\documentclass[12pt, a4paper]{amsart}

\setlength{\textwidth}{\paperwidth}
\addtolength{\textwidth}{-2in}
\calclayout

\usepackage[english]{babel}
\usepackage[utf8]{inputenc}
\usepackage{amsmath}
\usepackage{graphicx}
\usepackage[colorinlistoftodos]{todonotes}
\usepackage{amssymb,amscd,latexsym,epsfig,xy}
\usepackage[mathscr]{euscript}
\usepackage{amsthm}
\usepackage{tikz}
\usepackage{MnSymbol}
\usetikzlibrary{matrix,arrows,decorations.pathmorphing}
\usepackage{hyperref}
\usepackage{enumerate}
\usepackage{float}
\usepackage{tikz-cd}
\usepackage{relsize}
\usepackage{mathtools}
\usepackage{dsfont}
\usepackage{cleveref}
\usepackage{cite}

\crefformat{section}{\S#2#1#3} 
\crefformat{subsection}{\S#2#1#3}
\crefformat{subsubsection}{\S#2#1#3}

\newtheorem{defn}{Definition}[section]
\newtheorem{thm}[defn]{Theorem}
\newtheorem{lem}[defn]{Lemma}
\newtheorem{cor}[defn]{Corollary}
\newtheorem{conj}[defn]{Conjecture}
\newtheorem{prop}[defn]{Proposition}

\newcommand\A{\mathbb A}
\newcommand\C{\mathbb C}

\newcommand\HH{\mathbb H}

\newcommand\NN{\mathbb N}

\newcommand\PP{\mathbb P}
\newcommand\R{\mathbb R}
\newcommand\Q{\mathbb Q}

\newcommand\Z{\mathbb Z}

\newcommand\cA{\mathcal A}

\newcommand\cC{\mathcal C}
\newcommand\cE{\mathcal E}
\newcommand\cF{\mathcal F}

\newcommand\cH{\mathcal H}

\newcommand\cO{\mathcal O}

\newcommand\fd{\mathfrak d}
\newcommand\fD{\mathfrak D}
\newcommand\fg{\mathfrak g}
\newcommand\fm{\mathfrak m}
\newcommand\fM{\mathfrak M}

\newcommand\fp{\mathfrak p}
\newcommand\Arg{\operatorname{Arg}}
\newcommand\Aut{\operatorname{Aut}}
\newcommand\Coh{\operatorname{Coh}}

\newcommand\Ext{\operatorname{Ext}}

\newcommand\Hom{\operatorname{Hom}}

\newcommand\Rea{\operatorname{Re}\,}

\newcommand\Ima{\operatorname{Im}\,}

\newcommand\ch{\operatorname{ch}}

\newcommand\cl{\operatorname{cl}}

\newcommand\td{\operatorname{td}}

\newcommand\Stab{\operatorname{Stab}}
\newcommand\D{\operatorname{D}}

\newcommand\Sing{\operatorname{Sing}}
\newcommand\Init{\operatorname{Init}}
\newcommand\RHom{\operatorname{RHom}}

\newcommand\st{\mathrm{st}}

\newcommand\iin{\mathrm{in}}
\newcommand\oout{\mathrm{out}}
\newcommand\Gal{\operatorname{Gal}}

\title[Scattering diagrams, stability conditions, and coherent sheaves on 
$\PP^2$]{Scattering diagrams, stability conditions, and coherent sheaves on 
$\PP^2$\footnote{Mathematics Subject Classification: 14N35}}

\author{Pierrick Bousseau}

\date{}

\begin{document}

\begin{abstract}
We show that a purely algebraic structure, a two-dimensional scattering diagram, describes a large part of the wall-crossing behavior of moduli spaces of Bridgeland semistable objects in the derived category of coherent sheaves on $\PP^2$. This gives a new algorithm computing the Hodge numbers of the intersection cohomology of the classical moduli spaces of Gieseker semistable sheaves
on $\PP^2$, or equivalently the refined Donaldson-Thomas invariants for compactly supported sheaves on local 
$\PP^2$.

As applications, we prove that the intersection cohomology of moduli spaces of Gieseker semistable sheaves on $\PP^2$ is Hodge-Tate, and we give the first non-trivial numerical checks of the general $\chi$-independence conjecture for refined Donaldson-Thomas invariants of one-dimensional sheaves on local $\PP^2$.
\end{abstract}

\maketitle

\setcounter{tocdepth}{1}
\setcounter{section}{-1}

\tableofcontents

\thispagestyle{empty}

\section{Introduction}

\subsection{Overview}

The concept of scattering diagram comes from the work of 
Kontsevich-Soibelman \cite{MR2181810}
and Gross-Siebert \cite{MR2846484} in mirror symmetry. In this context, a scattering diagram is an algebraic structure which is supposed to encode the behavior of holomorphic discs with boundary
on torus fibers of the Strominger-Yau-Zaslow fibration \cite{MR1429831}. Essentially the same algebraic structure appears in an a priori completely different setting: the wall-crossing behavior of Donaldson-Thomas counts of semistable objects in a Calabi-Yau triangulated category of dimension $3$, upon variation of the stability condition \cite{kontsevich2008stability, MR2951762, MR3330788}.
A precise connection between scattering diagrams and spaces of stability conditions
for quivers with potential has been established by Bridgeland \cite{MR3710055}, recently followed by Cheung-Mandel \cite{cheung2019donaldson}, and has been used by the author and Arg\"uz \cite{AB21} to prove new results on the structure of quiver Donaldson--Thomas invariants such as the flow tree formula conjectured by Alexandrov-Pioline \cite{AP19} .

The aim of the present paper is to explore further this connection between stability conditions and scattering diagrams in a specific geometric example.
We consider the complex projective plane 
$\PP^2$ and the space of Bridgeland stability conditions
$\Stab(\PP^2)$ \cite{MR2373143} on the bounded derived category $\D^b(\PP^2)$ of coherent sheaves on $\PP^2$.
The space $\Stab (\PP^2)$ is a complex manifold of complex dimension $3$. We focus on a 
particular subset $U$ of $\Stab (\PP^2)$ of complex dimension $1$. Using as input the intersection cohomology of the moduli spaces of semistable objects in $\D^b(\PP^2)$ at various points of $U$, we 
construct a scattering diagram $\fD^{\PP^2}_{u,v}$ on $U$. On the other hand, we give a purely algorithmic definition of another scattering diagram 
$S(\fD^{\iin}_{u,v})$ on $U$.

Our main result is that these two scattering diagrams coincide.

\begin{thm}[=Theorem \ref{thm_main_precise}] \label{main_thm}
We have 
\[ \fD^{\PP^2}_{u,v} = S(\fD^{\iin}_{u,v}) \,.\]
\end{thm}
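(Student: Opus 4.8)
The plan is to exploit the fundamental structural result for two-dimensional scattering diagrams in the style of Kontsevich--Soibelman and Gross--Siebert: a scattering diagram is determined, up to equivalence, by its collection of incoming walls, the unique consistent completion being precisely the output of the scattering algorithm $S$. Since $S(\fD^{\iin}_{u,v})$ is by construction the consistent scattering diagram whose incoming walls form the explicit initial datum $\fD^{\iin}_{u,v}$, it suffices to establish two statements about the geometrically defined diagram $\fD^{\PP^2}_{u,v}$: first, that its incoming walls are exactly $\fD^{\iin}_{u,v}$; and second, that it is consistent. The theorem then follows from uniqueness.

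For the first step I would examine the part of $U$ near its boundary, where the one-parameter family of Bridgeland stability conditions degenerates to a limiting (large-volume, or exceptional-collection) chamber in which semistability is controlled explicitly. The incoming walls should be those supported on classes $\gamma$ for which the corresponding moduli spaces of semistable objects reduce to rigid objects --- shifts and twists of the line bundles $\cO, \cO(1), \cO(2)$ on $\PP^2$ --- whose moduli spaces are reduced points, so that the attached automorphisms are elementary and match the initial datum; this is also the place where the connection between $\Stab(\PP^2)$ and the scattering diagram of the associated quiver with potential, in the sense of Bridgeland, can be invoked most directly. One must then check that every remaining wall of $\fD^{\PP^2}_{u,v}$ is outgoing, i.e.\ produced by genuine wall-crossing rather than present "from infinity".

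The substantial step is consistency of $\fD^{\PP^2}_{u,v}$: the path-ordered product of wall-crossing automorphisms around any small loop must be trivial. I would derive this from the wall-crossing behaviour of moduli of Bridgeland semistable objects on $\PP^2$. The automorphism attached to a wall is the generating series packaging the intersection cohomology Poincar\'e polynomials of the relevant moduli spaces; the assertion that these series compose to the identity around a loop is exactly the refined (motivic) Kontsevich--Soibelman/Joyce wall-crossing formula, once it is phrased in the appropriate pro-nilpotent group completed with respect to the lattice grading (equivalently, graded by the discriminant $\Delta$, so that only finitely many walls contribute at each order and the infinite products converge). The two geometric inputs here are: (a) the identification of the intersection cohomology of the possibly singular, non-compact Bridgeland moduli spaces with refined Donaldson--Thomas invariants, which is where passing from the Fano surface $\PP^2$ to the Calabi--Yau threefold local $\PP^2$ is essential, since it supplies the symmetric obstruction theory and the BPS formalism; and (b) the validity of the refined wall-crossing formula itself in this setting.

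I expect the dictionary underlying step (b) --- showing that intersection cohomology, rather than ordinary cohomology of some smooth model, is the quantity obeying the wall-crossing formula, and controlling the singularities and non-compactness of the moduli spaces together with the Fano-to-Calabi--Yau transition --- to be the principal obstacle; a secondary one is the global combinatorics of $U$, namely locating the boundary chamber that hosts the incoming walls and verifying the finiteness needed for the $\fm$-adic completions. Once consistency and the identification of incoming walls are in place, uniqueness of the consistent scattering diagram with prescribed incoming walls yields $\fD^{\PP^2}_{u,v} = S(\fD^{\iin}_{u,v})$.
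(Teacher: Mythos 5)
Your high-level skeleton matches the paper exactly: prove consistency of $\fD^{\PP^2}_{u,v}$ (Theorem \ref{thm_consistency}), show that $\fD^{\PP^2}_{u,v}$ and $S(\fD^\iin_{u,v})$ agree in a neighbourhood $\bar U^\iin$ of the boundary of $U$ (Theorem \ref{thm_coincide_initial}), and conclude by uniqueness of the consistent completion (Proposition \ref{prop_criterion}). The difficulty is that on both substantial steps you leave the actual mechanism unresolved, and on the consistency step the mechanism you point towards is not the one that works.

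For consistency, you say the passage to the Calabi--Yau threefold $K_{\PP^2}$ is essential ``since it supplies the symmetric obstruction theory and the BPS formalism,'' and you flag the non-compactness, singularities and Fano-to-CY transition as the principal obstacle without saying how to surmount it. The paper's decisive input is that one can avoid all of this: by the theorem of Li--Zhao, $\Ext^2(E,F)=0$ for $\sigma$-semistable $E,F$ of the same phase and class in $\Gamma\setminus\Gamma^0$, for every $\sigma\in U$, so the moduli stacks $\fM_\gamma^\sigma$ are \emph{smooth}. This permits applying Meinhardt--Reineke's and Meinhardt's intersection-cohomology $=$ DT identity for categories of homological dimension one directly on $\PP^2$, via the motivic Hall algebra and virtual Hodge functions, with no symmetric obstruction theory, no vanishing cycles, and no orientation data; the relevant moduli spaces $M_\gamma^\sigma$ are moreover projective, not non-compact. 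The CY3 picture is conceptually motivating, but technically the paper never leaves the surface. As written, your step (b) has not been reduced to anything known.

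For the initial data, ``large-volume'' is the wrong regime: the boundary $\partial U$ is the opposite of large volume (that is $y\to+\infty$). The paper decomposes $\bar U^\iin$ into triangles $\bar U^\iin_{n,T}$, $\bar U^\iin_{n,L}$, $\bar U^\iin_{n,R}$, uses the $\psi(1)$-symmetry (tensor by $\cO(1)$) to reduce to $n=0$, identifies the tilted heart $\cA^\sigma[Z^\sigma,\phi]$ with the Beilinson quiver heart $\cA_0$ with simples $\cO(-1)[2],\cO[1],\cO(1)$, and then shows by positivity of dimension vectors together with the Kronecker bound $n_2\leqslant 3n_1$ that no semistable object has $\Rea Z^\sigma=0$ inside the triangles, while on the sides only $\cO(n)$ and $\cO(n)[1]$, with moduli spaces single points, survive. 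Your proposal gestures at ``exceptional collections'' and at Bridgeland's quiver scattering diagram, but the latter is not invoked in the paper, and the concrete positivity argument is what is needed; and the initial objects are all $\cO(n)$ and $\cO(n)[1]$ for $n\in\Z$, reduced to a fundamental domain by $\psi(1)$, not just $\cO,\cO(1),\cO(2)$.
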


We stress that the left-hand side $\fD_{u,v}^{\PP^2}$ encodes some complicated geometry of the moduli spaces 
of semistable objects in $\D^b(\PP^2)$, whereas the right-hand side $S(\fD^{\iin})$ is completely algorithmic and can be easily implemented on a computer.

As the notion of Gieseker stability can be recovered as a limiting case of Bridgeland stability condition, Theorem \ref{main_thm}
gives a new algorithm to compute  intersection cohomology of the classical \cite{huybrechts2010geometry} moduli spaces of Gieseker semistable sheaves
on $\PP^2$. Using this algorithm, we prove that the intersection cohomology of the moduli spaces of Gieseker semistable sheaves on $\PP^2$ is concentrated in Hodge bidegrees $(p,p)$ (Theorem \ref{thm_hodge_intro}), and we make the first non-trivial numerical checks of the general 
$\chi$-independence conjecture for intersection cohomology of moduli spaces of Gieseker semistable one-dimensional sheaves (Conjecture \ref{conj_chi_indep}-Theorem \ref{thm_chi_indep_test}). 

We have phrased our main result in terms of the derived category of coherent sheaves on 
$\PP^2$. One motivation for this choice is that moduli spaces of Gieseker semistable sheaves on $\PP^2$ are classical objects of algebraic geometry and the study of their intersection cohomology is an interesting topic on its own. Nevertheless, an alternative 
more modern point of view would be to consider Theorem \ref{main_thm} as a statement about $K_{\PP^2}$, the non-compact Calabi-Yau 3-fold total space of the canonical line bundle of $\PP^2$, also known as local $\PP^2$. Indeed, $U$ can also be viewed as a subspace of the space of Bridgeland stability conditions of the category $\D^b_0(K_{\PP^2})$ of coherent sheaves on $K_{\PP^2}$
set-theoretically supported on the zero-section, and the intersection cohomology of moduli spaces of semistable objects in $\D^b(\PP^2)$ involved in the definition of $\fD_{u,v}^{\PP^2}$ coincides with the refined Donaldson-Thomas invariants of $K_{\PP^2}$.
In particular, the combinatorial understanding of $\fD_{u,v}^{\PP^2}$
provided by Theorem \ref{main_thm} is an expression of the Kontsevich-Soibelman  wall-crossing formula \cite{kontsevich2008stability} for Donaldson-Thomas invariants of Calabi-Yau 3-folds.

In the follow-up paper \cite{bousseau2019takahashi}, we will combine Theorem \ref{main_thm} with the main result of 
Gr\"afnitz \cite{gabele2019tropical} to prove 
N.\ Takahashi's conjecture
\cite{MR1844627, choi2018log} on genus-$0$ Gromov-Witten theory with maximal tangency of the pair $(\PP^2,E)$, where $E$ is a smooth cubic curve in $\PP^2$.
We 
will also establish in \cite{bousseau2019takahashi} a new
sheaves/Gromov--Witten correspondence, relating Betti numbers of moduli spaces of one-dimensional Gieseker semistable sheaves on $\PP^2$, or equivalently refined genus-$0$ Gopakumar--Vafa invariants of local $\PP^2$,
with higher-genus maximal contact Gromov--Witten theory of $(\PP^2,E)$.
In combination with the work \cite{bousseau2020hae} on the Gromov-Witten side,
this will give the first mathematical proof of a non-trivial example of the general structure properties (finite generation, quasimodularity, holomorphic anomaly equation)
expected from string theory for the refined topological string on Calabi-Yau 3-folds
\cite{MR3024275}.

The rest of the introduction is organized as follows. 
In \cref{section_intro_description} we give a more detailed description of the objects 
$\fD_{u,v}^{\PP^2}$ and $S(\fD^\iin_{u,v})$
involved in the statement of Theorem
\ref{main_thm}. 
In \cref{section_intro_proof} we briefly describe the technical tools used in the proof of Theorem \ref{main_thm}. 
In \cref{section_intro_gieseker} 
we state precisely our results on moduli spaces of Gieseker semistable sheaves on 
$\PP^2$. 
Finally, we discuss connections with related works in \cref{section_intro_relation}.

\subsection{Description of the scattering diagrams $S(\fD^\iin_{u,v})$ and $\fD^{\PP^2}_{u,v}$}
\label{section_intro_description}

\subsubsection{Scattering diagrams}
Both $S(\fD^\iin_{u,v})$ and 
$\fD^{\PP^2}_{u,v}$ are scattering diagrams on $U$ for the Lie algebra $\fg_{u,v}$. Here, $U$ is the open subset of $\R^2$ defined by 
\[ U \coloneq \{ (x,y) \in \R^2\,| x^2+2y>0\} \,,\]
and $\fg_{u,v}$ the $\Q(u^{\pm \frac{1}{2}}, v^{\pm \frac{1}{2}})$-Lie algebra 
\[\fg_{u,v}\coloneq \bigoplus_{m \in \Z^2} 
\Q(u^{\pm \frac{1}{2}}, v^{\pm \frac{1}{2}}) z^m\]
with Lie bracket given by  
\[[z^m, z^{m'}] \coloneq
(-1)^{\langle m,m'\rangle} ((uv)^{\frac{\langle m,m' \rangle}{2}}
-(uv)^{-\frac{\langle m,m' \rangle}{2}})
 z^{m+m'}\,,\]
where \[\langle -,- \rangle \colon 
\Z^2 \times \Z^2 \rightarrow \Z \]
\[ \langle (a,b), (a',b') \rangle \coloneq 
3 (a'b-ab') \,.\]
For the purposes of the present paper, a scattering diagram $\fD$ on $U$ for 
$\fg_{u,v}$ is a collection of rays $\fd=(|\fd|,H_\fd)$, where:
\begin{itemize}
\item[(i)] $H_\fd \in \Q(u^{\pm \frac{1}{2}},
v^{\pm \frac{1}{2}}) z^{m_\fd}$ for some 
$m_\fd \in \Z^2$ called the class of the ray 
$\fd$.
\item[(ii)] $|\fd|$ is an oriented line segment or half-line contained in $U$, of direction $-m_\fd$, and called the support of the ray 
$\fd$.
\end{itemize}

To every ray $\fd=(|\fd|,H_\fd)$ of a scattering diagram, we attach the group element
\[\Phi_{\fd} \coloneqq \exp(H_\fd) \]
in the group $G := \exp(\fg_{u,v})$ with Lie algebra $\fg_{u,v}$
defined by the Baker-Campbell-Hausdorff formula.
In fact, in order to make sense of $G$ and of the exponential map
$\exp \colon \fg_{u,v} \rightarrow G$, one needs to work with various completions of $\fg_{u,v}$. In this introduction, we ignore this issue and we refer to  \cref{section_scattering_diagram} for details.

A scattering diagram $\fD$ on $U$ for $\fg_{u,v}$ is consistent if for every $\sigma \in U$ the product of the group elements $\Phi_\fd^{\epsilon_\fd(\sigma)}$, taken over all the rays $\fd$ of $\fD$ passing through 
$\sigma$ and in the anticlockwise order around $\sigma$, is the identity. Here, we set $\epsilon_\fd(\sigma)=\pm 1$ depending if the orientation of $\fd$ points towards $\sigma$ or not.

An elementary but important fact going back to Kontsevich-Soibelman \cite{MR2181810} gives a systematic way to construct consistent scattering diagrams.
Let us start with a scattering diagram 
$\fD$. Then $\fD$ is not necessarily consistent: there can exist a point $\sigma \in U$ such that the product of group elements $\Phi_\fd^{\epsilon_\fd(\sigma)}$ around $\sigma$ is not the identity. 
Then, Kontsevich-Soibelman \cite{MR2181810} proved that there is a unique way to add rays starting at $\sigma$ in order to form a new scattering diagram such that the composition of the group elements $\Phi_\fd^{\epsilon_\fd(\sigma)}$ around $\sigma$ is now equal to the identity. The rays that need to be added are completely determined by the consistency condition and so by the Lie bracket of the Lie algebra $\fg_{u,v}$.
The new scattering diagram is now consistent around $\sigma$, but due to the newly added rays, there are now maybe new points $\sigma' \in U$ where consistency fails, and the construction needs to be iterated by successive additions of new rays. Dealing with the convergence issues of this potentially infinite process (as done carefully in \cref{section_scattering_diagram}), we end up with a consistent scattering diagram. 
In other words, starting with any scattering diagram $\fD$, there is a canonical way to produce a consistent one, that we denote by $S(\fD)$. Each time rays of 
$\fD$ intersect, new rays are added to guarantee the consistency and the process is iterated. This explains the scattering terminology: when rays meet, they `scatter' and produce new rays in a completely algorithmic way.

\subsubsection{The scattering diagram $S(\fD^\iin_{u,v})$}
The scattering diagram $S(\fD^\iin_{u,v})$ is obtained by the consistent completion
$\fD \mapsto S(\fD)$ described above for a specific choice of initial scattering diagram $\fD^\iin_{u,v}$ which can be explicitly described. We first remark that the boundary of $U$ is the parabola in 
$\R^2$ of equation $x^2+2y=0$. The support of the rays of $\fD^{\iin}_{u,v}$ are the tangent lines to this parabola at the points
$s_n \coloneq (n,-\frac{n^2}{2})$. More precisely, we denote by $|\fd_n^+|$ the oriented tangent half-line of direction $-m_n^+$, where $m_n^+=(-1,n)$, 
and by $|\fd_n^-|$ the oriented tangent half-line of direction
$-m_n^-$, where $m_n^-=(1,-n)$.
Then the rays of $\fD^\iin_{u,v}$ are given by $\fd_{n,\ell}^+=(|\fd_n^+|, H_{n,\ell}^+)$
and  $\fd_{n,\ell}^-=(|\fd_n^-|, H_{n,\ell}^-)$, $n\in \Z$,
$\ell \geqslant 1$, where 
\[H_{n,\ell}^+ \coloneq - \frac{1}{\ell} \frac{1}{(uv)^{\frac{\ell}{2}}
-(uv)^{-\frac{\ell}{2}}} z^{\ell m_n^+} \in \Q(u^{\pm\frac{1}{2}}, v^{\pm\frac{1}{2}})z^{\ell m_n^+}\,,\]
and 
\[H_{n,\ell}^- \coloneq - \frac{1}{\ell} \frac{1}{(uv)^{\frac{\ell}{2}}
-(uv)^{-\frac{\ell}{2}}} z^{\ell m_n^-}\in \Q(u^{\pm\frac{1}{2}}, v^{\pm\frac{1}{2}})z^{\ell m_n^-}\,.\]
We refer to Figure \ref{fig:initial_scattering} for a pictorial representation of the support of $\fD^\iin_{u,v}$, and to Figure 
\ref{fig:scattering} for a pictorial representation of the support of the first steps of $S(\fD^\iin_{u,v})$.

\begin{figure}[h!]
\centering
\includegraphics[scale=0.8]{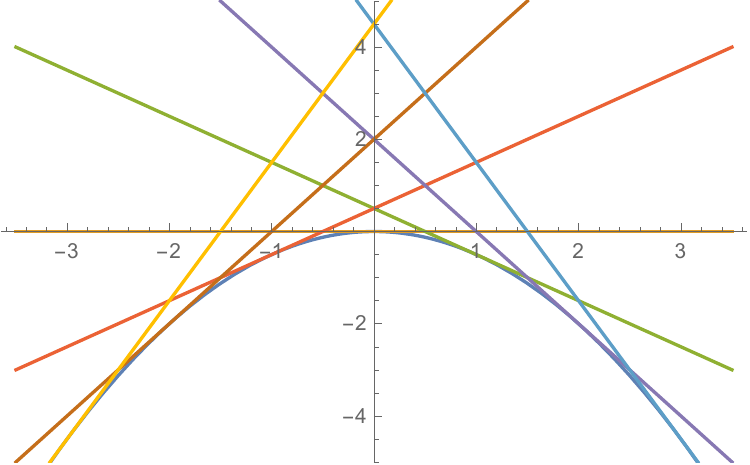}
\caption{The scattering diagram $\fD^\iin_{u,v}$}
\label{fig:initial_scattering}
\end{figure}

\subsubsection{The scattering diagram $\fD^{\PP^2}_{u,v}$}

The scattering diagram $\fD^{\PP^2}_{u,v}$ is constructed in terms of the moduli spaces of Bridgeland semistable objects in the bounded derived category $\D^b(\PP^2)$ of coherent sheaves on $\PP^2$.
We denote $\Gamma \coloneq \Z^3=K_0(\PP^2)$ and $\gamma =(r,d,\chi) \in \Gamma$, where $r$ stands for the rank, $d$ for the degree, and $\chi$ for the Euler characteristic of an object in $\D^b(\PP^2)$.

Recall from \cite{MR2373143} that every
Bridgeland stability condition $\sigma$ on
$\D^b(\PP^2)$ comes with the data of an additive map 
\[Z^\sigma \colon \Gamma \rightarrow \C\,,\]
\[ \gamma \mapsto Z_\gamma^\sigma \,,\]
called the central charge. The notion of stability specified by $\sigma$ is determined by the relative phases of the central charges $Z_\gamma^\sigma$.

Bridgeland
stability conditions for polarized surfaces, such as $\PP^2$, have been well studied \cite{MR2376815, MR3010070, MR2852118}. 
In particular, it is known how to construct 
an explicit half-space 
$\mathbb{H}\coloneq \{ (s,t) \in \R^2 \,| 
t>0\}$ of stability condition on $\D^b(\PP^2)$. The central charge for the stability $(s,t) \in \HH$ is given by 
\[  Z_\gamma^{(s,t)}= -\frac{1}{2}(s^2-t^2)r+ds+r+\frac{3}{2}d-\chi + i(d-sr)t\,.\]

The main idea to construct scattering diagrams from stability conditions is to consider loci of stability conditions $\sigma$ indexed by $\gamma \in \Gamma$ and defined by the condition that the central charge $Z_\gamma^\sigma$ remains of constant phase. In terms of the $(s,t)$ coordinates on $\mathbb{H}$, these loci are parabola in 
$\mathbb{H}$ and not straight lines.
Our main remark is that the map
\[ \mathbb{H} \rightarrow U\]
\[(s,t) \mapsto  (x,y)=(s,-\frac{1}{2}(s^2-t^2))\]
is a bijection, such that, for every
$\gamma =(r,d,\chi) \in \Gamma$, the locus 
$\Rea Z_\gamma^\sigma=0$
has equation 
\[ ry+dx+r+\frac{3}{2}d -\chi=0\,,\]
in terms of $(x,y) \in U$, and so is a straight line in $U$.
From now on, we use this identification 
$\mathbb{H} \simeq U$ to view $U$ as a space of stability conditions on $\D^b(\PP^2)$. 
In other words, we obtained $U$ from 
$\mathbb{H}$ by defining on $\mathbb{H}$ an integral affine structure such that the functions $\sigma \mapsto \Rea Z_\gamma^\sigma$ become integral affine coordinates.
The elementary change of variables
$(s,t) \mapsto (x,y)$
gives a new perspective on the standard upper half-plane $\mathbb{H}$ of stability conditions and is the key observation which makes the appearance of a scattering diagram possible.

For every $\gamma \in \Gamma$ and $\sigma \in U$, we have a projective variety $M_\gamma^\sigma$ parametrizing S-equivalence classes of $\sigma$-semistable objects of class $\gamma$ in $\D^b(\PP^2)$. We refer to \cref{section_moduli_invariants}-\cref{section_intersection_invariants} for details. The projective varieties 
$M_\gamma^\sigma$ are in general singular, due to the existence of strictly semistable objects. Nevertheless, the intersection cohomology groups $IH^{k}(M_\gamma^\sigma, \Q)$ behave as well as the cohomology groups of a smooth projective variety \cite{MR572580,MR696691,MR751966}, and in particular  
carry naturally a pure Hodge structure of weight $k$ \cite{MR1047415}. 
We denote by $Ih^{p,q}(M_\gamma^\sigma)$ the corresponding Hodge numbers, which can be organized into signed symmetrized intersection  
Hodge polynomial 
\[Ih_\gamma^\sigma(u^{\frac{1}{2}},v^{\frac{1}{2}}) \coloneq 
(-(uv)^{\frac{1}{2}})^{- \dim M_\gamma^\sigma}
\sum_{p,q=0}^{\dim M_\gamma^\sigma}
(-1)^{p+q}
Ih^{p,q}(M_\gamma^\sigma) u^p v^q \in \Z[u^{\pm
\frac{1}{2}},v^{\pm \frac{1}{2}}] \,.\]
To describe the scattering diagram $\fD^{\PP^2}_{u,v}$, it is actually more convenient to reorganize these Hodge polynomials and to define 
\[ \overline{\Omega}_\gamma^\sigma(u^{\frac{1}{2}}, v^{\frac{1}{2}}) :=-\sum_{\substack{\gamma' \in \Gamma\\ \gamma=\ell \gamma'}}
\frac{1}{\ell}
\frac{Ih_{\gamma'}^\sigma(u^{\frac{\ell}{2}}, v^{\frac{\ell}{2}})}{
(uv)^{\frac{\ell}{2}}-(uv)^{-\frac{\ell}{2}}}\,. \]
This formula might seem a bit unmotivated from the point of view of the classical geometry of moduli spaces of sheaves on $\PP^2$. However, it is the familiar way to package Donaldson-Thomas invariants in the presence of strictly semistable objects \cite{kontsevich2008stability, MR2951762}. Intuitively, the denominator in the formula comes from the fact that every stable object has a group of automorphisms equal to $\C^{*}$.

The scattering diagram $\fD^{\PP^2}_{u,v}$
is then defined as follows.
For every $\gamma \in \Gamma$, we consider the set $R_\gamma$ of points $\sigma \in U$
such that $\Rea Z_\gamma^\sigma=0$ and such that 
$\overline{\Omega}_\gamma^\sigma(u^{\frac{1}{2}}, v^{\frac{1}{2}})
\neq 0$. It happens that $R_\gamma$ is a half-line, contained in the straight line of equation $\Rea Z_\gamma^\sigma =0$. Denoting 
$m_\gamma \coloneq (r,-d) \in \Z^2$,
we attach to every point $\sigma$ of $R_\gamma$ the generating series
\[H_{\gamma}^\sigma \coloneq
 \overline{\Omega}_\gamma^\sigma(u^{\frac{1}{2}}, v^{\frac{1}{2}})
z^{m_\gamma} \,\in \Q(u^{\pm\frac{1}{2}},
v^{\pm\frac{1}{2}}) z^{m_\gamma}\,.\] 
As a function of $\sigma \in R_\gamma$, the moduli spaces $M_\gamma^\sigma$, and so the Hodge numbers 
$Ih^{p,q}(M_\gamma^\sigma)$ and the generating series $H_\gamma^\sigma$ are locally constant away from points where they jump in a discontinuous way. At such points, where $R_\gamma$ crosses walls in the space of stability conditions, the notion of semistability for objects of class $\gamma$ changes. These points of wall-crossing give a subdivision of $R_\gamma$ into line segments $R_{\gamma,j}$. We attach
to each line segment $R_{\gamma,j}$ the corresponding generating series $H_{\gamma,j} \coloneq H_\gamma^\sigma$ which is now independent of $\sigma
\in R_{\gamma,j}$ by construction.
By definition, $\fD^{\PP^2}_{u,v}$ is the scattering diagram on $U$ for 
$\fg_{u,v}$ whose rays are the  
$(R_{\gamma,j}, H_{\gamma,j})$ for all $\gamma \in \Gamma$ and for all $j$.

\subsubsection{Terminology}
It is worth pointing that we use the terminology `scattering diagram' in a slightly extended sense: $\fD^{\PP^2}_{u,v}$
is really a structure in the sense of 
\cite{MR2846484} or a wall-crossing structure in the sense of \cite{MR3330788}. In other words, it is an infinite collection of local scattering diagrams in the sense of
\cite{MR2846484}. Our terminology choices come from trying to avoid the potential confusion between the usages of the word `wall' in the mirror symmetry context and in the stability conditions context. 

One should also remark that $\fD^{\PP^2}_{u,v}$ is really a quantum scattering diagram, in the sense that the generating series $H_\gamma^\sigma$ are elements of the quantum torus Lie algebra. In the classical limit, generating series $H_\gamma^\sigma$ reduce to generating series of Euler characteristics of the intersection cohomology of moduli spaces of semistable objects.

\begin{figure}[h!]
\centering
\resizebox{0.90\textwidth}{0.90\textheight}{
\rotatebox{90}{
\begin{tikzpicture}[xscale=0.6,yscale=2.7,font=\fontsize{6}{6},define rgb/.code={\definecolor{mycolor}{RGB}{#1}}, rgb color/.style={define rgb={#1},mycolor}]
\draw[->,rgb color={255,132,0}] (-30.0,-4.00) -- (-51.0,-6.00);
\draw[->,rgb color={255,132,0}] (-30.0,-4.00) -- (-48.0,-6.00);
\draw[->,rgb color={255,132,0}] (-30.0,-4.00) -- (-48.0,-6.00);
\draw[->,rgb color={255,132,0}] (-30.0,-4.00) -- (-48.0,-6.00);
\draw[->,rgb color={255,132,0}] (-30.0,-4.00) -- (-42.0,-6.00);
\draw[->,rgb color={255,132,0}] (-30.0,-4.00) -- (-36.0,-6.00);
\draw[->,rgb color={255,132,0}] (-30.0,-4.00) -- (7.00,-4.00) node[right]{};
\draw[->,rgb color={255,132,0}] (-30.0,-4.00) -- (7.00,-4.00) node[right]{};
\draw[->,rgb color={255,132,0}] (-30.0,-4.00) -- (7.00,-4.00) node[right]{};
\draw[->,rgb color={255,132,0}] (-30.0,-4.00) -- (7.00,-4.00) node[right]{};
\draw[->,rgb color={255,132,0}] (-30.0,-4.00) -- (7.00,-4.00) node[right]{};
\draw[->,rgb color={255,132,0}] (-30.0,-4.00) -- (7.00,-4.00) node[right]{};
\draw[->,rgb color={255,132,0}] (-27.0,-4.00) -- (7.00,-4.00) node[right]{};
\draw[->,rgb color={255,132,0}] (-27.0,-4.00) -- (7.00,-4.00) node[right]{};
\draw[->,rgb color={255,132,0}] (-30.0,-4.00) -- (7.00,-4.00) node[right]{};
\draw[->,rgb color={255,132,0}] (-30.0,-4.00) -- (7.00,-1.94);
\draw[->,rgb color={255,132,0}] (-30.0,-4.00) -- (7.00,-1.94);
\draw[->,rgb color={255,132,0}] (-30.0,-4.00) -- (7.00,-1.94);
\draw[->,rgb color={255,132,0}] (-30.0,-4.00) -- (7.00,-2.24);
\draw[->,rgb color={255,132,0}] (-30.0,-4.00) -- (7.00,-2.46);
\draw[->,rgb color={255,132,0}] (-30.0,-4.00) -- (7.00,-1.76);
\draw[->,rgb color={255,132,0}] (-30.0,5.00) -- (-51.0,7.00);
\draw[->,rgb color={255,132,0}] (-30.0,5.00) -- (-48.0,7.00);
\draw[->,rgb color={255,132,0}] (-30.0,5.00) -- (-48.0,7.00);
\draw[->,rgb color={255,132,0}] (-30.0,5.00) -- (-48.0,7.00);
\draw[->,rgb color={255,132,0}] (-30.0,5.00) -- (-42.0,7.00);
\draw[->,rgb color={255,132,0}] (-30.0,5.00) -- (-36.0,7.00);
\draw[->,rgb color={255,132,0}] (-30.0,5.00) -- (7.00,5.00) node[right]{};
\draw[->,rgb color={255,132,0}] (-30.0,5.00) -- (7.00,5.00) node[right]{};
\draw[->,rgb color={255,132,0}] (-30.0,5.00) -- (7.00,5.00) node[right]{};
\draw[->,rgb color={255,132,0}] (-30.0,5.00) -- (7.00,5.00) node[right]{};
\draw[->,rgb color={255,132,0}] (-30.0,5.00) -- (7.00,5.00) node[right]{};
\draw[->,rgb color={255,132,0}] (-30.0,5.00) -- (7.00,5.00) node[right]{};
\draw[->,rgb color={255,132,0}] (-27.0,5.00) -- (7.00,5.00) node[right]{};
\draw[->,rgb color={255,132,0}] (-27.0,5.00) -- (7.00,5.00) node[right]{};
\draw[->,rgb color={255,132,0}] (-30.0,5.00) -- (7.00,5.00) node[right]{};
\draw[->,rgb color={255,132,0}] (-30.0,5.00) -- (7.00,2.94);
\draw[->,rgb color={255,132,0}] (-30.0,5.00) -- (7.00,2.94);
\draw[->,rgb color={255,132,0}] (-30.0,5.00) -- (7.00,2.94);
\draw[->,rgb color={255,132,0}] (-30.0,5.00) -- (7.00,3.24);
\draw[->,rgb color={255,132,0}] (-30.0,5.00) -- (7.00,3.46);
\draw[->,rgb color={255,132,0}] (-30.0,5.00) -- (7.00,2.76);
\draw[->,rgb color={255,132,0}] (-27.0,-4.00) -- (-39.0,-6.00);
\draw[->,rgb color={255,132,0}] (-27.0,-4.00) -- (7.00,-2.38);
\draw[->,rgb color={255,132,0}] (-27.0,5.00) -- (-39.0,7.00);
\draw[->,rgb color={255,132,0}] (-27.0,5.00) -- (7.00,3.38);
\draw[->,rgb color={255,132,0}] (-22.5,-3.75) -- (7.00,-3.75) node[right]{};
\draw[->,rgb color={255,132,0}] (-24.8,-3.75) -- (7.00,-3.75) node[right]{};
\draw[->,rgb color={255,107,0}] (-22.5,4.75) -- (7.00,4.75) node[right]{};
\draw[->,rgb color={255,107,0}] (-24.8,4.75) -- (7.00,4.75) node[right]{};
\draw[->,rgb color={255,132,0}] (-24.0,-3.67) -- (7.00,-3.67) node[right]{};
\draw[->,rgb color={255,132,0}] (-24.0,-3.67) -- (7.00,-3.67) node[right]{};
\draw[->,rgb color={255,0,0}] (-24.0,-3.50) -- (7.00,-0.917);
\draw[->,rgb color={255,0,0}] (-24.0,4.50) -- (7.00,1.92);
\draw[->,rgb color={255,107,0}] (-24.0,4.67) -- (7.00,4.67) node[right]{};
\draw[->,rgb color={255,132,0}] (-22.5,-3.50) -- (-30.0,-6.00);
\draw[->,rgb color={255,132,0}] (-22.5,-3.50) -- (7.00,-3.50) node[right]{};
\draw[->,rgb color={255,132,0}] (-22.5,-3.50) -- (7.00,-3.50) node[right]{};
\draw[->,rgb color={255,132,0}] (-21.0,-3.50) -- (7.00,-3.50) node[right]{};
\draw[->,rgb color={255,132,0}] (-18.0,-3.50) -- (7.00,-3.50) node[right]{};
\draw[->,rgb color={255,132,0}] (-22.5,-3.50) -- (7.00,-3.50) node[right]{};
\draw[->,rgb color={255,132,0}] (-22.5,-3.50) -- (7.00,-2.10);
\draw[->,rgb color={255,107,0}] (-22.5,4.50) -- (-30.0,7.00);
\draw[->,rgb color={255,107,0}] (-22.5,4.50) -- (7.00,4.50) node[right]{};
\draw[->,rgb color={255,107,0}] (-22.5,4.50) -- (7.00,4.50) node[right]{};
\draw[->,rgb color={255,107,0}] (-21.0,4.50) -- (7.00,4.50) node[right]{};
\draw[->,rgb color={255,107,0}] (-18.0,4.50) -- (7.00,4.50) node[right]{};
\draw[->,rgb color={255,107,0}] (-22.5,4.50) -- (7.00,4.50) node[right]{};
\draw[->,rgb color={255,107,0}] (-22.5,4.50) -- (7.00,3.10);
\draw[->,rgb color={255,132,0}] (-20.0,-3.33) -- (7.00,-3.33) node[right]{};
\draw[->,rgb color={255,107,0}] (-20.0,4.33) -- (7.00,4.33) node[right]{};
\draw[->,rgb color={255,107,0}] (-20.0,4.33) -- (7.00,4.33) node[right]{};
\draw[->,rgb color={255,132,0}] (-16.5,-3.25) -- (7.00,-3.25) node[right]{};
\draw[->,rgb color={255,132,0}] (-18.8,-3.25) -- (7.00,-3.25) node[right]{};
\draw[->,rgb color={255,107,0}] (-16.5,4.25) -- (7.00,4.25) node[right]{};
\draw[->,rgb color={255,107,0}] (-18.8,4.25) -- (7.00,4.25) node[right]{};
\draw[->,rgb color={255,107,0}] (-18.0,-3.00) -- (-40.5,-6.00);
\draw[->,rgb color={255,107,0}] (-18.0,-3.00) -- (-36.0,-6.00);
\draw[->,rgb color={255,107,0}] (-18.0,-3.00) -- (-36.0,-6.00);
\draw[->,rgb color={255,107,0}] (-18.0,-3.00) -- (-36.0,-6.00);
\draw[->,rgb color={255,107,0}] (-18.0,-3.00) -- (-27.0,-6.00);
\draw[->,rgb color={255,107,0}] (-18.0,-3.00) -- (-18.0,-6.00);
\draw[->,rgb color={255,107,0}] (-18.0,-3.00) -- (7.00,-3.00) node[right]{};
\draw[->,rgb color={255,107,0}] (-18.0,-3.00) -- (7.00,-3.00) node[right]{};
\draw[->,rgb color={255,107,0}] (-18.0,-3.00) -- (7.00,-3.00) node[right]{};
\draw[->,rgb color={255,107,0}] (-18.0,-3.00) -- (7.00,-3.00) node[right]{};
\draw[->,rgb color={255,107,0}] (-18.0,-3.00) -- (7.00,-3.00) node[right]{};
\draw[->,rgb color={255,107,0}] (-18.0,-3.00) -- (7.00,-3.00) node[right]{};
\draw[->,rgb color={255,107,0}] (-15.0,-3.00) -- (7.00,-3.00) node[right]{};
\draw[->,rgb color={255,107,0}] (-15.0,-3.00) -- (7.00,-3.00) node[right]{};
\draw[->,rgb color={255,107,0}] (-18.0,-3.00) -- (7.00,-3.00) node[right]{};
\draw[->,rgb color={255,107,0}] (-18.0,-3.00) -- (7.00,-1.33);
\draw[->,rgb color={255,107,0}] (-18.0,-3.00) -- (7.00,-1.33);
\draw[->,rgb color={255,107,0}] (-18.0,-3.00) -- (7.00,-1.33);
\draw[->,rgb color={255,107,0}] (-18.0,-3.00) -- (7.00,-1.61);
\draw[->,rgb color={255,107,0}] (-18.0,-3.00) -- (7.00,-1.81);
\draw[->,rgb color={255,107,0}] (-18.0,-3.00) -- (7.00,-1.15);
\draw[->,rgb color={255,107,0}] (-18.0,4.00) -- (-40.5,7.00);
\draw[->,rgb color={255,107,0}] (-18.0,4.00) -- (-36.0,7.00);
\draw[->,rgb color={255,107,0}] (-18.0,4.00) -- (-36.0,7.00);
\draw[->,rgb color={255,107,0}] (-18.0,4.00) -- (-36.0,7.00);
\draw[->,rgb color={255,107,0}] (-18.0,4.00) -- (-27.0,7.00);
\draw[->,rgb color={255,107,0}] (-18.0,4.00) -- (-18.0,7.00);
\draw[->,rgb color={255,107,0}] (-18.0,4.00) -- (7.00,4.00) node[right]{};
\draw[->,rgb color={255,107,0}] (-18.0,4.00) -- (7.00,4.00) node[right]{};
\draw[->,rgb color={255,107,0}] (-18.0,4.00) -- (7.00,4.00) node[right]{};
\draw[->,rgb color={255,107,0}] (-18.0,4.00) -- (7.00,4.00) node[right]{};
\draw[->,rgb color={255,107,0}] (-18.0,4.00) -- (7.00,4.00) node[right]{};
\draw[->,rgb color={255,107,0}] (-18.0,4.00) -- (7.00,4.00) node[right]{};
\draw[->,rgb color={255,107,0}] (-15.0,4.00) -- (7.00,4.00) node[right]{};
\draw[->,rgb color={255,107,0}] (-15.0,4.00) -- (7.00,4.00) node[right]{};
\draw[->,rgb color={255,107,0}] (-18.0,4.00) -- (7.00,4.00) node[right]{};
\draw[->,rgb color={255,107,0}] (-18.0,4.00) -- (7.00,2.33);
\draw[->,rgb color={255,107,0}] (-18.0,4.00) -- (7.00,2.33);
\draw[->,rgb color={255,107,0}] (-18.0,4.00) -- (7.00,2.33);
\draw[->,rgb color={255,107,0}] (-18.0,4.00) -- (7.00,2.61);
\draw[->,rgb color={255,107,0}] (-18.0,4.00) -- (7.00,2.81);
\draw[->,rgb color={255,107,0}] (-18.0,4.00) -- (7.00,2.15);
\draw[->,rgb color={255,107,0}] (-15.0,-3.00) -- (-24.0,-6.00);
\draw[->,rgb color={255,107,0}] (-15.0,-3.00) -- (7.00,-1.78);
\draw[->,rgb color={255,107,0}] (-15.0,4.00) -- (-24.0,7.00);
\draw[->,rgb color={255,107,0}] (-15.0,4.00) -- (7.00,2.78);
\draw[->,rgb color={255,107,0}] (-11.2,-2.75) -- (7.00,-2.75) node[right]{};
\draw[->,rgb color={255,107,0}] (-13.5,-2.75) -- (7.00,-2.75) node[right]{};
\draw[->,rgb color={255,0,0}] (-13.5,-2.50) -- (-45.0,-6.00);
\draw[->,rgb color={255,0,0}] (-13.5,-2.50) -- (7.00,-0.222);
\draw[->,rgb color={255,0,0}] (-13.5,3.50) -- (-45.0,7.00);
\draw[->,rgb color={255,0,0}] (-13.5,3.50) -- (7.00,1.22);
\draw[->,rgb color={255,77,0}] (-11.2,3.75) -- (7.00,3.75) node[right]{};
\draw[->,rgb color={255,77,0}] (-13.5,3.75) -- (7.00,3.75) node[right]{};
\draw[->,rgb color={255,107,0}] (-13.0,-2.67) -- (7.00,-2.67) node[right]{};
\draw[->,rgb color={255,107,0}] (-13.0,-2.67) -- (7.00,-2.67) node[right]{};
\draw[->,rgb color={255,77,0}] (-13.0,3.67) -- (7.00,3.67) node[right]{};
\draw[->,rgb color={255,107,0}] (-12.0,-2.50) -- (-12.0,-6.00);
\draw[->,rgb color={255,107,0}] (-12.0,-2.50) -- (7.00,-2.50) node[right]{};
\draw[->,rgb color={255,107,0}] (-12.0,-2.50) -- (7.00,-2.50) node[right]{};
\draw[->,rgb color={255,107,0}] (-10.5,-2.50) -- (7.00,-2.50) node[right]{};
\draw[->,rgb color={255,107,0}] (-7.50,-2.50) -- (7.00,-2.50) node[right]{};
\draw[->,rgb color={255,107,0}] (-12.0,-2.50) -- (7.00,-2.50) node[right]{};
\draw[->,rgb color={255,107,0}] (-12.0,-2.50) -- (7.00,-1.44);
\draw[->,rgb color={255,77,0}] (-12.0,3.50) -- (-12.0,7.00);
\draw[->,rgb color={255,77,0}] (-12.0,3.50) -- (7.00,3.50) node[right]{};
\draw[->,rgb color={255,77,0}] (-12.0,3.50) -- (7.00,3.50) node[right]{};
\draw[->,rgb color={255,77,0}] (-10.5,3.50) -- (7.00,3.50) node[right]{};
\draw[->,rgb color={255,77,0}] (-7.50,3.50) -- (7.00,3.50) node[right]{};
\draw[->,rgb color={255,77,0}] (-12.0,3.50) -- (7.00,3.50) node[right]{};
\draw[->,rgb color={255,77,0}] (-12.0,3.50) -- (7.00,2.44);
\draw[->,rgb color={255,107,0}] (-10.0,-2.33) -- (7.00,-2.33) node[right]{};
\draw[->,rgb color={255,77,0}] (-10.0,3.33) -- (7.00,3.33) node[right]{};
\draw[->,rgb color={255,77,0}] (-10.0,3.33) -- (7.00,3.33) node[right]{};
\draw[->,rgb color={255,107,0}] (-6.75,-2.25) -- (7.00,-2.25) node[right]{};
\draw[->,rgb color={255,107,0}] (-9.00,-2.25) -- (7.00,-2.25) node[right]{};
\draw[->,rgb color={255,77,0}] (-9.00,-2.00) -- (-27.0,-6.00);
\draw[->,rgb color={255,77,0}] (-9.00,-2.00) -- (-21.0,-6.00);
\draw[->,rgb color={255,77,0}] (-9.00,-2.00) -- (-21.0,-6.00);
\draw[->,rgb color={255,77,0}] (-9.00,-2.00) -- (-21.0,-6.00);
\draw[->,rgb color={255,77,0}] (-9.00,-2.00) -- (-9.00,-6.00);
\draw[->,rgb color={255,77,0}] (-9.00,-2.00) -- (7.00,-2.00) node[right]{};
\draw[->,rgb color={255,77,0}] (-9.00,-2.00) -- (7.00,-2.00) node[right]{};
\draw[->,rgb color={255,77,0}] (-9.00,-2.00) -- (7.00,-2.00) node[right]{};
\draw[->,rgb color={255,77,0}] (-9.00,-2.00) -- (7.00,-2.00) node[right]{};
\draw[->,rgb color={255,77,0}] (-9.00,-2.00) -- (7.00,-2.00) node[right]{};
\draw[->,rgb color={255,77,0}] (-9.00,-2.00) -- (7.00,-2.00) node[right]{};
\draw[->,rgb color={255,77,0}] (-6.00,-2.00) -- (7.00,-2.00) node[right]{};
\draw[->,rgb color={255,77,0}] (-6.00,-2.00) -- (7.00,-2.00) node[right]{};
\draw[->,rgb color={255,77,0}] (-9.00,-2.00) -- (7.00,-2.00) node[right]{};
\draw[->,rgb color={255,77,0}] (-9.00,-2.00) -- (3.00,-6.00);
\draw[->,rgb color={255,77,0}] (-9.00,-2.00) -- (7.00,-0.667);
\draw[->,rgb color={255,77,0}] (-9.00,-2.00) -- (7.00,-0.667);
\draw[->,rgb color={255,77,0}] (-9.00,-2.00) -- (7.00,-0.667);
\draw[->,rgb color={255,77,0}] (-9.00,-2.00) -- (7.00,-0.933);
\draw[->,rgb color={255,77,0}] (-9.00,-2.00) -- (7.00,-1.11);
\draw[->,rgb color={255,77,0}] (-9.00,-2.00) -- (7.00,-0.476);
\draw[->,rgb color={255,77,0}] (-9.00,3.00) -- (-27.0,7.00);
\draw[->,rgb color={255,77,0}] (-9.00,3.00) -- (-21.0,7.00);
\draw[->,rgb color={255,77,0}] (-9.00,3.00) -- (-21.0,7.00);
\draw[->,rgb color={255,77,0}] (-9.00,3.00) -- (-21.0,7.00);
\draw[->,rgb color={255,77,0}] (-9.00,3.00) -- (-9.00,7.00);
\draw[->,rgb color={255,77,0}] (-9.00,3.00) -- (7.00,3.00) node[right]{};
\draw[->,rgb color={255,77,0}] (-9.00,3.00) -- (7.00,3.00) node[right]{};
\draw[->,rgb color={255,77,0}] (-9.00,3.00) -- (7.00,3.00) node[right]{};
\draw[->,rgb color={255,77,0}] (-9.00,3.00) -- (7.00,3.00) node[right]{};
\draw[->,rgb color={255,77,0}] (-9.00,3.00) -- (7.00,3.00) node[right]{};
\draw[->,rgb color={255,77,0}] (-9.00,3.00) -- (7.00,3.00) node[right]{};
\draw[->,rgb color={255,77,0}] (-6.00,3.00) -- (7.00,3.00) node[right]{};
\draw[->,rgb color={255,77,0}] (-6.00,3.00) -- (7.00,3.00) node[right]{};
\draw[->,rgb color={255,77,0}] (-9.00,3.00) -- (7.00,3.00) node[right]{};
\draw[->,rgb color={255,77,0}] (-9.00,3.00) -- (3.00,7.00);
\draw[->,rgb color={255,77,0}] (-9.00,3.00) -- (7.00,1.67);
\draw[->,rgb color={255,77,0}] (-9.00,3.00) -- (7.00,1.67);
\draw[->,rgb color={255,77,0}] (-9.00,3.00) -- (7.00,1.67);
\draw[->,rgb color={255,77,0}] (-9.00,3.00) -- (7.00,1.93);
\draw[->,rgb color={255,77,0}] (-9.00,3.00) -- (7.00,2.11);
\draw[->,rgb color={255,77,0}] (-9.00,3.00) -- (7.00,1.48);
\draw[->,rgb color={255,77,0}] (-6.75,3.25) -- (7.00,3.25) node[right]{};
\draw[->,rgb color={255,77,0}] (-9.00,3.25) -- (7.00,3.25) node[right]{};
\draw[->,rgb color={255,77,0}] (-6.00,-2.00) -- (-6.00,-6.00);
\draw[->,rgb color={255,77,0}] (-6.00,-2.00) -- (7.00,-1.13);
\draw[->,rgb color={255,0,0}] (-6.00,-1.50) -- (-33.0,-6.00);
\draw[->,rgb color={255,0,0}] (-6.00,-1.50) -- (7.00,0.667);
\draw[->,rgb color={255,0,0}] (-6.00,2.50) -- (-33.0,7.00);
\draw[->,rgb color={255,0,0}] (-6.00,2.50) -- (7.00,0.333);
\draw[->,rgb color={255,77,0}] (-6.00,3.00) -- (-6.00,7.00);
\draw[->,rgb color={255,77,0}] (-6.00,3.00) -- (7.00,2.13);
\draw[->,rgb color={255,77,0}] (-3.00,-1.75) -- (7.00,-1.75) node[right]{};
\draw[->,rgb color={255,77,0}] (-5.25,-1.75) -- (7.00,-1.75) node[right]{};
\draw[->,rgb color={255,42,0}] (-3.00,2.75) -- (7.00,2.75) node[right]{};
\draw[->,rgb color={255,42,0}] (-5.25,2.75) -- (7.00,2.75) node[right]{};
\draw[->,rgb color={255,77,0}] (-5.00,-1.67) -- (7.00,-1.67) node[right]{};
\draw[->,rgb color={255,42,0}] (-5.00,2.67) -- (7.00,2.67) node[right]{};
\draw[->,rgb color={255,77,0}] (-4.50,-1.50) -- (7.00,-1.50) node[right]{};
\draw[->,rgb color={255,77,0}] (-4.50,-1.50) -- (7.00,-1.50) node[right]{};
\draw[->,rgb color={255,77,0}] (-3.00,-1.50) -- (7.00,-1.50) node[right]{};
\draw[->,rgb color={255,77,0}] (0.000,-1.50) -- (7.00,-1.50) node[right]{};
\draw[->,rgb color={255,77,0}] (-4.50,-1.50) -- (7.00,-1.50) node[right]{};
\draw[->,rgb color={255,77,0}] (-4.50,-1.50) -- (7.00,-5.33);
\draw[->,rgb color={255,77,0}] (-4.50,-1.50) -- (7.00,-0.733);
\draw[->,rgb color={255,42,0}] (-4.50,2.50) -- (7.00,2.50) node[right]{};
\draw[->,rgb color={255,42,0}] (-4.50,2.50) -- (7.00,2.50) node[right]{};
\draw[->,rgb color={255,42,0}] (-3.00,2.50) -- (7.00,2.50) node[right]{};
\draw[->,rgb color={255,42,0}] (0.000,2.50) -- (7.00,2.50) node[right]{};
\draw[->,rgb color={255,42,0}] (-4.50,2.50) -- (7.00,2.50) node[right]{};
\draw[->,rgb color={255,42,0}] (-4.50,2.50) -- (7.00,6.33);
\draw[->,rgb color={255,42,0}] (-4.50,2.50) -- (7.00,1.73);
\draw[->,rgb color={255,77,0}] (-3.00,-1.33) -- (7.00,-1.33) node[right]{};
\draw[->,rgb color={255,42,0}] (-3.00,-1.00) -- (-10.5,-6.00);
\draw[->,rgb color={255,42,0}] (-3.00,-1.00) -- (-3.00,-6.00);
\draw[->,rgb color={255,42,0}] (-3.00,-1.00) -- (-3.00,-6.00);
\draw[->,rgb color={255,42,0}] (-3.00,-1.00) -- (-3.00,-6.00);
\draw[->,rgb color={255,42,0}] (-3.00,-1.00) -- (7.00,-1.00) node[right]{};
\draw[->,rgb color={255,42,0}] (-3.00,-1.00) -- (7.00,-1.00) node[right]{};
\draw[->,rgb color={255,42,0}] (-3.00,-1.00) -- (7.00,-1.00) node[right]{};
\draw[->,rgb color={255,42,0}] (-3.00,-1.00) -- (7.00,-1.00) node[right]{};
\draw[->,rgb color={255,42,0}] (-3.00,-1.00) -- (7.00,-1.00) node[right]{};
\draw[->,rgb color={255,42,0}] (-3.00,-1.00) -- (7.00,-1.00) node[right]{};
\draw[->,rgb color={255,42,0}] (0.000,-1.00) -- (7.00,-1.00) node[right]{};
\draw[->,rgb color={255,42,0}] (0.000,-1.00) -- (7.00,-1.00) node[right]{};
\draw[->,rgb color={255,42,0}] (-3.00,-1.00) -- (7.00,-1.00) node[right]{};
\draw[->,rgb color={255,42,0}] (-3.00,-1.00) -- (7.00,-4.33);
\draw[->,rgb color={255,42,0}] (-3.00,-1.00) -- (7.00,-2.67);
\draw[->,rgb color={255,42,0}] (-3.00,-1.00) -- (7.00,0.111);
\draw[->,rgb color={255,42,0}] (-3.00,-1.00) -- (7.00,0.111);
\draw[->,rgb color={255,42,0}] (-3.00,-1.00) -- (7.00,0.111);
\draw[->,rgb color={255,42,0}] (-3.00,-1.00) -- (7.00,-0.167);
\draw[->,rgb color={255,42,0}] (-3.00,-1.00) -- (7.00,-0.333);
\draw[->,rgb color={255,42,0}] (-3.00,-1.00) -- (7.00,0.333);
\draw[->,rgb color={255,42,0}] (-3.00,2.00) -- (-10.5,7.00);
\draw[->,rgb color={255,42,0}] (-3.00,2.00) -- (-3.00,7.00);
\draw[->,rgb color={255,42,0}] (-3.00,2.00) -- (-3.00,7.00);
\draw[->,rgb color={255,42,0}] (-3.00,2.00) -- (-3.00,7.00);
\draw[->,rgb color={255,42,0}] (-3.00,2.00) -- (7.00,2.00) node[right]{};
\draw[->,rgb color={255,42,0}] (-3.00,2.00) -- (7.00,2.00) node[right]{};
\draw[->,rgb color={255,42,0}] (-3.00,2.00) -- (7.00,2.00) node[right]{};
\draw[->,rgb color={255,42,0}] (-3.00,2.00) -- (7.00,2.00) node[right]{};
\draw[->,rgb color={255,42,0}] (-3.00,2.00) -- (7.00,2.00) node[right]{};
\draw[->,rgb color={255,42,0}] (-3.00,2.00) -- (7.00,2.00) node[right]{};
\draw[->,rgb color={255,42,0}] (0.000,2.00) -- (7.00,2.00) node[right]{};
\draw[->,rgb color={255,42,0}] (0.000,2.00) -- (7.00,2.00) node[right]{};
\draw[->,rgb color={255,42,0}] (-3.00,2.00) -- (7.00,2.00) node[right]{};
\draw[->,rgb color={255,42,0}] (-3.00,2.00) -- (7.00,5.33);
\draw[->,rgb color={255,42,0}] (-3.00,2.00) -- (7.00,3.67);
\draw[->,rgb color={255,42,0}] (-3.00,2.00) -- (7.00,0.889);
\draw[->,rgb color={255,42,0}] (-3.00,2.00) -- (7.00,0.889);
\draw[->,rgb color={255,42,0}] (-3.00,2.00) -- (7.00,0.889);
\draw[->,rgb color={255,42,0}] (-3.00,2.00) -- (7.00,1.17);
\draw[->,rgb color={255,42,0}] (-3.00,2.00) -- (7.00,0.667);
\draw[->,rgb color={255,42,0}] (-3.00,2.00) -- (7.00,1.33);
\draw[->,rgb color={255,42,0}] (-3.00,2.33) -- (7.00,2.33) node[right]{};
\draw[->,rgb color={255,42,0}] (-3.00,2.33) -- (7.00,2.33) node[right]{};
\draw[->,rgb color={255,77,0}] (0.000,-1.25) -- (7.00,-1.25) node[right]{};
\draw[->,rgb color={255,77,0}] (-2.25,-1.25) -- (7.00,-1.25) node[right]{};
\draw[->,rgb color={255,42,0}] (0.000,2.25) -- (7.00,2.25) node[right]{};
\draw[->,rgb color={255,42,0}] (-2.25,2.25) -- (7.00,2.25) node[right]{};
\draw[->,rgb color={255,0,0}] (-1.50,-0.500) -- (-18.0,-6.00);
\draw[->,rgb color={255,0,0}] (-1.50,-0.500) -- (7.00,2.33);
\draw[->,rgb color={255,0,0}] (-1.50,1.50) -- (-18.0,7.00);
\draw[->,rgb color={255,0,0}] (-1.50,1.50) -- (7.00,-1.33);
\draw[->,rgb color={255,42,0}] (0.000,-1.00) -- (7.00,-3.33);
\draw[->,rgb color={255,42,0}] (0.000,-1.00) -- (7.00,-0.417);
\draw[->,rgb color={255,42,0}] (2.25,-0.750) -- (7.00,-0.750) node[right]{};
\draw[->,rgb color={255,42,0}] (0.000,-0.750) -- (7.00,-0.750) node[right]{};
\draw[->,rgb color={255,42,0}] (0.000,-0.667) -- (7.00,-0.667) node[right]{};
\draw[->,rgb color={255,42,0}] (0.000,-0.500) -- (7.00,-0.500) node[right]{};
\draw[->,rgb color={255,42,0}] (0.000,-0.500) -- (7.00,-0.500) node[right]{};
\draw[->,rgb color={255,42,0}] (1.50,-0.500) -- (7.00,-0.500) node[right]{};
\draw[->,rgb color={255,42,0}] (4.50,-0.500) -- (7.00,-0.500) node[right]{};
\draw[->,rgb color={255,42,0}] (0.000,-0.500) -- (7.00,-0.500) node[right]{};
\draw[->,rgb color={255,42,0}] (0.000,-0.500) -- (7.00,-1.67);
\draw[->,rgb color={255,42,0}] (0.000,-0.500) -- (7.00,0.0833);
\draw[->,rgb color={255,0,0}] (0.000,0.000) -- (7.00,0.000) node[right]{};
\draw[->,rgb color={255,0,0}] (0.000,0.000) -- (7.00,0.000) node[right]{};
\draw[->,rgb color={255,0,0}] (0.000,0.000) -- (7.00,0.000) node[right]{};
\draw[->,rgb color={255,0,0}] (0.000,0.000) -- (7.00,0.000) node[right]{};
\draw[->,rgb color={255,0,0}] (0.000,0.000) -- (7.00,0.000) node[right]{};
\draw[->,rgb color={255,0,0}] (0.000,0.000) -- (7.00,0.000) node[right]{};
\draw[->,rgb color={255,0,0}] (3.00,0.000) -- (7.00,0.000) node[right]{};
\draw[->,rgb color={255,0,0}] (3.00,0.000) -- (7.00,0.000) node[right]{};
\draw[->,rgb color={255,0,0}] (0.000,0.000) -- (7.00,0.000) node[right]{};
\draw[->,rgb color={255,0,0}] (0.000,0.000) -- (7.00,-4.67);
\draw[->,rgb color={255,0,0}] (0.000,0.000) -- (7.00,-2.33);
\draw[->,rgb color={255,0,0}] (0.000,0.000) -- (7.00,-2.33);
\draw[->,rgb color={255,0,0}] (0.000,0.000) -- (7.00,-2.33);
\draw[->,rgb color={255,0,0}] (0.000,0.000) -- (7.00,-1.17);
\draw[->,rgb color={255,0,0}] (0.000,0.000) -- (7.00,1.17);
\draw[->,rgb color={255,0,0}] (0.000,0.000) -- (7.00,1.17);
\draw[->,rgb color={255,0,0}] (0.000,0.000) -- (7.00,1.17);
\draw[->,rgb color={255,0,0}] (0.000,0.000) -- (7.00,-0.778);
\draw[->,rgb color={255,0,0}] (0.000,0.000) -- (7.00,0.778);
\draw[->,rgb color={255,0,0}] (0.000,0.000) -- (7.00,1.56);
\draw[->,rgb color={255,0,0}] (0.000,0.000) -- (7.00,0.583);
\draw[->,rgb color={255,0,0}] (0.000,0.500) -- (0.000,-6.00);
\draw[->,rgb color={255,0,0}] (0.000,0.500) -- (0.000,7.00);
\draw[->,rgb color={255,0,0}] (0.000,1.00) -- (7.00,1.00) node[right]{};
\draw[->,rgb color={255,0,0}] (0.000,1.00) -- (7.00,1.00) node[right]{};
\draw[->,rgb color={255,0,0}] (0.000,1.00) -- (7.00,1.00) node[right]{};
\draw[->,rgb color={255,0,0}] (0.000,1.00) -- (7.00,1.00) node[right]{};
\draw[->,rgb color={255,0,0}] (0.000,1.00) -- (7.00,1.00) node[right]{};
\draw[->,rgb color={255,0,0}] (0.000,1.00) -- (7.00,1.00) node[right]{};
\draw[->,rgb color={255,0,0}] (3.00,1.00) -- (7.00,1.00) node[right]{};
\draw[->,rgb color={255,0,0}] (3.00,1.00) -- (7.00,1.00) node[right]{};
\draw[->,rgb color={255,0,0}] (0.000,1.00) -- (7.00,1.00) node[right]{};
\draw[->,rgb color={255,0,0}] (0.000,1.00) -- (7.00,3.33);
\draw[->,rgb color={255,0,0}] (0.000,1.00) -- (7.00,3.33);
\draw[->,rgb color={255,0,0}] (0.000,1.00) -- (7.00,3.33);
\draw[->,rgb color={255,0,0}] (0.000,1.00) -- (7.00,5.67);
\draw[->,rgb color={255,0,0}] (0.000,1.00) -- (7.00,-0.167);
\draw[->,rgb color={255,0,0}] (0.000,1.00) -- (7.00,-0.167);
\draw[->,rgb color={255,0,0}] (0.000,1.00) -- (7.00,-0.167);
\draw[->,rgb color={255,0,0}] (0.000,1.00) -- (7.00,2.17);
\draw[->,rgb color={255,0,0}] (0.000,1.00) -- (7.00,-0.556);
\draw[->,rgb color={255,0,0}] (0.000,1.00) -- (7.00,0.222);
\draw[->,rgb color={255,0,0}] (0.000,1.00) -- (7.00,1.78);
\draw[->,rgb color={255,0,0}] (0.000,1.00) -- (7.00,0.417);
\draw[->,rgb color={255,0,0}] (0.000,1.50) -- (7.00,1.50) node[right]{};
\draw[->,rgb color={255,0,0}] (0.000,1.50) -- (7.00,1.50) node[right]{};
\draw[->,rgb color={255,0,0}] (1.50,1.50) -- (7.00,1.50) node[right]{};
\draw[->,rgb color={255,0,0}] (4.50,1.50) -- (7.00,1.50) node[right]{};
\draw[->,rgb color={255,0,0}] (0.000,1.50) -- (7.00,1.50) node[right]{};
\draw[->,rgb color={255,0,0}] (0.000,1.50) -- (7.00,2.67);
\draw[->,rgb color={255,0,0}] (0.000,1.50) -- (7.00,0.917);
\draw[->,rgb color={255,0,0}] (0.000,1.67) -- (7.00,1.67) node[right]{};
\draw[->,rgb color={255,0,0}] (2.25,1.75) -- (7.00,1.75) node[right]{};
\draw[->,rgb color={255,0,0}] (0.000,1.75) -- (7.00,1.75) node[right]{};
\draw[->,rgb color={255,42,0}] (0.000,2.00) -- (7.00,4.33);
\draw[->,rgb color={255,42,0}] (0.000,2.00) -- (7.00,1.42);
\draw[->,rgb color={255,42,0}] (1.00,-0.333) -- (7.00,-0.333) node[right]{};
\draw[->,rgb color={255,0,0}] (1.00,1.33) -- (7.00,1.33) node[right]{};
\draw[->,rgb color={255,0,0}] (1.00,1.33) -- (7.00,1.33) node[right]{};
\draw[->,rgb color={255,42,0}] (3.75,-0.250) -- (7.00,-0.250) node[right]{};
\draw[->,rgb color={255,42,0}] (1.50,-0.250) -- (7.00,-0.250) node[right]{};
\draw[->,rgb color={255,0,0}] (1.50,0.500) -- (7.00,0.500) node[right]{};
\draw[->,rgb color={255,0,0}] (1.50,0.500) -- (7.00,0.500) node[right]{};
\draw[->,rgb color={255,0,0}] (3.00,0.500) -- (7.00,0.500) node[right]{};
\draw[->,rgb color={255,0,0}] (6.00,0.500) -- (7.00,0.500) node[right]{};
\draw[->,rgb color={255,0,0}] (1.50,0.500) -- (7.00,0.500) node[right]{};
\draw[->,rgb color={255,0,0}] (1.50,0.500) -- (7.00,-0.111);
\draw[->,rgb color={255,0,0}] (1.50,0.500) -- (7.00,1.11);
\draw[->,rgb color={255,0,0}] (3.75,1.25) -- (7.00,1.25) node[right]{};
\draw[->,rgb color={255,0,0}] (1.50,1.25) -- (7.00,1.25) node[right]{};
\draw[->,rgb color={255,0,0}] (2.00,0.333) -- (7.00,0.333) node[right]{};
\draw[->,rgb color={255,0,0}] (2.00,0.333) -- (7.00,0.333) node[right]{};
\draw[->,rgb color={255,0,0}] (2.00,0.667) -- (7.00,0.667) node[right]{};
\draw[->,rgb color={255,0,0}] (4.50,0.250) -- (7.00,0.250) node[right]{};
\draw[->,rgb color={255,0,0}] (2.25,0.250) -- (7.00,0.250) node[right]{};
\draw[->,rgb color={255,0,0}] (4.50,0.750) -- (7.00,0.750) node[right]{};
\draw[->,rgb color={255,0,0}] (2.25,0.750) -- (7.00,0.750) node[right]{};
\draw[->,rgb color={255,0,0}] (3.00,0.000) -- (7.00,-0.667);
\draw[->,rgb color={255,0,0}] (3.00,0.000) -- (7.00,0.444);
\draw[->,rgb color={255,0,0}] (3.00,1.00) -- (7.00,1.67);
\draw[->,rgb color={255,0,0}] (3.00,1.00) -- (7.00,0.556);
\end{tikzpicture}
}
}
\caption{First steps of the scattering diagram $S(\fD^\iin_{u,v})$. Figure due to Tim Gr\"afnitz  \cite{gabele2019tropical}.} 
\label{fig:scattering}
\end{figure}

\subsection{Structure of the proof of the main result}
\label{section_intro_proof}

The scattering diagram $S(\fD^\iin_{u,v})$ is defined starting with a simple initial scattering diagram $\fD^\iin_{u,v}$ and then taking its consistent completion. The proof of Theorem \ref{main_thm}, that is, of the equality $\fD^{\PP^2}_{u,v}=S(\fD^\iin_{u,v})$, has correspondingly two parts.
We first show that $\fD^{\PP^2}_{u,v}$ is consistent, and then that $\fD^{\PP^2}_{u,v}$ has the same initial data as
$S(\fD^\iin_{u,v})$. Then, the result follows from the uniqueness of the consistent completion.

\subsubsection{Consistency from wall-crossing formula}
The most non-trivial property about 
the scattering diagram $\fD^{\PP^2}_{u,v}$ that we have to prove is its consistency. This is equivalent to a wall-crossing formula for the Hodge numbers of intersection cohomology of moduli spaces of semistable objects upon variation of the stability condition. 

The key point is the relation between intersection cohomology and Donaldson-Thomas invariants, which goes back to Meinhardt-Reineke \cite{meinhardt2017donaldson}.
This relation has been extended to Gieseker semistable sheaves on surfaces with negative canonical line bundles in 
\cite{MR3874687} and to some abstract 
framework for categories of homological dimension one in \cite{meinhardt2015donaldson}.

The crucial input that will enable us to use this class of techniques is a result of Li-Zhao \cite{MR3936077}:
for every $\sigma \in U$ and for every class $\gamma$, the stack of $\sigma$-semistable objects of class $\gamma$ is smooth. More precisely, 
the $\Ext^2$-group between two $\sigma$-semistable sheaves of class $\gamma$ vanishes if
$\gamma$ is not the class of a zero dimensional sheaf. This vanishing is well-known and obvious for Gieseker semistable sheaves, but is not obvious at all if $\sigma$ is a general stability condition in $U$, so we are really using the non-trivial content of \cite{MR3936077}. Once we know this vanishing, we can apply the machinery described in \cite{meinhardt2015donaldson}
to get that indeed Hodge numbers of intersection cohomology of moduli spaces of semistable objects satisfy the wall-crossing formula of 
Kontsevich-Soibelman
\cite{kontsevich2008stability} 
and Joyce-Song \cite{MR2951762}.

Alternatively, we could consider the derived category $\D^b_0(K_{\PP^2})$ of coherent sheaves on the total space of the canonical line bundle $K_{\PP^2}=\cO_{\PP^2}(-3)$ of $\PP^2$ which are set-theoretically supported on the zero-section $\PP^2$. As $\D^b_0(K_{\PP^2})$ is a Calabi-Yau triangulated category of dimension $3$, it is a natural place for Donaldson-Thomas theory and the wall-crossing formula. 

In fact, $U$
can also be viewed  naturally in the space $\Stab(K_{\PP^2})$ of Bridgeland stability conditions on $\D^b_0(K_{\PP^2})$. We should note that the spaces $\Stab(\PP^2)$ and 
$\Stab(K_{\PP^2})$ behave globally in very different ways, and that it is only 
in restriction to $U$ that we can identify them.
Adapting the proof of 
\cite[Proposition 3.1]{MR3861701} given for Gieseker semistable sheaves, it
follows from the vanishing result of Li-Zhao \cite{MR3936077} that for every $\sigma \in U$,
$\sigma$-semistable objects in $\D^b_0(K_{\PP^2})$ have cohomology sheaves scheme-theoretically supported on $\PP^2$, and so coincide with 
$\sigma$-semistable objects in 
$\D^b(\PP^2)$. 
Therefore, the Hodge numbers of
intersection cohomology of moduli spaces of 
$\sigma$-semistable objects in 
$\D^b(\PP^2)$ are really 
(refined) Donaldson-Thomas invariants of 
$K_{\PP^2}$.

The conclusion is that we think conceptually about $K_{\PP^2}$ but we work technically on $\PP^2$. 
Refined Donaldson-Thomas theory in the context of general 
Calabi-Yau 3-folds requires a discussion of orientation data, and we refer to \cite{shi2018orientation} for
a discussion precisely in the case of $K_{\PP^2}$, 
but thanks to the smoothness of the moduli stack of semistable objects, 
we don't have to go into these technical aspects of the general story.

\subsubsection{Initial data}
The combinatorial definition of the scattering diagram $S(\fD^{\iin}_{u,v})$
involves very simple initial data, and then a 
completely algorithmic completion which guarantees its consistency. 
Once we know that the scattering diagram 
$\fD^{\PP^2}_{u,v}$ is consistent, it is enough to show that it has the same initial data as $S(\fD^{\iin}_{u,v})$ in order to conclude the equality
$\fD^{\PP^2}_{u,v}=S(\fD^{\iin}_{u,v})$ which is the statement of Theorem \ref{main_thm}.

We will show that the initial rays
$\fd_{n,\ell}^-$ and $\fd_{n,\ell}^+$ of $S(\fD^{\iin}_{u,v})$ correspond respectively from the point of view of 
$\fD^{\PP^2}_{u,v}$ to the line bundles $\cO(n)$ and their shift
$\cO(n)[1]$, for $n \in \Z$. They come out from 
the points $s_n$ in the boundary parabola of $U$ where the central charge of $\cO(n)$ vanishes.
In order to prove that 
$\fD^{\PP^2}_{u,v}$ has the same initial data as
$S(\fD^{\iin}_{u,v})$, it is enough to show that, in the region in $U$ near the boundary parabola, 
$\fD^{\PP^2}_{u,v}$ consists only of the rays associated to $\cO(n)$ and 
$\cO(n)[1]$. 

This comes from the fact that this region can be decomposed into triangles
in correspondence with exceptional collections of objects in $\D^b(\PP^2)$.  A stability condition 
in the interior of such a triangle is equivalent to a
stability condition with quiver heart.
Using the fact that the class of an object in a quiver heart 
is a linear combination with nonnegative coefficients of the classes of the three simple quiver representations, 
we can show that no object has $\Rea Z=0$ inside each triangle, 
and 
so that the scattering diagram $\fD^{\PP^2}_{u,v}$,
defined in terms of the condition 
$\Rea Z=0$, is trivial in the interior of all the triangles.

In order to have such a simple picture, 
with initial data in correspondence with line bundles and determining everything 
else by wall-crossing, 
it is essential to restrict our attention to semistable objects with a fixed phase of the central charge, such as the semistable objects with $\Rea Z_\gamma^\sigma=0$ entering in the definition of $\fD^{\PP^2}_{u,v}$. 
Indeed, there is no point in $U$ where the set of all semistable objects is really simple, 
and so the naive idea to find a point in $U$ where the set of all semistable objects is simple, 
and then to move to a more complicated point by wall-crossing, does not work. By considering the scattering diagram 
$\fD^{\PP^2}_{u,v}$, we are only looking at semistable objects with $\Rea Z_\gamma^\sigma=0$, and according to the previous paragraph, there is a region in $U$ where the set of such semistable objects is simple.

\subsection{Applications to moduli spaces of Gieseker semistable sheaves}
\label{section_intro_gieseker}

For every $\gamma=(r,d,\chi)$, let 
$M_\gamma$ be the moduli space of S-equivalence classes of Gieseker semistable sheaves of class $\gamma$ on $\PP^2$. A reference for the classical topic of moduli spaces of Gieseker semistable sheaves is the book \cite{huybrechts2010geometry}. For every 
$\gamma \in \Gamma$, $M_\gamma$ is a projective variety, singular in general.
Nevertheless, the intersection cohomology groups $IH^{k}(M_\gamma, \Q)$
behave as well as the cohomology groups of a smooth projective variety, and in particular carry naturally a pure Hodge structure of weight $k$. We denote by $Ih^{p,q}(M_\gamma)$
the corresponding Hodge numbers and by $Ib_j(M_\gamma)$ the corresponding Betti numbers.

For every $\gamma \in \Gamma$, we have 
$M_\gamma^\sigma = M_\gamma$ for $\sigma=(x,y) \in U$ with $y$ large enough. 
Theorem \ref{main_thm} gives an algorithmic way to compute the intersection Hodge numbers $Ih^{p,q}(M_\gamma^\sigma)$, and so the intersection Hodge numbers $Ih^{p,q}(M_\gamma)$. Using this algorithm, we prove the following result.

\begin{thm}[=Theorem \ref{thm_hodge}]\label{thm_hodge_intro}
For every $\gamma \in \Gamma$, we have 
$Ih^{p,q}(M_\gamma)=0$ if $p \neq q$.
\end{thm}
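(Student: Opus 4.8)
The plan is to propagate the Hodge-Tate property from the elementary initial data of the scattering diagram $S(\fD^\iin_{u,v})$ along the algorithmic completion process $\fD \mapsto S(\fD)$, and then to transfer this to $\fD^{\PP^2}_{u,v}$ via Theorem \ref{main_thm}. Concretely, I would argue that the relevant integrality/Hodge-Tate structure is preserved at each scattering step: the functions $H_{n,\ell}^{\pm}$ defining $\fD^\iin_{u,v}$ are, up to the universal denominator $(uv)^{\ell/2}-(uv)^{-\ell/2}$, elements of $\Z[(uv)^{\pm 1/2}]$ — in particular they involve only the combination $uv$ and not $u$ and $v$ separately. I would introduce the sub-$\Q$-algebra (or sub-$\Z$-module) of $\Q(u^{\pm 1/2}, v^{\pm 1/2})$ consisting of those functions expressible in terms of $q \coloneq (uv)^{1/2}$ alone, observe that the Lie bracket of $\fg_{u,v}$ maps this sub-Lie-algebra to itself (since the structure constants $(-1)^{\langle m,m'\rangle}((uv)^{\langle m,m'\rangle/2} - (uv)^{-\langle m,m'\rangle/2})$ again depend only on $uv$), and conclude that the automorphisms $\Phi_\fd$ and hence all the rays produced by the scattering algorithm stay inside this sub-Lie-algebra. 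Thus every ray $(R_{\gamma,j}, H_{\gamma,j})$ of $\fD^{\PP^2}_{u,v} = S(\fD^\iin_{u,v})$ has $H_{\gamma,j}$ depending only on $uv$.

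The next step is to extract the Hodge-numbers statement from this structural fact. For a primitive class $\gamma$ (take $\ell = 1$, $\gamma' = \gamma$) the coefficient of $z^{m_\gamma}$ in $H_\gamma^\sigma$ is $-Ih_\gamma^\sigma(u^{1/2}, v^{1/2})/((uv)^{1/2} - (uv)^{-1/2})$ plus contributions from proper sub-multiples $\gamma = \ell\gamma'$ with $\ell \geqslant 2$; by induction on the divisibility of $\gamma$ (using that the sub-multiple terms are already known to depend only on $uv$, since their symmetrized polynomials $Ih_{\gamma'}^\sigma$ are evaluated at $u^{\ell/2}, v^{\ell/2}$, which lies in $\Z[(uv)^{\pm 1/2}]$ once $Ih_{\gamma'}^\sigma$ does) one deduces that $Ih_\gamma^\sigma(u^{1/2}, v^{1/2})$ itself depends only on $uv$. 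Since $Ih_\gamma^\sigma(u^{1/2}, v^{1/2}) = (-(uv)^{1/2})^{-\dim M_\gamma^\sigma}\sum_{p,q}(-1)^{p+q} Ih^{p,q}(M_\gamma^\sigma) u^p v^q$, and the $Ih^{p,q}$ are nonnegative integers, the monomial $u^p v^q$ can only be a function of $uv$ if $p = q$; comparing coefficients forces $Ih^{p,q}(M_\gamma^\sigma) = 0$ for $p \neq q$. Finally, specializing to $\sigma = (x,y) \in U$ with $y$ large gives $M_\gamma^\sigma = M_\gamma$, as recorded just before the statement, which yields $Ih^{p,q}(M_\gamma) = 0$ for $p \neq q$.

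The main obstacle I anticipate is the bookkeeping around convergence and completions: the scattering algorithm is an infinite process, and one must check that the ``depends only on $uv$'' property is genuinely stable under the relevant completions of $\fg_{u,v}$ (the ones glossed over in the introduction and treated carefully in \cref{section_scattering_diagram}), i.e.\ that passing to the limit does not introduce genuine $u$-versus-$v$ asymmetry. This should follow because each finite truncation lives in the sub-Lie-algebra and the completion is just a projective limit of such truncations, but it needs the completions to be defined so that the sub-Lie-algebra is a closed (topologically complete) subobject. A secondary point is that one should confirm the $\ell \geqslant 2$ terms in $H_\gamma^\sigma$ really do not spoil the induction — this is where one uses that the symmetrized intersection Hodge polynomial has coefficients in $\Z$ and that substitution $u^{1/2} \mapsto u^{\ell/2}$, $v^{1/2} \mapsto v^{\ell/2}$ sends functions of $uv$ to functions of $uv$. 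I do not expect either point to be serious, so the proof is essentially a monotonicity/closure argument riding on top of Theorem \ref{main_thm}.
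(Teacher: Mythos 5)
Your argument is correct and essentially identical to the paper's: the paper also derives the Hodge--Tate property from Theorem~\ref{main_thm} together with the observation that every $H_\fd$ in $S(\fD^\iin_{u,v})$ lies in $\Q(q^{\pm 1/2})$ with $q^{1/2}=(uv)^{1/2}$ (Corollary~\ref{cor_q_rationality}), which the paper obtains by running the scattering construction for the $q$-specialized Lie algebra $\fg_{q^-}$ and invoking uniqueness of the consistent completion. Your sub-Lie-algebra closure argument is just a restatement of that uniqueness argument, and your worry about completions is already absorbed into Proposition~\ref{prop_consistent_completion}.
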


If $\gamma$ is primitive
then semistability coincides with stability,
$M_\gamma$ is smooth, intersection cohomology coincides with ordinary cohomology, and in this case Theorem \ref{thm_hodge_intro} is classical \cite{MR1228610, MR1351502, MR2304330}. In general, under the extra assumption $r>0$, a different proof of Theorem \ref{thm_hodge_intro} could be extracted from \cite{MR3874687}.

The fact that the lines bundles $\cO(n)$ for 
$n \in \Z$ generate $\D^b(\PP^2)$ goes back to Beilinson \cite{MR509388}. The scattering diagram 
$\fD^{\PP^2}_{u,v}$ gives an explicit way, at the level of intersection cohomology of
moduli spaces, to reconstruct Gieseker semistable sheaves from line bundles 
by successive exact triangles in the derived category. 
In particular, we obtain a decomposition indexed by trees of 
the intersection cohomology of moduli spaces of Gieseker semistable sheaves, 
which seems to be new. We refer to
\cref{section_tree_decomposition} for details.

We also prove an analogue of Theorem \ref{thm_hodge_intro} involving the real algebraic geometry of the moduli spaces 
$M_\gamma$. We refer to \cref{section_real_moduli} for details.

\begin{thm} \label{thm_real_moduli_intro}
For every $\gamma \in \Gamma$, and for every $0 \leqslant p \leqslant \dim M_\gamma$, the complex conjugation in
$\Gal(\C/\R)$ acts as $(-1)^p$
on $IH^{2p}(M_\gamma,\Q)$.
\end{thm}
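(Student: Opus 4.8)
The plan is to use that $M_\gamma$ comes with a canonical model over $\Q$ --- since $\PP^2$, Gieseker stability and the GIT construction of \cite{huybrechts2010geometry} are all defined over $\Q$ --- so that the complex conjugation $F_\infty\in\Gal(\C/\R)$ acts on $M_\gamma(\C)$ and hence on each $IH^{2p}(M_\gamma,\Q)$ by an involution. By Theorem \ref{thm_hodge_intro} the Hodge structure on $IH^{2p}(M_\gamma)$ is pure of type $(p,p)$, so over $\C$ the involution $F_\infty$ is diagonalisable with eigenvalues $\pm 1$, and the assertion to prove is that on $IH^{2p}$ only the eigenvalue $(-1)^p$ occurs, equivalently that $\bigl(IH^{2p}(M_\gamma),F_\infty\bigr)\cong\Q(-p)^{\oplus Ib_{2p}(M_\gamma)}$ as a real Hodge structure. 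It should be stressed that this does not follow from Hodge-Tateness alone: already the Weil restriction $\mathrm{Res}_{\C/\R}\PP^1$ has $H^2$ of Tate type but $F_\infty$ with both eigenvalues $\pm1$, so one genuinely has to exclude such ``Artin-twisted'' summands.

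To exclude them I would use the tree decomposition of $IH^{\ast}(M_\gamma)$ built from the scattering diagram $\fD^{\PP^2}_{u,v}$ in \cref{section_tree_decomposition}, which expresses $IH^{\ast}(M_\gamma)$ as a direct sum, indexed by decorated trees, of tensor products of (i) cohomologies of projective spaces and Grassmannians, arising from the $\Ext^1$-spaces controlling extensions along Bridgeland walls, and (ii) the base case, namely the one-point moduli spaces of the line bundles $\cO(n)$ making up the initial data near the boundary parabola of $U$ (see \cref{section_intro_proof}). The point is that every object entering this construction --- the moduli stacks, their strata, the universal families, the Harder--Narasimhan and Jordan--H\"older correspondences, and the walls themselves --- is defined over $\Q$, and the decomposition theorem and the Hodge-module formalism used to pass to intersection cohomology are defined over $\R$; hence the whole tree decomposition is $F_\infty$-equivariant. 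It therefore suffices to verify the statement on the building blocks and to check that the assembling operations --- K\"unneth products, direct sums, and the Tate twists recording cohomological degree shifts --- preserve it.

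For a point the statement is empty. For $\PP^n_{\R}$ (and likewise for Grassmannians), $H^{2p}(\PP^n(\C),\Q)$ is spanned by the fundamental class of a codimension-$p$ linear subspace $L$ defined over $\Q$; since $F_\infty$ carries $L(\C)$ to itself and acts antilinearly on its rank-$p$ complex normal bundle, it multiplies the corresponding Thom class by $(-1)^p$, so $F_\infty^{\ast}[L]=(-1)^p[L]$. More generally, the class of any codimension-$p$ subvariety of a smooth projective $\R$-variety is an $F_\infty$-eigenvector of eigenvalue $(-1)^p$, so the property passes through the decomposition as soon as each building block has its even cohomology spanned by such classes, which is clear for projective spaces and Grassmannians. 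Finally, if $A$ and $B$ both satisfy $F_\infty=(-1)^p$ in degree $2p$ then so does $A\times B$, because $H^{2p}(A\times B)=\bigoplus_{i+j=p}H^{2i}(A)\otimes H^{2j}(B)$ and $(-1)^i(-1)^j=(-1)^{i+j}$; direct sums and a Tate twist by $\Q(-k)$, which shifts degree by $2k$ and multiplies $F_\infty$ by $(-1)^k$, are compatible in the same way. Combining these observations with the $F_\infty$-equivariance of the tree decomposition, whose leaves are the one-point moduli spaces of the $\cO(n)$, yields $F_\infty=(-1)^p$ on $IH^{2p}(M_\gamma)$.

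The main obstacle is the middle step: one must make the tree decomposition genuinely $F_\infty$-equivariant, i.e.\ realise all the moduli spaces, strata and $\Ext^1$-bundles occurring in it over $\R$ and check that no twist by the quadratic character of $\C/\R$ is introduced at any wall-crossing --- the $\mathrm{Res}_{\C/\R}\PP^1$ example shows this is precisely where a careless argument would break down. Since the groups appearing in the quiver/GIT presentations are connected and split over $\Q$ and the walls and universal objects are defined over $\Q$, there is no room for such a twist; one must also extend complex-conjugation functoriality from the smooth strata to the intersection cohomology of the (generally singular) $M_\gamma$ through the decomposition theorem, which is routine but needs to be spelled out. The resulting real-algebraic properties of $M_\gamma$ are then developed in \cref{section_real_moduli}.
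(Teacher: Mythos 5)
Your overall strategy—propagating the $(-1)^p$ statement from the initial data (one-point moduli of the $\cO(n)$) through the scattering diagram—is the right one, and your observation that Hodge--Tateness alone is insufficient (the $\mathrm{Res}_{\C/\R}\PP^1$ example) is a genuine subtlety worth flagging. However, the specific mechanism you propose has a gap, and it is a different mechanism from the paper's.

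You want to ``use the tree decomposition of $IH^{\ast}(M_\gamma)$ \dots which expresses $IH^{\ast}(M_\gamma)$ as a direct sum, indexed by decorated trees, of tensor products of \dots cohomologies of projective spaces and Grassmannians.'' The paper does not establish any such direct sum decomposition of $IH^{\ast}$, and in fact explicitly disclaims it: in \cref{section_tree_decomposition} the tree decomposition $P(M_\gamma)=\sum_{T}P_T(M_\gamma)$ is a decomposition of the intersection Poincar\'e \emph{polynomial} obtained from the Kontsevich--Soibelman admissible-series formalism, the nonnegativity of the coefficients of $P_T$ is only \emph{conjectured}, and the paper writes ``\emph{even if our result holds only at the numerical level of Betti numbers}, we can \emph{think} of the various terms indexed by $T$ as coming from various locally closed strata.'' So the geometric stratification and the ``tensor products of Grassmannian cohomologies'' you want to feed $F_\infty$-equivariance through are, at the level this paper reaches, a heuristic, not a theorem. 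Moreover, the actual building block that the wall-crossing produces is not a Grassmannian: by Lemma \ref{lem_wcf} and Proposition \ref{prop_dt_ic} the extra factors are pure powers of $\A^1$ (i.e.\ Tate twists $\Q(-1)$) and, on the stacky side, $B\C^*$; no Grassmannians appear in the formulas defining $S(\fD^\iin_{u,v})$. You yourself identify all of this as ``the main obstacle,'' and I agree that as written the argument does not close.

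The paper's proof of Theorem \ref{thm_real_moduli} sidesteps this obstacle entirely by never geometrizing. It notes that \emph{every} equality used in the proof of Theorem \ref{thm_main_precise} — the motivic Hall algebra identities, Lemma \ref{lem_wcf}, Meinhardt's relation (Proposition \ref{prop_dt_ic}), and the scattering formulas themselves — can be lifted from $\Z[u^{\pm\frac12},v^{\pm\frac12}]$ to the Grothendieck group of mixed Hodge structures equipped with a $\Gal(\C/\R)$-action, simply because all the underlying constructions are defined over $\R$ (indeed over $\Q$). Under this lift, the single variable $uv$ is replaced by the class $[H^2_c(\A^1,\Q)]$, on which $F_\infty$ acts as $-1$. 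Since the initial rays of $\fD^\iin_{u,v}$ contribute trivially (points), and every formula is a polynomial in $[H^2_c(\A^1,\Q)]$ — this is exactly the content of Corollary \ref{cor_q_rationality}, which in the Grothendieck-group refinement becomes the statement that $[IH^{2p}(M_\gamma)]$ is an integer multiple of $[\Q(-p)]$, with no Artin twist — purity and semisimplicity of the Hodge structure then upgrade the Grothendieck-group identity to an actual isomorphism, giving $F_\infty=(-1)^p$ on $IH^{2p}$. This is the same ``initial data trivial $+$ propagates by wall-crossing'' logic as in the proof of Theorem \ref{thm_hodge}, carried out in a refined coefficient ring rather than in a geometric stratification. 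If you want to repair your argument, replace the appeal to the tree decomposition by this Grothendieck-group lift: it requires only the routine check that the Hall-algebra and Donaldson--Thomas machinery is $\Gal(\C/\R)$-equivariant, which is the part of your sketch (``every object entering this construction is defined over $\Q$'') that is actually sound.
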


Our final results on moduli spaces of Gieseker semistable sheaves concern sheaves supported on curves, that is with $\gamma=(0,d,\chi)$ and $d \geqslant1$. The intersection Betti numbers $Ib_j(M_{(0,d,\chi)})$ are refined Donaldson-Thomas invariants for one-dimensional sheaves on $K_{\PP^2}$, and so can be viewed as refined genus-$0$
Gopakumar-Vafa invariants of $K_{\PP^2}$ (see \cite{bousseau2019takahashi} for details). 
Tensor product with $\cO(1)$ and Serre duality induce isomorphisms 
$M_{(0,d,\chi)} \simeq M_{(0,d,\chi+d)}$ and $M_{(0,d,\chi)} \simeq M_{(0,d,-\chi)}$.
On the other hand, if $d \geqslant3$ and $\chi' \neq \pm \chi \mod d$, then the algebraic varieties $M_{(0,d,\chi)}$ and $M_{(0,d,\chi')}$ are not isomorphic by \cite[Theorem 8.1]{woolf2013nef}. This makes the following conjecture particularly non-trivial.

\begin{conj}\footnote{Note added in the final version (08/2021): this conjecture has now been proved by Maulik and Shen \cite{MS20}.} \label{conj_chi_indep}
For every fixed $d \geqslant1$, the intersection Betti numbers $Ib_j(M_{d,\chi})$ do not depend on $\chi$.
\end{conj}

It is a general conjecture due to Joyce-Song \cite[Conjecture 6.20]{MR2951762}
(see also \cite[Conjecture 6.3]{MR2892766}) that (unrefined) Donaldson-Thomas invariants for one-dimensional sheaves on Calabi-Yau 3-folds should be $\chi$-independent.
This conjecture is known for $K_{\PP^2}$ (see \cite[Appendix A]{MR3861701}
and \cite{bousseau2019takahashi}). Conjecture \ref{conj_chi_indep} is the refined version of the Joyce-Song conjecture. 

Using Theorem \ref{main_thm} as an essential ingredient, we will prove in \cite{bousseau2019takahashi} a special but already quite non-trivial case of 
Conjecture \ref{conj_chi_indep}.

\begin{thm}(\cite[Theorem 0.5.2]{bousseau2019takahashi}) \label{thm_chi_indep_gcd}
For every fixed $d \geqslant1$, we have $Ib_j(M_{(0,d,\chi)})=Ib_j(M_{(0,d,\chi')})$
as long as $\gcd(d,\chi)=\gcd(d,\chi')$.
\end{thm}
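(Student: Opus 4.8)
The plan is to use Theorem~\ref{main_thm} to replace the statement by a combinatorial assertion about the algorithmic scattering diagram $S(\fD^\iin_{u,v})$, to dispose of the easy part via symmetries coming from autoequivalences, and to isolate the genuine difficulty in a $\gcd$-refinement attacked through the recursive structure of the scattering. For the reduction: for $\gamma=(0,d,\chi)$ the ray $R_\gamma$ is the vertical half-line of class $m_\gamma=(0,-d)$ through $x=\chi/d-\frac32$, and $M_\gamma^{(x,y)}=M_{(0,d,\chi)}$ once $y$ is large, so by Theorem~\ref{main_thm} the series $H_\gamma^{(x,y)}$ carried by that part of $R_\gamma$ is the one produced by $S(\fD^\iin_{u,v})$. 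Unwinding
\[
H_{\gamma}^{\sigma}=\left(-\sum_{\substack{\ell\geqslant 1\\\ell\mid\gcd(d,\chi)}}\frac{1}{\ell}\,\frac{Ih_{(0,d/\ell,\chi/\ell)}^{\sigma}(u^{\frac{\ell}{2}},v^{\frac{\ell}{2}})}{(uv)^{\frac{\ell}{2}}-(uv)^{-\frac{\ell}{2}}}\right)z^{m_\gamma}
\]
by a Möbius inversion over the divisors of $\gcd(d,\chi)$ then expresses every $Ib_j(M_{(0,d,\chi)})$ algorithmically in terms of $S(\fD^\iin_{u,v})$, so it suffices to prove that the output of the scattering algorithm attached to the rays of class $(0,-d)$ depends on $\chi$ only through $\gcd(d,\chi)$.

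Next I would dispose of the elementary part. Tensoring by $\cO(1)$ induces the integral-affine automorphism $(x,y)\mapsto(x+1,\,y-x-\frac12)$ of $U$, which sends $s_n$ to $s_{n+1}$, hence permutes the initial rays $\fd^{\pm}_{n,\ell}$ and preserves $S(\fD^\iin_{u,v})$; on classes it realizes $\chi\mapsto\chi+d$. Serre duality (dualization) likewise induces an affine automorphism of $U$ preserving $S(\fD^\iin_{u,v})$ and realizing $\chi\mapsto-\chi$. So $Ib_j(M_{(0,d,\chi)})$ already depends on $\chi$ only through the class of $\pm\chi$ in $\Z/d\Z$; writing $e=\gcd(d,\chi)=\gcd(d,\chi')$, $d_0=d/e$, $\chi=e\chi_0$, $\chi'=e\chi_0'$ with $\gcd(\chi_0,d_0)=\gcd(\chi_0',d_0)=1$, the remaining task is to match the class-$(0,-d)$ ray data of $S(\fD^\iin_{u,v})$ at the positions $\chi_0/d_0-\frac32$ and $\chi_0'/d_0-\frac32$.

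This $\gcd$-refinement --- upgrading ``$\pm\chi\bmod d$'' to ``$\gcd(d,\chi)$'' --- is the hard part, and I expect it to be the main obstacle. It cannot come from a symmetry of the configuration: the integral-affine automorphisms of $U$ preserving $S(\fD^\iin_{u,v})$ form exactly the infinite dihedral group generated by the two automorphisms above, whose orbits on vertical rays of a fixed class are precisely the sets $\{\pm\chi\bmod d\}$, and no autoequivalence of $\D^b(\PP^2)$ (equivalently of $\D^b_0(K_{\PP^2})$) preserving the ray-direction $(0,-d)$ can move $\chi$ out of its class $\pm\chi\bmod d$, since such an autoequivalence fixes the rank--degree lattice and so shifts $\chi$ only by a multiple of $d$. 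The route I would attempt first is a direct induction on $d$ using the tree decomposition of \cref{section_tree_decomposition}: there $Ih(M_{(0,d,\chi)})$ is a sum over decorated trees whose leaves carry line-bundle classes $\cO(n)$, $\cO(n)[1]$ and whose internal weights are the structure constants of $\fg_{u,v}$, which depend on a pair of incident classes $m,m'$ only through $\langle m,m'\rangle$; one can then hope to build a weight-preserving bijection between the trees contributing to $(0,d,\chi)$ and those contributing to $(0,d,\chi')$ when $\gcd(d,\chi)=\gcd(d,\chi')$, organizing the induction around the scattering point $\sigma_0$ at which $R_{(0,d,\chi)}$ originates and showing that the local scattering model there depends only on $\gcd(d,\chi)$. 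An alternative, closer to \cite{bousseau2019takahashi}, is to pass through the forthcoming sheaves/Gromov--Witten correspondence: Theorem~\ref{main_thm} identifies $\fD^{\PP^2}_{u,v}$ with a scattering diagram of tropical-vertex type computing maximal-contact Gromov--Witten invariants of $(\PP^2,E)$ in which $\chi$ controls the tangency point on the elliptic curve $E$, its class modulo $d$-torsion having order $d/\gcd(d,\chi)$, so that $\gcd$-independence reflects the transitivity --- through the monodromy action of the family of plane cubics, factoring through $\mathrm{SL}_2(\Z/d)$ --- on torsion classes of a given order.

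Finally, in either approach, once the class-$(0,-d)$ ray data of $S(\fD^\iin_{u,v})$ is shown to depend on $\chi$ only through $\gcd(d,\chi)$, the Möbius inversion of the first step transfers this back to the intersection Betti numbers $Ib_j(M_{(0,d,\chi)})$, giving Theorem~\ref{thm_chi_indep_gcd}.
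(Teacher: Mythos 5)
The paper does not prove this theorem: it is imported verbatim from the companion paper \cite{bousseau2019takahashi}, and the introduction explains that the proof there goes via Theorem~\ref{main_thm} combined with Gabele's tropical correspondence \cite{gabele2019tropical} and the sheaves/Gromov--Witten correspondence for $(\PP^2,E)$. Your ``alternative'' route at the end --- deformation invariance of maximal-contact Gromov--Witten invariants of $(\PP^2,E)$ plus monodromy acting transitively on $d$-torsion classes of a given order --- is in fact the route the companion paper takes, so you have correctly identified the actual mechanism; but you only sketch it, and it is not a proof one can extract from the present paper alone, since it requires the full sheaves/GW correspondence as an external input (which is itself the main theorem of \cite{bousseau2019takahashi}).

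Your primary route, a weight-preserving bijection between the trees of \S\ref{section_tree_decomposition} contributing to $(0,d,\chi)$ and $(0,d,\chi')$, is explicitly contraindicated by the paper's own discussion: at the end of the introduction, and again in \S\ref{section_chi_indep}, the paper observes that ``the tree decompositions of the Betti numbers are different for different values of $\chi$, and only the total sum over the trees is $\chi$-independent in a seemingly miraculous way.'' Indeed the worked examples for $d=3,4$ show the number of contributing trees and the individual polynomials $P_T$ all change with $\chi$ (e.g.\ one tree for $(0,3,1)$ versus two for $(0,3,3)$). So the bijection you hope to build does not exist at the level of individual trees, and the induction you propose would need some nontrivial regrouping not supplied in your sketch. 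The first two steps of your reduction are correct and match what is stated in the paper (the $\psi(1)$-symmetry and Serre duality give $M_{(0,d,\chi)}\simeq M_{(0,d,\chi+d)}\simeq M_{(0,d,-\chi)}$), but the genuine content --- the upgrade from ``$\pm\chi\bmod d$'' to ``$\gcd(d,\chi)$'' --- remains open in your proposal.
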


Using the algorithm given by Theorem \ref{main_thm} to compute the intersection Betti numbers $Ib_j(M_{(0,d,\chi)})$, we can practically test Conjecture \ref{conj_chi_indep}
in low degrees.

\begin{thm} \label{thm_chi_indep_test}
Conjecture \ref{conj_chi_indep} holds for $d \leq 4$.
\end{thm}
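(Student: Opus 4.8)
The plan is to verify Conjecture \ref{conj_chi_indep} for $d \leq 4$ by an explicit computation, using the algorithm furnished by Theorem \ref{main_thm}. Concretely, for each $d \in \{1,2,3,4\}$ and each relevant value of $\chi$, I would compute the signed symmetrized intersection Hodge polynomial $Ih_{(0,d,\chi)}^\sigma(u^{1/2},v^{1/2})$ for $\sigma=(x,y) \in U$ with $y$ large, since for such $\sigma$ one has $M_{(0,d,\chi)}^\sigma = M_{(0,d,\chi)}$, the Gieseker moduli space. By Theorem \ref{main_thm}, the ray datum $H_{(0,d,\chi),j}$ attached to the relevant segment of $R_{(0,d,\chi)}$ in $\fD^{\PP^2}_{u,v}$ agrees with the corresponding datum in $S(\fD^\iin_{u,v})$, so the polynomial $Ih_{(0,d,\chi)}^\sigma$ can be extracted purely combinatorially by running the scattering algorithm $\fD \mapsto S(\fD)$ far enough.

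The first step is to reduce the number of cases. Using the isomorphisms $M_{(0,d,\chi)} \simeq M_{(0,d,\chi+d)}$ (tensoring with $\cO(1)$) and $M_{(0,d,\chi)} \simeq M_{(0,d,-\chi)}$ (Serre duality), it suffices to treat $\chi$ in a set of representatives of $\Z/d$ modulo $\chi \mapsto -\chi$: so $\chi \in \{0\}$ for $d=1$; $\chi \in \{0,1\}$ for $d=2$; $\chi \in \{0,1\}$ for $d=3$; and $\chi \in \{0,1,2\}$ for $d=4$. Moreover, by Theorem \ref{thm_chi_indep_gcd} (or a direct $\gcd$-based argument), classes with the same $\gcd(d,\chi)$ already give equal intersection Betti numbers, which further collapses the list: the genuinely new comparisons are, e.g. for $d=4$, between the $\gcd=1$ class $\chi=1$ and the $\gcd=2$ class $\chi=2$ and the $\gcd=4$ class $\chi=0$. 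So in the end one must compute a short finite list of Hodge polynomials and check that, for each fixed $d$, they coincide after specializing to Betti numbers (i.e. setting $u=v$ and reading off coefficients, or directly comparing the polynomials).

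The second step is the actual machine computation: implement $S(\fD^\iin_{u,v})$ following \cref{section_scattering_diagram}, iterate the scattering process up to the order in $z^m$ needed to see the rays in class $m_{(0,d,\chi)}=(0,-d)$, extract $Ih_{(0,d,\chi)}^\sigma$ from the corresponding $H$-series by inverting the plethystic-type formula defining $H_\gamma^\sigma$ (the sum over $\gamma=\ell\gamma'$), and tabulate the results. For $d\leq 4$ the truncation order is modest, so this is entirely feasible. One should cross-check the output against known values: for $d=1$, $M_{(0,1,\chi)}\simeq\PP^2$ with $Ib_0=Ib_2=Ib_4=1$; for $d=2$, $M_{(0,2,\chi)}$ is a known $5$-fold; for $d\geq 3$ the Betti numbers should be compared with existing computations in the literature on one-dimensional sheaves on local $\PP^2$ where available, and with the Euler-characteristic-level predictions.

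The main obstacle I expect is purely computational bookkeeping rather than conceptual: organizing the scattering algorithm so that it terminates at a provably sufficient order, and correctly inverting the relation between the ray generating series $H_\gamma^\sigma$ and the intersection Hodge polynomials $Ih_{\gamma'}^\sigma$ in the presence of the imprimitive contributions ($\ell \geq 2$). A secondary subtlety is making sure the region and the segment of $R_{(0,d,\chi)}$ from which one reads off $Ih_{(0,d,\chi)}^\sigma$ really corresponds to the large-$y$ (Gieseker) chamber and not to some other Bridgeland chamber; this is handled by the discussion in \cref{section_intro_gieseker} identifying $M_\gamma^\sigma=M_\gamma$ for $y$ large, but one must track the wall-crossings along $R_{(0,d,\chi)}$ carefully to land in the correct segment $R_{(0,d,\chi),j}$.
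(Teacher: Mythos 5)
Your proposal is correct and takes essentially the same approach as the paper: reduce the list of $(d,\chi)$ cases using periodicity, Serre duality, and Theorem \ref{thm_chi_indep_gcd}, then run the scattering algorithm of Theorem \ref{main_thm} to compute the intersection Poincar\'e (equivalently, Hodge) polynomials in the Gieseker chamber and compare. The only cosmetic difference is that the paper organizes the output of the algorithm as a sum over attractor trees (to highlight that the decomposition, unlike the total, is genuinely $\chi$-dependent), whereas you simply extract the polynomial directly; both execute the same computation.
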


As the intersection Betti numbers are computed by the scattering diagram 
$S(\fD_{u,v}^{\iin})$, one might hope for a combinatorial proof of Conjecture 
\ref{conj_chi_indep}. This seems quite difficult. In particular, we will see explicitly in the proof of Theorem \ref{thm_chi_indep_test}
in \S \ref{section_chi_indep}
that the
tree decompositions of the Betti numbers are different for different values of $\chi$, and only the total sum over the trees is $\chi$-independent in a seemingly miraculous way.

\subsection{Relations with previous and future works}
\label{section_intro_relation}
In this section we briefly mention some related works.

\subsubsection{Bridgeland}
The idea to consider objects with fixed argument of the central charge in order to make a connection between scattering diagrams
and stability conditions, in the context of quivers with potentials, is already contained in 
\cite{MR3710055}. Given $(Q,W)$ a quiver with potential, let $Q_0$ be its set of vertices and $\Gamma=\Z^{Q_0}$ the lattice of dimensions for representations of $Q$. We denote $\Gamma^\vee \coloneq \Hom(\Gamma,\Z)$ and $\Gamma^\vee_{\R}
\coloneq \Gamma^\vee \otimes_{\Z} \R$.
Let $\cC$ be the triangulated category of dg-modules with finite-dimensional cohomology
over the Ginzburg dg-algebra of $(Q,W)$ \cite{G06, KY11}.
It is a triangulated category, Calabi-Yau of dimension $3$, with a natural bounded $t$-structure of heart the abelian category 
$\cA$ of finite-dimensional representations of the Jacobi algebra of $(Q,W)$. 
We have a natural embedding 
\[ \Gamma_\R^\vee \rightarrow \Stab(\cC)\,,\]
\[ \theta \mapsto (Z_\theta, \cA) \,,\]
with the central charge $Z^\theta \colon \Gamma \rightarrow \C$ given by 
\[Z^\theta=-\theta+i \delta\,,\]
where 
\[\delta \colon \Gamma \rightarrow \Z\]
\[(\gamma_j)_{j\in Q_0} \mapsto \sum_{j \in Q_0} \gamma_j\,,\] 
is the 
total dimension for representations of $Q$.
The main result of \cite{MR3710055}
is the construction, in terms of Donaldson-Thomas invariants of 
$\cC$, of a scattering diagram $\fD^{(Q,W)}$ on 
$\Gamma_{\R}^\vee$, supported on the loci of points $\theta \in \Gamma_\R^\vee$ such that there exists
$\gamma \in \Gamma$ such that $\Rea Z^\theta_\gamma=-\theta(\gamma)=0$ and such that there exists a $\theta$-semistable object of class $\gamma$.

Our construction of the scattering diagram 
$\fD^{\PP^2}_{u,v}$ on $U$ is similar, but there is one main difference. 
In \cite{MR3710055}, the stability space $\Gamma_{\R}^\vee$ has a natural structure of vector space
and 
the scattering diagram $\fD^{(Q,W)}$ is local, that is, everything non-trivial happens near $0 \in \Gamma_\R^\vee$, corresponding to the fact that the abelian heart $\cA$ is fixed.
By contrast, in our setting, the stability space $U$ is only a piece of an integral affine manifold, and the scattering diagram
$\fD^{\PP^2}_{u,v}$ involves infinitely many such local scatterings, corresponding to the fact that the abelian heart is moving in the triangulated category 
$\D^b(\PP^2)$.

Another result of \cite{MR3710055} is a correspondence between the chamber structure of 
$\fD^{(Q,W)}$ and the mutations of 
$(Q,W)$. We will establish in a follow-up paper a similar 
correspondence between the chamber structure of 
$\fD^{\PP^2}_{u,v}$ and the exceptional collections of  
$\D^b(\PP^2)$ related by mutations, see \S
\ref{section_exceptional} for more details.

For every choice of exceptional collection $\cE$ on 
$\PP^2$, there is a quiver with potential
$(Q_\cE,W_\cE)$ describing $\D^b_0(K_{\PP^2})$, and so we can apply \cite{MR3710055} to obtain a scattering diagram $\fD^{(Q_\cE,W_\cE)}$ in $\R^3$. It would be interesting to obtain a precise comparison between $\fD^{(Q_\cE,W_\cE)}$ and $\fD^{\PP^2}_{u,v}$.

\subsubsection{Kontsevich-Soibelman} \label{subsubKS}
In
\cite{MR3330788}, Kontsevich-Soibelman introduce a general notion of wall-crossing 
structure and give several, mainly conjectural, constructions.
Under some assumptions, they construct a wall-crossing structure on the base 
$B$ of an holomorphic integrable system $\pi \colon M \rightarrow B$. 
When $\pi \colon M \rightarrow B$ is of Hitchin type, there is an associated 
noncompact Calabi-Yau 3-fold $Y$ and a conjectural embedding of (the universal cover of)
the complement $B_0$ of the discriminant locus into the space of Bridgeland stability conditions of the Fukaya category
$F(Y)$ of $Y$. General Donaldson-Thomas theory, conjecturally applied to $F(Y)$,
naturally defines a wall-crossing structure on $B$. Kontsevich-Soibelman conjecture 
that the wall-crossing structures on $B$
coming from the holomorphic integrable system $\pi \colon M \rightarrow B$ and
from Donaldson-Thomas theory of $F(Y)$ coincide. 

In \cite[\S 6]{bousseau2019takahashi}, we explain how our main result fits into this general framework, by taking for $M$
the mirror of $(\PP^2,E)$ and for $Y$ the mirror of $K_{\PP^2}$.
In particular, combining hyperkähler rotation and mirror symmetry, we will obtain a conceptual heuristic explanation for why the main connection established in \cite{bousseau2019takahashi} between sheaf counting on 
$K_{\PP^2}$ and relative Gromov-Witten theory of
$(\PP^2,E)$ should be true: 
under hyperkähler rotation, holomorphic curves in $\PP^2-E$ turn into open special Lagrangians in $M$, which after suspension turn into closed special Lagrangians in $Y$, which under mirror symmetry turn into objects of $\D^b(K_{\PP^2})$. 

Remark that $M$
is not of Hitchin type and so we are slightly outside of the strict framework of 
\cite{MR3330788} but we are in a natural extension of it. The main reason why we are able to extract non-conjectural statements from this story is that, in this specific example, $Y$ has a reasonable mirror $K_{\PP^2}$, and so we can work with the algebro-geometric $D^b(K_{\PP^2})$ instead of the a priori complicated looking symplectic Fukaya category $F(Y)$.

\subsubsection{Physics}
\label{section_physics}
Hodge numbers of intersection cohomology of 
moduli spaces of Bridgeland semistable objects in $\D^b_0(K_{\PP^2})$ are BPS indices of the $\mathcal{N}=2$ supersymmetric 4-dimensional theory obtained by compactifying 
type IIA string theory on the local Calabi-Yau 3-fold $K_{\PP^2}$. 

Even if our slice $U$ in the space of stability conditions is defined by a polynomial
central charge, we claim that the resulting scattering diagram would stay the same for the slice given by the vector multiplet moduli of the $\mathcal{N}=2$ theory, that is, by the stringy K\"ahler moduli space of $K_{\PP^2}$,
where the central charge is given by the periods solutions of the mirror Picard-Fuchs equation
(see e.g. \cite[\S 9]{MR2852118}). More details on this claim are given in 
\cite[\S 3.4]{bousseau2019takahashi}. In fact, the main constructions of the present paper were first tested using the physical central charge given by the periods. 
We choose to write the present paper using polynomial
central charge for convenience: contact with the existing mathematical litterature is easier and change of variables given by algebraic functions are easier to study than those given by transcendental functions. Nevertheless, in order to generalize the present paper to more general geometries, polynomial central charges might not be good enough in general, and the use of the honest stringy Kähler moduli space might be ultimately necessary.

The $\mathcal{N}=2$ theory obtained by compactifying type IIA string theory on 
$K_{\PP^2}$ has been studied early on \cite{MR2174156} (and in fact, as part of a series of physics works which motivated the definition of Bridgeland stability conditions). At the time of \cite{MR2174156}, the wall-crossing formula of Kontsevich-Soibelman and Joyce-Song was not known and only some qualitative aspects of the question of relating the BPS spectrum near the orbifold point to the BPS spectrum near the large volume point were studied. 
Using the wall-crossing formula, the scattering diagram $\fD^{\PP^2}_{u,v}$
gives a complete answer to this physics question, in a form which does not seem to have appeared before in the physics literature. 
At the level of terminology, the scattering diagram $\fD^{\PP^2}_{u,v}$ is made of $K$-walls in the sense of \cite{MR3115984}.

The study of the BPS spectrum of $K_{\PP^2}$ has been revisited 
more recently using spectral networks
\cite{MR3697419} and partial results have been obtained.
Our approach is orthogonal: whereas the spectral networks live on appropriate projections of the mirror curves to $K_{\PP^2}$, our scattering diagram naturally lives on the base of the family parametrizing these mirror curves (see \S \ref{subsubKS}).
In physics language, the scattering diagram $\fD_{u,v}^{\PP^2}$ is a mathematical version of the string junction description of the BPS spectrum in the $D3$-brane probe realization of the $\mathcal{N}=2$ theory (see \cite{MR1657861} and follow-ups).

\subsubsection{Manschot}
An alternative algorithm to compute Hodge numbers of intersection cohomology 
of moduli spaces of Gieseker semistable sheaves
on $\PP^2$, at least for sheaves of positive rank, has been developed by Manschot
\cite{MR2836429, MR3063951, MR3695800, MR3874687}. The idea, going back to Yoshioka 
\cite{MR1285785, MR1339925} in rank $2$, is to blow-up one point on $\PP^2$, reduce the problem to the blown-up 
surface by a blow-up formula, and then solve the problem on the blown-up surface by wall-crossing in the space of polarizations of the blown-up surface.

This algorithm and the algorithm given by  our Theorem 
\ref{main_thm} are similar in the sense that they both use wall-crossing in a space of stability conditions. But there is an important difference: the space of stability conditions in Manschot's algorithm is a space of polarizations, at the cost of working with an auxiliary geometry and using a blow-up formula, whereas our algorithm only uses the geometry intrinsic to $\PP^2$, at the cost of going deep in the interior of the space of Bridgeland stability conditions.
Another difference is that Manschot's algorithm works rank by rank, and seems limited to sheaves of positive rank, whereas our algorithm considers all ranks at once and is able to treat sheaves supported in dimension $1$.

With his approach, Manschot is able to get explicit formulas for generating series of invariants at fixed rank and degree, at least
in low ranks. It is not clear how to do the same with our algorithm and it is a possibly interesting question to explore.

\subsubsection{Exceptional vector bundles and mutations}
\label{section_exceptional}
As we will review in \cite[\S 3]{bousseau2019takahashi}, a specialization $S(\fD^\iin_{\cl^+})$
of $S(\fD^\iin_{u,v})$ naturally appears in the Gross-Siebert description of mirror symmetry for the pair 
$(\PP^2,E)$, where $E$ is a smooth cubic curve in $\PP^2$ \cite{cps}. 
The scattering diagram
$S(\fD^\iin_{\cl^+})$ has been further studied following this point of view in 
\cite{MR4038403}. In particular, the main result of \cite{MR4038403}
is the existence in the complement of the support of $S(\fD^\iin_{\cl^+})$ of a decomposition in triangle chambers naturally 
indexed and related by combinatorial mutations \cite{MR3007265}. 

On the other hand, exactly the same combinatorics of mutations is well-known to describe exceptional collections in 
$\D^b(\PP^2)$  \cite{MR816365, MR849052, 
MR916199, MR885779, MR936525}. 
As we define the scattering diagram
$\fD^{\PP^2}_{u,v}$ in terms of $\D^b(\PP^2)$, and our main result (Theorem \ref{main_thm}) is the equality 
$S(\fD^\iin_{u,v})=\fD^{\PP^2}_{u,v}$, it is natural to ask if the decomposition in triangles of \cite{MR4038403} has a
natural interpretation in terms of exceptional collections from the point of view of $\fD^{\PP^2}_{u,v}$. 

One can show that it is indeed the case, the sides of these triangles are of the form 
$\Rea Z_{\gamma(E_1)}=0$, $\Rea Z_{\gamma(E_2)}=0$, $\Rea Z_{\gamma(E_3)}=0$ for $(E_1,E_2,E_3)$ an exceptional collection in $\D^b(\PP^2)$, and the combinatorial mutations of triangles match the mutations of exceptional collections.
In particular, the equality $S(\fD^\iin_{u,v})=\fD^{\PP^2}_{u,v}$ gives a new and intrinsic to $\PP^2$ explanation of the common appearance of the combinatorics of mutations in the mirror construction for $(\PP^2,E)$ and in the structure of $\D^b(\PP^2)$. Here intrinsic means without using $\mathbb{Q}$-Gorenstein degenerations of $\PP^2$, which are naturally related to both combinatorial mutations and to exceptional collections \cite{MR3053568}. To find such explanation was in fact one of our original motivations for Theorem \ref{main_thm}.

In order to keep the length of the present paper finite, the claims of the above paragraph will be discussed in a follow-up paper. Finally,
 our proof of Theorem \ref{main_thm} 
is logically independent of the classical results of Dr\'ezet and Le Potier \cite{MR816365} on the classification of exceptional vector bundles and on the criterion for nonemptiness of the moduli spaces of Gieseker semistable sheaves. Therefore, we might expect Theorem \ref{main_thm} to give a new approach to these results, and we also refer to some future work. 

\subsection{Plan of the paper}
In 
\cref{section_scattering_diagram} we
introduce general notions about scattering diagrams and we define in a purely algorithmic way a scattering diagram $S(\fD^{\iin}_{u,v})$. 
In \cref{section_scattering_p2}
we introduce specific coordinates on a slice of the space of Bridgeland stability conditions on $\D^b(\PP^2)$ and we use them to define a scattering diagram 
$\fD^{\PP^2}_{u,v}$ in terms of intersection cohomology of Bridgeland semistable objects. 
In \cref{section_consistency_proof} 
we prove that the scattering diagram 
$\fD^{\PP^2}_{u,v}$ is consistent.
In \cref{section_initial_data_proof}
we show that the scattering diagrams 
$\fD^{\PP^2}_{u,v}$ and 
$S(\fD^\iin_{u,v})$ have the same initial data. 
In \cref{section_main_result} 
we prove
Theorem \ref{main_thm},
that is, the equality $\fD^{\PP^2}_{u,v}
=S(\fD^\iin_{u,v})$. In 
\cref{section_application_gieseker} 
we discuss various applications to moduli spaces of Gieseker semistable sheaves.

\subsection{Acknowledgments}
I thank Jinwon Choi, Michel van Garrel, Rahul Pandharipande and Junliang Shen for conversations stimulating my interest in N.\ Takahashi's conjecture, which was one of the main motivation for the present work and will be discussed in the companion paper 
\cite{bousseau2019takahashi}. I thank Richard Thomas and Tom Bridgeland for discussions at a stage where some of the ideas ultimately incorporated in this paper were still in preliminary form. I thank Yu-Shen Lin for an early discussion on the scattering diagram of \cite{cps}. I thank Tim Gr\"afnitz  for discussions on his work \cite{gabele2019tropical}. I thank Michel van Garrel, Penka Georgieva, Rahul Pandharipande, Vivek Shende and Bernd Siebert for invitations to conferences and seminars where this work has been presented and for related discussions.
I thank Yuan Yao for questions on a first version of the paper. Finally, I thank the anonymous referee for a very careful reading and many suggestions of improvement of the exposition.

I acknowledge the support of Dr.\ Max R\"ossler, the Walter Haefner Foundation and the ETH Z\"urich
Foundation.

\section{The scattering diagram $S(\fD^{\iin}_{u,v})$}\label{section_scattering_diagram}
In \cref{section_local_scattering} we review the notion of local scattering diagram.
In \cref{section_scattering_definitions} we give a definition of scattering diagram specifically designed for our future needs. A slightly unusual
feature is the consideration of real-valued and not just integral-valued
functions $\varphi$ as `regulator' of the scattering computations.
In \cref{section_initial} we give a general construction of initial scattering diagrams 
$\fD^{\iin}$ and of their consistent completions $S(\fD^{\iin})$. We specialize 
this construction in  \cref{section_scattering_final} in order to define scattering diagrams $S(\fD^\iin_{u,v})$, $S(\fD^\iin_{q^\pm})$, and $S(\fD^\iin_{\cl^\pm})$.
In \cref{subsection_psi(1)} we establish a symmetry property of the scattering diagram $S(\fD^\iin_{u,v})$.

 We follow to various degrees the notation and conventions of the references 
\cite{MR2846484, MR2667135, MR2722115, bousseau2018quantum_tropical}, except that what we call local scattering diagrams are the scattering diagrams of \cite{MR2846484}, and what we call scattering diagrams are the wall structures
of \cite{MR2846484}. This slight change in terminology is designed in order to avoid possible confusions with the terminology usually used in the context of Bridgeland stability conditions, which will enter the story in \cref{section_scattering_p2}.

\subsection{Local scattering diagrams}
\label{section_local_scattering}
In this section, we fix
$M \simeq \Z^2$ a two-dimensional lattice,
\begin{align*} \varphi \colon M &\longrightarrow \R \\
 m &\longmapsto \varphi(m)\,,\end{align*}
an additive real-valued function on $M$,
and 
\[\fg=\bigoplus_{m \in M} \fg_m \,,\] 
a $M$-graded Lie algebra over 
$\Q$ (that is, with $[\fg_{m},\fg_{m'}]\subset 
\fg_{m+m'}$) such that 
$[\fg_m,\fg_{m'}]=0$ if 
$m$ and $m'$ are collinear. 

As $\varphi$ is additive and $\fg$ is $M$-graded, the subspace $\fg_\varphi \subset \fg$ defined by 
\[ \fg_\varphi := \bigoplus_{\substack{m \in M\\ \varphi(m)>0}} \fg_m\]
is a Lie subalgebra of $\fg$.
For every $k \in \R_{>0}$, we denote by
$\fm_k$ the Lie ideal 
\[ \bigoplus_{\substack{m \in M\\\varphi(m)>k}} \fg_m\]
of
$\fg_{\varphi}$ and by $\fg_{\varphi}^k := \fg_{\varphi}/\fm_k$ the quotient Lie algebra. We say that an element $x \in \fg_\varphi$ is of \emph{order}$>k$
if $x \in \fm_k$.
We denote by 
\[ G_\varphi^k := \exp(\fg_{\varphi}^k)\] the group whose elements are formal exponentials $\exp(g)$ of elements of $\fg_{\varphi}^k$, and whose product is given by the Baker-Campbell-Hausdorff formula
\[ \exp(g_1) \exp(g_2)=\exp(g_1+g_2+\frac{1}{2}[g_1,g_2]+\dots)\,. \]
Because we work modulo $\fm_k$, the Baker-Campbell-Hausdorff contains only finitely many nonzero terms and so this definition of $G_{\varphi}^k$ makes sense.

\begin{defn} \label{def_local_naked_ray}
For every nonzero $m \in M$, a \emph{local naked ray of class $m$} is a subset of $M_\R=M \otimes \R$ of the form either 
$\R_{\geqslant 0}m$ or $-\R_{\geqslant 0}m$.
\end{defn}

\begin{defn} \label{def_local_ray}
For every nonzero $m \in M$, a \emph{local ray $\fd$ of class $m$} 
for
$(M, \varphi,
\fg)$ is a pair 
$(|\fd| , H_\fd)$, where: 
\begin{itemize}
    \item[(i)] $|\fd|$ is a local naked ray of class $m$.
    \item[(ii)] $H_\fd$ is a nonzero element of $\fg_m$.
\end{itemize}
The local ray $\fd=(|\fd|, H_\fd)$ of class $m$ is \emph{outgoing} if 
$|\fd| = -\R_{\geqslant 0}m$, and \emph{ingoing} if 
$|\fd|=\R_{\geqslant 0} m$.
The local naked ray $|\fd|$ is called the support of the local ray $\fd$.
\end{defn}

It follows from Definition 
\ref{def_local_ray} that the class of a local ray is uniquely determined by the local ray. 

\begin{defn}
We denote by $m_\fd \in M$ the class of a local ray $\fd$.
\end{defn}

\begin{defn} \label{def_local_scattering}
A \emph{local scattering diagram} for 
$(M, \varphi,\fg)$ is a collection $\fD$ of local rays $\fd=(|\fd|, H_\fd)$ for
$(M,\varphi,\fg)$, such that:
\begin{enumerate}
	\item[(i)] For every nonzero $m \in M$, there is at most one ingoing local ray of class $m$ in $\fD$, and at most one outgoing ray of class $m$ in $\fD$.
    \item[(ii)] For every ray $\fd=(|\fd|, H_\fd)$ in $\fD$, we have
    $\varphi(m_\fd) > 0$. In particular, $H_\fd \in \fg_{\varphi}$.
    \item[(iii)] For every $k \in \R_{\geqslant 0}$, there are only finitely many rays 
$\fd=(|\fd|, H_\fd)$ in $\fD$ with 
\[\varphi(m_\fd) \leqslant k\,,\] 
that is, with
\[H_\fd \neq 0 \mod \fm_k\,.\]
\end{enumerate}
\end{defn}

\begin{defn} \label{def_autom}
Let $\fD$ be a local scattering diagram
for $(M, \varphi,\fg)$. For every ray
$\fd=(|\fd|,H_\fd)$ of $\fD$ and for every positive integer $k$, we denote by $\Phi_{\fd,k}$ the group element
\[\Phi_{\fd,k} \coloneqq \exp(H_\fd) \in G_{\varphi}^k   \,,\]
where we view $H_\fd$ as an element of $\fg_{\varphi}^k$.
\end{defn}

\begin{defn}
Let $\fD$ be a local scattering diagram
for $(M, \varphi,\fg)$. We fix a positive integer $k$, and a smooth path 
\[\fp \colon [0,1] \longrightarrow M_\R - \{0\}\,\]
\[\tau \longmapsto \fp(\tau)\,,\]
with transverse intersection with respect to all the rays $\fd=(|\fd|,H_\fd)\in \fD$
with $H_{\fd} \neq 0 \mod \fm_k$. Let $\fd_1, \dots, \fd_N$ be the successive rays $\fd$ of $\fD$ with $H_\fd \neq 0 \mod \fm_k$ intersected by the path 
$\fp$ at times $\tau_1 \leqslant \dots \leqslant \tau_N$.
Then, 
the \emph{order $k$ path-orderd product along $\fp$}, denoted by $\Phi_{\fp,k}^{\fD}$, is the following product in the group $G_\varphi^k$:
\[\Phi_{\fp,k}^{\fD} \coloneqq \Phi_{\fd_N,k}^{\epsilon_N} \dots \Phi_{\fd_1,k}^{\epsilon_1}\,,\]
where, for every $j=1, \dots, N$,
\[ \epsilon_j \coloneq \mathrm{sign} (\det (\fp'(\tau_j),m_{\fd_j}))\in \{\pm 1\} \,.\]
\end{defn}

\textbf{Remarks:} 
\begin{itemize}
\item[(i)] The ordering of the rays 
$\fd_1, \dots, \fd_N$ by times of intersection with the path is slightly ambiguous as different rays can have the 
same support. By the assumption $[\fg_m,\fg_{m'}]=0$ if 
$m$ and $m'$ are collinear, the group elements associated to rays with the same support commute. It follows that 
$\Phi_{\fp,k}^{\fD}$ is well-defined despite this ambiguity.
\item[(ii)]  The definition of $\Phi_{\fp,k}^{\fD}$ makes sense as there are only finitely many rays $\fd$ in $\fD$
with $H_\fd \neq 0 \mod \fm_k$ by condition 
(2) of Definition \ref{def_local_scattering}.
\end{itemize}

\begin{defn} \label{def_local_consistency_order_k}
Let $k$ be a nonnegative integer.
A local scattering diagram $\fD$
for $(M, \varphi,\fg)$ is \emph{consistent at order $k$}
if for every smooth loop, that is,  smooth path
\[\fp \colon [0,1] \longrightarrow M_\R-\{0\}\,,\] 
\[\tau \longmapsto \fp(\tau)\,,\]
with $\fp(0)=\fp(1)$, with transverse intersection with respect to all the rays $\fd=(|\fd|,H_\fd)$ in $\fD$
with $H_\fd \neq 0 \mod \fm_k$, the order $k$ path-ordered product along $\fp$ is the identity:
\[\Phi_{\fp,k}^{\fD}=1 \in G_\varphi^k\,.\]
\end{defn}

In order to check order 
$k$ consistency of a local scattering diagram $\fD$, it is enough to show that 
$\Phi_{\fp,k}^{\fD}=1$ for $\fp$ a simple smooth loop in $M_\R-\{0\}$ encircling $0$ and with transverse intersection with respect to all rays of $\fD$.

\begin{defn} \label{def_local_consistency}
A local scattering diagram $\fD$ for $(M, \varphi,\fg)$ is \emph{consistent} if it is consistent at order $k$
for every nonnegative integer $k$.
\end{defn}

The following Proposition \ref{prop_local_consistent_completion}
states that, under some condition on the order of the initial rays, any local scattering diagram can be completed in a consistent one.
The shape of this result goes back to Kontsevich-Soibelman \cite[Theorem 6]{MR2181810}. Many variants exist in the literature, see for example 
\cite[Theorem 1.4]{MR2667135}. 
We write down below the 
`usual' proof in order to check that it goes through our slightly unusual conventions, such that the fact that $\varphi$ is $\R$-valued (and not $\Z$-valued).
 
\begin{prop} \label{prop_local_consistent_completion}
We fix $c$ a real number such that $c \geqslant 1$.
Let $\fD$ be a local scattering diagram for 
$(M,\varphi,\fg)$ such that, for every ray $\fd$ in $\fD$, we have $\varphi(m_\fd) \geqslant c$. Then, there exists a unique sequence 
$(S_k(\fD))_{k \in \NN}$ of local scattering diagram
for $(M,\varphi,\fg)$,
such that:
\begin{itemize}
\item[(i)] $S_0(\fD)$ is the set of ingoing rays of $\fD$.
\item[(ii)] For every $k \in \NN$, $S_{k+1}(\fD)$ is obtained by adding to $S_k(\fD)$ rays 
$\fd$ such that $k<\varphi(m_\fd) \leqslant k+1$ and $\varphi(m_\fd) \geqslant c$.
\item[(iii)] For every $k \in \NN$, 
$S_k(\fD)$ is consistent at order $k$.
\end{itemize}
For every $k \in \NN$, we call 
$S_k(\fD)$ the order $k$ 
\emph{consistent completion} of $\fD$. We denote by $S(\fD)$ the limiting consistent local scattering diagram for
$(M,\varphi,\fg)$
obtained for $k \rightarrow +\infty$, and we call it the \emph{consistent completion} of $\fD$.
\end{prop}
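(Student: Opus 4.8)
The plan is to run the classical Kontsevich--Soibelman order-by-order argument \cite[Theorem 6]{MR2181810}, and to check that the one feature specific to our conventions---that $\varphi$ is $\R$-valued rather than $\Z$-valued---does no harm, this being precisely where the hypothesis $c\geqslant 1$ enters. I would argue by induction on $k$. For the base case, set $S_0(\fD)$ equal to the set of ingoing rays of $\fD$; since every ray $\fd$ of $\fD$ satisfies $\varphi(m_\fd)\geqslant c\geqslant 1$, one has $H_\fd\equiv 0\bmod\fm_0$ for all of them, so every order-$0$ automorphism is the identity and $S_0(\fD)$ is trivially consistent at order $0$.

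For the inductive step, suppose $S_k(\fD)$ has been built, is a local scattering diagram all of whose rays $\fd$ have $\varphi(m_\fd)\geqslant c$, and is consistent at order $k$. I would first enlarge it to $\fD'$ by adjoining the outgoing rays of $\fD$ with $k<\varphi(m_\fd)\leqslant k+1$ (each of which has $H_\fd\in\fm_k$), pick a small anticlockwise loop $\alpha$ around $0\in M_\R$ transverse to all rays of $\fD'$ nontrivial modulo $\fm_{k+1}$, and look at its order-$(k+1)$ automorphism $\Phi=\Phi^{\fD'}_{\alpha,k+1}$. Consistency of $S_k(\fD)$ at order $k$, together with the fact that the newly adjoined rays act trivially modulo $\fm_k$, gives $\Phi\equiv\id\bmod\fm_k$. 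The key linear-algebra point is then that, using $c\geqslant 1$, one has $[\fm_k,\fm_k]\subseteq\fm_{2k}\subseteq\fm_{k+1}$ when $k\geqslant 1$ (and for $k=0$ no bracket can contribute modulo $\fm_1$, since any bracket of two $H_\fd$'s has $t$-exponent $\geqslant 2c\geqslant 2$), so that $\fm_k/\fm_{k+1}$ is abelian and $\Phi=\exp\bigl([\Theta,-]\bigr)$ for a unique $\Theta\in\fm_k/\fm_{k+1}$. Because $\varphi$ is additive and the $\fg_m$ give an $M$-grading, each term of $\Theta$ lying in $\fg_m$ carries the $t$-exponent $\varphi(m)$; hence $\Theta=\sum_m\Theta_m\,t^{\varphi(m)}$ with $\Theta_m\in\fg_m$, the sum running over the finitely many $m\in M$ with $k<\varphi(m)\leqslant k+1$ (finiteness again using that each bracket raises the $t$-exponent by at least $c\geqslant 1$, together with the finite-type property of $S_k(\fD)$).

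I would then define $S_{k+1}(\fD)$ by adjoining to $\fD'$, for each $m$ with $\Theta_m\neq 0$, an outgoing ray $\fd_m$ of class $m$ with $H_{\fd_m}=-\Theta_m\,t^{\varphi(m)}$ (when $\fD'$ already contains an outgoing ray of class $m$, instead subtract $\Theta_m$ from its coefficient, so that at most one outgoing ray of each class remains). Since $[H_{\fd_m},-]$ maps $\fm_k$ into $\fm_{k+1}$, each $\Phi_{\fd_m,k+1}=\exp([H_{\fd_m},-])$ acts on $\fg[t^{\R_{\geqslant 0}}]/\fm_{k+1}$ as $\id+[H_{\fd_m},-]$; the new automorphisms commute with one another and with $\Phi$ modulo $\fm_{k+1}$; and---using that an anticlockwise loop crosses every outgoing ray with sign $+1$---the order-$(k+1)$ loop automorphism of $S_{k+1}(\fD)$ equals $\exp\bigl([\Theta-\sum_m\Theta_m\,t^{\varphi(m)},-]\bigr)=\id$. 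This gives consistency at order $k+1$; the finite-type condition for $S_{k+1}(\fD)$ is clear since finitely many rays are adjoined at each step; and uniqueness is forced, because $\Theta$ is determined by $S_k(\fD)$ and the prescribed outgoing rays of $\fD$, and consistency at order $k+1$ then pins down the coefficients of the adjoined rays. Letting $k\to\infty$ produces the consistent completion $S(\fD)$.

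The hard part will not be the cancellation, which is formally identical to the integral case, but the bookkeeping that makes it legitimate: verifying that $(\fm_k)_{k\in\NN}$ still filters $\fg[t^{\R_{\geqslant 0}}]$ in the right way when $\varphi$ is real-valued, that $\fm_k/\fm_{k+1}$ is abelian, that the defect $\Theta$ decomposes over the $M$-grading with $t$-exponents rigidly given by $\varphi$, and that only finitely many $M$-degrees occur at each order. These are exactly the places where $c\geqslant 1$ is used, and keeping track of them carefully is the whole content of adapting \cite[Theorem 6]{MR2181810} to the present conventions.
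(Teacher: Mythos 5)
Your proof is essentially the same as the paper's: both run the Kontsevich--Soibelman order-by-order induction, both observe that the hypothesis $c\geqslant 1$ is precisely what forces all BCH commutators generated at stage $k+1$ to land in $\fm_{k+1}$ (with the $k=0$ case handled via $2c\geqslant 2>1$), and both extract the degree-$(k+1)$ defect $\Theta=\sum_m\Theta_m\,t^{\varphi(m)}$ from the $M$-grading and additivity of $\varphi$, then cancel it by adjoining outgoing rays. The paper's version, consistent with the statement that $S_0(\fD)$ consists only of the \emph{ingoing} rays, never touches the outgoing rays of $\fD$ — the defect $\Phi^{S_k(\fD)}_{\alpha,k+1}$ is computed directly from $S_k(\fD)$, and the new outgoing rays are defined by $H=-\overline\Omega_m^{(k+1)}\,t^{\varphi(m)}$. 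Your detour of first adjoining the outgoing rays of $\fD$ to form $\fD'$, then subtracting $\Theta_m$ from their coefficients, ends up yielding the same object (the adjoined ray contributes exactly $+H_0$ to $\Theta_m$ since, as you check, an anticlockwise loop crosses an outgoing ray with sign $+1$, and the cross-terms land in $\fm_{k+1}$ by $c\geqslant 1$), but it is an unnecessary complication and it makes your uniqueness sentence misleading: $\Theta$ as you define it does depend on the discarded outgoing rays of $\fD$, whereas the unique $S_{k+1}(\fD)$ does not; it is cleaner to argue, as the paper does, that commutativity mod $\fm_{k+1}$ of the new rays with everything else plus faithfulness of the adjoint action forces the new coefficients. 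One small omission: you should also record that each new class $m$ with $\Theta_m\neq 0$ satisfies $\varphi(m)\geqslant c$ (immediate, since $m$ is a nonempty sum of classes each with $\varphi\geqslant c$), as this is needed to propagate the inductive hypothesis to $S_{k+1}(\fD)$.
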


\begin{proof}
We prove the existence and uniqueness of 
$S_k(\fD)$ by induction on $k \in \NN$.

For $k=0$, we take for $S_0(\fD)$ the union of ingoing rays of $\fD$. By condition (ii) of Definition \ref{def_local_scattering}, for every ray $\fd=(|\fd|,H_\fd)$ of $\fD$, we have $H_\fd=0 \mod \fm_0$, so $\Phi_{\fd,0}=1$. In particular, $S_0(\fD)$ is consistent at order zero. The uniqueness of $S_0(\fD)$ is clear.

Let $k \in \NN$ be such that $S_k(\fD)$ has been constructed and proved to be unique. We wish to construct and prove the uniqueness of $S_{k+1}(\fD)$. Let 
\[\fp \colon [0,1] \rightarrow M_\R -\{0\}\,,\]
be a simple anticlockwise loop around $0 \in M_\R$. By the induction hypothesis, 
$S_k(\fD)$ is consistent at order $k$ and so $\Phi_{\fp,k}^{S_k(\fD)}=1$
in $G_\varphi^k$. 
It follows that the element $\Phi_{\fp,k+1}^{S_k(\fD)}$
of $G_\varphi^{k+1}$ is of the form
\[ \Phi_{\fp,k+1}^{S_k(\fD)}
=\exp(H^{(k+1)}) \,\]
where $H^{(k+1)} \in \fg_{\varphi}^{k+1}$ satisfies 
$H^{(k+1)}=0 \mod \fm_k$, and so is of the form
\[ H^{(k+1)}=\sum_{\substack{m \in M\\ 
k<\varphi(m) \leqslant k+1
}} g_m\,,\]
where $g_m \in \fg_m$.

It follows from $\varphi(m_\fd) \geqslant
c \geqslant 1$ for every ray $\fd$ of 
$S_k(\fD)$, from condition (iii) of Definition \ref{def_local_scattering} satisfied by $S_k(\fD)$, and from the Baker-Campbell-Hausdorff formula defining the product in $G_\varphi^{k+1}$ that the set $M^{(k+1)}$ of $m \in M$ such that 
$g_m \neq 0$ is finite, and that, for every such $m \in M^{(k+1)}$, we have $\varphi(m) \geqslant c$.

We define $S_{k+1}(\fD)$ as being the set of the following rays:
\begin{itemize}
\item[(i)] The ingoing rays of $\fD$, which are also the ingoing rays of $S_k(\fD)$ by induction hypothesis.
\item[(ii)] The outgoing rays of $S_k(\fD)$.
\item[(iii)] For every $m \in M^{(k+1)}$,
 the outgoing ray 
\[(-\R_{\geqslant 0}m, -g_m)\,.\]
\end{itemize} 

By construction, $S_{k+1}(\fD)$ is consistent at order $k+1$. Indeed, by the Baker-Campbell-Hausdorff formula defining the product in 
$G_\varphi^{k+1}$, it is 
consistent up to terms whose order is at least the one of the commutators of the form  
\[[H_\fd, g_m] \in \fg_{m_\fd+m}\,,\]
where $\fd$ is either an incoming ray or an outgoing ray of $S_k(\fD)$, so with 
$\varphi(m_\fd) \geqslant c$, and where 
$m \in M^{(k+1)}$ so $\varphi(m)>k$,
or of the form  
\[
[g_m, g_{m'}] \in \fg_{m+m'}\,,\]
where $m, m' \in M^{(k+1)}$, and so with $\varphi(m+m')\geqslant 2c\geqslant 2$ and $\varphi(m+m')>2k$. 
In the first case, the order of the commutator is strictly greater than $c+k \geqslant k+1$.
In the second case, 
if $k=0$, we have $\varphi(m+m')\geqslant 2>1=k+1$, and if $k \geqslant 1$, we have $\varphi(m+m')>2k 
\geqslant k+1$, and so the order of the commutator is also strictly greater than $k+1$ in any case.

It remains to show the uniqueness of 
$S_{k+1}(\fD)$. The outgoing rays $\fd$ of 
$S_{k+1}(\fD)$ with $\varphi(m_\fd) \leqslant k$ are uniquely fixed by the uniqueness of $S_k(\fD)$ given by the induction hypothesis. The same study of 
orders of commutators as above shows that the order $k+1$ group element  associated with 
the outgoing rays $\fd$ of $S_{k+1}(\fD)$
with $\varphi(m_\fd)>k$ commute with the order $k+1$ group element associated with rays of $S_k(\fD)$, and so outgoing rays $\fd$ of $S_{k+1}(\fD)$ are uniquely determined  by the condition of consistency at order $k+1$ in terms of $g_m$, as above.  
\end{proof}

\textbf{Remarks:} 
\begin{itemize}
\item[(i)] The proof of Proposition \ref{prop_local_consistent_completion} is the 
`usual' proof of existence of a consistent completion of a scattering diagram, as in \cite[Theorem 6]{MR2181810}
or 
\cite[Theorem 1.4]{MR2667135}. 
The only possibly subtle point is that $\varphi$ is 
$\R$-valued, whereas in the `usual' situation, $\varphi$ is $\Z$-valued. One might worry that this could allow a phenomenon of accumulation of values of $\varphi$ at some finite value, preventing the induction step to go from $k$ to $k+1$. 
The above proof shows that it does not happen under the additional assumption that $\varphi(m_\fd) \geqslant 1$ for every ray 
$\fd$ in $\fD$.
\item[(ii)] It is clear from the proof of Proposition
\ref{prop_local_consistent_completion} that the construction of $S(\fD)$ from $\fD$ is completely algorithmic, involving finitely many computations for any fixed order $k$.
\end{itemize}

The following Lemma motivates the scattering terminology: if ingoing rays come in a given cone of directions, then the outgoing rays of the consistent completion are `emitted forward'.

\begin{lem} \label{lem_scattering_forward}
Let $\fD$ be a local scattering diagram for 
$(M,\varphi,\fg)$ consisting entirely of ingoing rays contained in a strictly convex cone of $M_\R$ delimited by two ingoing rays 
$\fd_1^\iin$ and $\fd_2^\iin$. Then, all the outgoing rays of $S(\fD)$ are contained in the strictly convex cone $-\R_{\geqslant 0}m_{\fd_1^\iin}-\R_{\geqslant 0}m_{\fd_2^\iin}$, delimited by two outgoing rays 
$\fd_1^\oout$ and $\fd_2^\oout$, respectively of class 
$m_{\fd_1^\iin}$ and $m_{\fd_2^\iin}$, and with $H_{\fd_1^\oout}=H_{\fd_1^\iin}$
and $H_{\fd_2^\oout}=H_{\fd_2^\iin}$.
\end{lem}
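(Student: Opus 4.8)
The plan is to follow the classes $m_\fd\in M$ of the rays produced by the consistent completion and to show that they stay in the submonoid of $M$ generated by the classes of the ingoing rays of $\fD$. Let $C\subset M_\R$ be the strictly convex cone delimited by $\fd_1^\iin$ and $\fd_2^\iin$, so that $C=\R_{\geqslant 0}m_{\fd_1^\iin}+\R_{\geqslant 0}m_{\fd_2^\iin}$ and all rays of $\fD$ lie in $C$, and set $P\coloneqq C\cap M$, a submonoid of $(M,+)$. By hypothesis every ingoing ray of $\fD$ has class in $P\setminus\{0\}$, hence so does every ray of $S_0(\fD)$. I would then prove by induction on $k$ that every ray of $S_k(\fD)$ has class in $P\setminus\{0\}$. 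For the inductive step, recall from the proof of Proposition \ref{prop_local_consistent_completion} that the new rays of $S_{k+1}(\fD)$ have classes in the set $M^{(k+1)}$ of $m$ with $\overline{\Omega}^{(k+1)}_m\neq 0$, where $\Phi_{\alpha,k+1}^{S_k(\fD)}=\exp([H^{(k+1)},-])$ for $\alpha$ a simple anticlockwise loop around $0$ and $H^{(k+1)}=\sum_m\overline{\Omega}^{(k+1)}_m t^{\varphi(m)}$. Since $\fg$ is $M$-graded and $H^{(k+1)}$ is built from the $H_\fd$, $\fd\in S_k(\fD)$, by iterated Lie brackets through the Baker--Campbell--Hausdorff formula, every $m\in M^{(k+1)}$ is a finite sum of classes of rays of $S_k(\fD)$; as $P$ is closed under addition and contains those classes by induction, $m\in P$, and $m\neq 0$ since $\varphi(m)>0$. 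This shows that every outgoing ray of $S(\fD)$ has class in $P\subset C$, hence support $-\R_{\geqslant 0}m_\fd\subset -C=-\R_{\geqslant 0}m_{\fd_1^\iin}-\R_{\geqslant 0}m_{\fd_2^\iin}$, which is the first assertion.

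For the second assertion I would analyse what happens on the two extreme rays $\R_{\geqslant 0}m_{\fd_1^\iin}$ and $\R_{\geqslant 0}m_{\fd_2^\iin}$ of $C$. The crucial point is strict convexity of $C$: if $m\in P$ lies on $\R_{\geqslant 0}m_{\fd_1^\iin}$ and $m=m_1+\dots+m_j$ with all $m_i\in P$, then each $m_i$ also lies on $\R_{\geqslant 0}m_{\fd_1^\iin}$, so the $m_i$ are pairwise parallel and $\langle m_i,m_{i'}\rangle=0$. By the standing assumption that $[\fg_{m},\fg_{m'}]=0$ whenever $\langle m,m'\rangle=0$, every iterated bracket of elements $\overline{\Omega}_\fd\in\fg_{m_\fd}$ ($\fd\in S_k(\fD)$) whose classes sum to such an $m$ and which contains at least one bracket vanishes. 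Hence in the Baker--Campbell--Hausdorff expansion $\overline{\Omega}^{(k+1)}_m$ reduces to its linear term $\sum_{\fd\in S_k(\fD),\,m_\fd=m}\epsilon_\fd\,\overline{\Omega}_\fd$, where, with the orientation conventions in the definition of $\Phi^{\fD}_{\alpha,k}$, the crossing sign $\epsilon_\fd$ of the anticlockwise loop equals $-1$ for an ingoing ray and $+1$ for an outgoing ray of a given class.

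Applying this with $m=m_{\fd_1^\iin}$: at the unique stage $k+1$ with $k<\varphi(m_{\fd_1^\iin})\leqslant k+1$ the only ray of $S_k(\fD)$ of this class is the ingoing ray $\fd_1^\iin$, since no outgoing ray of class $m_{\fd_1^\iin}$ can have been produced earlier ($m_{\fd_1^\iin}\notin M^{(k'+1)}$ for $k'<k$). Thus $\overline{\Omega}^{(k+1)}_{m_{\fd_1^\iin}}=-\overline{\Omega}_{\fd_1^\iin}$, and Proposition \ref{prop_local_consistent_completion} adds precisely the outgoing ray $\fd_1^\oout\coloneqq\bigl(-\R_{\geqslant 0}m_{\fd_1^\iin},\,\overline{\Omega}_{\fd_1^\iin}t^{\varphi(m_{\fd_1^\iin})}\bigr)$, that is, with $H_{\fd_1^\oout}=H_{\fd_1^\iin}$. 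A secondary induction then shows that no further ray ever appears on the line $\R m_{\fd_1^\iin}$: at each later stage the rays of $S_k(\fD)$ supported there come in ingoing/outgoing pairs of equal class with the same $\overline{\Omega}$ and opposite crossing sign, whose contributions to $\overline{\Omega}^{(k+1)}_m$ cancel. The same argument on the other extreme ray produces $\fd_2^\oout$ with $H_{\fd_2^\oout}=H_{\fd_2^\iin}$, and since $\fd_1^\oout$ and $\fd_2^\oout$ are supported on the two extreme rays of $-C$ they delimit it, completing the proof.

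I expect the main difficulty to lie in the second assertion: one must fix the crossing-sign convention for ingoing versus outgoing rays along an anticlockwise loop, verify that on an extreme ray the Baker--Campbell--Hausdorff correction terms genuinely all vanish, and run the secondary induction showing that the matching outgoing ray $\fd_i^\oout$ is created exactly once and thereafter neutralises $\fd_i^\iin$ to all orders, so that the two extreme rays pass through unchanged while the interior of the cone genuinely scatters.
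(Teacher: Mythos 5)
Your proof is correct, but takes a genuinely different route from the paper's. The paper's proof is a one-liner invoking the perturbation trick of Gross--Pandharipande--Siebert (\cite[\S 1.4]{MR2667135}) to reduce to the elementary two-ray local scattering diagram, then citing \cite[Lemma 1.9]{MR2667135} (and \cite[Lemma 2.9]{mandel2015scattering} for a general Lie algebra $\fg$), where the assertion is immediate. You instead argue directly through the inductive construction of Proposition~\ref{prop_local_consistent_completion}: the $M$-grading of $\fg$ and the Baker--Campbell--Hausdorff expansion confine all new classes to the submonoid $P=C\cap M$, giving the first assertion, and strict convexity of $C$ together with the standing hypothesis $[\fg_m,\fg_{m'}]=0$ when $\langle m,m'\rangle=0$ kills every bracket term in degree lying on a boundary half-line, reducing the scattering there to the linear matching of ingoing and outgoing rays. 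Your route is self-contained (no external perturbation lemma) and shows precisely why the boundary passes through unchanged; the paper's route is shorter on the page but outsources the real content to a reduction with its own technicalities (generic perturbation, commutation with the order-by-order limit). Two small points worth recording explicitly if you wrote this up: the sign $\epsilon_\fd=-1$ (ingoing) versus $+1$ (outgoing) along an anticlockwise loop should be verified once from $\epsilon_\fd=\operatorname{sign}\det(\alpha'(t),m_\fd)$; and since $\fD$ may contain several ingoing rays along the same boundary half-line (of classes $m_{\fd_1^\iin}$, $2m_{\fd_1^\iin}$, \dots), the conclusion of your secondary induction should be phrased as ``each boundary ingoing ray acquires its matching outgoing ray, and thereafter nothing else appears on that line,'' rather than ``no further ray ever appears.''
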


\begin{proof}
We use the perturbation trick of 
\cite[\S 1.4]{MR2667135} in order to reduce the question to the case of an `elementary' local scattering diagram, as 
\cite[Lemma 1.9]{MR2667135} 
(see
\cite[Lemma 2.9]{mandel2015scattering}
for the case of a general Lie algebra $\fg$), for which the result is clear. Indeed, a consistent elementary local scattering diagram has two ingoing rays of class $m_1$ and $m_2$, and three outgoing rays of class $m_1$, $m_2$ and $m_1+m_2$. Furthermore, the outgoing rays of class $m_1$ and $m_2$ are continuations of the two ingoing rays.
\end{proof}

\subsection{Scattering diagrams} \label{section_scattering_definitions}

In this section, we fix
$M \simeq \Z^2$ a two-dimensional lattice
and 
\[\fg=\bigoplus_{m \in M} \fg_m \,,\] 
a $M$-graded Lie algebra over 
$\Q$ (that is, with $[\fg_{m},\fg_{m'}]\subset 
\fg_{m+m'}$) such that 
$[\fg_m,\fg_{m'}]=0$ if 
$m$ and $m'$ are collinear. 

We also fix $U$ an open subset of $\R^2$, of closure $\bar{U}$ in $\R^2$. For every $\sigma \in U$, we think about $M_\R
=M \otimes \R$ as being the tangent space to $U$ at $\sigma$.

\begin{defn} \label{def_naked_ray}
For every $m \in M$, a \emph{naked ray of class} $m$ in $U$ is a subset $|\fd|$ of $\bar{U}$
of the form \[|\fd|=\Init(\fd)-\R_{\geqslant 0} m \,,\] for some 
$\Init(\fd) \in \bar{U}$, or of the form 
\[|\fd|=\Init(\fd)-[0,T_\fd]m\,,\] 
for some 
$\Init(\fd) \in \bar{U}$ and some $T_\fd \in \R_{>0}$.

In both cases, we call $\Init(\fd)$ the initial point of $|\fd|$. If 
$|\fd|$ is bounded, that is, if 
$|\fd|=\Init(\fd)-[0,T_\fd]m$, we call 
$\Init(\fd)-T_\fd m$ the endpoint of $|\fd|$.
\end{defn}

\begin{defn} \label{def_ray}
For every $m \in M$, a \emph{ray $\fd$ of class} $m$ in $U$  for 
$(M,\fg)$ is a pair
$(|\fd|, H_\fd)$, where 
\begin{itemize}
    \item[(i)] $|\fd|$ is a naked ray of class $m$ in $U$,
    \item[(ii)] $H_\fd$ is a nonzero element of 
    $\fg_m$.
\end{itemize}
\end{defn}

It follows from Definition 
\ref{def_ray} that the class of a ray is uniquely determined by the ray. 

\begin{defn}
We denote by
$m_\fd \in M$ the class of a ray $\fd$.
\end{defn}

In \cref{section_local_scattering}, the notion of local scattering diagram depends on a choice of function $\varphi \colon M \rightarrow \R$.
This function plays the role of `regulator' for local scattering diagrams: for every $k$, there are only finitely many local rays of class $m$
with $\varphi(m) \leq k$. In this section, we are in a global setting and 
the `regulator' function will depend on the point 
$\sigma \in U$: for every $\sigma \in U$, we introduce a function 
$\varphi_\sigma$ that will play the role of `regulator' for the part of the 
scattering diagram localized around $\sigma$.

\begin{defn} \label{def_varphi}
For every $\sigma=(x,y) \in \bar{U}$, we denote
\begin{align*} \varphi_\sigma \colon M_\R &\longrightarrow \R \\
(a,b) &\longmapsto 2(-ax-b) \,.\end{align*}
\end{defn}

The function that we denote by $\varphi_\sigma$ should be denoted by
$-\varphi_\sigma$ using the notation of \cite{MR2846484, MR2722115}. Given our conventions in 
Definition \ref{def_scattering}, the descriptions are equivalent. The notation of \cite{MR2846484, MR2722115} is the most natural from the point of view of the mirror construction, but, for the purposes of the present paper, we prefer to incorporate a minus sign once and for all in the definition of $\varphi_\sigma$ in order to have less signs in the later formulas. 
For every $\sigma \in \bar{U}$, the function
$\varphi_\sigma$ restricted to $M$ is an additive real-valued function on $M$.

\begin{lem}\label{lem_increasing}
Let $\fd=(|\fd|,H_\fd)$ be a ray in $U$.
Then the function given by 
\begin{align*}
\R_{\geqslant 0} &\longrightarrow \R\\
\tau &\longmapsto \varphi_{\Init(\fd)-\tau m_\fd}(m_\fd)
\end{align*}
if $|\fd|=\Init(\fd)-\R_{\geqslant 0}m_\fd$, or by 
\begin{align*}
[0,T_\fd] &\longrightarrow \R\\ 
\tau &\longmapsto \varphi_{\Init(\fd)-\tau m_\fd}(m_\fd)\end{align*}
if $|\fd|=\Init(\fd)-[0,T_\fd]m_\fd$,
is increasing. More precisely, writing
$m_\fd=(a,b)$, this function is strictly increasing if $a \neq 0$ and constant if $a=0$.

Moreover, if $|\fd|=\Init(\fd)-[0,T_\fd]m_\fd$, writing $\Init(\fd)=(x_i,y_i)$ and 
$\Init(\fd)-T_\fd m_\fd=(x_f,y_f)$, we have 
\[ \varphi_{\Init(\fd) -T_\fd m_\fd}(m_\fd)
-\varphi_{\Init(\fd)}(m_\fd) \geqslant 2|x_f-x_i|\,.\]
\end{lem}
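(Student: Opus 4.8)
The plan is to reduce the statement to a single explicit evaluation of $\varphi_\sigma$ along the support of $\fd$, after which both assertions become elementary; the only input that is not purely formal is the integrality of the class $m_\fd$.

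First I would write $\Init(\fd)=(x_i,y_i)$ and $m_\fd=(a,b)$, recording that $a,b\in\Z$ since $m_\fd\in M\simeq\Z^2$. The point $\Init(\fd)-tm_\fd$ has coordinates $(x_i-ta,\,y_i-tb)$, so by Definition \ref{def_varphi}
\[ \varphi_{\Init(\fd)-tm_\fd}(m_\fd)=2\bigl(-a(x_i-ta)-b\bigr)=2a^2\,t+2(-ax_i-b)\,. \]
Note that $y_i$ does not enter, consistently with the fact that $\varphi_\sigma$ depends only on the first coordinate of $\sigma$. This is an affine function of $t$ with slope $2a^2\geqslant 0$, hence increasing; it is strictly increasing precisely when $a\neq 0$ and constant precisely when $a=0$, which is the first assertion.

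For the \textbf{moreover} part, suppose $|\fd|=\Init(\fd)-[0,T_\fd]m_\fd$ and set $\Init(\fd)-T_\fd m_\fd=(x_f,y_f)$, so that $x_f=x_i-T_\fd a$. Evaluating the displayed affine formula at $t=T_\fd$ and at $t=0$ and subtracting gives
\[ \varphi_{\Init(\fd)-T_\fd m_\fd}(m_\fd)-\varphi_{\Init(\fd)}(m_\fd)=2a^2\,T_\fd\,. \]
Since $T_\fd>0$ we have $|x_f-x_i|=T_\fd|a|$, so the asserted inequality is exactly $2a^2T_\fd\geqslant 2T_\fd|a|$, i.e.\ $a^2\geqslant|a|$, which holds because $a$ is an integer. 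There is no genuine obstacle here: the Lemma is a bookkeeping statement, and the one point requiring a moment's care is that the final inequality really uses $a\in\Z$ (it would fail for $0<|a|<1$), which is the kind of integrality that makes the $t$-adic estimates elsewhere in \cref{section_scattering_diagram} work.
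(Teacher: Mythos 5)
Your proof is correct, and it is actually more complete than the one in the paper.

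Both proofs substitute $\Init(\fd)-tm_\fd=(x_i-ta,\,y_i-tb)$ into Definition \ref{def_varphi} to get $\varphi_{\Init(\fd)-tm_\fd}(m_\fd)=2(-a\,x(t)-b)$. The paper then argues the monotonicity by case distinction on the sign of $a$ (and the monotonicity of $x(t)$), whereas you expand once and for all to the affine formula $2a^2t+2(-ax_i-b)$, so the slope $2a^2\geqslant 0$ is visible and all three assertions of the first part drop out with no cases. That is a small simplification, but it has a real payoff: the paper's written proof in fact stops after the monotonicity claim and never addresses the \textbf{moreover} inequality at all, while your explicit formula yields it immediately as $2a^2T_\fd\geqslant 2T_\fd|a|$. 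Your remark that this last step genuinely uses $a\in\Z$ (the inequality $a^2\geqslant|a|$ fails for $0<|a|<1$) is a good observation; it is the place where the lattice structure of $M$ enters, and it is exactly what feeds the finiteness/bounding arguments later (e.g.\ in the proof of Proposition \ref{prop_consistent_completion} and in Lemma \ref{lem_x_bound}, both of which invoke "the second part of Lemma \ref{lem_increasing}").
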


\begin{proof}
We write $m_\fd=(a,b)$. If $a \geqslant 0$, then the $x$-coordinate $x(\tau)$ of $\Init(\fd)-\tau m_\fd$
is an decreasing function of $\tau$, and so 
$\varphi_{\Init(\fd)-\tau m_\fd}(m_\fd)=2(-ax(\tau)-b)$ is an increasing function of $\tau$, strictly increasing if $a \neq 0$ and constant if $a=0$.

Similarly, if $a<0$, then the $x$-coordinate $x(\tau)$ of $\Init(\fd)-\tau m_\fd$
is an increasing function of $\tau$, and so 
$\varphi_{\Init(\fd)-\tau m_\fd}(m_\fd)=2(-ax(\tau)-b)$ is a strictly increasing function of $\tau$.
\end{proof}

\begin{defn}\label{def_normalized}
A collection $\fD$ of rays 
$\fd=(|\fd|,H_\fd)$ in $U$ for 
$(M,\fg)$ is \emph{normalized} if:
\begin{enumerate}
\item[(i)] For every $\sigma \in U$ and for every nonzero $m \in M$, there is at most 
	one ray $\fd$ in $\fD$ of class $m$ such that 
	$\sigma$ belongs to the interior of 
	$|\fd|$.
\item[(ii)] There do no exist rays $\fd_1=(|\fd_1|,H_{\fd_1})$ and $\fd_2=(|\fd_2|,H_{\fd_2})$ in $\fD$ such that the endpoint of $|\fd_1|$ coincides with the initial point of $|\fd_2|$, and such that 
$H_{\fd_1}=H_{\fd_2}$.
\end{enumerate}
\end{defn}

Given a collection 
$\fD$ of rays in $U$ for 
$(M,\fg)$, there is a canonical way to produce a normalized collection of rays that we call the 
\emph{normalization} of $\fD$. 
The normalization of $\fD$ is obtained by repeated use of the following operations:
\begin{enumerate}
\item[(i)] If $\fd_1=(|\fd_1|,H_{\fd_1})$ and $\fd_2=(|\fd_2|,H_{\fd_2})$ are two rays of the same class $m$ with $|\fd_1|\cap |\fd_2|$ of nonempty interior, replace 
$\fd_1$ and $\fd_2$ by the rays 
$(|\fd_1|-|\fd_1 \cap \fd_2|, H_{\fd_1})$,
$(|\fd_2|-|\fd_1 \cap \fd_2|, H_{\fd_2})$,
$(|\fd_1| \cap |\fd_2|, H_{\fd_1}+H_{\fd_2})$.
\item[(ii)] If $\fd_1$ and $\fd_2$ are two rays 
of $\fD$ such that the endpoint of 
$|\fd_1|$ coincides with the initial point of $\fd_2$ and such that 
$H_{\fd_1}=H_{\fd_2}$, replace 
$\fd_1$ and $\fd_2$ by the ray 
$(|\fd_1|\cup|\fd_2|,H_{\fd_1})$.
\end{enumerate}

\begin{defn}\label{def_scattering}
A \emph{scattering diagram} on $U$
for $(M,\fg)$ is a collection $\fD$ of rays
$\fd=(|\fd|,H_\fd)$ in $U$ for $(M,\fg)$, such that:
\begin{enumerate}
	\item[(i)] The collection $\fD$ is normalized.
    \item[(ii)] If $\fd=(|\fd|, H_\fd)$ is a ray in $\fD$, then $\varphi_\sigma(m_\fd)>0$ for every $\sigma \in |\fd|\cap U$.
    \item[(iii)] For every compact set $K$ in $U$ and for every $k \in \R_{\geqslant 0}$, the set of rays
$\fd=(|\fd|, H_\fd)$ in $\fD$ such that there exists $\sigma \in |\fd| \cap K$
such that $\varphi_\sigma (m_\fd)\leqslant k$ is finite.
\end{enumerate}
\end{defn}

\begin{defn}
The \emph{singular locus} $\Sing(\fD)$ of a scattering diagram $\fD$ is the set of the 
initial points of the rays and of the non-trivial intersection points of the rays, 
that is,
\[\Sing(\fD)\coloneq \bigcup_{\fd \in \fD} \{\Init(\fd)\} \cup \bigcup_{\substack{\fd_1, \fd_2 \in \fD \\ \dim |\fd_1| \cap |\fd_2| =0}} (|\fd_1| \cap |\fd_2|) \,.\]
\end{defn}

Let $\fD$ be a scattering diagram on $U$ for $(M, \fg)$, 
and let $\sigma \in U$. We explain how to define a local scattering 
diagram $\fD_\sigma$ for $(M, \varphi_\sigma, \fg)$ in the sense of \cref{section_local_scattering}.
The local scattering diagram $\fD_\sigma$ is a local picture of $\fD$ around the point $\sigma$, $M_\R=M \otimes \R$ being identified with the tangent space to $U$ at $\sigma$.
More precisely, local rays of $\fD_\sigma$ are constructed from rays of $\fD$ as follows:
\begin{itemize}
\item[(i)]
Let $\fd=(|\fd|, H_\fd)$ be a ray of class $m_\fd$ of $\fD$ such that $\Init(\fd)=\sigma$. Then we define 
$|\fd_\sigma| \coloneq -\R_{\geqslant 0}m_\fd$ and we view $\fd_\sigma \coloneq (|\fd_\sigma|, H_\fd)$ as a (outgoing) local ray of $\fD_\sigma$.
\item[(ii)]
Let $\fd=(|\fd|,H_\fd)$ be a ray of class $m_\fd$ of $\fD$ such that $\sigma \in |\fd|$ and 
$\sigma \neq \Init(\fd)$. Then we define $|\fd_{\sigma,out}| \coloneq -\R_{\geqslant 0}m_\fd$ and we view $\fd_{\sigma,out} \coloneq (|\fd_{\sigma,out}|, H_\fd)$ as a (outgoing) local ray of $\fD_\sigma$. We also define 
$|\fd_{\sigma,in}| \coloneq \R_{\geqslant 0}m_\fd$ and we view 
$\fd_{\sigma,in} \coloneq (|\fd_{\sigma,in}|, H_\fd)$ as a (ingoing) local ray of $\fD_\sigma$.
\end{itemize}

\begin{lem} \label{lem_local_scattering}
The collection of local rays $\fD_\sigma$ is a local scattering diagram for
$(M, \varphi_\sigma,\fg)$
in the sense of Definition \ref{def_local_scattering}.
\end{lem}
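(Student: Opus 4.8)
The plan is to verify, one by one, the three conditions of Definition \ref{def_local_scattering} for the collection $\fD_\sigma$ of local rays constructed from $\fD$. The work is entirely bookkeeping: every local ray of $\fD_\sigma$ is attached either to a ray $\fd$ of $\fD$ with $\Init(\fd)=\sigma$ (contributing one outgoing local ray), or to a ray $\fd$ with $\sigma$ in the relative interior of $|\fd|$ (contributing one ingoing and one outgoing local ray), and in each case the Lie-algebra element is $H_\fd\, t^{\varphi_\sigma(m_\fd)}$ with $H_\fd\in\fg_{m_\fd}$ nonzero. So the function $\varphi=\varphi_\sigma$ here is the additive $\R$-valued function of Definition \ref{def_varphi}, and one must confirm that $\varphi_\sigma(m_\fd)\geqslant 0$ so that these are legitimate local rays in the sense of Definition \ref{def_local_ray}: this is immediate from condition (2) of Definition \ref{def_scattering}, which gives $\varphi_\sigma(m_\fd)>0$ for $\sigma\in|\fd|\cap U$, hence in particular for our fixed $\sigma\in U$.

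For condition (1) of Definition \ref{def_local_scattering} — at most one ingoing and at most one outgoing local ray of each class $m$ — I would argue as follows. Two outgoing local rays of class $m$ would come either from two rays $\fd_1,\fd_2$ of $\fD$ of class $m$ both passing through $\sigma$, or from one ray through $\sigma$ and one ray initiating at $\sigma$; in either situation the two rays of $\fD$ share the point $\sigma$ in the interior or as an endpoint, and the normalization hypothesis (condition (1) of Definition \ref{def_scattering}, item (1) of Definition \ref{def_normalized}) rules out two distinct rays of the same class having overlapping interiors through $\sigma$. The edge cases where $\sigma$ is an endpoint need a short separate look, but collinear rays of the same class meeting end-to-start are merged under normalization (item (2) applies when the $H$'s agree; when they differ the two rays have disjoint interiors so contribute to different local naked rays $\pm\R_{\geqslant 0}m$ only if they actually continue each other, which normalization handles). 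The ingoing case is symmetric. Condition (2) — $\varphi_\sigma(m_\fd)>0$ for every local ray — is exactly condition (2) of Definition \ref{def_scattering} evaluated at our $\sigma\in|\fd|\cap U$, as noted above.

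The main obstacle, and the only condition requiring the finiteness input, is condition (3): for each $k\in\R_{\geqslant 0}$, only finitely many local rays of $\fD_\sigma$ have $\varphi_\sigma(m_\fd)\leqslant k$. Here I would take a compact neighbourhood $K$ of $\sigma$ contained in $U$ and apply condition (3) of Definition \ref{def_scattering} to $K$ and $k$: this gives finitely many rays $\fd$ of $\fD$ with some point of $|\fd|\cap K$ on which $\varphi$ takes value $\leqslant k$. Every ray of $\fD$ giving rise to a local ray of $\fD_\sigma$ with $\varphi_\sigma(m_\fd)\leqslant k$ has $\sigma\in|\fd|\cap K$, so it is among this finite list; since each such ray contributes at most two local rays to $\fD_\sigma$, condition (3) follows. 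The one point to be careful about is that $\sigma$ itself must lie in $|\fd|\cap K$ — true by construction of $\fD_\sigma$ — and that $K$ can indeed be chosen compact inside the open set $U$, which is where the hypothesis $U$ open is used. With the three conditions checked, $\fD_\sigma$ is a local scattering diagram for $(M,\langle-,-\rangle,\varphi_\sigma,\fg)$, proving the lemma.
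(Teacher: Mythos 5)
Your proof is correct and takes the same route as the paper: the paper's own proof is a single sentence noting that conditions (1)--(3) of Definition \ref{def_local_scattering} follow respectively from conditions (1)--(3) of Definition \ref{def_scattering}, and you have simply unpacked each implication. Two small remarks. For condition (3) you can take $K=\{\sigma\}$ itself, which is already a compact subset of the open set $U$; there is no need to pass to a compact neighbourhood. For condition (1), the edge case you flag (a bounded ray $\fd_1$ of class $m$ ending at $\sigma$ together with a ray $\fd_2$ of the same class starting at $\sigma$ with $H_{\fd_1}\neq H_{\fd_2}$) is a real loose end, but it is a loose end in the paper's definition of $\fD_\sigma$ rather than in your proof: as written, $\fd_1$ contributes both an ingoing and an outgoing local ray of class $m$ to $\fD_\sigma$, and $\fd_2$ contributes a second outgoing local ray of class $m$, so the construction should really be read as treating the case where $\sigma$ is the endpoint of a bounded ray as producing only an ingoing local ray. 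With that understanding, normalization condition (1) of Definition \ref{def_normalized} gives at most one ingoing and at most one outgoing local ray per class exactly as you argue, and your verification matches the paper's.
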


\begin{proof}
Conditions (i)-(2)-(3) of Definition \ref{def_local_scattering} for $\fD_\sigma$ follow respectively from conditions (i)-(2)-(3) of Definition 
\ref{def_scattering} for $\fD$.
\end{proof}

\begin{defn} \label{def_consistency_order_k}
Let $k$ be a nonnegative integer.
A scattering diagram $\fD$ on $U$
for $(M,\fg)$
is 
\emph{consistent at order $k$} if, for every $\sigma \in U \cap \Sing(\fD)$, the local scattering 
diagram $\fD_\sigma$ for 
$(M,\varphi_\sigma,\fg)$ is consistent at order $k$ in the sense of Definition \ref{def_local_consistency_order_k}.
\end{defn}

\begin{defn} \label{def_consistency}
A scattering diagram $\fD$
on $U$
for $(M,\fg)$
is 
\emph{consistent} if it is consistent at order $k$ for every nonnegative integer $k$, or equivalently if for every $\sigma \in U \cap \Sing(\fD)$, the local scattering 
diagram $\fD_\sigma$ for 
$(M,\varphi_\sigma,\fg)$ is consistent in the sense of Definition \ref{def_local_consistency}.
\end{defn}

\subsection{The scattering diagrams $S(\fD^\iin)$}\label{section_initial}
In this section we construct specific examples of scattering diagrams in the sense of  \cref{section_scattering_definitions}.
We consider the open subset $U$ of $\R^2$ given by 
\[ U=\{(x,y) \in \R^2 \,|\, y >-\frac{x^2}{2}\}\,.\]
Its closure in $\R^2$ is 
\[ \bar{U}=\{(x,y) \in \R^2 \,|\, y \geqslant -\frac{x^2}{2}\}\,,\]
and so its boundary $\partial U \coloneq \bar{U}-U$ is the parabola of equation $y=-\frac{x^2}{2}$.

\begin{lem}\label{lem_stay_in_U}
Let $\sigma \in U$ and 
$m \in M$ such that 
$\varphi_\sigma(m)>0$. Then
\[\sigma - \R_{\geqslant 0} m \subset U \,.\]
\end{lem}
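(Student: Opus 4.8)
The plan is to parametrize the half-ray and reduce the claim to a one-variable positivity statement. Writing $\sigma=(x,y)$ and $m=(a,b)\in M$, a general point of $\sigma-\R_{\geqslant 0}m$ has the form $\sigma-tm=(x-ta,\,y-tb)$ with $t\geqslant 0$. Since membership in $U$ is governed by the single inequality $x^2+2y>0$, it suffices to prove that
\[ g(t) \coloneq (x-ta)^2 + 2(y-tb) > 0 \quad \text{for all } t\geqslant 0 \,.\]

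First I would compute the derivative: $g'(t) = -2a(x-ta)-2b = 2a^2 t - 2(ax+b)$. Now the hypothesis $\varphi_\sigma(m)>0$ unwinds, by Definition \ref{def_varphi}, to $2(-ax-b)>0$, i.e.\ $ax+b<0$; hence $g'(t) = 2a^2 t + \varphi_\sigma(m) \geqslant \varphi_\sigma(m) > 0$ for every $t\geqslant 0$. (Equivalently, one observes $g'(t)=\varphi_{\sigma-tm}(m)$, which is positive for all $t\geqslant 0$ by Lemma \ref{lem_increasing} together with $\varphi_\sigma(m)>0$.) Therefore $g$ is strictly increasing on $\R_{\geqslant 0}$.

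To conclude, note that $g(0)=x^2+2y>0$ because $\sigma\in U$, so $g(t)\geqslant g(0)>0$ for all $t\geqslant 0$, which is exactly the statement that $\sigma-tm\in U$. I do not anticipate any real obstacle here: the computation is immediate once one observes that $g'$ is, up to nothing, the function $\varphi$ evaluated along the ray. The only point deserving a word is the degenerate case $a=0$ (a vertical ray), but it is handled uniformly by the displayed inequality, where $g'(t)\equiv\varphi_\sigma(m)>0$ is simply a positive constant.
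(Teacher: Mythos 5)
Your proof is correct and is essentially the same as the paper's: both reduce the claim to the positivity of $g(t)=(x-ta)^2+2(y-tb)$ for $t\geqslant 0$, using $\sigma\in U$ for $g(0)>0$ and $\varphi_\sigma(m)>0$ for the rest. The only cosmetic difference is that you establish this via monotonicity (computing $g'(t)=2a^2t+\varphi_\sigma(m)>0$), whereas the paper simply expands $g(t)=x^2+2y+2t(-ax-b)+t^2a^2$ and observes that the last two summands are nonnegative.
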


\begin{proof}
We write $\sigma=(x,y) \in \R^2$ and 
$m=(a,b) \in \Z^2$. For every $\tau \in \R_{\geqslant 0}$, we have 
$\sigma -\tau m=(x-\tau a, y-\tau b)$ and
\[(x-\tau a)^2+2(y-\tau b)=x^2+2y+2\tau(-ax-b)+\tau^2 a^2 \geqslant x^2+2y+2\tau(-ax-b)\,.\]
As $\sigma \in U$, we have $x^2+2y>0$. 
On the other hand, we have $\varphi_\sigma(m)=2(-ax-b) \geqslant 0$. It follows that $(x-\tau a)^2+2(y-\tau b)>0$, that is, 
$\sigma -tm \in U$.
\end{proof}

\begin{lem} \label{lem_x2+2y_increasing}
Let $\fd$ be a ray in $U$ such that 
$\varphi_{\sigma}(m_\fd)>0$ for every 
$\sigma \in |\fd| \cap U$. 
Then, writing 
$\Init(\fd)-\tau m_\fd=(x(\tau),y(\tau))$ for every
$\tau \in \R_{\geqslant 0}$, the function given by  
\begin{align*} \R_{\geqslant 0} &\longrightarrow \R \\
\tau &\longmapsto x(\tau)^2+2y(\tau)
\end{align*}
if $|\fd|=\Init(\fd)-\R_{\geqslant 0} m_\fd$, or by 
\begin{align*} [0,T_\fd]& \longrightarrow \R\\
 \tau &\longmapsto x(\tau)^2+2y(\tau)
\end{align*}
if $|\fd|=\Init(\fd)-[0,T_\fd]m_\fd$, is strictly increasing.
\end{lem}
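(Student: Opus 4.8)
The plan is to reduce the statement to the computation of a single derivative. First I would parametrize: write $\Init(\fd)=(x_0,y_0)$ and $m_\fd=(a,b)$, so that $\sigma(t)\coloneqq \Init(\fd)-tm_\fd = (x_0-ta,\,y_0-tb)=(x(t),y(t))$ and
\[ x(t)^2+2y(t) = (x_0-ta)^2 + 2(y_0-tb)\,. \]
Differentiating in $t$ gives
\[ \frac{d}{dt}\bigl(x(t)^2+2y(t)\bigr) = -2a(x_0-ta)-2b = 2\bigl(-a\,x(t)-b\bigr) = \varphi_{\sigma(t)}(m_\fd)\,, \]
by Definition \ref{def_varphi}. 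So the derivative of the function under consideration is exactly $t\mapsto \varphi_{\sigma(t)}(m_\fd)$, which by Lemma \ref{lem_increasing} is a non-decreasing function of $t$ (an affine function with slope $2a^2$: strictly increasing if $a\neq 0$, constant if $a=0$).

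Next I would bring in the hypothesis. It only asserts $\varphi_\sigma(m_\fd)>0$ for $\sigma$ in $|\fd|\cap U$, i.e.\ on the part of the ray lying in the open set $U$, whereas I need positivity of the derivative along the whole domain (which may include a boundary endpoint). The bridge is that $\partial U$ is the parabola $y=-x^2/2$, and a line meets a parabola in at most two points; since $|\fd|$ is contained in $\bar U$ and lies on the line $\{\Init(\fd)-tm_\fd\}$, we get $\sigma(t)\in U$ for every $t$ in the domain interval except at most two values. For all such $t$ the hypothesis yields $\varphi_{\sigma(t)}(m_\fd)>0$, that is, the derivative computed above is strictly positive off a finite subset of the domain.

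Finally I would conclude. The derivative is a polynomial in $t$ that is strictly positive on the complement of a finite set in the domain interval; by continuity it is therefore $\geqslant 0$ on the whole interval, and $>0$ off that finite set. Hence for $s<t$ in the domain,
\[ \bigl(x(t)^2+2y(t)\bigr) - \bigl(x(s)^2+2y(s)\bigr) = \int_s^t \varphi_{\sigma(r)}(m_\fd)\,dr > 0\,, \]
so the function is strictly increasing. (One could equivalently split into the cases $a=0$, where the derivative is a positive constant, and $a\neq 0$, where it is a strictly increasing affine function whose value at $0$ is $\geqslant 0$ by taking $t\to 0^+$ among the $t$ with $\sigma(t)\in U$.) The only genuinely delicate point is this last matching of ``positive on $|\fd|\cap U$'' with ``positive along the full ray,'' and it is resolved entirely by the elementary observation that a line and a parabola meet in at most two points; everything else is routine.
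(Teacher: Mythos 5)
Your proof is correct and follows essentially the same route as the paper's: compute $\frac{d}{dt}(x(t)^2+2y(t))=2(-ax(t)-b)=\varphi_{\sigma(t)}(m_\fd)$, note this is positive wherever $\sigma(t)\in U$, and handle the at-most-finitely-many boundary points via the line-meets-parabola observation plus continuity. The paper phrases the last step slightly more tersely (it notes $|\fd|$ can touch the boundary parabola in at most one point, since a chord of the concave boundary would leave $\bar U$), but the content is identical.
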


\begin{proof}
Writing $m_\fd=(a,b) \in \Z^2$, we have, for every 
$\sigma=(x(\tau),y(\tau)) \in |\fd| \cap U$, 
\[ \frac{d}{d\tau}(x(\tau)^2+2y(\tau))=2(-ax(\tau)-b)
=\varphi_{\sigma}(m_\fd)>0\,.\]
As $|\fd| \cap \bar{U}$ is at most a
point (indeed, $|\fd| \subset \bar{U}$ 
and $\partial{U}$ is a parabola), and 
$\tau \mapsto x(\tau)^2+2y(\tau)$ is continuous, we conclude that 
$\tau \mapsto x(\tau)^2+2y(\tau)$ is strictly increasing.
\end{proof}

\begin{defn}
For every $n \in \Z$, we denote \[s_n \coloneq (n,-\frac{n^2}{2}) \in \bar{U}-U \subset \R^2\,\] 
\[m_n^{-} \coloneq (1,-n) \in \Z^2\,\] 
\[m_n^{+} \coloneq (-1,n) \in \Z^2\,.\]
\end{defn}

\begin{defn} \label{def_first_scattering}
For every $n \in \Z$, we define naked rays
\[|\fd_n^{+}| \coloneq s_n - [0,\frac{1}{2}]m_n^+\,,\]
and 
\[|\fd_n^{-}| \coloneq s_n - [0,\frac{1}{2}]m_n^{-}\,.\]
\end{defn}

We have $|\fd_n^+|\cap U =  s_n - (0,\frac{1}{2}]m_n^+$ and $|\fd_n^{-}|\cap U \coloneq s_n - (0,\frac{1}{2}]m_n^-$, 
that is, $|\fd_n^+|$ and $|\fd_n^-|$ are contained in $U$, except for their initial point $s_n$ which is on the boundary of $U$. In particular, 
$|\fd_n^+|$ and $|\fd_n^-|$ are naked rays in $U$ in the sense of Definition 
\ref{def_naked_ray}. This is related to the fact that, for every 
$n \in \Z$, the line $\R m_n^-=\R m_n^+$
is the tangent at the point $s_n$ to the parabola 
$y=-\frac{x^2}{2}$, that is, to the boundary of $U$.

\begin{lem} \label{lem_first_scattering}
For every $n \in \Z$, we have 
\[|\fd_n^+| \cap |\fd_{n+1}^-|=s_n-\frac{1}{2}m_n^+=s_{n+1}-\frac{1}{2}m_{n+1}^-
=(n+\frac{1}{2}, -\frac{n^2}{2}-\frac{n}{2})\,.\]
\end{lem}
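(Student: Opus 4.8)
The plan is to prove the lemma by a direct coordinate computation establishing the two displayed equalities, followed by a short argument that the intersection of the two segments cannot be larger than a single point.

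First I would unwind Definition \ref{def_first_scattering}: $|\fd_n^+|$ is the segment $s_n-[0,\frac{1}{2}]m_n^+$ and $|\fd_{n+1}^-|$ is the segment $s_{n+1}-[0,\frac{1}{2}]m_{n+1}^-$, so it suffices to evaluate each at parameter $\frac{1}{2}$. Substituting $s_n=(n,-\frac{n^2}{2})$ and $m_n^+=(-1,n)$ gives $s_n-\frac{1}{2} m_n^+=(n+\frac{1}{2},\,-\frac{n^2}{2}-\frac{n}{2})$, and substituting $s_{n+1}=(n+1,-\frac{(n+1)^2}{2})$ and $m_{n+1}^-=(1,-(n+1))$ gives $s_{n+1}-\frac{1}{2} m_{n+1}^-=(n+\frac{1}{2},\,-\frac{(n+1)^2}{2}+\frac{n+1}{2})$; the elementary identity $-\frac{(n+1)^2}{2}+\frac{n+1}{2}=-\frac{n(n+1)}{2}=-\frac{n^2}{2}-\frac{n}{2}$ then shows both expressions equal $(n+\frac{1}{2},-\frac{n^2}{2}-\frac{n}{2})$. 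Since $\frac{1}{2}\in[0,\frac{1}{2}]$, this common point lies in both $|\fd_n^+|$ and $|\fd_{n+1}^-|$, so it belongs to their intersection.

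It remains to check the intersection is no larger. The segment $|\fd_n^+|$ is contained in the affine line $s_n+\R m_n^+$ and $|\fd_{n+1}^-|$ in $s_{n+1}+\R m_{n+1}^-$; the direction vectors $m_n^+=(-1,n)$ and $m_{n+1}^-=(1,-(n+1))$ satisfy $(-1)\cdot(-(n+1))-n\cdot 1=1\neq 0$, hence are linearly independent, so the two lines meet in exactly one point and the two segments meet in at most one point. Combining with the previous paragraph yields the claimed equality. There is no genuine obstacle here; the only points requiring a moment's care are verifying that the common point is an actual endpoint of each closed segment (so that it lies in the intersection rather than merely on the ambient lines) and confirming the non-parallelism of the direction vectors, which rules out the two segments overlapping along a whole sub-segment.
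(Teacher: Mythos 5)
Your proof is correct and follows the same approach the paper intends: the paper's proof is the single line ``Elementary computation,'' and you have simply written out that computation, with the welcome extra care of verifying non-parallelism so that the intersection is exactly one point rather than possibly a sub-segment.
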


\begin{proof}
Elementary computation.
\end{proof}

\begin{lem} \label{lem_varphi_first-scattering}
For every $n \in \Z$, we have 
\[\varphi_{|\fd_n^+|\cap |\fd_{n+1}^-|}(m_n^+)
= \varphi_{|\fd_n^+|\cap |\fd_{n+1}^-|}(m_{n+1}^-)=1\,.\]
\end{lem}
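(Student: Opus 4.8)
The plan is to reduce the statement to a one-line substitution, since both ingredients are already available. By Lemma \ref{lem_first_scattering}, the intersection point $\sigma \coloneq |\fd_n^{+}| \cap |\fd_{n+1}^{-}|$ is explicitly
\[ \sigma = \left(n + \tfrac{1}{2},\, -\tfrac{n^2}{2} - \tfrac{n}{2}\right)\,, \]
and by Definition \ref{def_varphi} we have $\varphi_{(x,y)}(a,b) = 2(-ax - b)$. It therefore suffices to evaluate this on the two classes $m_n^{+} = (-1,n)$ and $m_{n+1}^{-} = (1,-(n+1))$.

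First I would compute $\varphi_\sigma(m_n^{+})$: taking $(a,b) = (-1,n)$ and $x = n + \tfrac{1}{2}$ gives $2\big(-(-1)x - n\big) = 2(x - n) = 1$. Then I would compute $\varphi_\sigma(m_{n+1}^{-})$: taking $(a,b) = (1,-(n+1))$ and the same $x$ gives $2\big(-x + (n+1)\big) = 2\big((n+1) - (n+\tfrac{1}{2})\big) = 1$. Both values equal $1$, which is precisely the claim; note that the $y$-coordinate of $\sigma$ plays no role, as is visible from the shape of $\varphi_\sigma$.

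There is essentially no obstacle here. The only points worth double-checking are bookkeeping ones: the sign convention in Definition \ref{def_varphi} (the paper's $\varphi_\sigma$ differs by a sign from the one of \cite{MR2846484}), and the fact that the orientations in Definition \ref{def_first_scattering} are chosen so that $|\fd_n^{+}|$ runs from $s_n$ to $\sigma$ and $|\fd_{n+1}^{-}|$ runs from $s_{n+1}$ to $\sigma$; this orientation is exactly what makes the two values positive, consistently with the monotonicity statement of Lemma \ref{lem_increasing}.
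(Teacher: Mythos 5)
Your proof is correct and takes exactly the same approach as the paper: plug the explicit coordinates of $|\fd_n^+| \cap |\fd_{n+1}^-|$ from Lemma \ref{lem_first_scattering} into the formula $\varphi_{(x,y)}(a,b)=2(-ax-b)$ of Definition \ref{def_varphi} for $m_n^+=(-1,n)$ and $m_{n+1}^-=(1,-(n+1))$. The extra remarks about sign conventions and orientations are accurate but not needed for the computation.
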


\begin{proof}
Using Definition \ref{def_varphi} and Lemma \ref{lem_first_scattering}, we have
\[\varphi_{|\fd_n^+|\cap |\fd_{n+1}^-|}(m_n^+)=2(n+\frac{1}{2}-n)=1\,,\]
and 
\[\varphi_{|\fd_n^+|\cap |\fd_{n+1}^-|}(m_{n+1}^-)=2(-n-\frac{1}{2}+n+1)=1\,.\]
\end{proof}

\begin{figure}[h!]
\centering
\includegraphics[scale=0.8]{parabola}
\caption{The parabola $y=-\frac{x^2}{2}$ 
and the tangent lines
$\R m_n^-=\R m_n^+$ at the points $s_n$.}
\label{fig:universe}
\end{figure}

Let
\[\fg=\bigoplus_{m \in M} \fg_m \,,\] 
a $M$-graded Lie algebra over 
$\Q$ (that is with $[\fg_{m},\fg_{m'}]\subset 
\fg_{m+m'}$) such that 
$[\fg_m,\fg_{m'}]=0$ if 
$m$ and $m'$ are collinear. 
For every $n \in \Z$ and for every integer $\ell \geqslant 1$, 
we fix nonzero elements 
\[H_{n,\ell}^+ \in \fg_{\ell m_n^+}\]
and 
\[H_{n,\ell}^- \in \fg_{\ell m_n^-}\,.\]
For every $n \in \Z$ and for every integer 
$\ell \geqslant 1$, $(|\fd_n^+|, H_{n,\ell}^+)$
and $(|\fd_n^-|, H_{n,\ell}^-)$
are rays in $U$ in the sense of Definition 
\ref{def_ray}.

\begin{lem}
The set $\fD^{\iin}$ of rays 
$\fd_{n,\ell}^+ \coloneq (|\fd_n^+|, H_{n,\ell}^+)$, $\fd_{n,\ell}^{-} \coloneq (|\fd_n^-|, H_{n,\ell}^-)$, for all 
$n \in \Z$ and for every integer $\ell \geqslant 1$, is a scattering diagram on $U$ for $(M,\fg)$ in the sense of Definition \ref{def_scattering}.
\end{lem}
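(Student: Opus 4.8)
The plan is to verify the three conditions of Definition \ref{def_scattering} in turn; all of them reduce to elementary computations combined with the monotonicity statements of Lemma \ref{lem_increasing} and Lemma \ref{lem_varphi_first-scattering}.

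First I would check that $\fD^{\iin}$ is normalized. For condition (1) of Definition \ref{def_normalized}, observe that the classes $\ell m_n^{+} = \ell(-1,n)$ and $\ell m_n^{-} = \ell(1,-n)$, for $n \in \Z$ and $\ell \geqslant 1$, are pairwise distinct: the first coordinate determines the sign and $\ell$, and then the second coordinate determines $n$. Hence $\fD^{\iin}$ contains at most one ray of each class, so condition (1) is automatic. (One may also note that $|\fd_n^{\pm}|$ and $|\fd_{n'}^{\pm}|$ with $n \neq n'$ lie on distinct tangent lines to the parabola $y = -x^2/2$ and so meet in at most one point, while $|\fd_n^{+}|$ and $|\fd_n^{-}|$ meet only at $s_n$.) For condition (2) of Definition \ref{def_normalized}, the endpoint of $|\fd_n^{\pm}|$ is $s_n - \tfrac12 m_n^{\pm}$, whose first coordinate is $n \pm \tfrac12 \notin \Z$, hence never equals the first coordinate $n'$ of an initial point $s_{n'}$; so no endpoint of a ray of $\fD^{\iin}$ is the initial point of a ray, and condition (2) holds vacuously.

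Next, for condition (2) of Definition \ref{def_scattering}, since $\varphi_\sigma$ is additive we have $\varphi_\sigma(\ell m_n^{\pm}) = \ell\,\varphi_\sigma(m_n^{\pm})$, so it suffices to treat $\ell = 1$. A direct computation gives $\varphi_{s_n}(m_n^{\pm}) = 0$, and by Lemma \ref{lem_increasing} (applicable since the first coordinate of $m_n^{\pm}$ is $\mp 1 \neq 0$) the function $t \mapsto \varphi_{s_n - t m_n^{\pm}}(m_n^{\pm})$ is strictly increasing; hence $\varphi_\sigma(m_n^{\pm}) > 0$ for every $\sigma \in |\fd_n^{\pm}| \cap U = s_n - (0,\tfrac12] m_n^{\pm}$. (Alternatively, Lemma \ref{lem_varphi_first-scattering} gives the value $1$ at the far endpoint.)

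Finally, for condition (3) of Definition \ref{def_scattering}, let $K \subset U$ be compact and $k \in \R_{\geqslant 0}$. Since $K$ is bounded and the first coordinate of any point of $|\fd_n^{\pm}|$ lies in $[n-\tfrac12, n+\tfrac12]$, only finitely many $n$ are such that $|\fd_n^{\pm}|$ meets $K$. For each such $n$, the set $|\fd_n^{\pm}| \cap K$ is a compact subset of $U$, hence disjoint from the boundary point $s_n$, so by condition (2) and continuity the function $\sigma \mapsto \varphi_\sigma(m_n^{\pm})$ attains a strictly positive minimum $\delta_n > 0$ on it; then $\varphi_\sigma(\ell m_n^{\pm}) = \ell\,\varphi_\sigma(m_n^{\pm}) \geqslant \ell \delta_n$, so $\varphi_\sigma(m_\fd) \leqslant k$ can hold only for $\ell \leqslant k/\delta_n$. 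Thus only finitely many rays $\fd$ of $\fD^{\iin}$ satisfy the condition, establishing (3). The whole argument is routine; the only point needing a little care is this last one, where one uses both that $K$ is bounded (to cut down to finitely many $n$) and that $K$ is a closed subset of $U$ missing the boundary parabola (to get the uniform positive lower bound on $\varphi$, hence finitely many $\ell$).
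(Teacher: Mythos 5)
Your proof is correct and follows essentially the same route as the paper's: distinctness of classes for normalization, a sign check on the first coordinate for positivity of $\varphi_\sigma$, and a boundedness-plus-positive-lower-bound argument for local finiteness. The only substantive differences are cosmetic: for condition (2) of Definition \ref{def_normalized} you supply the explicit reason (endpoints have half-integer $x$-coordinate, initial points have integer $x$-coordinate) where the paper merely asserts that no endpoint coincides with an initial point, and for condition (3) you take a per-$n$ minimum $\delta_n$ of $\varphi_\sigma(m_n^{\pm})$ on $|\fd_n^{\pm}|\cap K$ whereas the paper uses a single uniform $\epsilon>0$ bounding $|x-n|$ away from zero on $K$ and works directly from the formula $\varphi_\sigma(\ell m_n^{\pm})=2\ell\,|x-n|$; both yield finitely many $(n,\ell)$.
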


\begin{proof}
As the rays of $\fD^\iin$ have distinct classes ($\ell m_n^+=(-\ell,\ell n)$ for
$\fd_{n,\ell}^+$ and 
$\ell m_n^-=(\ell, -\ell n)$ for 
$\fd_{n,\ell}^-$),
and as there do not exist rays $\fd_1$ and $\fd_2$ of $\fD^\iin$ with the endpoint of
$\fd_1$ coinciding with the initial point of $\fd_2$, $\fD^{\iin}$ satisfies condition (i)
of Definition \ref{def_scattering}.

If $\sigma=(x,y) \in |\fd_n^+| \cap U$, then $x > n$, so, for every $\ell \geqslant 1$, $\varphi_\sigma(\ell m_n^+)=\ell (
x-n) >0$. Similarly, if $\sigma=(x,y) \in |\fd_n^-| \cap U$, then $x<n$, so, for every 
$\ell \geqslant 1$,  
$\varphi_\sigma(\ell m_n^-)=\ell(-x+n)>0$. It follows that $\fD^{\iin}$ satisfies condition (ii) of 
Definition \ref{def_scattering}.

Let $K$ be a compact in $U$ and $k \in \R_{\geqslant 0}$. As $K$ is compact in $U$, there exists $C>0$ such that $\sigma=(x,y) \in K$ implies $|x|<C$, and there exists $\epsilon >0$ such that 
$\sigma=(x,y) \in K$ implies $|x-n|>\epsilon$ for all $n \in \Z$. Therefore, if we have 
$\sigma \in K \cap |\fd_n^+|$ such that 
$\varphi_\sigma(\ell m_n^+) \leqslant k$, then 
$\ell (x-n) \leqslant k$, $|x|<C$, $|x-n|>\epsilon$, so
$|n|\leqslant k+C$ and $\ell \leqslant k/\epsilon$: in particular, there are finitely many such $n$ and $\ell$. 

Similarly, if we have 
$\sigma \in K \cap |\fd_n^-|$ such that 
$\varphi_\sigma(\ell m_n^-) \leqslant k$, then 
$\ell (n-x) \leqslant k$, $|x|<C$, $|x-n|>\epsilon$, so
$|n|\leqslant k+C$ and $\ell \leqslant k/\epsilon$. There are again only finitely many such $n$ and $\ell$. This shows that $\fD^{\iin}$ satisfies condition (iii)
of Definition \ref{def_scattering}.
\end{proof}

\begin{prop} \label{prop_consistent_completion}
There exists a unique sequence 
$(S_k(\fD^\iin))_{k \in \NN}$ of scattering
diagrams on $U$ for 
$(M,\fg)$ such that:
\begin{itemize}
\item[(i)] $S_0(\fD^\iin)=\fD^\iin$.
\item[(ii)] For every $k \in \NN$, 
$S_{k+1}(\fD^\iin)$ is obtained by adding to 
$S_k(\fD^\iin)$ rays $\fd$ such that
$|\fd|\subset U$,  $k < \varphi_{\Init(\fd)}(m_\fd) \leqslant k+1$ and $\varphi_{\Init(\fd)}(m_\fd) \geqslant 1$.
\item[(iii)] For every $k \in \NN$,
$S_k(\fD^\iin)$ is consistent at order $k$.
\end{itemize}
For every $k \in \NN$, we call $S_k(\fD^\iin)$ the order $k$ \emph{consistent completion} of $\fD^\iin$. We denote by $S(\fD^\iin)$ the limiting consistent scattering diagram on $U$ for 
$(M,\fg)$ and we call it the \emph{consistent completion} of 
$\fD^\iin$.
\end{prop}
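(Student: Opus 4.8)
The plan is to deduce this global statement from the local consistent-completion result, Proposition \ref{prop_local_consistent_completion}, by carrying out the completion point-by-point over $\Sing$ and then checking that the rays added at different points fit together into a genuine scattering diagram on $U$ in the sense of Definition \ref{def_scattering}. I would proceed by induction on $k$, exactly mirroring the statement: assume $S_k(\fD^\iin)$ has been constructed and is consistent at order $k$, and construct $S_{k+1}(\fD^\iin)$ together with its uniqueness.

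First I would observe that the process only ever adds rays, and that each added ray $\fd$ satisfies $\varphi_{\Init(\fd)}(m_\fd)\geqslant 1$ by fiat, so Lemma \ref{lem_increasing} guarantees $\varphi_\sigma(m_\fd)>0$ along $|\fd|$ (condition (2) of Definition \ref{def_scattering}), and Lemma \ref{lem_stay_in_U} guarantees that outgoing rays emitted at a point of $U$ stay inside $U$. The heart of the induction step is local: for each $\sigma\in U\cap\Sing(S_k(\fD^\iin))$ at which $\fD_\sigma$ (Lemma \ref{lem_local_scattering}) fails to be consistent at order $k+1$, apply Proposition \ref{prop_local_consistent_completion} — whose hypothesis $\varphi(m_\fd)\geqslant c\geqslant 1$ holds with $c=1$ by the previous sentence — to obtain the finitely many new outgoing local rays needed to restore consistency at order $k+1$ at $\sigma$. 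Each such local ray, of class $m$ with $k<\varphi_\sigma(m)\leqslant k+1$, is promoted to a genuine ray $(\sigma-\R_{\geqslant 0}m,H)$ of $S_{k+1}(\fD^\iin)$ with $\Init=\sigma$; by Lemma \ref{lem_stay_in_U} its support lies in $U$. One then takes the union over all such $\sigma$, followed by the normalization operations of \S\ref{section_scattering_definitions}, to obtain $S_{k+1}(\fD^\iin)$.

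The main obstacle — and the reason the $\R$-valued $\varphi$ is not merely cosmetic — is verifying that $S_{k+1}(\fD^\iin)$ is again a \emph{scattering diagram}, i.e.\ that the local finiteness condition (3) of Definition \ref{def_scattering} is preserved, and in particular that the inductive process does not stall by accumulating infinitely many new singular points with $\varphi$-values bounded by $k+1$ inside a compact $K\subset U$. The key estimate is the second half of Lemma \ref{lem_increasing}: for a bounded ray, the increment of $\varphi$ along its support dominates $2|x_f-x_i|$, so a ray travelling a bounded horizontal distance inside $K$ only increases $\varphi$ by a bounded amount, while Lemma \ref{lem_x2+2y_increasing} shows $x^2+2y$ is strictly increasing along rays, preventing rays from lingering near $\partial U$. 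Combining these with the order-$k$ finiteness (condition (3) for $S_k(\fD^\iin)$), which confines the relevant rays to a compact subset, and with the fact that each local completion at a single $\sigma$ produces only finitely many new rays (Proposition \ref{prop_local_consistent_completion}), one shows that only finitely many new singular points with $\varphi\leqslant k+1$ occur in $K$, so condition (3) holds for $S_{k+1}(\fD^\iin)$ and the induction closes.

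Finally, uniqueness at each order follows from the uniqueness clause in Proposition \ref{prop_local_consistent_completion} applied locally: the rays of $S_{k+1}(\fD^\iin)$ with $\varphi_{\Init}\leqslant k$ are pinned down by the uniqueness of $S_k(\fD^\iin)$, and the newly added rays (those with $k<\varphi_{\Init}\leqslant k+1$) are forced at each $\sigma$ by the order-$(k+1)$ consistency of $\fD_\sigma$ together with the Baker–Campbell–Hausdorff analysis of $t$-adic valuations in the proof of Proposition \ref{prop_local_consistent_completion}. The limiting diagram $S(\fD^\iin)\coloneq\bigcup_k S_k(\fD^\iin)$ is then consistent at every order, hence consistent by Definition \ref{def_consistency}, and is a scattering diagram on $U$ since conditions (1)–(3) of Definition \ref{def_scattering} are preserved in the limit (condition (3) being a statement about each fixed order $k$).
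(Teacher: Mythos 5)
Your strategy --- induct on $k$, apply the local Proposition \ref{prop_local_consistent_completion} at each $\sigma\in U\cap\Sing(S_k(\fD^\iin))$, promote the new local outgoing rays to global rays via Lemma \ref{lem_stay_in_U}, take the union and normalize, then check conditions (1)--(3) of Definition \ref{def_scattering} --- is the same as the paper's, and your treatment of condition (3) via the horizontal-displacement estimate in Lemma \ref{lem_increasing} is likewise the paper's. The uniqueness sketch is also on target, though the paper is a touch more careful in first deducing from $\varphi_{\Init(\fd)}\leqslant k+1$ that $\Init(\fd)$ must lie in $\Sing(S_k(\fD^\iin))$ before invoking local uniqueness.

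The genuine gap is that you never verify that $S_{k+1}(\fD^\iin)$ is in fact \emph{consistent at order $k+1$}, which is the content of the third bullet; you declare ``the induction closes'' after checking condition (3). The local completion at a given $\sigma\in\Sing(S_k(\fD^\iin))$ restores order-$(k+1)$ consistency of $S_{k+1}\bigl(S_k(\fD^\iin)_\sigma\bigr)$, but $\bigl(S_{k+1}(\fD^\iin)\bigr)_\sigma$ is in general strictly larger: newly emitted rays from \emph{other} singular points can pass through $\sigma$ as extra ingoing/outgoing pairs, and the new rays also intersect one another at \emph{new} singular points where no local completion was carried out at all. One must argue that neither effect disturbs consistency at order $k+1$. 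The mechanism, which the paper's proof spells out and which is exactly why the hypothesis $\varphi\geqslant 1$ is imposed, is a $t$-adic cancellation: every newly added ray has $\varphi_\sigma(m_\fd)>k$ and $\varphi_\sigma(m_\fd)\geqslant 1$, so the commutator of a new ray with any ray of $S_{k+1}\bigl(S_k(\fD^\iin)_\sigma\bigr)$ has $t$-adic valuation $>k+1$, and the commutator of two new rays has valuation $>\max(2k,2)\geqslant k+1$; hence the extra ingoing/outgoing pairs cancel modulo $\fm_{k+1}$, and the new singular points are automatically consistent at order $k+1$. You invoke the $t$-adic/Baker--Campbell--Hausdorff analysis for the uniqueness clause but omit it exactly where it does the most work --- closing the consistency step of the induction --- so this part needs to be supplied.
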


\begin{proof}
We prove the existence and uniqueness of 
$S_k(\fD^\iin)$ by induction on 
$k \in \NN$.

For $k=0$, the scattering diagram $\fD^\iin$ is trivially consistent at order $0$, and we set $S_0(\fD^\iin) \coloneq \fD^\iin$.
According to
Lemma \ref{lem_first_scattering} and Lemma
\ref{lem_varphi_first-scattering},
the only $\sigma \in U \cap \Sing(\fD^\iin)$ are the points $|\fd_n^+| \cap |\fd_{n+1}^-|$, and at such point $\sigma$, the 
function 
$\varphi_\sigma$ evaluated on the classes of the ingoing rays of the corresponding local scattering diagram is equal to $1$.

Let $k \in \NN$ be such that 
$S_k(\fD^\iin)$ has been constructed and 
proved to be unique. We wish to construct
and prove the uniqueness of $S_{k+1}(\fD^\iin)$.

For every $\sigma \in U \cap \Sing(S_k(\fD^\iin))$, let $S_k(\fD^\iin)_\sigma$ be the corresponding local scattering diagram for
$(M,\varphi_\sigma,\fg)_\sigma$ given by Lemma \ref{lem_local_scattering}. By induction hypothesis, we know that $\varphi(m_\fd) \geqslant 1$ for every ray $\fd$ of $S_k(\fD^\iin)_\sigma$. Using Proposition \ref{prop_local_consistent_completion}, it follows that the local scattering diagram $S_k(\fD^\iin)_\sigma$
can be canonically completed in an 
order $k+1$ consistent local scattering diagram 
$S_{k+1}(S_k(\fD^\iin)_\sigma)$, by adding outgoing rays $\fd$ with $k<\varphi_\sigma(m_\fd) \leqslant k+1$ and $\varphi_\sigma(m_\fd) \geqslant 1$.

For every $\sigma \in U \cap \Sing(S_k(\fD^\iin))$ and for every 
$\fd=(-\R_{\geqslant 0}m_\fd, H_\fd)$ outgoing ray of $S_{k+1}(S_k(\fD^\iin)_\sigma)$ added to $S_k(\fD^\iin)$, we define a ray $\tilde{\fd}$ in $U$ by
\[ \tilde{\fd} \coloneq (\sigma-\R_{\geqslant 0} m_\fd, H_\fd)\,.\]
We have by construction 
$k < \varphi_{\Init(\tilde{\fd})}(m_{\tilde{\fd}}) 
\leqslant k+1$ and $\varphi_{\Init(\tilde{\fd})}(m_{\tilde{\fd}}) \geqslant 1$.
Remark that according to Lemma
\ref{lem_stay_in_U}, the condition $\varphi_\sigma(m_\fd) \geqslant 1 >0$ implies that 
$\sigma -\R_{\geqslant 0}m_\fd \subset U$, and so 
$\tilde{\fd}$ is indeed a ray in $U$.

We denote by $\tilde{S}_{k+1}(\fD^\iin)$ the union of the rays of 
$S_k(\fD^\iin)$ and of all the rays $\tilde{\fd}$ constructed above. 
We define $S_{k+1}(\fD^\iin)$ as being the normalization of 
$\tilde{S}_{k+1}(\fD^\iin)$
(see Remark following Definition 
\ref{def_normalized}), so that
$S_{k+1}(\fD^\iin)$ satisfies condition (i) of Definition \ref{def_scattering}

By construction, $S_{k+1}(\fD^\iin)$ satisfies condition (ii) of Definition 
\ref{def_scattering}. Therefore, in order to show that $S_{k+1}(\fD^\iin)$ is a scattering diagram on $U$ for $(M,\fg)$, 
it remains to check condition (iii) of Definition 
\ref{def_scattering}. It follows from the second part of Lemma 
\ref{lem_increasing} that for every compact 
subset $K$ of $U$, there exists 
a positive integer $N$ such that every ray 
$\fd =(|\fd|,H_\fd)$ in 
$S_{k+1}(\fD^\iin)$
comes by successive local scatterings from the initial rays $\fd_n^+$, $\fd_n^-$ with 
$|n| \leqslant N$. In particular, there are only finitely many such rays, and so condition (iii) of Definition \ref{def_scattering} is satisfied.

We claim that the scattering diagram $S_{k+1}(\fD^{\iin})$ is consistent at order $k+1$.
Indeed, let $\sigma \in U \cap \Sing(S_{k+1}(\fD^\iin))$ and let $S_{k+1}(\fD^{\iin})_\sigma$ be the corresponding local scattering diagram. 
If $\sigma \in U \cap 
\Sing (S_k(\fD^\iin))$, then $S_{k+1}(\fD^{\iin})_\sigma$ is obtained from $S_{k+1}(S_k(\fD^\iin)_\sigma)$ 
by adding rays $\fd$ with $\varphi_\sigma(m_\fd) >k$ and
$\varphi_\sigma(m_\fd) \geqslant 1$, coming in ingoing-outgoing pairs (local picture of a ray passing through $\sigma$: see Definition of 
$\fD_\sigma$ in \cref{section_scattering_definitions}).
By construction, $S_{k+1}(S_k(\fD^\iin)_\sigma)$
is consistent at order $k+1$. On the other hand, the group elements attached to the added rays commute with the group elements attached to the rays of $S_{k+1}(S_k(\fD^\iin)_\sigma)$
and with each other up to terms of order strictly greater than $k+1$, 
so the group elements attached to ingoing-outgoing pairs cancel each other and so 
$S_{k+1}(\fD^{\iin})_\sigma$ is consistent at order $k+1$. 

If $\sigma \notin U \cap 
\Sing (S_k(\fD^\iin))$, then all the rays 
$\fd$ of $S_{k+1}(\fD^\iin)$ are newly added rays, with $\varphi_\sigma(m_\fd) >k$
and $\varphi(m_\fd) \geqslant 1$, coming in ingoing-outgoing pairs (local picture of a ray passing through $\sigma$: see Definition of 
$\fD_\sigma$ in \cref{section_scattering_definitions}). The group elements attached to the added rays commute with each other up to terms of order strictly greater than $
\max(2k,2) \geqslant k+1$, so the group elements attached to ingoing-outgoing pairs cancel each other and so
$S_{k+1}(\fD^{\iin})_\sigma$ is consistent at order $k+1$.

It remains to prove the uniqueness of
$S_{k+1}(\fD^\iin)$.
Let $\fD$ be a scattering diagram on 
$U$ satisfying the same properties that 
$S_{k+1}(\fD^\iin)$ stated in Proposition
\ref{prop_consistent_completion}.
Let $\fd$ be a ray of $\fD$ which is not in $S_k(\fD^\iin)$. As $\varphi_{\Init(\fd)}
\leqslant k+1$, $\fd$ cannot be produced by a local scattering containing as incoming ray a ray of $\fD$ added to 
$S_k(\fD^\iin)$, and so $\Init(\fd)
\in U \cap \Sing(S_k(\fD^\iin))$.
It follows by consistency of 
$\fD$ and uniqueness of the local consistent completion given by Proposition 
\ref{prop_local_consistent_completion} that 
$\fd$ is one of the rays of 
$S_{k+1}(\fD^\iin)$.
\end{proof}

The proof of Proposition 
\ref{prop_consistent_completion}
is a variant of a rather simple case of the construction of 
wall structures in the sense of Gross-Siebert (e.g.\ see 
\cite[\S 6.3.3]{MR2722115}, or \cite{MR2846484} for a much more general context). The main difference is that we do not work with 
polyhedral decompositions and $\Z$-valued 
piecewise linear functions, but with $\R$-valued linear functions $\varphi_\sigma$.

It is clear from the proof of Proposition
\ref{prop_consistent_completion} that the construction of $S(\fD^\iin)$ from
$\fD^\iin$ is completely algorithmic:
it is an iterative application of the construction of local consistent completions,
Proposition
\ref{prop_local_consistent_completion}, which is itself completely algorithmic.

\subsection{The scattering diagrams $S(\fD^\iin_{u,v})$, $S(\fD^\iin_{q^\pm})$ and $S(\fD^\iin_{\cl^\pm})$} \label{section_scattering_final}

For every $\fg$
a $M$-graded Lie algebra over 
$\Q$ such that 
$[\fg_m,\fg_{m'}]=0$ if 
$m$ and $m'$ are collinear, 
and for every choice of nonzero elements 
$H_{n,\ell}^+ \in \fg_{\ell m_n^+}$
and $H_{n,\ell}^- \in \fg_{\ell m_n^-}$, $n\in\Z$,
$\ell \geqslant 1$, we have defined
in \cref{section_initial} a scattering diagram 
$\fD^\iin$ and constructed its consistent completion $S(\fD^\iin)$.
Below, we apply this construction in several more specific cases.

We consider the skew-symmetric non-degenerate bilinear form
\[\langle -,- \rangle \colon 
\bigwedge\nolimits^2 M \rightarrow \Z \]
given by 
\[ \langle (a,b), (a',b') \rangle = 
3 (a'b-ab') \,.\]
The choice of 
$\langle -,- \rangle$ is dictated by the skew-symmetrized Euler form of $\D^b(\PP^2)$,
see Lemma \ref{lem_skew_sym_euler_form}.

\begin{defn} \label{def_lie_algebra}
We denote by $\fg_{u,v}$ the $\Q(u^{\pm \frac{1}{2}}, v^{\pm \frac{1}{2}})$-Lie algebra 
\[\fg_{u,v}\coloneq \bigoplus_{m \in M} 
\Q(u^{\pm \frac{1}{2}}, v^{\pm \frac{1}{2}}) z^m\]
with Lie bracket given by
\[[z^m, z^{m'}] \coloneq
(-1)^{\langle m,m'\rangle} ((uv)^{\frac{\langle m,m' \rangle}{2}}
-(uv)^{-\frac{\langle m,m' \rangle}{2}})
 z^{m+m'}\,.\]
\end{defn}

\begin{defn}
We denote by $\fg_{q^-}$ the $\Q(q^{\pm \frac{1}{2}})$-Lie algebra 
\[\fg_{q^-} \coloneq \bigoplus_{m \in M} 
\Q(q^{\pm \frac{1}{2}})
z^m\]
with Lie bracket given by
\[[z^m, z^{m'}] \coloneq 
(-1)^{\langle m,m'\rangle}(q^{\frac{\langle m,m' \rangle}{2}}
-q^{-\frac{\langle m,m' \rangle}{2}}) z^{m+m'}\,.\]
\end{defn}

\begin{defn}
We denote by $\fg_{q^+}$ the $\Q(q^{\pm \frac{1}{2}})$-Lie algebra 
\[\fg_{q^+} \coloneq \bigoplus_{m \in M} 
\Q(q^{\pm \frac{1}{2}})
z^m\]
with Lie bracket given by
\[[z^m, z^{m'}] \coloneq 
(q^{\frac{\langle m,m' \rangle}{2}}
-q^{-\frac{\langle m,m' \rangle}{2}}) z^{m+m'}\,.\]
\end{defn}

\begin{defn}
We denote by $\fg_{\cl^-}$ the $\Q$-Lie algebra 
\[\fg_{\cl^-} \coloneq \bigoplus_{m \in M} \Q z^m\]
with Lie bracket given by
\[[z^m, z^{m'}] \coloneq (-1)^{\langle m,m' \rangle} 
\langle m,m' \rangle z^{m+m'}\,.\]
\end{defn}

\begin{defn}
We denote by $\fg_{\cl^+}$ the $\Q$-Lie algebra 
\[\fg_{\cl^+} \coloneq \bigoplus_{m \in M} \Q z^m\]
with Lie bracket given by
\[[z^m, z^{m'}] \coloneq 
\langle m,m' \rangle z^{m+m'}\,.\]
\end{defn}

\textbf{Remarks:} 
\begin{itemize}
\item[(i)] $\fg_{q^-}$ is the specialization $u=v=q^{\frac{1}{2}}$
of $\fg_{u,v}$.
\item[(ii)] $\fg_{\cl^-}$ is the semiclassical limit $q^{\frac{1}{2}} \rightarrow 1$ of $\fg_{q^-}$.
\item[(iii)] $\fg_{\cl^+}$ is the semiclassical
limit $q^{\frac{1}{2}} \rightarrow 1$ 
of $\fg_{q^+}$.
\item[(iv)] In the remaining part of the present paper, we will only work 
with $\fg_{u,v}$ and its specializations $\fg_{q^-}$ and $\fg_{\cl^-}$, which are relevant for describing the wall-crossing of Donaldson-Thomas invariants. However, in this section, we also introduce $\fg_{q^+}$ and $\fg_{\cl^+}$ because they appear naturally and will be used in the interpretation of the scattering diagrams in terms of log Gromov--Witten invariants presented in \cite{bousseau2019takahashi}.
\end{itemize}

\begin{defn} \label{def_scattering_D_in_u_v}
We denote by $\fD^\iin_{u,v}$ the scattering diagram $\fD^\iin$ of \cref{section_initial}
obtained for $\fg=\fg_{u,v}$, and, for every $n \in \Z$ and for every integer $\ell \geqslant 1$,
\[H_{n,\ell}^+ \coloneq - \frac{1}{\ell} \frac{1}{(uv)^{\frac{\ell}{2}}
-(uv)^{-\frac{\ell}{2}}} z^{\ell m_n^+} \in (\fg_{u,v})_{\ell m_n^+}\,,\]
and 
\[H_{n,\ell}^- \coloneq - \frac{1}{\ell} \frac{1}{(uv)^{\frac{\ell}{2}}
-(uv)^{-\frac{\ell}{2}}} z^{\ell m_n^-}\in (\fg_{u,v})_{\ell m_n^-}\,.\]
We denote by $S(\fD^\iin_{u,v})$ the consistent completion of $\fD^\iin_{u,v}$ given by 
Proposition \ref{prop_consistent_completion}.
\end{defn}

\begin{defn}\label{def_scattering_q_-}
We denote by $\fD^\iin_{q^-}$ the scattering diagram $\fD^\iin$ of  \cref{section_initial}
obtained for $\fg=\fg_{q^-}$, and, for every $n \in \Z$ and for every integer $\ell \geqslant 1$,
\[H_{n,\ell}^+ \coloneq - \frac{1}{\ell} \frac{1}{q^{\frac{\ell}{2}}
-q^{-\frac{\ell}{2}}} z^{\ell m_n^+} \in (\fg_{q^-})_{\ell m_n^+}\,,\]
and 
\[H_{n,\ell}^- \coloneq - \frac{1}{\ell} \frac{1}{q^{\frac{\ell}{2}}
-q^{-\frac{\ell}{2}}} z^{\ell m_n^-}\in (\fg_{q^-})_{\ell m_n^-}\,.\]
We denote by $S(\fD^\iin_{q^-})$ the consistent completion of $\fD^\iin_{q^-}$ given by 
Proposition \ref{prop_consistent_completion}.
\end{defn}

\begin{defn}\label{def_scattering_q_+}
We denote by $\fD^\iin_{q^+}$ the scattering diagram $\fD^\iin$ of \cref{section_initial}
obtained for $\fg=\fg_{q^+}$, and, for every $n \in \Z$ and for every integer $\ell \geqslant 1$,
\[H_{n,\ell}^+ \coloneq \frac{(-1)^{\ell-1}}{\ell} \frac{1}{q^{\frac{\ell}{2}}
-q^{-\frac{\ell}{2}}} z^{\ell m_n^+} \in (\fg_{q^+})_{\ell m_n^+}\,,\]
and 
\[H_{n,\ell}^- \coloneq \frac{(-1)^{\ell-1}}{\ell} \frac{1}{q^{\frac{\ell}{2}}
-q^{-\frac{\ell}{2}}} z^{\ell m_n^-}\in (\fg_{q^+})_{\ell m_n^-}\,.\]
We denote by $S(\fD^\iin_{q^+})$ the consistent completion of $\fD^\iin_{q^+}$ given by 
Proposition \ref{prop_consistent_completion}.
\end{defn}

\begin{defn} 
We denote by $\fD^\iin_{\cl^-}$ the scattering diagram $\fD^\iin$ of \cref{section_initial}
obtained for $\fg=\fg_{\cl^-}$, and, for every $n \in \Z$ and for every integer $\ell \geqslant 1$,
\[H_{n,\ell}^+ \coloneq - \frac{1}{\ell^2} z^{\ell m_n^+} \in (\fg_{\cl^-})_{\ell m_n^+}\,,\]
and 
\[H_{n,\ell}^- \coloneq - \frac{1}{\ell^2} z^{\ell m_n^-}\in (\fg_{\cl^-})_{\ell m_n^-}\,.\]
We denote by $S(\fD^\iin_{\cl^-})$ the consistent completion of $\fD^\iin_{\cl^-}$ given by 
Proposition \ref{prop_consistent_completion}.
\end{defn}

\begin{defn}\label{defn_scattering_cl_+}
We denote by $\fD^\iin_{\cl^+}$ the scattering diagram $\fD^\iin$ of \cref{section_initial}
obtained for $\fg=\fg_{\cl^+}$, and, for every $n \in \Z$ and for every integer $\ell \geqslant 1$,
\[H_{n,\ell}^+ \coloneq \frac{(-1)^{\ell-1}}{\ell^2} z^{\ell m_n^+} \in (\fg_{\cl^+})_{\ell m_n^+}\,,\]
and 
\[H_{n,\ell}^- \coloneq \frac{(-1)^{\ell -1}}{\ell^2} z^{\ell m_n^-}\in (\fg_{\cl^+})_{\ell m_n^-}\,.\]
We denote by $S(\fD^\iin_{\cl^+})$ the consistent completion of $\fD^\iin_{\cl^+}$ given by 
Proposition \ref{prop_consistent_completion}.
\end{defn}

\begin{lem}\label{lem_q=uv}
\begin{itemize}
\item[(i)] Under the change of variables
$q^{\frac{1}{2}}=(uv)^{\frac{1}{2}}$, we have the equality of scattering diagrams
$S(\fD^\iin_{q^-})
=S(\fD^\iin_{u,v})$.
\item[(ii)] The scattering diagram $S(\fD^\iin_{\cl^-})$ is the semiclassical limit $q^{\frac{1}{2}} \rightarrow 1$ of the scattering diagram $S(\fD^\iin_{q^-})$.
\item[(iii)] The scattering diagram $S(\fD^\iin_{\cl^+})$ is the semiclassical
limit $q^{\frac{1}{2}} \rightarrow 1$ 
of the scattering diagram $S(\fD^\iin_{q^+})$.
\end{itemize}
\end{lem}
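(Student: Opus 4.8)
The plan is to derive all three parts from one structural fact: the consistent-completion operation $\fD \mapsto S(\fD)$ of Proposition~\ref{prop_consistent_completion} — which, via Lemma~\ref{lem_local_scattering}, is assembled from iterated local consistent completions (Proposition~\ref{prop_local_consistent_completion}) — uses the coefficient data only through the $M$-graded Lie bracket of $\fg$, through $\Q$-linear Baker--Campbell--Hausdorff combinations, and through projections onto the graded pieces $\fg_m$, while the \emph{supports} of all the rays produced are determined by $M$, the pairing $\langle -,-\rangle$, and the classes $m \in M$ alone. Hence $S(-)$ is natural in the Lie algebra $\fg$: it commutes with extension of scalars of the coefficient field, and with specialization of a parameter, the latter provided no denominator vanishing under the specialization is introduced.

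For the first part I would observe that $q^{\frac12} \mapsto (uv)^{\frac12} = u^{\frac12}v^{\frac12}$ is an injective ring homomorphism $\Q(q^{\pm\frac12}) \hookrightarrow \Q(u^{\pm\frac12},v^{\pm\frac12})$, and that the bracket of $\fg_{q^-}$ depends on $q$ only through the combination $q^{\langle m,m'\rangle/2}$; so under this substitution the bracket of $\fg_{q^-}$ becomes literally the bracket of $\fg_{u,v}$, and $\fg_{u,v} = \fg_{q^-} \otimes_{\Q(q^{\pm\frac12})} \Q(u^{\pm\frac12},v^{\pm\frac12})$ as $M$-graded Lie algebras, via $z^m \mapsto z^m$. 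The same substitution takes the generators $H_{n,\ell}^{\pm}$ of $\fD^\iin_{q^-}$ (Definition~\ref{def_scattering_q_-}) to those of $\fD^\iin_{u,v}$ (Definition~\ref{def_scattering_D_in_u_v}), so the naturality above yields $S(\fD^\iin_{u,v}) = S(\fD^\iin_{q^-})$ after $q^{\frac12} = (uv)^{\frac12}$.

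For the second and third parts I would first pin down what the semiclassical limit means at the level of scattering diagrams, via the rescaling $w^m \coloneqq (q^{\frac12} - q^{-\frac12})^{-1} z^m$. Writing $[n]_q \coloneqq \frac{q^{n/2} - q^{-n/2}}{q^{1/2} - q^{-1/2}}$ for the balanced quantum integer — a rational function of $q^{\frac12}$, regular at $q^{\frac12} = 1$ with value $n$ there, and vanishing exactly for $n = 0$ both generically and at $q^{\frac12} = 1$ — the bracket of $\fg_{q^-}$ becomes $[w^m, w^{m'}] = (-1)^{\langle m,m'\rangle}[\langle m,m'\rangle]_q\, w^{m+m'}$ and the generators of $\fD^\iin_{q^-}$ become $H_{n,\ell}^{\pm} = -\frac{1}{\ell\,[\ell]_q}\, w^{\ell m_n^{\pm}}$; as $q^{\frac12} \to 1$ these recover exactly the bracket of $\fg_{\cl^-}$ and the generators $-\frac{1}{\ell^2} w^{\ell m_n^{\pm}}$ of $\fD^\iin_{\cl^-}$. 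I would then prove by induction on the order $k$ that, in the $w$-coordinates, every ray of $S_k(\fD^\iin_{q^-})$ has coefficient a rational function of $q^{\frac12}$ regular at $q^{\frac12}=1$, and that setting $q^{\frac12}=1$ in all these coefficients produces exactly $S_k(\fD^\iin_{\cl^-})$: the inductive step is the assertion that the local completion of Proposition~\ref{prop_local_consistent_completion}, built from brackets, rational Baker--Campbell--Hausdorff coefficients and grading projections, commutes with $q^{\frac12} \mapsto 1$, the only denominators it introduces being the integers $k!$ of Baker--Campbell--Hausdorff and the initial denominators $\ell\,[\ell]_q$, all of which are units of the local ring of $\Q(q^{\frac12})$ at $q^{\frac12}=1$. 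The third part is identical, with $(-1)^{\langle m,m'\rangle}$ replaced by $1$ in the bracket and $-\frac1\ell$ by $\frac{(-1)^{\ell-1}}{\ell}$ in the generators.

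The hard part will be the bookkeeping in this last induction, and in particular checking that the limit is non-degenerate: no ray of $S(\fD^\iin_{q^-})$ should collapse at $q^{\frac12}=1$, so that the specialized rays are in bijection with the rays of $S(\fD^\iin_{\cl^-})$ rather than matching only up to some spurious vanishing. This requires tracking, order by order, which coefficients are units of the local ring at $q^{\frac12}=1$ and which singular points occur on each side, but involves no essentially new idea beyond the naturality of the completion and the regularity of the quantum integers $[n]_q$ at $q^{\frac12}=1$.
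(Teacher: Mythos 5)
Your approach is the paper's (initial data agree under the substitution or limit, so uniqueness of the consistent completion in Proposition~\ref{prop_consistent_completion} finishes), just fleshed out; the paper's own proof is two sentences. Your closing worry about non-degeneracy is, however, unnecessary: the induction you describe already places the $w$-coordinate coefficients of $S_k(\fD^\iin_{q^-})$ in the local ring $R\subset\Q(q^{\pm\frac{1}{2}})$ of rational functions regular at $q^{\frac{1}{2}}=1$, and reduction modulo the maximal ideal of $R$ is a $\Q$-algebra homomorphism that preserves the order-$k$ consistency relations, so the reduced diagram is an order-$k$ consistent completion of $\fD^\iin_{\cl^-}$ and hence equals $S_k(\fD^\iin_{\cl^-})$ by uniqueness, regardless of which coefficients are units; if a quantum coefficient vanishes at $q^{\frac{1}{2}}=1$, the corresponding ray is simply absent in the classical diagram, and every later contribution passing through it lies in the maximal ideal and vanishes with it, so no spurious mismatch of singular loci can arise and no order-by-order unit-tracking is needed.
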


\begin{proof}
The result is true at the level of the scattering diagrams $\fD^\iin$ from
their explicit description.
The result follows by uniqueness of the consistent completion given by 
Proposition \ref{prop_consistent_completion}.
\end{proof}

\begin{cor} \label{cor_q_rationality}
For every ray $\fd=(|\fd|,H_\fd)$ of 
$S(\fD_{u,v}^\iin)$, we have 
\[ H_\fd \in \Q(q^{\pm \frac{1}{2}}) \,,\]
where we view $\Q(q^{\pm \frac{1}{2}})$
inside 
$\Q(u^{\pm \frac{1}{2}}, v^{\pm \frac{1}{2}})$ via the change of variables 
$q^{\frac{1}{2}}
=(uv)^{\frac{1}{2}}$.
\end{cor}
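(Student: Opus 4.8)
The plan is to deduce this from the already-established identification of scattering diagrams in Lemma \ref{lem_q=uv}, together with an elementary observation about the field of definition of the consistent completion.

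First I would recall that, by Definition \ref{def_scattering_q_-}, the scattering diagram $\fD^\iin_{q^-}$ is built from the Lie algebra $\fg_{q^-}$, which is a Lie algebra over the field $\Q(q^{\pm \frac{1}{2}})$, and that its initial rays carry
\[
H_{n,\ell}^\pm = -\frac{1}{\ell}\,\frac{1}{q^{\frac{\ell}{2}}-q^{-\frac{\ell}{2}}}\, z^{\ell m_n^\pm} \in (\fg_{q^-})_{\ell m_n^\pm} = \Q(q^{\pm \frac{1}{2}})\, z^{\ell m_n^\pm}.
\]
The key point is then that every ray $\fd$ of $S(\fD^\iin_{q^-})$ still has $H_\fd$ with coefficient in $\Q(q^{\pm \frac{1}{2}})$, i.e.\ $H_\fd \in (\fg_{q^-})_{m_\fd} = \Q(q^{\pm \frac{1}{2}})\, z^{m_\fd}$. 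This I would prove by induction on the order $k$, following the construction of $S_k(\fD^\iin_{q^-})$ in Proposition \ref{prop_consistent_completion}: the passage from $S_k$ to $S_{k+1}$ only adds rays whose elements $\overline{\Omega}$ are obtained, via the Baker--Campbell--Hausdorff formula used in the proof of Proposition \ref{prop_local_consistent_completion}, from iterated Lie brackets of the $H$'s of rays already present. Since the BCH formula has coefficients in $\Q$, and iterated brackets of homogeneous elements of $\fg_{q^-}$ land again in $\fg_{q^-}$ with coefficients in $\Q(q^{\pm \frac{1}{2}})$, the coefficient field $\Q(q^{\pm \frac{1}{2}})$ is preserved at each step, hence in the limiting $S(\fD^\iin_{q^-})$.

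Finally I would invoke the first bullet of Lemma \ref{lem_q=uv}: under the substitution $q^{\frac{1}{2}} = (uv)^{\frac{1}{2}}$ one has $S(\fD^\iin_{q^-}) = S(\fD^\iin_{u,v})$, so each ray $\fd = (|\fd|, H_\fd)$ of $S(\fD^\iin_{u,v})$ comes from a ray of $S(\fD^\iin_{q^-})$ with identical support and with $H_\fd$ the image of an element of $\Q(q^{\pm \frac{1}{2}})\, z^{m_\fd}$ under the embedding $\Q(q^{\pm \frac{1}{2}}) \hookrightarrow \Q(u^{\pm\frac{1}{2}},v^{\pm\frac{1}{2}})$, $q^{\frac{1}{2}}\mapsto (uv)^{\frac{1}{2}}$. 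This is exactly the assertion. I do not expect a serious obstacle here: the only thing to be careful about is the bookkeeping that no BCH manipulation in the proof of Proposition \ref{prop_local_consistent_completion} introduces a denominator lying outside $\Q(q^{\pm\frac{1}{2}})$ — but the only denominators that ever appear are the units $q^{\frac{\ell}{2}} - q^{-\frac{\ell}{2}}$ already built into the initial data, so this is a routine verification rather than a real difficulty.
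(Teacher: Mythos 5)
Your argument is correct and in essence coincides with the paper's one-line proof, which simply cites the first point of Lemma~\ref{lem_q=uv}. The inductive verification that the BCH manipulations preserve $\Q(q^{\pm\frac{1}{2}})$-coefficients is superfluous: by Definition~\ref{def_ray}, any ray $\fd$ of a scattering diagram for $(M,\langle-,-\rangle,\fg_{q^-})$ automatically has $H_\fd \in (\fg_{q^-})_{m_\fd} = \Q(q^{\pm\frac{1}{2}})\,z^{m_\fd}$, so once $S(\fD^\iin_{q^-})$ is known to exist the only content is the identification $S(\fD^\iin_{q^-}) = S(\fD^\iin_{u,v})$ under $q^{\frac{1}{2}}=(uv)^{\frac{1}{2}}$, which is exactly Lemma~\ref{lem_q=uv}.
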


\begin{proof}
It is an immediate consequence of the first point of Lemma \ref{lem_q=uv}.
\end{proof}
\subsection{Action of $\psi(1)$ on scattering diagrams}
\label{subsection_psi(1)}

In this section we establish a symmetry 
$\psi(1)$ of the scattering diagram 
$S(\fD^\iin_{u,v})$ defined in
\cref{section_scattering_final}.

\begin{defn} \label{def_psi(1)}
We denote by $\psi(1)$ the affine transformation of $\R^2$ given by
\[ \psi(1) \colon \R^2 \rightarrow \R^2\]
\[ (x,y) \mapsto \left(x+1, y-x-\frac{1}{2}\right)\,,\]
and $\mathrm{d \psi}[1]$ its linear part,
given by 
\[ \mathrm{d \psi}(1) \colon 
\R^2 \rightarrow \R^2\]
\[ (a,b) \mapsto (a,b-a) \,.\]
\end{defn}

\begin{lem} \label{lem_computation}
For every $(x,y) \in \R^2$, writing 
$\psi(1)((x,y))=(x',y')$, we have 
\[ (x')^2+2y'=x^2+2y\,.\]
\end{lem}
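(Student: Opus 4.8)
The statement is an explicit identity about the quadratic expression $x^2+2y$, which is precisely the quantity cutting out the boundary parabola $\partial U$, so the content of the lemma is that $\psi(1)$ preserves this parabola (and more generally each level set $x^2+2y=\text{const}$). Since $\psi(1)$ is given by the completely explicit formula of Definition \ref{def_psi(1)}, the plan is simply to substitute and expand.

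First I would write $x'=x+1$ and $y'=y-x-\tfrac12$ from Definition \ref{def_psi(1)}. Then I would compute
\[
(x')^2+2y' = (x+1)^2 + 2\left(y-x-\tfrac12\right) = x^2+2x+1+2y-2x-1,
\]
and observe that the linear terms $2x$ and $-2x$ cancel and the constants $1$ and $-1$ cancel, leaving $x^2+2y$. This establishes the claim.

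There is essentially no obstacle: the only thing to verify is that the cross term $2x$ produced by expanding $(x+1)^2$ is exactly cancelled by the $-2x$ coming from $2\cdot(-x)$ in $2y'$, and that the constant $+1$ from $(x+1)^2$ is cancelled by the $-1$ from $2\cdot(-\tfrac12)$. Both cancellations are immediate, so the proof is a one-line computation; I would present it as such.
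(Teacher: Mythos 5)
Your proof is correct and is exactly the paper's proof: both substitute $x'=x+1$, $y'=y-x-\tfrac12$ into $(x')^2+2y'$ and expand, obtaining $x^2+2y$ after the obvious cancellations.
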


\begin{proof}
We have 
\[ (x')^2+2y'=(x+1)^2
+2 (y-x-\frac{1}{2})=x^2+2y\,.\]
\end{proof}

\begin{lem} \label{lem_preserv_U}
The affine transformation $\psi(1)$ of 
$\R^2$ preserves $U$.

Moreover the restriction of $\psi(1)$ to $U$ is a
bijection from $U$ to $U$.
\end{lem}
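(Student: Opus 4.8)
The plan is to deduce everything from Lemma \ref{lem_computation}, which asserts that $\psi(1)$ preserves the quantity $x^2+2y$. First I would record that $\psi(1)$ is an affine automorphism of $\R^2$: its linear part $\mathrm{d}\psi(1)$ sends $(a,b)$ to $(a,b-a)$, which has determinant $1$ and is therefore invertible, so $\psi(1)$ is a bijection of $\R^2$; concretely its inverse is the affine map $\psi(1)^{-1}(x,y)=\left(x-1,\,y+x-\frac{1}{2}\right)$.

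Next, recall that $U=\{(x,y)\in\R^2\mid x^2+2y>0\}$. For $(x,y)\in\R^2$, write $\psi(1)((x,y))=(x',y')$. By Lemma \ref{lem_computation} we have $(x')^2+2y'=x^2+2y$, so $(x,y)\in U$ if and only if $\psi(1)((x,y))\in U$. In particular $\psi(1)(U)\subseteq U$, which proves the first assertion.

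Finally, the inverse map $\psi(1)^{-1}$ is again affine and, being the inverse of a map preserving $x^2+2y$, it also preserves $x^2+2y$ (alternatively, apply the displayed equality to $(x',y')$ and $(x,y)=\psi(1)^{-1}((x',y'))$). Hence the same argument gives $\psi(1)^{-1}(U)\subseteq U$, and combining the two inclusions yields $\psi(1)(U)=U$. Therefore the restriction of $\psi(1)$ to $U$ is a bijection from $U$ to $U$, being the restriction of a bijection of $\R^2$ that carries $U$ onto $U$. There is no substantive obstacle here; the only point requiring a little care is that "preserves $U$" should be read as the equivalence $(x,y)\in U\Leftrightarrow\psi(1)((x,y))\in U$, rather than a one-sided inclusion, and this is exactly what Lemma \ref{lem_computation} supplies.
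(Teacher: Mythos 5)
Your argument is correct and follows essentially the same route as the paper: invoke Lemma \ref{lem_computation} to see that $\psi(1)$ carries $U$ into $U$, then exhibit the explicit affine inverse $\psi(1)^{-1}(x,y)=(x-1,\,y+x-\frac{1}{2})$ and observe that it too preserves $x^2+2y$, hence maps $U$ into $U$, giving the bijection. Nothing further is needed.
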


\begin{proof}
Recall that $(x,y) \in U$ if and only if 
$x^2+2y>0$. Therefore, the fact that $\psi(1)$ preserves $U$ follows directly from 
Lemma \ref{lem_computation}.

In order to show that 
$\psi(1)|_U \colon U \rightarrow U$ is a bijection, we note that 
$\psi(1) \colon \R^2 \rightarrow \R^2$ is a bijection, of inverse given by 
\[ \psi(1)^{-1} \colon 
\R^2 \rightarrow \R^2\]
\[(x,y) \mapsto (x-1, y+x-\frac{1}{2})\,.\]
The fact that $\psi(1)^{-1}$ preserves $U$
follows from Lemma
\ref{lem_computation}.
\end{proof}

\begin{defn}
Let $\fd=(|\fd|, H_\fd)$ be a ray of class 
$m_\fd$ in $U$ for
$(M,\fg_{u,v})$.
Writing $H_\fd=\overline{\Omega}_\fd z^{m_\fd} \in (\fg_{u,v})_{m_\fd}$, we define 
$\psi(1)(\fd) \coloneq
(\psi(1)(|\fd|), \psi(1)(H_\fd))$,
where 
$\psi(1)(|\fd|)$ is the image of $|\fd|$
by $\psi(1)$, and where 
$\psi(1)(H_\fd) \coloneq \overline{\Omega}_\fd z^{\mathrm{d\psi}(m_\fd)}$.
\end{defn}

The notation $\psi(1)$ will be justified in
Lemma \ref{lem_T_action}, where it will be shown that, after identification of $U$
with a space of stability conditions on 
$\D^b(\PP^2)$, the action of $\psi(1)$ on $U$ coincides with the action of the autoequivalence $- \otimes \cO(1)$
of $\D^b(\PP^2)$.

\begin{lem}
If $\fd$ is a ray of class $m_\fd$ in 
$U$ for $(M, \fg_{u,v})$, then $\psi(1)(\fd)$ is a ray of class 
$\mathrm{d\psi}(m_\fd)$ in $U$ for $(M, \fg_{u,v})$.
\end{lem}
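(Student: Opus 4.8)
The plan is to verify directly that the pair $\psi(1)(\fd) = (\psi(1)(|\fd|), \psi(1)(H_\fd))$ satisfies the two requirements of Definition \ref{def_ray}, namely that $\psi(1)(|\fd|)$ is a naked ray of class $\mathrm{d\psi}(m_\fd)$ in $U$ in the sense of Definition \ref{def_naked_ray}, and that $\psi(1)(H_\fd)$ is a nonzero element of $(\fg_{u,v})_{\mathrm{d\psi}(m_\fd)}$. First I would handle the naked ray condition. Since $\psi(1)$ is an affine transformation of $\R^2$ with linear part $\mathrm{d\psi}(1)$, it sends a subset of the form $\Init(\fd) - \R_{\geqslant 0} m_\fd$ to $\psi(1)(\Init(\fd)) - \R_{\geqslant 0}\,\mathrm{d\psi}(m_\fd)$, and a subset of the form $\Init(\fd) - [0,T_\fd]m_\fd$ to $\psi(1)(\Init(\fd)) - [0,T_\fd]\,\mathrm{d\psi}(m_\fd)$; in both cases this is a naked ray of class $\mathrm{d\psi}(m_\fd)$ with initial point $\psi(1)(\Init(\fd))$ (and endpoint $\psi(1)(\Init(\fd)) - T_\fd\,\mathrm{d\psi}(m_\fd)$ in the bounded case). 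The one nontrivial point is that this naked ray must lie in $\bar{U}$: since $|\fd| \subset \bar{U}$ and, by Lemma \ref{lem_preserv_U}, $\psi(1)$ maps $U$ bijectively to $U$ (hence also maps $\bar{U}$ to $\bar{U}$, as $\psi(1)$ is a homeomorphism of $\R^2$ preserving both $U$ and $\R^2 \setminus \bar{U}$ by Lemma \ref{lem_computation}), we conclude $\psi(1)(|\fd|) \subset \bar{U}$.

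Next I would check that $\mathrm{d\psi}(m_\fd) \in M$: since $\mathrm{d\psi}(1)(a,b) = (a,b-a)$ has integer entries whenever $(a,b)$ does, this is immediate. For the second requirement, write $H_\fd = \overline{\Omega}_\fd z^{m_\fd}$ with $\overline{\Omega}_\fd \in \Q(u^{\pm\frac12}, v^{\pm\frac12})$ nonzero; then by definition $\psi(1)(H_\fd) = \overline{\Omega}_\fd z^{\mathrm{d\psi}(m_\fd)}$, which is a nonzero element of the graded piece $(\fg_{u,v})_{\mathrm{d\psi}(m_\fd)} = \Q(u^{\pm\frac12}, v^{\pm\frac12}) z^{\mathrm{d\psi}(m_\fd)}$. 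This establishes that $\psi(1)(\fd)$ is a ray of class $\mathrm{d\psi}(m_\fd)$ in $U$ for $(M,\langle-,-\rangle,\fg_{u,v})$.

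I do not expect any serious obstacle here: the statement is essentially a bookkeeping check that the affine map $\psi(1)$ transports the defining data of a ray covariantly, with the only substantive input being the preservation of $U$ (equivalently of $\bar{U}$) already recorded in Lemmas \ref{lem_computation} and \ref{lem_preserv_U}. If anything requires a word of care, it is matching the bounded/unbounded dichotomy of Definition \ref{def_naked_ray} under $\psi(1)$ and confirming that $\psi(1)$ takes the initial point to the initial point and the endpoint to the endpoint, which follows since affine maps commute with the parametrizations $t \mapsto \Init(\fd) - t m_\fd$ up to replacing $m_\fd$ by $\mathrm{d\psi}(m_\fd)$.
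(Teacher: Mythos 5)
Your proof is correct and identifies the same unique nontrivial point as the paper does: the containment $\psi(1)(|\fd|)\subset\bar{U}$, established via Lemma \ref{lem_preserv_U} (or, more directly, via Lemma \ref{lem_computation}, since the identity $(x')^2+2y'=x^2+2y$ shows $\psi(1)$ preserves $\bar{U}=\{x^2+2y\geqslant 0\}$ on the nose). The paper states exactly this one-line observation and treats the remaining bookkeeping as obvious; your additional verifications are all correct but not substantively different from the paper's argument.
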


\begin{proof}
The only not completely obvious point is that $\psi(1)(|\fd|)$ is contained in 
$\bar{U}$, but this follows from Lemma
\ref{lem_preserv_U}.
\end{proof}

\begin{defn}
Let $\fD$ be a scattering diagram on $U$ for $(M,\fg_{u,v})$. We denote by
$\psi(1)(\fD)$ the collection of rays 
$\psi(1)(\fd)$, for all $\fd$ ray of 
$\fD$.
\end{defn}

\begin{lem}
Let $\fD$ be a scattering diagram on 
$U$ for 
$(M,\fg_{u,v})$.
Then $\psi(1)(\fD)$ is a scattering diagram on $U$ for 
$(M,\fg_{u,v})$.
\end{lem}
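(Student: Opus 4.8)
The plan is to check the three defining conditions of Definition \ref{def_scattering} for $\psi(1)(\fD)$, transporting each of them from $\fD$ along the bijection $\psi(1)\colon U \to U$ of Lemma \ref{lem_preserv_U}. By the preceding lemma, every $\psi(1)(\fd)$ is already a ray of class $\mathrm{d}\psi(1)(m_\fd)$ in $U$ for $(M,\langle -,-\rangle,\fg_{u,v})$, and its element $H_{\psi(1)(\fd)} = \overline{\Omega}_\fd\, z^{\mathrm{d}\psi(1)(m_\fd)}$ is nonzero because $\overline{\Omega}_\fd \neq 0$; so only the collective conditions (1)--(3) remain.

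The single computational input I would isolate first is the identity
\[ \varphi_{\psi(1)(\sigma)}\big(\mathrm{d}\psi(1)(m)\big) = \varphi_\sigma(m) \]
for all $\sigma \in \bar{U}$ and $m \in M$: writing $\sigma=(x,y)$ and $m=(a,b)$, both sides equal $2(-ax-b)$ by a one-line expansion using Definition \ref{def_varphi}, exactly parallel to Lemma \ref{lem_computation}. I would also record that $\mathrm{d}\psi(1)\colon (a,b)\mapsto (a,b-a)$ is an automorphism of $M=\Z^2$ preserving $\langle -,-\rangle$, so that $z^m \mapsto z^{\mathrm{d}\psi(1)(m)}$ is an automorphism of the graded Lie algebra $\fg_{u,v}$. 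With these in hand, each condition becomes a pull-back argument under $\psi(1)^{-1}$, which is again an affine bijection of $\R^2$ preserving $U$ (as in the proof of Lemma \ref{lem_preserv_U}), carrying naked rays of class $m'$ onto naked rays of class $\mathrm{d}\psi(1)(m')$, interiors onto interiors, and initial points and endpoints onto initial points and endpoints.

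For condition (1): a failure of either normalization axiom for $\psi(1)(\fD)$ at a point $\sigma \in U$ would, after applying $\psi(1)^{-1}$ and using injectivity of $\mathrm{d}\psi(1)$ on classes and of $z^m\mapsto z^{\mathrm{d}\psi(1)(m)}$ on the $H$'s, produce a failure of the same axiom for $\fD$ at $\psi(1)^{-1}(\sigma) \in U$, contradicting that $\fD$ is normalized. For condition (2): given a ray $\psi(1)(\fd)$ and $\sigma \in |\psi(1)(\fd)| \cap U$, write $\sigma = \psi(1)(\sigma')$ with $\sigma' \in |\fd| \cap U$; then $\varphi_\sigma(m_{\psi(1)(\fd)}) = \varphi_{\sigma'}(m_\fd) > 0$ by the displayed identity and condition (2) for $\fD$. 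For condition (3): given a compact $K \subset U$ and $k \in \R_{\geqslant 0}$, the set $\psi(1)^{-1}(K)$ is compact in $U$, and every ray $\psi(1)(\fd)$ with some $\sigma \in |\psi(1)(\fd)| \cap K$ satisfying $\varphi_\sigma(m_{\psi(1)(\fd)}) \leqslant k$ yields, under $\psi(1)^{-1}$, a ray $\fd$ with $\sigma' = \psi(1)^{-1}(\sigma) \in |\fd| \cap \psi(1)^{-1}(K)$ and $\varphi_{\sigma'}(m_\fd) \leqslant k$; condition (3) for $\fD$ over the pair $(\psi(1)^{-1}(K),k)$ then bounds the number of such $\fd$, hence of such $\psi(1)(\fd)$.

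I do not expect a genuine obstacle: the only substantive step is the invariance identity $\varphi_{\psi(1)(\sigma)}(\mathrm{d}\psi(1)(m)) = \varphi_\sigma(m)$, after which the statement is pure transport of structure along $\psi(1)$. It would be cleanest to prove this identity once (as a short auxiliary lemma), since the same mechanism will be reused to deduce $\psi(1)$-invariance of the consistency condition, and hence the $\psi(1)$-symmetry of $S(\fD^\iin_{u,v})$ announced at the start of \cref{subsection_psi(1)}.
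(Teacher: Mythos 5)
Your proposal is correct and follows essentially the same route as the paper: the paper also verifies conditions (1)--(3) of Definition \ref{def_scattering} by transporting along the bijection $\psi(1)|_U$, with the same one-line computation $\varphi_{\psi(1)(\sigma)}(\mathrm{d}\psi(1)(m)) = \varphi_\sigma(m)$ at the heart of condition (2) and properness of $\psi(1)$ for condition (3). Your version is a bit more explicit (e.g.\ isolating the invariance identity as a lemma, noting that $\mathrm{d}\psi(1)$ preserves $\langle -,-\rangle$), but there is no genuine difference in strategy.
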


\begin{proof}
We have to check conditions (i)-(ii)-(iii) 
of Definition \ref{def_scattering}.
Condition (i) is clear as $\psi(1)$ restricted to $U$ is a bijection from $U$ to $U$ according to Lemma \ref{lem_preserv_U}.

Let $\fd$ be a ray of $\fD$ and let 
$\sigma=(x,y) \in |\fd| \cap U$. Writing 
$m_\fd=(a,b)$, we have 
$\varphi_\sigma(m_\fd)
=2(-ax-b)>0$ by condition (ii) of Definition
\ref{def_scattering} satisfied by $\fD$.
So we have
\[ \varphi_{\psi(1)(\sigma)}
(\mathrm{d\psi}(1)(m_\fd))
=2(-a(x+1)-(b-a))=2(-ax-b) 
=\varphi_\sigma(m_\fd)>0\,.\]
It follows that 
$\psi(1)(\fD)$ satisfies condition 
(ii) of Definition \ref{def_scattering}.

Condition (iii) of Definition \ref{def_scattering} for $\psi(1)(\fD)$ follows from condition (iii) of Definition \ref{def_scattering} for $\fD$ as $\psi(1)$ is a proper map.
\end{proof}

Recall that we defined the scattering diagram $S(\fD^\iin_{u,v})$ in 
\cref{section_scattering_final}. The following result expresses the fact that $\psi(1)$ is a symmetry of 
$S(\fD^\iin_{u,v})$.

\begin{prop}\label{prop_psi(1)}
We have 
$\psi(1)(S(\fD^{\iin}_{u,v}))=S(\fD^{\iin}_{u,v})$.
\end{prop}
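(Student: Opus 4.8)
The plan is to exploit the uniqueness of the consistent completion established in Proposition \ref{prop_consistent_completion}, together with the fact that $\psi(1)$ is a symmetry of the \emph{initial} scattering diagram $\fD^\iin_{u,v}$. First I would observe that $\psi(1)$ commutes with all the structures entering the construction of $S(\fD^\iin)$: by Lemma \ref{lem_preserv_U} the affine transformation $\psi(1)$ restricts to a bijection $U \to U$; its linear part $\mathrm{d\psi}(1)\colon (a,b) \mapsto (a,b-a)$ preserves the skew-symmetric form $\langle -,-\rangle$, since $\langle (a,b-a),(a',b'-a')\rangle = 3(a'b - a'a - ab' + aa') = 3(a'b - ab') = \langle (a,b),(a',b')\rangle$; hence $\mathrm{d\psi}(1)$ induces a Lie algebra automorphism $z^m \mapsto z^{\mathrm{d\psi}(1)(m)}$ of $\fg_{u,v}$ (the coefficients in the bracket depend only on $\langle m,m'\rangle$). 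Finally, by the computation carried out in the proof of the previous lemma, $\varphi_{\psi(1)(\sigma)}(\mathrm{d\psi}(1)(m)) = \varphi_\sigma(m)$ for all $\sigma, m$, so $\psi(1)$ preserves the function $\varphi$ controlling the order filtration, and it preserves the normalization operations and the notion of consistency verbatim.

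The next step is to check that $\psi(1)(\fD^\iin_{u,v}) = \fD^\iin_{u,v}$ at the level of the initial scattering diagram. Concretely one computes that $\psi(1)$ sends the naked ray $|\fd_n^+| = s_n - [0,\tfrac{1}{2}]m_n^+$ to $|\fd_{n+1}^+|$ and $|\fd_n^-|$ to $|\fd_{n+1}^-|$: indeed $\psi(1)(s_n) = (n+1, -\tfrac{n^2}{2} - n - \tfrac{1}{2}) = (n+1, -\tfrac{(n+1)^2}{2}) = s_{n+1}$ by Lemma \ref{lem_computation} (or direct algebra), and $\mathrm{d\psi}(1)(m_n^+) = (-1, n+1) = m_{n+1}^+$, $\mathrm{d\psi}(1)(m_n^-) = (1, -(n+1)) = m_{n+1}^-$. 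On the coefficient side, $\psi(1)(H_{n,\ell}^+) = -\tfrac{1}{\ell}\tfrac{1}{(uv)^{\ell/2}-(uv)^{-\ell/2}} z^{\ell m_{n+1}^+} = H_{n+1,\ell}^+$ and likewise for the $-$ rays, since $\psi(1)$ acts trivially on the scalar coefficient $\overline{\Omega}_\fd$ and only shifts the exponent $m \mapsto \mathrm{d\psi}(1)(m)$. Thus $\psi(1)$ permutes the initial rays, establishing $\psi(1)(\fD^\iin_{u,v}) = \fD^\iin_{u,v}$.

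Finally I would conclude by uniqueness. Applying $\psi(1)$ to the characterization of $S(\fD^\iin_{u,v})$ in Proposition \ref{prop_consistent_completion}: since $\psi(1)$ intertwines normalization, preserves $\varphi$ and the order filtration (so that $\psi(1)(S_k(\fD^\iin_{u,v}))$ has the same order-$k$ content as $S_k(\fD^\iin_{u,v})$), and conjugates consistent scattering diagrams to consistent ones (the automorphisms $\Phi_{\fd,k}$ transform equivariantly because $\mathrm{d\psi}(1)$ is a Lie algebra automorphism), the sequence $(\psi(1)(S_k(\fD^\iin_{u,v})))_{k \in \NN}$ satisfies exactly the three bullet points defining $(S_k(\fD^\iin_{u,v}))_{k \in \NN}$, now with initial term $\psi(1)(\fD^\iin_{u,v}) = \fD^\iin_{u,v}$. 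By the uniqueness clause of Proposition \ref{prop_consistent_completion} we get $\psi(1)(S_k(\fD^\iin_{u,v})) = S_k(\fD^\iin_{u,v})$ for all $k$, and passing to the limit $k \to +\infty$ yields $\psi(1)(S(\fD^\iin_{u,v})) = S(\fD^\iin_{u,v})$. The only genuinely substantive point — and thus the step deserving most care — is verifying that $\mathrm{d\psi}(1)$ preserves the bilinear form $\langle -,-\rangle$, hence induces an automorphism of $\fg_{u,v}$ compatible with the whole scattering construction; everything else is bookkeeping against the definitions, ultimately reducing, as in the proof of Lemma \ref{lem_q=uv}, to uniqueness of the consistent completion.
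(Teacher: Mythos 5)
Your proof is correct and follows essentially the same route as the paper: verify that $\psi(1)$ permutes the initial rays of $\fD^\iin_{u,v}$ (via $\psi(1)(s_n)=s_{n+1}$, $\mathrm{d\psi}(1)(m_n^\pm)=m_{n+1}^\pm$), then invoke the uniqueness clause of Proposition \ref{prop_consistent_completion}. The one place you go beyond the paper's write-up is in making explicit that $\mathrm{d\psi}(1)$ preserves $\langle-,-\rangle$ and hence induces a Lie algebra automorphism of $\fg_{u,v}$ compatible with the whole consistency machinery — the paper leaves this check implicit (part of it sits in the two lemmas preceding the proposition, which establish that $\psi(1)$ sends scattering diagrams to scattering diagrams and preserves $\varphi$), and spelling it out as you do is a worthwhile clarification.
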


\begin{proof}
We first remark that we have 
$\psi(1)(\fD^\iin_{u,v})=\fD^\iin_{u,v}$.
Indeed, for every $n \in \Z$, we have 
\[ \psi(1)(s_n)=\psi(1)((n,-\frac{n^2}{2}))=(n+1,-\frac{n^2}{2}-n-\frac{1}{2})=
(n+1,-\frac{(n+1)^2}{2})= s_{n+1}\,,\] 
\[ \mathrm{d\psi}(m_n^-)=
\mathrm{d\psi}((1,-n))
=(1,-n-1)
=m_{n+1}^-\,,\]
\[ \mathrm{d \psi}(m_n^+)=
\mathrm{d\psi}((-1,n))
=(-1, n+1)
=m_{n+1}^+\,,\]
and so, for every $n \in \Z$ and for every 
integer $\ell \geqslant 1$, we have 
$\psi(1)(\fd_{n,\ell}^+)
=\fd_{n+1,\ell}^+$ and 
$\psi(1)(\fd_{n,\ell}^-)=\fd_{n+1,\ell}^-$.

The fact that $\psi(1)(S(\fD^{\iin}_{u,v}))=S(\fD^{\iin}_{u,v})$ then follows from the uniqueness of the consistent completion given by Proposition 
\ref{prop_consistent_completion}.
\end{proof}

\section{The scattering diagram $\fD^{\PP^2}_{u,v}$}
\label{section_scattering_p2}

In \cref{section_sheaves} we fix our notation for coherent sheaves on
$\PP^2$.
In \cref{section_stability_conditions} we review Bridgeland stability conditions on the derived category of coherent sheaves on $\PP^2$ and we introduce a special set of coordinates
on a particular slice of the space 
of these stability conditions. 
In \cref{section_moduli_invariants} we review 
properties of moduli spaces of Bridgeland  semistable objects.
In \cref{section_intersection_invariants}
we introduce numerical invariants of these moduli spaces, in particular Hodge numbers of the intersection cohomology.
In \cref{section_scattering_from_stability}, we
use these numerical invariants to 
construct a scattering diagram $\fD^{\PP^2}_{u,v}$. 
In \cref{subsection_psi(1)_P2} we establish a symmetry property of the scattering diagram 
$\fD_{u,v}^{\PP^2}$.

\subsection{Coherent sheaves on $\PP^2$}
\label{section_sheaves}

Let $\Coh(\PP^2)$ be the abelian category of coherent sheaves 
on $\PP^2$. Given $E$ a coherent sheaf on $\PP^2$, we denote by 
$r(E)$ its rank, $d(E)$ its degree, $\chi(E)\coloneq \dim H^0(E)-\dim H^1(E)+\dim H^2(E)$ its Euler characteristic, and 
\[\gamma(E)\coloneq (r(E),d(E),\chi(E))\in \Z^3\,.\] 
We call $\gamma(E) \in \Z^3$ the class of $E$. Remark that $E \mapsto \gamma(E)$ is additive:
for every $E$ and $F$ coherent sheaves on 
$\PP^2$, we have $\gamma(E \oplus F)=\gamma(E)+\gamma(F)$.
The data of $\gamma(E) \in \Z^3$ is equivalent to the data of the Chern character of $E$.
Indeed, by the Hirzebruch-Riemann-Roch formula, we have 
\[\ch_0(E)=r(E)\,, \ch_1(E)=d(E)\,, \ch_2(E)=\chi(E)-r(E)-\frac{3}{2}d(E)\,.\]
Let $\D^b (\PP^2)$ be the bounded derived category of 
the abelian category $\Coh(\PP^2)$. The Grothendieck group 
\[\Gamma \coloneqq K_0(\D^b(\PP^2))=K_0(\Coh(\PP^2))\]
is isomorphic to $\Z^3$ via the map
\[ \Gamma \simeq \Z^3 \,\]
\[[E] \mapsto \gamma(E)=(r(E),d(E),\chi(E))\,.\] 
For every object $E$ of 
$\D^b(\PP^2)$, we denote by $\gamma(E)$ its class in 
$\Gamma \simeq \Z^3$.

For every $n \in \Z$, we have the degree $n$ line bundle $\cO(n)$ on $\PP^2$. For every object $E$ in $\D^b(\PP^2)$, we denote 
$E(n) \coloneq E \otimes \cO(n)$.

\begin{defn} \label{def_bilinear_form}
We denote by 
\begin{align*}(-,-) \colon \Gamma \otimes \Gamma &\longrightarrow \Z \\
 \gamma \otimes \gamma' &\longmapsto (\gamma,\gamma')
 \end{align*}
the bilinear form on $\Gamma$ given by,
for $\gamma=(r,d,\chi)$ and 
$\gamma'=(r',d',\chi')$,
\[ (\gamma,\gamma') \coloneq -3dr'-rr'-dd'+r\chi'+\chi r'\,.\]
\end{defn}

\begin{lem}\label{lem_euler_form}
The bilinear form $(-,-)$ on 
$\Gamma$ coincides with the Euler form on 
$\Gamma=K_0(\D^b(\PP^2))$, that is,
for every objects $E$ and $E'$ of $\D^b(\PP^2)$, the Euler characteristic 
\[ \chi(E,E') \coloneq \dim 
\Hom(E,E')-\dim \Ext^1(E,E')+\dim \Ext^2(E,E')\,\]
is given by 
\[ \chi(E,E')=(\gamma(E),\gamma(E'))\,.\]
\end{lem}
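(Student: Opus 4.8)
The plan is to verify the identity by the Hirzebruch–Riemann–Roch theorem on $\PP^2$, reducing to a finite computation once the bilinear form is known on a spanning set. First I would recall that $\chi(E,E') = \chi(E^\vee \otimes E')$ in $K$-theory, or more directly that $\chi(E,E') = \int_{\PP^2} \ch(E)^\vee \cdot \ch(E') \cdot \td(\PP^2)$, where for a class with Chern character $(\ch_0,\ch_1,\ch_2)$ the dual is $(\ch_0,-\ch_1,\ch_2)$, and $\td(\PP^2) = 1 + \tfrac{3}{2} H + H^2$ with $H$ the hyperplane class. Since this formula is additive and both sides of the claimed identity are bilinear in $(\gamma(E),\gamma(E'))$, it suffices to check it on any $\Z$-basis of $\Gamma$. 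A convenient choice is the classes of $\cO$, $\cO(1)$, $\cO(2)$, which generate $\D^b(\PP^2)$ by Beilinson \cite{MR509388}; in the coordinates $\gamma = (r,d,\chi)$ these are $(1,0,1)$, $(1,1,3)$, $(1,2,6)$.

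The key steps, in order: (1) Translate the stated coordinates $\gamma=(r,d,\chi)$ into Chern characters via the displayed relations $\ch_0 = r$, $\ch_1 = d$, $\ch_2 = \chi - r - \tfrac{3}{2}d$, already recorded in \S\ref{section_sheaves}. (2) Write the Riemann–Roch pairing explicitly: for $\gamma=(r,d,\chi)$ and $\gamma'=(r',d',\chi')$ with Chern characters $(r, d\,H, c_2)$ and $(r', d'\,H, c_2')$ where $c_2 = \chi - r - \tfrac32 d$ (as a multiple of $H^2$) and similarly $c_2'$, compute
\[
\chi(E,E') = \int_{\PP^2} (r - dH + c_2)(r' + d'H + c_2')\left(1 + \tfrac32 H + H^2\right),
\]
keeping only the $H^2$-coefficient. (3) Expand and simplify, substituting back $c_2 = \chi - r - \tfrac32 d$, $c_2' = \chi' - r' - \tfrac32 d'$, and check the result equals $-3dr' - rr' - dd' + r\chi' + \chi r'$. (4) Conclude by bilinearity. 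Alternatively, and perhaps more cleanly, skip the general expansion and just evaluate both $\chi(\cO(i),\cO(j))$ (which is $\binom{j-i+2}{2}$ for $i\le j$, and determined by $\Ext$-vanishing / Serre duality in general) and the right-hand side on the nine pairs from $\{\cO,\cO(1),\cO(2)\}$, then invoke bilinearity.

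The computation is entirely routine; the only place to be careful is sign and normalization conventions — specifically the sign in the dual Chern character $\ch_1 \mapsto -\ch_1$, the coefficient $\tfrac32 H$ in $\td(\PP^2)$ (which comes from $-\tfrac12 K_{\PP^2} = \tfrac32 H$), and the factor $-3$ appearing in the $dr'$ term, which should be traced to the $H^2$-term of the Todd class interacting with the first Chern classes. This last point is worth double-checking against the known fact that the skew-symmetrization of $(-,-)$ must reproduce the form $\langle(r,d),(r',d')\rangle = 3(dr'-d'r)$ used throughout \S\ref{section_scattering_diagram} (cf. the forthcoming Lemma \ref{lem_skew_sym_euler_form}); indeed, antisymmetrizing $-3dr' + r\chi' + \chi r'$ in $(r,d,\chi)\leftrightarrow(r',d',\chi')$ kills the symmetric $r\chi'+\chi r'$ and the symmetric $-rr'-dd'$ terms and leaves $-3dr' + 3d'r = 3(d'r - dr')$, which matches up to the orientation convention. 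There is no real obstacle here; it is a sanity check that the conventions chosen in \S\ref{section_sheaves} are internally consistent.
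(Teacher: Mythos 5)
Your proposal is correct and follows essentially the same route as the paper: translate $(r,d,\chi)$ into Chern characters, apply Hirzebruch--Riemann--Roch, expand, and substitute back $\ch_2 = \chi - r - \tfrac{3}{2}d$. The alternative you offer (evaluate on pairs of line bundles and invoke bilinearity) would also work, but the paper simply does the direct expansion.
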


\begin{proof}
Let $E$ and $E'$ be two objects of $\D^b(\PP^2)$. We denote $\gamma(E)=(r,d,\chi)$ and $\gamma(E')=(r',d',\chi')$.
By the Hirzebruch-Riemann-Roch formula, we have, denoting $H=c_1(\cO(1))$, 
\[\chi(E,E')=\int_{\PP^2} \ch(E^\vee)\ch(E') \td(\PP^2)
=\int_{\PP^2}(r-dH+\ch_2(E))(r'+d'H+\ch_2(E'))(1+\frac{3}{2}H+H^2)\]
\[=rr'+\frac{3}{2}(rd'-dr')+r\ch_2(E')+\ch_2(E)r'-dd'\,.\]
Using that $\ch_2(E)=\chi-r-\frac{3}{2}d$ and $\ch_2(E')=\chi'-r'-\frac{3}{2}d'$, we get the desired formula.
\end{proof}

\begin{lem} \label{lem_skew_sym_euler_form}
For every $\gamma, \gamma' \in \Gamma$, we have 
\[ (\gamma,\gamma')-(\gamma',\gamma)
=\langle m_\gamma, m_\gamma' \rangle\,.\]
\end{lem}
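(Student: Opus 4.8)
This is a direct computation, so the plan is simply to expand both sides and compare. Write $\gamma=(r,d,\chi)$ and $\gamma'=(r',d',\chi')$, and recall from Definition \ref{def_bilinear_form} that
\[ (\gamma,\gamma')=-3dr'-rr'-dd'+r\chi'+\chi r'\,. \]
First I would subtract $(\gamma',\gamma)$ from $(\gamma,\gamma')$ and observe that the terms $-rr'$, $-dd'$, $r\chi'$ and $\chi r'$ are each symmetric under $\gamma\leftrightarrow\gamma'$, hence cancel, while the term $-3dr'$ produces $-3dr'+3d'r=3(rd'-r'd)$. Thus $(\gamma,\gamma')-(\gamma',\gamma)=3(rd'-r'd)$.

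Next I would identify the right-hand side. Since $m_\gamma=(r,-d)$ and $m_{\gamma'}=(r',-d')$, the definition $\langle (a,b),(a',b')\rangle=3(a'b-ab')$ gives
\[ \langle m_\gamma, m_{\gamma'}\rangle=\langle (r,-d),(r',-d')\rangle=3\bigl(r'(-d)-r(-d')\bigr)=3(rd'-r'd)\,, \]
which matches the expression obtained above, completing the proof. There is no real obstacle here; the only point requiring minor care is keeping the sign conventions straight in the definition of $\langle-,-\rangle$ and in the formula $m_\gamma=(r,-d)$, so that the two $3(rd'-r'd)$ expressions line up rather than differing by a sign.
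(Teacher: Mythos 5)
Your proof is correct and follows essentially the same direct computation as the paper's: expand $(\gamma,\gamma')-(\gamma',\gamma)$ using Definition \ref{def_bilinear_form}, observe that the symmetric terms cancel to leave $3(rd'-r'd)$, and check that $\langle m_\gamma, m_{\gamma'}\rangle$ gives the same expression. Nothing to add.
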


\begin{proof}
Let $\gamma, \gamma' \in \Gamma$. We denote 
$\gamma=(r,d,\chi)$ and $\gamma'=(r',d',\chi')$. We recall from Definition
\ref{def_m_gamma} that $m_\gamma=(r,-d)$,
$m_{\gamma'}=(r',-d')$, and from
\cref{section_scattering_final} that 
\[\langle (a,b),(a',b')\rangle=3(a'b-ab')\,.\]
Therefore, we have 
\[\langle m_\gamma,m_{\gamma'}\rangle
=3(rd'-dr')\,.\]
On the other hand, using Definition \ref{def_bilinear_form}, we have 
\[ (\gamma,\gamma')-(\gamma',\gamma)
=3(rd'-dr')\,.\]
\end{proof}

The skew-symmetric bilinear form $\gamma \otimes \gamma' \mapsto (\gamma,\gamma')-(\gamma',\gamma)$
on $\Gamma$ can be viewed as the Euler form of the Calabi-Yau-3 category $\D^b_0(K_{\PP^2})$ of coherent sheaves on $K_{\PP^2}$ set-theoretically 
supported on $\PP^2$.

\subsection{Stability conditions}
\label{section_stability_conditions}

We first recall the classical notions of 
$\mu$-stability and Gieseker stability.
We refer to \cite{huybrechts2010geometry}
for details.

\begin{defn}
Let $E$ be a coherent sheaf on $\PP^2$ with $r(E) \neq 0$. The 
\emph{slope} of $E$ is 
\[ \mu(E) \coloneq \frac{d(E)}{r(E)}
\,.\]
\end{defn}

\begin{defn}
A coherent sheaf $E$ on $\PP^2$ is \emph{$\mu$-semistable} (respectively \emph{$\mu$-stable}) if
$E$ is purely of dimension 2 (that is, the dimension of every nonzero subsheaf of $E$ is $2$), and, for every nonzero strict subsheaf $F$ of $E$, we have $\mu(F) \leqslant \mu(E)$
(respectively $\mu(F)<\mu(E)$).
\end{defn}

\begin{defn}
Let $E$ be a coherent sheaf on $\PP^2$. The reduced Hilbert polynomial is the monic polynomial
\[ p_E(n) \coloneq \frac{\chi(E(n))}{\alpha_E} \,,\]
where $\alpha_E$ is the leading coefficient of the Hilbert polynomial 
$\chi(E(n))$.
\end{defn}

\begin{defn}
A coherent sheaf $E$ on $\PP^2$ is 
\emph{Gieseker semistable} (respectively \emph{stable}) if $E$ is of pure dimension
(that is, every nonzero subsheaf of $E$ has support of dimension equal to the dimension of the support of $E$), and, for every nonzero strict subsheaf $F$
of $E$, we have $p_F(n) \leqslant p_E(n)$
(respectively $p_F(n)<p_E(n)$) for $n$ large enough.
\end{defn}

We now recall the notion stability condition
in the sense of Bridgeland \cite{MR2373143},
in the particular case of the triangulated category $\D^b (\PP^2)$.

\begin{defn}
A \emph{prestability condition} $\sigma$ on $\D^b(\PP^2)$ consists
of a pair $\sigma=(Z, \cA)$, such that:
\begin{itemize}
    \item[(i)] $\cA$ is the heart of a bounded t-structure on 
    $\D^b(\PP^2)$.
    \item[(ii)] $Z$ is a linear map 
    $Z \colon \Gamma \rightarrow \C$, called the \emph{central charge}.
    \item[(ii)] For every nonzero object $E$ of $\cA$, we have 
    $Z(E)=\rho(E) e^{i \pi \phi(E)}$ with $\rho(E) \in \R_{>0}$,
    and $0 < \phi(E) \leqslant 1$, that is $Z(E)$ is contained in the upper half-plane minus the nonnegative real axis.
    \item[(iii)] A nonzero object $F$ of $\cA$ is \emph{$\sigma$-semistable} if for every nonzero subobject $F'$ of $F$ in $\cA$, we have $\phi(F') \leqslant \phi(F)$.
    We require the Harder-Narasimhan property, that is, that every nonzero object $E$ of $\cA$ admits a finite filtration 
    \[0 \subset E_0 \subset E_1 \dots \subset E_n =E\] in $\cA$, with each factor $F_i \coloneq E_i/E_{i-1}$ $\sigma$-semistable
    and $\phi(F_1) > \phi(F_2) > \dots >\phi(F_n)$.
    \end{itemize}
\end{defn}

\begin{defn}
Given $\sigma=(Z,\cA)$ a prestability condition on $\D^b(\PP^2)$, an object 
$F$ of $\D^b(\PP^2)$ is called $\emph{$\sigma$-stable}$ if $F$
is a nonzero object of $\cA$, and, for every nonzero strict subobject $F'$ of $F$ in $\cA$, we have $\phi(F') < \phi(F)$.
\end{defn}

\begin{defn}
A stability condition $\sigma=(Z,\cA)$ on $\D^b(\PP^2)$ is a prestability condition 
satisfying the \emph{support property}, that is, such that there exists a quadratic form $Q$ on the $\R$-vector space $\Gamma \otimes \R$
    such that:
    \begin{itemize}
        \item[(i)] The kernel of $Z$ in $\Gamma \otimes \R$ is negative
        definite with respect to $Q$,
        \item[(ii)] For every $\sigma$-semistable object, we have 
        $Q(\gamma(E)) \geqslant 0$.
    \end{itemize}
\end{defn}

\textbf{Remarks:} 
\begin{itemize}
\item[(i)] If $\sigma=(Z,\cA)$ is a stability condition on $\D^b(\PP^2)$
which satisfies the support property, then the image by $Z \colon \Gamma \rightarrow \C$
of the set of $\gamma \in \Gamma$ such that there exists a $\sigma$-semistable 
object of class $\gamma$ is discrete in $\C$.
\item[(ii)] The support property is a notion due to Kontsevich-Soibelman \cite[\S 1.2]{kontsevich2008stability}. 
We refer to 
\cite[Appendix B]{MR2852118} for a comparison with earlier notions of finiteness  for stability conditions.
\end{itemize}

We denote by $\Stab(\PP^2)$ the set of stability conditions on $\D^b(\PP^2)$.
According to \cite{MR2373143}, $\Stab(\PP^2)$ has a natural structure of complex manifold of dimension $3$, such that the map
\[ \Stab(\PP^2) \rightarrow \Hom(\Gamma,\C) \simeq \C^3\]
\[ \sigma=(Z,\cA) \mapsto Z\]
is a local isomorphism of complex manifolds
(locally on $\Stab(\PP^2)$).

Following \cite{MR2376815, MR2998828, MR2852118, MR3010070},
we review a standard way to construct examples of stability conditions on 
$\D^b (\PP^2)$.

\begin{defn} \label{defn_heart}
For every $s \in \R$, we denote:
\begin{itemize}
    \item[(i)] $\Coh^{\leqslant s}(\PP^2)$ the subcategory of 
    $\Coh(\PP^2)$ generated (by extensions) by 
    $\mu$-semistable sheaves of slope $\leqslant s$.
    \item[(ii)] $\Coh^{>s}(\PP^2)$ the subcategory of $\Coh(\PP^2)$
    generated (by extensions) by $\mu$-semistable sheaves of slope
    $>s$ and torsion sheaves.
    \item[(iii)] $\Coh^{\texttt{\#} s}(\PP^2)$ the subcategory of $\D^b(\PP^2)$
    of objects $E$ such that $\cH^i(E)=0$ for $i \neq -1, 0$,
    $\cH^{-1}(E)$ is an object of $\Coh^{\leqslant s}(\PP^2)$,
    and $\cH^{0}(E)$ is an object of $\Coh^{>s}(\PP^2)$.
\end{itemize}
\end{defn}

The category $\Coh^{\texttt{\#}s}(\PP^2)$ is obtained from 
$\Coh(\PP^2)$ by tilt of the torsion pair 
\[(\Coh^{> s}(\PP^2), \Coh^{\leqslant s}(\PP^2))\,.\]
In particular, $\Coh^{\texttt{\#}s}(\PP^2)$ is an abelian category and the heart of a bounded
t-structure on $\D^b(\PP^2)$.

\begin{defn}
For every $(s,t) \in \R^2$ with $t>0$, let $\tilde{Z}^{(s,t)} \colon \Gamma \rightarrow \C$
be the linear map defined by
\[\gamma = (r,d,\chi) \mapsto \tilde{Z}_\gamma^{(s,t)} \,,\]
  \[  \tilde{Z}_\gamma^{(s,t)}\coloneq -\frac{r}{2}(s+it)^2  +d(s+it)+r+\frac{3}{2}d-\chi\,. \]
\end{defn}

We have
\[  \tilde{Z}_\gamma^{(s,t)}=r \left( -\frac{1}{2}(s^2-t^2)-ist \right)  +d (s+it)+r+\frac{3}{2}d-\chi\]
\[= -\frac{1}{2}(s^2-t^2)r+ds+r+\frac{3}{2}d-\chi + i(d-sr)t\,.\]
If $E$ is an object of $\D^b(\PP^2)$ of class $\gamma(E) \in \Gamma$, then we can write
\[\tilde{Z}_{\gamma(E)}^{(s,t)}=-\int_{\PP^2}
e^{-(s+it)H} \ch(E)\,,\]
where $H \coloneq c_1(\cO(1))$.

According to \cite{MR2376815, MR2998828, MR2852118}, for every 
$(s,t) \in \R^2$ with $t>0$, the pair 
\[ (\tilde{Z}^{(s,t)}, \Coh^{\texttt{\#} s}(\PP^2))\] is a stability condition on
$\D^b(\PP^2)$. In particular, we get
an embedding of the upper half-plane 
$\{(s,t) \in \R^2|\, t>0\}$ into 
$\Stab(\PP^2)$.

We now do something new, which is to describe the same slice inside $\Stab(\PP^2)$, but using coordinates $(x,y)$ different from the usual coordinates $(s,t)$. This will replace the upper half-plane 
$\{(s,t) \in \R^2|\,t>0\}$ by the 
`upper parabola' $U=\{(x,y) \in \R^2|\,y>-\frac{x^2}{2}\}$, which already appeared in  \cref{section_initial}.

\begin{defn} \label{def_central_charge_formula}
For every $(x,y) \in U =\{(x,y) \in \R^2| \,y>-\frac{x^2}{2}\}$,
let $Z^{(x,y)} \colon \Gamma \rightarrow \C$ be the linear map defined by 
\[\gamma = (r,d,\chi) \mapsto Z_\gamma^{(x,y)} \,,\]
\[Z_\gamma^{(x,y)} \coloneqq r y 
+dx
+r+\frac{3}{2}d-\chi
+i
(d-rx)\sqrt{x^2+2y}\,.\]
\end{defn}

\begin{prop} \label{prop_stability}
For every $\sigma=(x,y) \in U$, the pair 
$(Z^{(x,y)}, \Coh^{\texttt{\#}x}(\PP^2))$
is a stability condition on $\D^b(\PP^2)$.
\end{prop}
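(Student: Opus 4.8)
The plan is to reduce Proposition \ref{prop_stability} to the already-established fact that $(\tilde{Z}^{(s,t)}, \Coh^{\texttt{\#}s}(\PP^2))$ is a stability condition for every $(s,t)$ with $t>0$. The key observation is that the formulas for $Z^{(x,y)}$ and $\tilde{Z}^{(s,t)}$ match up under the change of variables $(s,t) \mapsto (x,y) = (s, -\tfrac{1}{2}(s^2-t^2))$, which was already highlighted in the introduction. First I would verify this change of variables is a bijection from the half-plane $\HH = \{(s,t) \mid t > 0\}$ onto $U = \{(x,y) \mid x^2 + 2y > 0\}$: the inverse sends $(x,y)$ to $(x, \sqrt{x^2+2y})$, and $x^2 + 2y > 0$ is exactly the condition that makes $\sqrt{x^2+2y}$ a well-defined positive real number, while $y > -x^2/2$ is the image condition. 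Then I would check the identity $Z^{(x,y)}_\gamma = \tilde{Z}^{(s,t)}_\gamma$ for all $\gamma = (r,d,\chi)$ under this substitution: substituting $x = s$ and $y = -\tfrac{1}{2}(s^2-t^2)$, the real part $ry + dx + r + \tfrac{3}{2}d - \chi$ becomes $-\tfrac{1}{2}(s^2-t^2)r + ds + r + \tfrac{3}{2}d - \chi$, which is precisely $\Rea \tilde{Z}^{(s,t)}_\gamma$; and since $\sqrt{x^2+2y} = t$, the imaginary part $(d - rx)\sqrt{x^2+2y}$ becomes $(d - rs)t = \Ima \tilde{Z}^{(s,t)}_\gamma$. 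So $Z^{(x,y)}$ and $\tilde{Z}^{(s,t)}$ are literally the same linear map $\Gamma \to \C$.

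Given this, the proof is essentially immediate: the heart $\Coh^{\texttt{\#}x}(\PP^2) = \Coh^{\texttt{\#}s}(\PP^2)$ is unchanged (it depends only on $s = x$, not on $t$), and the central charge is unchanged as a map $\Gamma \to \C$. Since the notion of prestability condition — the positivity condition $Z(E) = \rho(E) e^{i\pi\phi(E)}$ with $\rho(E) > 0$, $0 < \phi(E) \leq 1$ for nonzero $E \in \cA$, together with the Harder–Narasimhan property — depends only on the pair $(Z, \cA)$, and the support property likewise depends only on $(Z,\cA)$ (via the existence of a suitable quadratic form $Q$), the pair $(Z^{(x,y)}, \Coh^{\texttt{\#}x}(\PP^2))$ satisfies all the axioms precisely because $(\tilde{Z}^{(s,t)}, \Coh^{\texttt{\#}s}(\PP^2))$ does, for the corresponding $(s,t) \in \HH$.

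The only genuine content is therefore the bookkeeping of the change of variables, which I would organize as: (i) a short lemma that $(s,t) \mapsto (s, -\tfrac{1}{2}(s^2-t^2))$ restricts to a bijection $\HH \xrightarrow{\sim} U$ with the stated inverse; (ii) the algebraic identity $Z^{(x,y)} = \tilde{Z}^{(s,t)}$ under this substitution, done separately on real and imaginary parts; (iii) the remark that the heart is independent of $t$. The main (very mild) obstacle is just making sure the square-root branch is handled correctly — that $x^2 + 2y > 0$ on $U$ guarantees $\sqrt{x^2+2y}$ is real and positive, so the imaginary part of $Z^{(x,y)}_\gamma$ is well-defined and matches $\Ima \tilde{Z}^{(s,t)}_\gamma$ with the correct sign. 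There is no hard analytic or categorical input beyond what is already cited from \cite{MR2376815, MR2998828, MR2852118}.
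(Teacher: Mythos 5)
Your proposal is correct and matches the paper's own proof exactly: cite the known result that $(\tilde{Z}^{(s,t)}, \Coh^{\texttt{\#}s}(\PP^2))$ is a stability condition for $t>0$, observe that $(s,t)\mapsto(s,-\tfrac12(s^2-t^2))$ is a bijection $\HH\to U$ with inverse $(x,y)\mapsto(x,\sqrt{x^2+2y})$, and check that $Z^{(x,y)}=\tilde Z^{\psi(x,y)}$ while the heart depends only on $x=s$. The extra bookkeeping you add — verifying the real and imaginary parts separately and noting the square root is well-defined and positive on $U$ — is a slight elaboration of what the paper leaves implicit, but the argument is the same.
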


\begin{proof}

According to \cite{MR2376815, MR2998828, MR2852118}, for every $(s,t) \in \R^2$ with $t >0$, the pair 
\[(\tilde{Z}^{(s,t)}, \Coh^{\texttt{\#} s}(\PP^2))\]
is a stability condition on $\D^b(\PP^2)$.

The map 
\[(s,t) \mapsto  (s,-\frac{1}{2}(s^2-t^2))\]
defines a bijection between the upper half-plane 
\[\{(s,t) \in \R^2|\,t>0\}\] and 
\[U
=\{(x,y) \in \R^2|\,y>-\frac{x^2}{2}\}\,,\]
of inverse 
\[\psi \colon (x,y) \mapsto (x, \sqrt{x^2+2y})\,,\]
such that $Z^{(x,y)}=\tilde{Z}^{\psi(x,y)}$. 
\end{proof}

\textbf{Remarks:}
\begin{itemize}
\item[(i)] Proposition \ref{prop_stability} gives a map 
\[U \rightarrow \Stab(\PP^2)\,\]
\[\sigma=(x,y) \mapsto (Z^{(x,y)}, \Coh^{\texttt{\#}x}(\PP^2))\,\] which is injective.
Viewing $U$ as an open subset of $\R^2 \simeq \C$, this map is holomorphic. From now on, we use the same notation $\sigma$ for a point 
$\sigma=(x,y)$ in $U$ or for the corresponding stability condition 
$\sigma=(Z^\sigma, \cA^\sigma)
\coloneq (Z^{(x,y)}, \Coh^{\texttt{\#}x}(\PP^2))
$.
\item[(ii)] If $E$ is a skyscraper sheaf $k(p)$, $p \in \PP^2$, then 
$\gamma(E)=(0,0,1)$ and $Z_{\gamma(E)}^\sigma=-1$ for every $\sigma \in U$.
\item[(iii)] The choice of the coordinates 
$(x,y)$ instead of the more traditional
coordinates $(s,t)$ is motivated by the fact that the real part of the central charge,
\[ \Rea Z_\gamma^\sigma=ry+dx+r+\frac{3}{2}d -\chi \,, \]
is affine in $(x,y)$ (but quadratic in $(s,t)$). This will be the key point enabling us to make contact with scattering diagrams
in \cref{section_scattering_from_stability}.
\end{itemize}

\subsection{Moduli spaces and walls}

\label{section_moduli_invariants}

For every $\sigma \in U$ and $\gamma \in \Gamma$, we denote by
$\fM_\gamma^\sigma$ the moduli stack of $\sigma$-semistable 
objects in $\D^b(\PP^2)$ of class $\gamma$.
According to \cite{MR3010070}, the Artin stack $\fM_\gamma^\sigma$ admits a good
moduli space $M_\gamma^\sigma$ which is in fact a projective scheme whose
closed points are in one-to-one correspondence with 
$S$-equivalence classes of $\sigma$-semistable objects of class 
$\gamma$. This follows from a description of $M_\gamma^\sigma$
as moduli space of quiver representations,
see  
\cite[Proposition 7.5]{MR3010070}.
The moduli space $M_\gamma^{\sigma-\st}$ of $\sigma$-stable objects of class 
$\gamma$ is a quasiprojective scheme, open in $M_\gamma^\sigma$.

We introduce the rank $1$ sublattice 
\[\Gamma^0 \coloneq \{(0,0,\chi)|\, \chi \in \Z\} \subset \Gamma \,. \] 
If $E$ is a coherent sheaf on $\PP^2$, then 
$\gamma(E) \in \Gamma^0$ if and only if $E$ is supported in dimension $0$.
According to \cite{MR3936077}, we have 
$\Ext^2(E,E)=0$ for every $\sigma$-semistable object $E$ with $\gamma(E) \notin 
\Gamma^0$, and so the stacks
$\fM_\gamma^\sigma$ of $\sigma$-semistable objects and the moduli spaces 
$M_\gamma^{\sigma-\st}$ of $\sigma$-stable objects are smooth if 
$\gamma \notin \Gamma^0$.

If $E$ is a $\sigma$-semistable object of class $\gamma \notin \Gamma^0$, then,
using that $\Ext^2(E,E)=0$ and Lemma \ref{lem_euler_form},
we find that the dimension of 
$\fM_\gamma^{\sigma}$ at $E$ is
$\dim \Ext^1(E,E)-\dim \Ext^0(E,E)
=-(\gamma,\gamma)$. In short, if
$\gamma \notin \Gamma^0$, the moduli stack
$\fM_\gamma^{\sigma}$ is smooth and equidimensional of dimension
\[ \dim \fM_\gamma^\sigma=-(\gamma,\gamma)\,.\]

If $E$ is a $\sigma$-stable object of class $\gamma \notin \Gamma^0$, then,
using that $\Ext^2(E,E)=0$, that $\dim \Ext^0(E,E)=1$
as a stable object is simple, and Lemma \ref{lem_euler_form},
we find that the dimension of $M_\gamma^{\sigma-\st}$ at $E$ is $\dim \Ext^1(E,E)=1-\chi(E,E)=1-(\gamma,\gamma)$.
In short, if $\gamma \notin \Gamma^0$, the moduli space $M_\gamma^{\sigma-\st}$ is smooth and equidimensional of dimension
\[ \dim M_\gamma^{\sigma-\st}
=1-(\gamma,\gamma) \,.\]
In fact, $M_\gamma^{\sigma-\st}$ is also smooth and equidimensional if $\gamma \in \Gamma^0$, as it is 
$\PP^2$ if $\gamma=(0,0,1)$, and the empty set else (this follows from Lemma 6.3 and Lemma 10.1 of \cite{MR2376815}).
The moduli spaces
$M_\gamma^\sigma$ of $\sigma$-semistable objects are singular in general.

Our main interest is in the study of $M_\gamma^\sigma$ as a function of 
$\sigma$.
Such study, mainly from the birational point of view, has been done in 
\cite{MR3010070, MR3275289, MR3455422, MR3271294, MR3381441, MR3635357, MR3419956, MR3800356, MR3921322, MR3615584, MR2681544, woolf2013nef}.
The result is that, for a fixed $\gamma$, there are finitely many 
curves in $U$, called \emph{actual walls} for $\gamma$, such that if $\sigma$
moves in a given connected component of the complement of the walls in $U$,
then $M_\gamma^{\sigma}$ does not change. In other words, 
$M_\gamma^\sigma$, as a function of $\sigma$, only changes when 
$\sigma$ crosses an actual wall. 

Every $\gamma' \in \Gamma$ not collinear with $\gamma$ defines a \emph{potential wall} $W_{\gamma,\gamma'}$ for $\gamma$, defined
as the set of $\sigma \in U$ such that $Z_{\gamma}^{\sigma}$ and $Z_{\gamma'}^{\sigma}$
are positively collinear. It follows from the explicit formula giving 
$Z^\sigma$, Definition \ref{def_central_charge_formula}, that each potential wall is either a parabola or a vertical 
line in $U$. Every actual wall for $\gamma$ is contained in a potential wall for $\gamma$.
Not every potential wall for $\gamma$ is an actual wall for $\gamma$: there are only 
finitely many actual  walls for $\gamma$ but in general infinitely many potential walls for $\gamma$.
An algorithm to determine the actual walls for $\gamma$ is presented in \cite{MR3921322}.

We will not be interested in the study of $M_\gamma^\sigma$ as a function of $\sigma$ from the birational point of
view, but from the point of view of numerical cohomological invariants:
Euler characteristics, Betti numbers, Hodge numbers. We will consider these
invariants for intersection cohomology rather than for singular cohomology:
as the moduli spaces $M_\gamma^\sigma$ are in general singular, we will obtain invariants with better properties this way.

\subsection{Intersection cohomology invariants}
\label{section_intersection_invariants}

We refer to \cite{MR751966, MR2525735, MR1047415}, for general notions on intersection cohomology, perverse sheaves and mixed Hodge modules, and to \cite{meinhardt2017donaldson, meinhardt2015donaldson}
for applications in the context of Donaldson-Thomas theory, which will be ultimately relevant for us.

We recall that given $X$ a projective variety, $Z$ an equidimensional subvariety of
$X$, and $L$ a variation of pure Hodge structures on an open subset $Z^\circ$
of the smooth part of $Z$, then there is a canonical pure Hodge module 
$IC_Z(L)$ on $X$ such that $IC_Z(L)|_{Z^\circ}= L$.

For every $\sigma \in U$ and $\gamma \in \Gamma$, we apply this construction to 
$X=M_\gamma^{\sigma}$, $Z=\overline{M_\gamma^{\sigma-\st}}$ the closure
of $M_\gamma^{\sigma-\st}$ in $M_\gamma^{\sigma}$, $Z^\circ = M_\gamma^{\sigma-\st}$, and 
$L=\Q$ viewed as a trivial variation of pure Hodge structures of type $(0,0)$.
Remark that 
$M_\gamma^{\sigma-\st}$ is equidimensional by \cref{section_moduli_invariants}.
We obtain a mixed Hodge module $IC_{\overline{M_\gamma^{\sigma-\st}}}(\Q)$
on $M_\gamma^\sigma$.
By Saito theory of mixed Hodge modules \cite{MR1047415}, for every 
$k \in \Z$, the cohomology group
\[IH^{k}(M_\gamma^\sigma, \Q) \coloneq H^k(M_\gamma^\sigma, IC_{\overline{M_\gamma^{\sigma-\st}}}(\Q))\] 
is naturally a pure Hodge structure of weight $k$,
and so has Hodge numbers, that we denote $Ih^{p,q}(M_\gamma^\sigma)$.
We organize these intersection Hodge numbers into a signed symmetrized intersection  
Hodge polynomial 
\[Ih_\gamma^\sigma(u^{\frac{1}{2}},v^{\frac{1}{2}}) \coloneq 
(-(uv)^{\frac{1}{2}})^{- \dim M_\gamma^\sigma}
\sum_{p,q=0}^{\dim M_\gamma^\sigma}
(-1)^{p+q}
Ih^{p,q}(M_\gamma^\sigma) u^p v^q \in \Z[u^{\pm
\frac{1}{2}},v^{\pm \frac{1}{2}}] \,.\]
We organize the intersection Betti numbers
\[Ib_j(M_\gamma^\sigma) \coloneq \dim H^j(M_\gamma^\sigma, IC_{\overline{M_\gamma^{\sigma-\st}}}(\Q)) = \sum_{p+q=j} Ih^{p,q}(M_\gamma^\sigma)\,,\]
into a signed symmetrized intersection 
Poincar\'e polynomial 
\[ Ib_\gamma^\sigma(q^{\frac{1}{2}}) \coloneq 
(-q^{\frac{1}{2}})^{-\dim M_\gamma^\sigma}
\sum_{j=0}^{2\dim M_\gamma^\sigma}
(-1)^j
Ib_j(M_\gamma^\sigma) q^{\frac{j}{2}} \in \Z[q^{\pm \frac{1}{2}}]\,.\]
Finally, we can consider the intersection Euler characteristic
\[Ie_\gamma^{+,\sigma} \coloneq 
\sum_{j=0}^{2\dim M_\gamma^\sigma}
(-1)^j Ib_j(M_\gamma^\sigma) \in \Z\,,\]
and its signed version
\[Ie_\gamma^{-,\sigma} \coloneq 
(-1)^{\dim M_\gamma^\sigma}
\sum_{j=0}^{2\dim M_\gamma^\sigma}
(-1)^j Ib_j(M_\gamma^\sigma) \in \Z\,.\]
We have the obvious specialization relations:
\[Ib_\gamma^\sigma(q^{\frac{1}{2}}) = Ih_\gamma^\sigma(u^{\frac{1}{2}}
=q^{\frac{1}{2}}, v^{\frac{1}{2}}=q^{\frac{1}{2}})\,,\]
\[Ie_\gamma^{-,\sigma}=Ib_\gamma^\sigma(q^{\frac{1}{2}}=1)\,.\]

If $M_\gamma^{\sigma-\st}=M_\gamma^\sigma$, which happens for 
example if $\gamma$ is a primitive element of the lattice 
$\Gamma$, then $M_\gamma^\sigma$ is a smooth projective variety and the intersection Hodge numbers, Betti numbers, Euler characteristic of 
$M_\gamma^\sigma$ coincide with the usual Hodge numbers, Betti numbers, 
Euler characteristic of $M_\gamma^\sigma$.

\subsection{Scattering diagrams from stability conditions}.
\label{section_scattering_from_stability}

In this section we use the intersection Hodge polynomials 
$Ih_\gamma^\sigma(u^{\frac{1}{2}}, v^{\frac{1}{2}})$,
the intersection Poincar\'e polynomials $Ib_\gamma^\sigma(q^{\frac{1}{2}})$,
and the intersection Euler characteristics 
$Ie_\gamma^\sigma$, defined in  \cref{section_intersection_invariants},
to construct scattering diagrams on $U$, in the sense of 
Definition \ref{def_scattering}, $\fD^{\PP^2}_{u,v}$,
$\fD^{\PP^2}_{q^\pm}$ and $\fD^{\PP^2}_{\cl^\pm}$, respectively for
$(M,\fg_{u,v})$, $(M,\fg_{q^\pm})$, 
and
$(M, \fg_{\cl^{\pm}})$. We recall that the Lie algebras
$\fg_{u,v}$, $\fg_{q^\pm}$ and $\fg_{\cl^\pm}$ have been defined in
\cref{section_scattering_final}.

The scattering diagram  $\fD^{\PP^2}_{u,v}$ will be constructed in terms of particular combinations 
$\overline{\Omega}_\gamma^\sigma(u^{\frac{1}{2}}, v^{\frac{1}{2}})$ of the intersection Hodge polynomials $Ih_\gamma^\sigma(u^{\frac{1}{2}}, v^{\frac{1}{2}})$, defined as follows.

\begin{defn}
For every $\gamma \in \Gamma$,
we denote
\[ \Gamma_\gamma \coloneq \{\gamma' \in \Gamma\,|
\gamma=\ell \gamma' \text{for some }
\ell \in \Z_{\geqslant 1}\}\,,\]
that is, $\Gamma_\gamma$ is the finite set of elements in the lattice $\Gamma$
dividing $\gamma$.
\end{defn}

\begin{defn} \label{def_omega}
For every $\gamma \in \Gamma$ and $\sigma \in U$, we define
\[ \overline{\Omega}_\gamma^\sigma(u^{\frac{1}{2}}, v^{\frac{1}{2}}) :=-\sum_{\substack{\gamma' \in \Gamma_\gamma\\ \gamma=\ell \gamma'}}
\frac{1}{\ell}
\frac{Ih_{\gamma'}^\sigma(u^{\frac{\ell}{2}}, v^{\frac{\ell}{2}})}{
(uv)^{\frac{\ell}{2}}-(uv)^{-\frac{\ell}{2}}}\,. \]
\end{defn}

The formula defining $\overline{\Omega}_\gamma^\sigma(u^{\frac{1}{2}}, v^{\frac{1}{2}})$ might seem a bit unmotivated from the point of view of the classical geometry of moduli spaces of sheaves on $\PP^2$. However, it is the familiar way to package Donaldson-Thomas invariants in the presence of strictly semistable objects \cite{kontsevich2008stability, MR2951762}. Intuitively, the denominator in the formula comes from the fact that every stable object has a group of automorphisms equal to $\C^{*}$.

By the Möbius inversion formula, we have 
\[Ih_{\gamma}^\sigma(u^{\frac{1}{2}}, v^{\frac{1}{2}})
:=-\sum_{\substack{\gamma' \in \Gamma_\gamma\\ \gamma=\ell \gamma'}}
\frac{\mu(\ell)}{\ell}
\frac{\overline{\Omega}_{\gamma'}^\sigma(u^{\frac{\ell}{2}}, v^{\frac{\ell}{2}})}{
(uv)^{\frac{\ell}{2}}-(uv)^{-\frac{\ell}{2}}}\,,\]
where $\mu$ is the Möbius function. In particular, the knowledge of the invariants
$(Ih_{\gamma}^\sigma(u^{\frac{1}{2}}, v^{\frac{1}{2}}))_{\gamma'\in \Gamma_\gamma}$
is equivalent to the knowledge of the invariants 
$(\overline{\Omega}_\gamma^\sigma(u^{\frac{1}{2}}, v^{\frac{1}{2}}))_{\gamma'\in \Gamma_\gamma}$.

For every $\gamma =(r,d,\chi) \in \Gamma$, we define
\[L_\gamma \coloneq \{ \sigma \in U| \, Z_{\gamma}^{\sigma} \in i \R_{>0} \}\,.\]
Using the explicit formula for $Z_\gamma^\sigma$ given in Definition 
\ref{def_central_charge_formula}, we find that
\[L_\gamma
=\{(x,y) \in U |\, ry+dx+r+\frac{3}{2}d-\chi=0 \,, -rx+d>0 \}\,.\]
Remark that $L_\gamma = L_{\gamma'}$ if $\gamma$ and $\gamma'$
are positively collinear in $\Gamma$.

More explicitly, we have
the following cases according to the sign of $r$ and $d$:
\begin{itemize}
    \item[(i)] If $r > 0$, then $L_\gamma$ is the intersection of the line of 
equation 
\[ry+dx+r+\frac{3}{2}d-\chi=0\] with $U$ and with the right half-plane $x < \frac{d}{r}$.
    \item[(ii)] If $r<0$, then $L_\gamma$ is the intersection of the line of equation 
\[ry+dx+r+\frac{3}{2}d-\chi=0\] 
with $U$ and with the left half-plane 
$x> \frac{d}{r}$.
   \item[(iii)] If $r=0$ and $d>0$, then $L_\gamma$ is the intersection of the vertical line of equation
\[x=\frac{3}{2}-\frac{\chi}{d}\]
with $U$.
   \item[(iv)] If $r=0$ and $d \leqslant 0$, then $L_\gamma$ is empty.
\end{itemize}

For every $\gamma \in \Gamma$, we denote by $\bar{L}_\gamma$ the closure of $L_\gamma$ in $\bar{U}$.

\begin{defn}
For every $\gamma \in \Gamma$, we define 
\[R_\gamma \coloneqq \{ \sigma \in U \,| \, Z_\gamma^{\sigma} \in i \R_{>0} \,,
\overline{\Omega}_\gamma^\sigma(u^{\frac{1}{2}},v^{\frac{1}{2}}) \neq 0 \}\,. \]
\end{defn}

By definition, for every $\gamma \in \Gamma$, $R_\gamma$ is a subset of $L_\gamma$.
We denote by $\bar{R}_\gamma$ the closure of $R_\gamma$ in $\bar{U}$: it is a subset of 
$\bar{L}_\gamma$.

The moduli spaces $M_{\gamma'}^\sigma$, and so the intersection Hodge 
polynomials $Ih_{\gamma'}^\sigma(u^{\frac{1}{2}}, v^{\frac{1}{2}})$, are constant 
as a function of $\sigma$ for $\sigma$ lying in a given connected component of the complement in $U$ of the finitely many actual
walls for the classes $\gamma' \in \Gamma_\gamma$. 
It follows that, for every $\gamma \in \Gamma$, we have a natural decomposition
\[R_\gamma = \bigcup_{j \in J_\gamma} R_{\gamma,j}\,,\]
where:
\begin{itemize}
\item[(i)] $J_\gamma$ is the finite set indexing connected 
components of $R_\gamma$ minus the intersection points with the finitely many actual walls for classes $\gamma'\in \Gamma_\gamma$ where $\overline{\Omega}_\gamma^\sigma(u^{\frac{1}{2}},v^{\frac{1}{2}})$ jumps. 
\item[(ii)] $R_{\gamma,j}$ is the closure in $R_\gamma$ of the corresponding connected
component indexed by $j \in J_\gamma$.  We denote by $\bar{R}_{\gamma,j}$ the closure of $R_{\gamma,j}$ in 
$\bar{U}$.
\end{itemize}
Each $\bar{R}_{\gamma,j}$ is either a bounded line segment contained in $\bar{L}_\gamma$
or a half-line contained in $\bar{L}_\gamma$.
For every $j \in J_\gamma$, we denote by 
$\overline{\Omega}_{\gamma,j}(u^{\frac{1}{2}},v^{\frac{1}{2}})$ the common value of the invariants 
$\overline{\Omega}_\gamma^\sigma(u^{\frac{1}{2}},v^{\frac{1}{2}})$
for $\sigma$ in the interior of $R_{\gamma,j}$.


\begin{defn} \label{def_m_gamma}
For every $\gamma=(r,d,\chi) \in \Gamma$, we denote
$m_\gamma \coloneq  
(r,-d) \in M =\Z^2$.
\end{defn} 

Remark that, for every $\gamma \in \Gamma$, 
the integral vector $m_\gamma$ is a direction for $L_\gamma$.

If $\bar{R}_{\gamma,j}$ is a bounded line segment, then there
exists a unique way to write 
\[\bar{R}_{\gamma,j}=\Init(\fd_{\gamma,j}) - [0,T_{\fd_{\gamma,j}}] m_\gamma \,,\]
with
$\Init(\fd_{\gamma,j}) \in \bar{U}$, and
$T_{\fd_{\gamma,j}}$ a positive real number.
If $\bar{R}_{\gamma,j}$ is a half-line, then there exists a unique way to write
\[\bar{R}_{\gamma,j}=\Init(\fd_{\gamma,j}) - \R_{\geqslant 0} m_\gamma \,,\]
with  $\Init(\fd_{\gamma,j}) \in \bar{U}$.
It follows that for every $\gamma \in \Gamma$ and $j \in J_\gamma$, we can naturally view 
$\bar{R}_{\gamma,j}$ as a naked ray of class $m_\gamma$ in $U$ in the sense of Definition 
\ref{def_naked_ray}. We denote by $|\fd_{\gamma,j}|$ this naked ray of class $m_\gamma$.

\begin{defn} \label{def_ray_stability_scattering}
For every $\gamma \in \Gamma$
and $j \in J_\gamma$, we denote by $\fd_{\gamma,j}$ the ray of class 
$m_\gamma$ in $U$ for 
$(M, \fg_{u,v})$, in the sense of Definition \ref{def_ray}, given by 
$(|\fd_{\gamma,j}|, H_{\fd_{\gamma,j}})$,
where 
\[H_{\fd_{\gamma,j}}\coloneq
\overline{\Omega}_{\gamma,j}(u^{\frac{1}{2}},v^{\frac{1}{2}})
z^{m_\gamma} \,\in (\fg_{u,v})_{m_\gamma}\,.\]
\end{defn}

\begin{defn}\label{defn_scattering_stability}
We denote by $\fD^{\PP^2}_{u,v}$ the collection of rays 
$\fd_{\gamma,j}$, where $\gamma \in \Gamma$, 
$j \in J_\gamma$.
\end{defn}

\begin{prop}
The collection of rays $\fD^{\PP^2}_{u,v}$ is a scattering diagram on 
$U$ for $(M,\fg_{u,v})$ in the sense of Definition \ref{def_scattering}.
\end{prop}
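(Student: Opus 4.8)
The plan is to verify the three defining conditions of Definition \ref{def_scattering} for the collection $\fD^{\PP^2}_{u,v} = \{\fd_{\gamma,j}\}$, using the structural facts about the loci $R_\gamma$, the moduli spaces $M_\gamma^\sigma$, and the central charge $Z^\sigma$ established in \cref{section_moduli_invariants}--\cref{section_intersection_invariants}. First I would check condition (2): for a ray $\fd_{\gamma,j}$ of class $m_\gamma = (r,-d)$ and a point $\sigma = (x,y)$ in its support, we have $\sigma \in L_\gamma$, hence $\Rea Z_\gamma^\sigma = 0$ and $-rx+d > 0$; therefore $\varphi_\sigma(m_\gamma) = 2(-rx+d) > 0$, which is exactly condition (2). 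Note this also forces $m_\gamma \neq 0$, i.e.\ $(r,d) \neq (0,0)$, which holds whenever $L_\gamma \neq \emptyset$ (from the explicit case analysis, $r=0,d\le 0$ gives $L_\gamma = \emptyset$), so every ray in $\fD^{\PP^2}_{u,v}$ has a well-defined nonzero class and $H_{\fd_{\gamma,j}}$ is a nonzero element of $(\fg_{u,v})_{m_\gamma}$ — the latter because each summand $-\frac{1}{\ell}\frac{Ih_{\gamma',j}(u^{\ell/2},v^{\ell/2})}{(uv)^{\ell/2}-(uv)^{-\ell/2}}$ is a nonzero rational function (the $\ell=1$ term alone, coming from $\gamma'=\gamma$, is nonzero since $\sigma \in R_\gamma$ means $Ih_\gamma^\sigma \neq 0$), and distinct $\ell$ contribute monomials of distinct degree in $q=uv$, so no cancellation occurs.

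Next I would address condition (1), that $\fD^{\PP^2}_{u,v}$ is normalized in the sense of Definition \ref{def_normalized}. For part (1) of that definition, two rays $\fd_{\gamma,j}$ and $\fd_{\gamma',j'}$ of the same class $m$ have supports contained in the lines $L_\gamma$ and $L_{\gamma'}$; having the same class $m_\gamma = m_{\gamma'} = m$ means $(r,d)$ and $(r',d')$ are proportional, but the lines $L_\gamma$, $L_{\gamma'}$ are parallel with equations $ry+dx + (r+\tfrac32 d - \chi) = 0$, so either they coincide (and then the decomposition $R_\gamma = \bigcup_j R_{\gamma,j}$ was defined precisely as a decomposition into closures of connected components, so distinct $j$ have disjoint interiors) or they are disjoint parallel lines. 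A subtlety: different $\gamma$'s with the same $m_\gamma$ but different $\chi$ give different parallel lines, while the \emph{same} line $L_\gamma$ may carry data from several classes $\ell\gamma$; this is already accounted for because $\Init$ and the decomposition index set $J_\gamma$ only depend on $L_\gamma$ together with the jumping locus of the $Ih_{\gamma',j}$ for $\gamma' \in \Gamma_\gamma$, and the ray attached is a single ray with $H$ the full sum over $\Gamma_\gamma$. For part (2) of Definition \ref{def_normalized}, I would argue that if the endpoint of $|\fd_{\gamma,j}|$ equals the initial point of $|\fd_{\gamma,j'}|$, then by construction this common point is an intersection with an actual wall where some $Ih_{\gamma',\cdot}$ jumps, so $H_{\fd_{\gamma,j}} \neq H_{\fd_{\gamma,j'}}$; here one should be slightly careful to invoke that the $R_{\gamma,j}$ were defined by cutting exactly at such jumping points, so consecutive segments genuinely carry different $H$.

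Finally, condition (3) — local finiteness on compacta — is where the real content lies, and I expect this to be the main obstacle. Given a compact $K \subset U$ and $k \in \R_{\geqslant 0}$, I must show only finitely many rays $\fd_{\gamma,j}$ meet $K$ at a point $\sigma$ with $\varphi_\sigma(m_\gamma) = 2(-rx+d) \leqslant k$. The strategy is: the support property of the Bridgeland stability conditions on the relevant slice (Proposition \ref{prop_stability} together with \cite{MR2852118}) implies that for $\sigma$ in a compact set, the classes $\gamma$ with $M_\gamma^\sigma \neq \emptyset$ and with $|Z_\gamma^\sigma|$ bounded form a finite set; here $\sigma \in R_\gamma$ forces $\Rea Z_\gamma^\sigma = 0$, so $|Z_\gamma^\sigma| = |\Ima Z_\gamma^\sigma| = |d - rx|\sqrt{x^2+2y} = \tfrac12 \varphi_\sigma(m_\gamma)\sqrt{x^2+2y} \leqslant \tfrac{k}{2}\sup_K \sqrt{x^2+2y}$, which is bounded. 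Combined with local compactness of $K$ in $U$ (bounded away from $\partial U$), the discreteness of $Z^\sigma(\{\gamma : M_\gamma^\sigma \neq \emptyset\})$ from the support property, and the fact that for fixed $\gamma$ there are only finitely many $j \in J_\gamma$ (finitely many actual walls), this gives the required finiteness. The delicate point is making the support-property bound uniform over $\sigma \in K$ — I would handle this by covering $K$ by finitely many small balls on each of which the quadratic form $Q$ in the support property can be taken constant, and then using that $\{\gamma : Q(\gamma) \geqslant 0,\ |Z_\gamma^\sigma| \leqslant C\}$ is finite for each such local piece, as in \cite[Appendix B]{MR2852118}.
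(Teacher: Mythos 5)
Your proof takes essentially the same route as the paper: verify conditions (1)--(3) of Definition \ref{def_scattering} directly, using the affine structure of the loci $L_\gamma$, the definition of the decomposition $R_\gamma = \bigcup_j R_{\gamma,j}$, and the support property plus a compactness covering to obtain local finiteness. Two small remarks. First, if $m_\gamma = m_{\gamma'}$ then $(r,-d) = (r',-d')$ exactly, not merely up to proportionality, so the lines $L_\gamma$, $L_{\gamma'}$ are genuinely parallel with identical direction and linear coefficients, differing only in the constant term $\chi$; this is what makes the paper's phrasing (that $\gamma$ is uniquely recovered from $m$ together with $\sigma \in L_\gamma$) work. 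Second, and more to the point: for normalization condition (2) you assert that a jump at the cut point of some $Ih_{\gamma',\cdot}$, $\gamma' \in \Gamma_\gamma$, forces $H_{\fd_{\gamma,j}} \neq H_{\fd_{\gamma,j'}}$, but $H_{\fd_{\gamma,j}}$ is a \emph{sum} over all divisors $\gamma'$ of $\gamma$, so one must know that the individual $Ih_{\gamma',j}$ can be recovered from the single element $H_{\fd_{\gamma,j}}$. The paper makes this explicit: the $\ell$-th summand is a function of $(uv)^{\ell/2}$ alone, so the terms can be disentangled and a jump in any one $Ih_{\gamma',j}$ forces a jump in $H_{\fd_{\gamma,j}}$. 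Your earlier ``no cancellation'' observation gestures at the same idea, but you never actually invoke it at the place where it is needed, and as stated ("monomials of distinct degree in $q$") it is not quite accurate, since the $Ih_{\gamma',j}$ are Laurent polynomials rather than monomials.
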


\begin{proof}
Let $\sigma=(x,y) \in U$ and let $m \in M$.
Let $\fd$ be a ray of 
$\fD_{u,v}^{\PP^2}$ of class $m$ and such that $\sigma$ belongs to the interior of 
$|\fd|$. We have $\fd=\fd_{\gamma,j}$ for some $\gamma \in \Gamma$ and $j \in J_\gamma$. The class $\gamma=(r,d,\chi)$ is uniquely determined by the condition 
$m_\gamma=(r,-d)=m$ (using Definition \ref{def_m_gamma}) and the condition 
$\sigma \in L_\gamma$, that is, 
$\chi=ry+dx+r+\frac{3}{2}d$. The element $j \in J_\gamma$ is uniquely determined by the fact that 
$\sigma$ belongs to the interior of $|\fd_{\gamma,j}|$ and that the interiors of $|\fd_{\gamma,j_1}|$ and 
$|\fd_{\gamma,j_2}|$ are distinct for 
$j_1,j_2 \in J_\gamma$, $j_1 \neq j_2$.
It follows that $\fD^{\PP^2}_{u,v}$
satisfies the first condition of Definition 
\ref{def_normalized}.
As
$J_\gamma$ is defined in terms of the jumps of 
the invariants $\overline{\Omega}_{\gamma,j}(u^{\frac{1}{2}},
v^{\frac{1}{2}})$, $\fD^{\PP^2}_{u,v}$
also satisfies the second condition of 
Definition \ref{def_normalized}. In conclusion, 
$\fD^{\PP^2}_{u,v}$ is normalized, that is,
$\fD^{\PP^2}_{u,v}$ satisfies condition (i) of Definition \ref{def_scattering}.

If $\sigma=(x,y) \in |\fd_{\gamma,j}| \cap U$, then, by definition of 
$L_\gamma$, we have 
$Z_\gamma^{\sigma} \in i \R_{>0}$, so, using Definitions \ref{def_varphi} and 
\ref{def_central_charge_formula}
\[\varphi_\sigma ( m_\gamma)=2(-rx+d)=2 \frac{\Ima Z_\gamma^{\sigma}}{\sqrt{x^2+2y}}>0\,.\]
It follows that $\fD^{\PP^2}_{u,v}$ satisfies condition (ii) of Definition \ref{def_scattering}.

We fix $K$ a compact set in $U$ and $k \in \R_{\geqslant 0}$. As 
$K$ is compact in $U$, there exists $C >0$ such that $\frac{1}{2}\sqrt{x^2+2y}<C$ for every $(x,y)\in K$. In particular, if 
$\sigma=(x,y) \in K$, then, for every $\gamma \in \Gamma$, we have
$\Ima Z_{\gamma}^{\sigma}=\frac{1}{2}\sqrt{x^2+2y} \, \varphi_\sigma (m_\gamma)$
and so 
\[\Ima Z_{\gamma}^\sigma<C \varphi_\sigma (m_\gamma) \,.\]
If $\sigma=(x,y) \in K$, then, the set of $\gamma \in \Gamma$ such that $M_\gamma^{\sigma}$ is nonempty,
$\Rea Z_{\gamma}^{\sigma} \in [-1,1]$ and $\varphi_\sigma(m_\gamma) \leqslant k$ is contained in the set $F_\sigma$ of $\gamma \in \Gamma$ 
such that $M_\gamma^\sigma$ is nonempty, $\Rea Z_{\gamma}^{\sigma} \in [-1,1]$ and 
$\Ima Z_{\gamma}^\sigma \leqslant Ck$, which is finite
according to the support property satisfied by the stability condition 
$\sigma$. 

By definition of the topology on the space of stability conditions and by the support property, there exists an open subset $U_\sigma$ of $K$ containing $\sigma$ such that $F_{\sigma'}\subset F_{\sigma}$ for every $\sigma' \in U_\sigma$.
By compactness of $K$, there exists finitely many $\sigma_1, \dots,\sigma_l$ such that $K=\cup_{j=1}^l U_{\sigma_j}$. In particular, the set of $\gamma \in \Gamma$  such that there exists $\sigma \in K$ such that
$M_\gamma^\sigma$ is nonempty,
$\Rea Z_{\gamma}^\sigma=0$ and $\Ima Z_{\gamma}^\sigma \leqslant Ck$
is contained in $\cup_{j=1}^l F_{\sigma_j}$, which is finite. It follows that $\fD^{\PP^2}_{u,v}$ satisfies condition (iii) 
of Definition \ref{def_scattering}.
\end{proof}

\begin{defn} \label{def_scattering_stability_q}
We define similarly a scattering diagram $\fD_{q^-}^{\PP^2}$, resp.\
$\fD_{q^+}^{\PP^2}$,
on $U$ for $(M, \fg_{q^-})$, resp.\
$(M, \fg_{q^+})$,
by repeating the previous construction with 
\[R_\gamma \coloneqq \{ \sigma \in U\,| \, Z_\gamma^{\sigma} \in i \R_{>0} \,,
Ib_\gamma^\sigma(q^{\frac{1}{2}}) \neq 0 \}\,, \]
and 
\[H_{\fd_{\gamma,j}}
\coloneq
\left(
-\sum_{\substack{\gamma' \in \Gamma_\gamma
\\ \gamma = \ell \gamma'}}
\frac{1}{\ell}
\frac{Ib_{\gamma',j}(q^{\frac{\ell}{2}})}{
q^{\frac{\ell}{2}}-q^{-\frac{\ell}{2}}}
\right) 
z^{m_\gamma} \,\in (\fg_{q^-})_{m_\gamma}\,,\]
resp.\
\[H_{\fd_{\gamma,j}}
\coloneq
(-1)^{(\gamma,\gamma)-1}
\left(
\sum_{\substack{\gamma' \in \Gamma_\gamma
\\ \gamma = \ell \gamma'}}
\frac{1}{\ell}
\frac{Ib_{\gamma',j}(q^{\frac{\ell}{2}})}{
q^{\frac{\ell}{2}}-q^{-\frac{\ell}{2}}}
\right) 
z^{m_\gamma} \,\in (\fg_{q^+})_{m_\gamma}\,.\] 
\end{defn}

Recall that we introduced the Euler bilinear form $(-,-)\colon \Gamma \otimes 
\Gamma \rightarrow \Z$ in Definition 
\ref{def_bilinear_form}.

\begin{defn}\label{def_scattering_stability_cl}
We define similarly a scattering diagram $\fD_{\cl^-}^{\PP^2}$, 
resp.\ $\fD_{\cl^+}^{\PP^2}$,
on $U$ for $(M, \fg_{\cl^-})$, resp.\ 
$(M, \fg_{\cl^+})$,
by repeating the previous construction with 
\[R_\gamma \coloneqq \{ \sigma \in U \,| \, Z_\gamma^{\sigma} \in i \R_{>0} \,,
Ie_\gamma^\sigma \neq 0 \}\,, \]
and 
\[H_{\fd_{\gamma,j}}\coloneq
\left(
-\sum_{\substack{\gamma' \in \Gamma_\gamma \\ \gamma=\ell \gamma'}}
\frac{1}{\ell^2}
Ie_{\gamma',j}^-
\right)
z^{m_\gamma}
\,\in (\fg_{\cl^-})_{m_\gamma}\,,\]
resp.\
\[H_{\fd_{\gamma,j}}\coloneq
(-1)^{(\gamma,\gamma)-1}
\left(
\sum_{\substack{\gamma' \in \Gamma_\gamma \\ \gamma=\ell \gamma'}}
\frac{1}{\ell^2}Ie^-_{\gamma',j}
\right)
z^{m_\gamma}
\,\in (\fg_{\cl^+})_{m_\gamma}\,.\]
\end{defn}

\subsection{Action of $\psi(1)$ on $\fD_{u,v}^{\PP^2}$}
\label{subsection_psi(1)_P2}

In this section we establish a symmetry 
$\psi(1)$ of the scattering diagram 
$\fD^{\PP^2}_{u,v}$ defined in
\cref{section_scattering_from_stability}.

There is a natural action of the group of autoequivalences $\Aut (\D^b(\PP^2))$ on $\Stab (\PP^2)$ (see \cite[Lemma 8.2]{MR2373143}):
if $T \in \Aut (\D^b(\PP^2))$ and 
$\sigma=(Z,\cA) \in \Stab(\PP^2)$, the action of $T$ on $\sigma$ is defined by 
$T \cdot \sigma \coloneq (Z \circ T^{-1},
T(\cA))$.

In the remaining of this section we take $T=(-\otimes \cO(1)) \in 
\Aut (\D^b(\PP^2))$. 
Recall that we defined in
\cref{subsection_psi(1)} an action 
$\psi(1)$ on $U$.

\begin{lem} \label{lem_T_action}
The action of $T$ on $\Stab(\PP^2)$ 
preserves $U$. Moreover, the action of $T$
restricted to $U$ on $U$
coincides with the action of $\psi(1)$
on $U$.
\end{lem}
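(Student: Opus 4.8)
The statement to prove is Lemma \ref{lem_T_action}: the autoequivalence $T=(-\otimes\cO(1))$ of $\D^b(\PP^2)$ acts on $\Stab(\PP^2)$ in a way that preserves the slice $U$, and the induced action on $U$ agrees with the affine transformation $\psi(1)$ of Definition \ref{def_psi(1)}. The plan is to unwind both sides explicitly in coordinates and compare. On the $\Stab(\PP^2)$ side, recall that $T$ acts by $T\cdot\sigma=(Z^\sigma\circ T^{-1},T(\cA^\sigma))$. First I would compute the effect of $T^{-1}=(-\otimes\cO(-1))$ on classes: if $\gamma=\gamma(E)=(r,d,\chi)$ then $\gamma(E\otimes\cO(-1))=(r,\,d-r,\,\chi-d+r\cdot\frac{?}{})$ — more cleanly, $\ch(E\otimes\cO(-1))=e^{-H}\ch(E)$, so $\ch_0$ is unchanged, $\ch_1\mapsto \ch_1-\ch_0$, $\ch_2\mapsto\ch_2-\ch_1+\tfrac12\ch_0$; translating back through the dictionary $\ch_0=r$, $\ch_1=d$, $\ch_2=\chi-r-\tfrac32 d$ gives the precise formula for $T^{-1}$ on $(r,d,\chi)$.

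The key identity to establish is that for the point $\sigma=(x,y)\in U$ and its image $\sigma'=\psi(1)(\sigma)=(x+1,\,y-x-\tfrac12)$, one has $Z^{\sigma'}_{\gamma}=Z^{\sigma}_{T^{-1}\gamma}$ for all $\gamma\in\Gamma$. Here I would use the compact expression $\tilde Z^{(s,t)}_{\gamma(E)}=-\int_{\PP^2}e^{-(s+it)H}\ch(E)$ from \cref{section_stability_conditions}, together with the bijection $\psi\colon(x,y)\mapsto(x,\sqrt{x^2+2y})$ relating $U$-coordinates to $(s,t)$-coordinates and satisfying $Z^{(x,y)}=\tilde Z^{\psi(x,y)}$. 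Under the $(s,t)$-coordinates, $-\otimes\cO(1)$ manifestly corresponds to the shift $(s,t)\mapsto(s-1,t)$, since $e^{-(s+it)H}\ch(E\otimes\cO(1))=e^{-(s+it)H}e^{H}\ch(E)=e^{-((s-1)+it)H}\ch(E)$; equivalently $\tilde Z^{(s,t)}_{T^{-1}\gamma}=\tilde Z^{(s+1,t)}_{\gamma}$. Then I would check that Lemma \ref{lem_computation} (which says $\psi(1)$ preserves $x^2+2y$) is exactly the statement that $\psi(1)$ in $U$-coordinates corresponds to $(s,t)\mapsto(s+1,t)$ in $(s,t)$-coordinates: indeed if $\psi(\sigma)=(s,t)$ then $\psi(\psi(1)\sigma)=(s+1,\sqrt{(x+1)^2+2(y-x-\tfrac12)})=(s+1,\sqrt{x^2+2y})=(s+1,t)$. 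This proves the central charge identity. For the heart, I would observe that $T(\Coh^{\#x}(\PP^2))=\Coh^{\#(x+1)}(\PP^2)$: tensoring by $\cO(1)$ shifts $\mu$-slopes by $1$, hence sends $\Coh^{\leqslant s}$ to $\Coh^{\leqslant s+1}$ and $\Coh^{>s}$ to $\Coh^{>s+1}$, and therefore the tilt $\Coh^{\#s}$ to $\Coh^{\#(s+1)}$. Combining, $T\cdot(Z^{(x,y)},\Coh^{\#x})=(Z^{(x,y)}\circ T^{-1},\,\Coh^{\#(x+1)})=(Z^{\psi(1)(x,y)},\,\Coh^{\#(x+1)})$, which is precisely the stability condition attached to $\psi(1)(x,y)\in U$ under Proposition \ref{prop_stability}. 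Since $\psi(1)$ maps $U$ to $U$ bijectively (Lemma \ref{lem_preserv_U}), this shows $T$ preserves $U$ and acts as $\psi(1)$.

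I expect the main obstacle to be purely bookkeeping: keeping the direction of $T$ versus $T^{-1}$ straight (the central charge transforms by $T^{-1}$ while the heart transforms by $T$), and matching the $+1$ versus $-1$ in the shift of $s$ against the $+1$ in $\psi(1)$, so that all signs are consistent. The conceptual content — that $-\otimes\cO(1)$ is just $s\mapsto s-1$ on the classical slice, pulled over to the parabolic coordinates via $\psi$ — is clean; the risk is an off-by-one or sign slip. To guard against this I would also sanity-check on the skyscraper class $\gamma=(0,0,1)$: $T^{-1}$ fixes it, $Z^{\sigma}_{(0,0,1)}=-1$ for every $\sigma\in U$, and indeed $\psi(1)$ must fix the value $-1$, which it does trivially; and on $\gamma(\cO)=(1,0,1)$, whose $R_\gamma$-type ray should be carried by $\psi(1)$ to the ray of $\gamma(\cO(1))=(1,1,3)$, matching $\fd^\pm_n\mapsto\fd^\pm_{n+1}$ from Proposition \ref{prop_psi(1)}. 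These consistency checks, together with the explicit $e^{-(s+it)H}$ computation, complete the argument.
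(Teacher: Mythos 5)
Your proposal is correct, and the central-charge part takes a genuinely different (and cleaner) route than the paper's proof. The paper computes $T^{-1}(\gamma)=(r,\,d-r,\,\chi-r-d)$ and then plugs this directly into the polynomial formula for $Z^{(x,y)}$, grinding out $Z^{(x,y)}_{T^{-1}\gamma}=Z^{\psi(1)(x,y)}_\gamma$ by regrouping terms and invoking Lemma \ref{lem_computation}. You instead work through the $(s,t)$-coordinates: the compact form $\tilde Z^{(s,t)}_{\gamma(E)}=-\int_{\PP^2}e^{-(s+it)H}\ch(E)$ makes the action of $-\otimes\cO(1)$ manifestly a translation $s\mapsto s+1$ on $T^{-1}\gamma$ (no explicit formula for $T^{-1}$ on $(r,d,\chi)$ is ever needed — the dangling ``$\frac{?}{}$'' in your first paragraph is harmless since you abandon that route), and then Lemma \ref{lem_computation} is read precisely as the statement that $\psi\circ\psi(1)\circ\psi^{-1}$ is this translation, because $\psi(1)$ preserves $x^2+2y=t^2$. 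What your approach buys is conceptual transparency: the identity drops out of the multiplicativity $\ch(E\otimes\cO(-1))=e^{-H}\ch(E)$ rather than from a coincidence of coefficients, and it makes clear that nothing about $\PP^2$ beyond the exponential form of the twist by a line bundle is being used. The paper's approach is more self-contained (it stays entirely in the $(x,y)$-chart and never leaves Definition \ref{def_central_charge_formula}) but obscures why the cancellation happens. Your treatment of the heart is identical to the paper's, and your sanity checks on $\gamma=(0,0,1)$ and on $\fd^\pm_n\mapsto\fd^\pm_{n+1}$ are sound.
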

 
\begin{proof}
Let $\sigma=(x,y) \in U$. We have to compute $T \cdot \sigma$. 
$T \cdot \sigma$.
We have $T(\cA^{(x,y)})=
T( \Coh^{\texttt{\#}x}(\PP^2))
= \Coh^{\texttt{\#}x+1}(\PP^2)
=
\cA^{\psi(1)((x,y))}$. Indeed, if $E$ is a 
$\mu$-semistable sheaf of slope $x$, then 
$E(1)$ is $\mu$-semistable of slope
\[ \frac{d(E(1))}{r(E(1))}=
\frac{d(E)+r(E)}{r(E)}=\frac{d(E)}{r(E)}+1=x+1\,.\]
Recall from Definition \ref{def_central_charge_formula} that if 
$\gamma=(r,d,\chi)$, then 
\[Z_\gamma^{(x,y)} \coloneqq r y 
+dx
+r+\frac{3}{2}d-\chi
+i
(d-rx)\sqrt{x^2+2y}\,.\]
Using that
$r(E(-1))=r(E)$, $d(E(-1))=d(E)-r(E)$, $\chi(E(-1))=\chi(E)+\frac{r(E)}{2}-\left(\frac{3}{2}r(E)+d(E)\right)$,
we find 
\[ T^{-1}(\gamma)=T^{-1}
((r,d,\chi))=(r,d-r,\chi-r-d)\,.\]
Thus, we have 
\[ Z_{T^{-1}(\gamma)}^{(x,y)}
=ry+(d-r)x+r+\frac{3}{2}(d-r)
-\chi+r+d+i(d-r-rx)\sqrt{x^2+2y}\]
\[=r(y-x+\frac{1}{2})+d(x+1)
+r+\frac{3}{2}d-\chi+i(d-r(x+1))\sqrt{x^2+2y}\,,\]
and so, using Lemma \ref{lem_computation},
we have
\[ Z_{T^{-1}(\gamma)}^{(x,y)}
=Z_\gamma^{\psi(1)((x,y))}\,.\]
We conclude that $T \cdot \sigma =
\psi(1)(\sigma)$.
\end{proof}

We recall that in 
\cref{subsection_psi(1)} 
we also defined an operation 
$\psi(1)$ on scattering diagrams 
on $U$ for $(M,\fg_{u,v})$.

\begin{prop} \label{prop_psi(1)_P2}
We have 
$\psi(1)(\fD_{u,v}^{\PP^2})
=\fD_{u,v}^{\PP^2}$.
\end{prop}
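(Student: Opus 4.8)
The strategy is to reduce the equality $\psi(1)(\fD^{\PP^2}_{u,v}) = \fD^{\PP^2}_{u,v}$ to the fact that the autoequivalence $T = (-\otimes\cO(1))$ of $\D^b(\PP^2)$ acts compatibly on all the data defining the scattering diagram: the open set $U$, the central charges, the moduli spaces of semistable objects, and hence their intersection Hodge polynomials. The key input is Lemma \ref{lem_T_action}, which identifies the action of $T$ on the slice $U$ of $\Stab(\PP^2)$ with the affine map $\psi(1)$ introduced in \cref{subsection_psi(1)}. First I would spell out what $\psi(1)(\fd_{\gamma,j})$ is: given a ray $\fd_{\gamma,j} = (|\fd_{\gamma,j}|, H_{\fd_{\gamma,j}})$ of class $m_\gamma=(r,-d)$, its image under $\psi(1)$ has support $\psi(1)(|\fd_{\gamma,j}|)$ and element $\overline{\Omega}_{\fd_{\gamma,j}} z^{\mathrm{d}\psi(1)(m_\gamma)}$, where $\mathrm{d}\psi(1)(r,-d) = (r, -d-r) = m_{T(\gamma)}$ since $T(\gamma) = T((r,d,\chi))$ has degree $d+r$ (tensoring by $\cO(1)$ raises the degree by the rank). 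So at the level of classes, $\psi(1)$ matches the bookkeeping of $T$ exactly.

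Next I would check that $T$ induces, for every $\sigma \in U$ and every $\gamma \in \Gamma$, an isomorphism of moduli spaces $M_\gamma^\sigma \xrightarrow{\ \sim\ } M_{T(\gamma)}^{T\cdot\sigma} = M_{T(\gamma)}^{\psi(1)(\sigma)}$, compatible with $S$-equivalence and with the stable loci, and hence an isomorphism of the pure Hodge structures $IH^k(M_\gamma^\sigma,\Q) \simeq IH^k(M_{T(\gamma)}^{\psi(1)(\sigma)},\Q)$. This is immediate from the definition of the $T$-action on $\Stab(\PP^2)$ (an object is $\sigma$-semistable of class $\gamma$ iff $T$ of it is $(T\cdot\sigma)$-semistable of class $T(\gamma)$). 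Consequently $Ih_\gamma^\sigma(u^{1/2},v^{1/2}) = Ih_{T(\gamma)}^{\psi(1)(\sigma)}(u^{1/2},v^{1/2})$, and since $T$ also sends the walls for $\gamma$ to the walls for $T(\gamma)$ via $\psi(1)$, it sends the subdivision $R_\gamma = \bigcup_j R_{\gamma,j}$ to $R_{T(\gamma)} = \bigcup_j R_{T(\gamma),j}$, matching indexing sets $J_\gamma \simeq J_{T(\gamma)}$. Because the divisibility structure $\Gamma_\gamma$ is preserved by the automorphism $T$ of the lattice $\Gamma$ (if $\gamma = \ell\gamma'$ then $T(\gamma) = \ell\, T(\gamma')$), comparing the defining formula for $H_{\fd_{\gamma,j}}$ in Definition \ref{def_ray_stability_scattering} term by term shows $\psi(1)(\fd_{\gamma,j}) = \fd_{T(\gamma),j}$ as rays. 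As $\gamma$ ranges over $\Gamma$ and $T$ is a bijection of $\Gamma$, the collection $\{\fd_{\gamma,j}\}$ is carried bijectively to itself, giving the claimed equality.

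The main obstacle, and the step deserving the most care, is verifying that $\psi(1)$ genuinely matches the \emph{supports} $R_{\gamma,j}$ and their endpoints, not merely their underlying lines $\bar L_\gamma$. One must confirm that $\psi(1)$ carries $L_\gamma$ onto $L_{T(\gamma)}$ (this follows from $Z_{T^{-1}(\gamma)}^{(x,y)} = Z_\gamma^{\psi(1)((x,y))}$ established in the proof of Lemma \ref{lem_T_action}, which in particular shows $Z_\gamma^\sigma \in i\R_{>0}$ iff $Z_{T(\gamma)}^{\psi(1)(\sigma)} \in i\R_{>0}$), and that it carries the actual walls for $\gamma$ to the actual walls for $T(\gamma)$ and the initial point $\Init(\fd_{\gamma,j})$ to $\Init(\fd_{T(\gamma),j})$. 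The cleanest way to finish, mirroring the proof of Proposition \ref{prop_psi(1)}, is then to invoke the preservation of everything under $\psi(1)$ directly rather than re-deriving it; but unlike the $S(\fD^\iin_{u,v})$ case there is no uniqueness-of-consistent-completion shortcut available here, so this has to be done by hand from the $T$-equivariance of moduli spaces. Once that equivariance is in place, the rest is routine bookkeeping.
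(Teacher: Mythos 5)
Your proposal is correct and follows essentially the same approach as the paper's proof: invoke Lemma \ref{lem_T_action} to identify the $T$-action on $U$ with $\psi(1)$, then use the fact that the autoequivalence $T = -\otimes\cO(1)$ identifies $M_\gamma^\sigma$ with $M_{T(\gamma)}^{T\cdot\sigma}$. The paper's proof is a three-line version of this; your proposal simply spells out the bookkeeping (the match $\mathrm{d}\psi(1)(m_\gamma)=m_{T(\gamma)}$, preservation of walls, of $L_\gamma$, of the divisibility structure) that the paper leaves to the reader, and your observation that there is no uniqueness-of-completion shortcut here, so the equivariance must be checked on the definition directly, is exactly the right point.
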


\begin{proof}
The scattering diagram $\fD_{u,v}^{\PP^2}$
is defined in terms of the moduli spaces
$M_\gamma^\sigma$ of $\sigma$-semistable objects in $\D^b(\PP^2)$ for 
$\sigma \in U$. As
$T\in \Aut(\D^b(\PP^2))$, we have 
$M_\gamma^\sigma=M_{T(\gamma)}^{T \cdot \sigma}$ for every $\sigma \in U$ and for every $\gamma \in \Gamma$.
According to Lemma \ref{lem_T_action}, we have 
$T \cdot \sigma = \psi(1)(\sigma)$ for every $\sigma \in U$. It follows that
$\psi(1)(\fD_{u,v}^{\PP^2})
=\fD_{u,v}^{\PP^2}$.
\end{proof}

\section{Consistency of $\fD_{u,v}^{\PP^2}$}
\label{section_consistency_proof}

\subsection{Statement of Theorem \ref{thm_consistency}}
We introduced the scattering diagram 
$\fD_{u,v}^{\PP^2}$ in
\cref{section_scattering_from_stability}.
The main result of the present section is the consistency of $\fD_{u,v}^{\PP^2}$:

\begin{thm} \label{thm_consistency}
The scattering diagram $\fD_{u,v}^{\PP^2}$ on $U$ for $(M, \fg_{u,v})$ is consistent in the sense of Definition \ref{def_consistency}.
\end{thm}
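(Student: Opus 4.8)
The plan is to deduce the consistency of $\fD_{u,v}^{\PP^2}$, point by point, from the Kontsevich--Soibelman/Joyce--Song wall-crossing formula for refined Donaldson--Thomas invariants. By Definition \ref{def_consistency} it suffices to fix $\sigma_0 \in U \cap \Sing(\fD_{u,v}^{\PP^2})$ and show that the associated local scattering diagram $(\fD_{u,v}^{\PP^2})_{\sigma_0}$ for $(M,\langle-,-\rangle,\varphi_{\sigma_0},\fg_{u,v})$ is consistent, which by the remark following Definition \ref{def_local_consistency_order_k} reduces to showing that the composition of the automorphisms $\Phi_{\fd}^{\epsilon_\fd(\sigma_0)}$, over all rays $\fd$ of $\fD_{u,v}^{\PP^2}$ through $\sigma_0$ taken in anticlockwise order, is the identity, working order by order modulo $\fm_k$. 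The rays through $\sigma_0$ are exactly the $\fd_{\gamma,j}$ with $\sigma_0 \in \bar{R}_{\gamma,j} \subset L_\gamma$, so all the classes $\gamma$ involved satisfy $Z_\gamma^{\sigma_0} \in i\R_{>0}$; they therefore lie in the positive part of a rank-two sublattice of $\Gamma$, and the local picture at $\sigma_0$ is exactly that of how the moduli spaces $M_\gamma^\sigma$ --- hence the polynomials $Ih_\gamma^\sigma$ --- jump as $\sigma$ crosses the finitely many actual walls passing through $\sigma_0$.

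First I would pass to the Calabi--Yau threefold picture: viewing $U$ inside $\Stab(\D^b_0(K_{\PP^2}))$ as in \cref{section_intro_proof}, the Li--Zhao vanishing $\Ext^2(E,E)=0$ for $\sigma$-semistable $E$ with $\gamma(E)\notin\Gamma^0$ \cite{MR3936077}, together with an argument adapting \cite[Proposition 3.1]{MR3861701}, shows that for every $\sigma\in U$ the $\sigma$-semistable objects in $\D^b_0(K_{\PP^2})$ coincide with those in $\D^b(\PP^2)$ and that the moduli stack $\fM_\gamma^\sigma$ is smooth. This places us inside a Calabi--Yau-$3$ triangulated category with smooth moduli stacks of semistable objects, which is precisely the setting in which the wall-crossing formula is available.

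The heart of the argument is the identification of $H_{\fd_{\gamma,j}}$ with the refined Donaldson--Thomas generating series. Because $\fM_\gamma^\sigma$ is smooth, the Meinhardt--Reineke relation between Donaldson--Thomas invariants and intersection cohomology \cite{meinhardt2017donaldson}, in the form valid within the abstract framework of \cite{meinhardt2015donaldson} (compare \cite{MR3874687} in the Gieseker case), identifies the BPS invariant of $\D^b_0(K_{\PP^2})$ in class $\gamma$ at $\sigma$ with the intersection Hodge polynomial $Ih_\gamma^\sigma(u^{\frac{1}{2}},v^{\frac{1}{2}})$ after the sign-symmetric normalization built into Definition \ref{def_ray_stability_scattering}; the divisibility sum over $\gamma'\in\Gamma_\gamma$ with $\gamma=\ell\gamma'$ and the denominator $(uv)^{\frac{\ell}{2}}-(uv)^{-\frac{\ell}{2}}$ are precisely the passage from BPS to (rational, refined) DT invariants. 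Moreover, by Lemma \ref{lem_skew_sym_euler_form} the skew-symmetrized Euler form of $\D^b_0(K_{\PP^2})$, transported to the classes $m_\gamma=(r,-d)$, is the form $\langle-,-\rangle$ of \cref{section_initial}, so the Lie bracket of $\fg_{u,v}$ of Definition \ref{def_lie_algebra}, with its sign $(-1)^{\langle m,m'\rangle}$ and its quantum-dilogarithm coefficient, is exactly the bracket of the quantum torus Lie algebra in which the Kontsevich--Soibelman formula is phrased. With this dictionary, the wall-crossing operators $\Phi_{\fd_{\gamma,j}}=\exp([H_{\fd_{\gamma,j}},-])$ are the DT automorphisms, and the Kontsevich--Soibelman/Joyce--Song wall-crossing formula \cite{kontsevich2008stability,MR2951762}, applied in a neighborhood of $\sigma_0$ in $U$, gives precisely that the monodromy of these operators around $\sigma_0$ is trivial; convergence at each order $k$ is automatic since, by condition (3) of Definition \ref{def_scattering} already established for $\fD_{u,v}^{\PP^2}$, only finitely many rays are nonzero modulo $\fm_k$ near $\sigma_0$.

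I expect the main obstacle to be this last identification carried out carefully: matching the normalizations (rational versus integral DT invariants, the role of the $(-(uv)^{\frac{1}{2}})^{-\dim M_\gamma^\sigma}$ twist, and the treatment of the classes $\gamma\in\Gamma^0$ of zero-dimensional sheaves, for which $M_\gamma^\sigma$ is a symmetric power of $\PP^2$), and verifying that the hypotheses of the machinery of \cite{meinhardt2015donaldson} --- essentially that, after the $\Ext^2$-vanishing, one is in the homological-dimension-one situation in which intersection cohomology computes the BPS sheaf --- genuinely hold on the slice $U$. Once this is in place, Theorem \ref{thm_consistency} follows formally from the wall-crossing formula applied at each $\sigma_0 \in U \cap \Sing(\fD_{u,v}^{\PP^2})$.
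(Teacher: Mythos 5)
Your outline gets the overall shape right: fix $\sigma_0 \in U \cap \Sing(\fD_{u,v}^{\PP^2})$, identify $H_{\fd_{\gamma,j}}$ with refined DT invariants via the Meinhardt--Reineke/\cite{meinhardt2015donaldson} correspondence, and conclude local consistency from a wall-crossing identity. But there is a real gap at the point you flag as the ``main obstacle,'' and the paper resolves it by a route different from the one you sketch.

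You invoke ``the Kontsevich--Soibelman/Joyce--Song wall-crossing formula'' as a citable black box in the Calabi--Yau-$3$ category $\D^b_0(K_{\PP^2})$. In the refined, intersection-cohomology setting that is required here, no such off-the-shelf statement is available: Joyce--Song is about (unrefined) numerical invariants, and the motivic Kontsevich--Soibelman theory on a general Calabi--Yau $3$-fold requires orientation data — precisely the complication the paper explicitly goes out of its way to avoid by working technically in $\D^b(\PP^2)$ and only thinking of $K_{\PP^2}$ heuristically. The paper instead proves the needed wall-crossing identity directly (Propositions \ref{prop_dt_ic} and \ref{prop_wcf}): it sets up the motivic Hall algebra of $\cA = \Coh^{\texttt{\#}x}(\PP^2)$, uses existence and uniqueness of Harder--Narasimhan filtrations for $\sigma_\iin$ and $\sigma_\oout$ on either side of the one potential wall $W_\sigma$ through $\sigma_0$, and then — this is the key step you do not supply — proves in Lemma \ref{lem_wcf} that applying the virtual Hodge polynomial to iterated Hall products is an algebra homomorphism into the quantum torus $A_\Lambda$, with the precise $(-(uv)^{1/2})^{(\gamma,\gamma)}$ twist. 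That lemma rests not on $\Ext^2(E,E)=0$ but on the stronger vanishing $\Ext^2(G,G')=0$ for semistable $G,G'$ with $\phi_G < \phi_{G'}$, which the paper extracts from the proofs (not just the stated results) of \cite{MR3936077}. Your proposal only mentions the weaker self-$\Ext^2$ vanishing and smoothness of stacks; it is not enough to make the integration map a homomorphism, which is the technical heart of the consistency argument.

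Two further points worth noting. First, to even apply \cite[Theorem~1.1]{meinhardt2015donaldson} slice-wise in phase, the paper verifies in Lemma \ref{lem_technical_meinh} that the categories $\cA^{\sigma_\iin}(\phi)$, $\cA^{\sigma_\oout}(\phi)$ satisfy the technical conditions of \cite{meinhardt2015donaldson}; this is not formal — it uses the quiver presentation of \cite{MR3010070}, the Li--Zhao vanishing, and, for the required \emph{symmetry} of the pairing, the fact that Chern characters of objects of the same phase at a generic $\sigma_\iin$ are collinear because $\sigma_\iin$ sits off every potential wall. Your proposal defers this check. Second, the reduction to one potential wall and the orientation of the rays around $\sigma_0$ is not automatic: the paper needs Lemma \ref{lem_rank_two} (rank-$2$ sublattice, using $Z^\sigma_{(0,0,1)}=-1$), Lemma \ref{key_lem_walls} (the tangent to $W_\sigma$ at $\sigma_0$ is $\{\varphi_{\sigma_0}=0\}$, so ingoing and outgoing rays really separate), and the careful choice of $\sigma_\iin,\sigma_\oout$ on a vertical line so that the tilted heart $\cA$ is the same on both sides; these are elementary but essential bookkeeping steps that the ``apply the formula near $\sigma_0$'' framing skips.
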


The proof of Theorem \ref{thm_consistency} takes the remaining part of this section. 
According to Definition
\ref{def_consistency}, we have to show that for every $\sigma \in U \cap \Sing(\fD^{\PP^2}_{u,v})$, the local scattering diagram 
$(\fD_{u,v}^{\PP^2})_\sigma$ is consistent in the sense of Definition \ref{def_local_consistency}.
Using the framework of 
\cite{meinhardt2017donaldson} and
 \cite{meinhardt2015donaldson}, we will show that it is essentially a version of the wall-crossing 
formula in Donaldson-Thomas theory.

We fix $\sigma =(x_0,y_0)\in U \cap \Sing(\fD^{\PP^2})$, $k$ a nonnegative integer, and we will show that 
the local scattering diagram
$(\fD_{u,v}^{\PP^2})_\sigma$ is consistent at order $k$ in the sense of Definition 
\ref{def_local_consistency_order_k}.
By definition, $(\fD_{u,v}^{\PP^2})_\sigma$ is a local scattering diagram in $M_\R=\R^2$ identified with the tangent space to 
$U$ at $\sigma$.

We start proving general results on
$(\fD_{u,v}^{\PP^2})_\sigma$ in
\cref{section_local_wall}. 
In 
\cref{section_numerical_mixed_Hodge_theory} we review motivic numerical invariants which can be extracted from mixed Hodge theory. In \cref{section_DT_formalism} we use the Donaldson-Thomas theory framework of
\cite{meinhardt2017donaldson} and
 \cite{meinhardt2015donaldson}
to prove Proposition \ref{prop_dt_ic}.
In \cref{section_wcf} we prove Proposition 
\ref{prop_wcf}, which is a form of the wall-crossing formula in Donaldson-Thomas theory. Finally, we combine Proposition \ref{prop_dt_ic} and Proposition \ref{prop_wcf} to end the proof of Theorem 
\ref{thm_consistency} in \cref{section_proof_consistency}.
 
\subsection{Local structure near a wall}
\label{section_local_wall}
Recall from Definition \ref{def_varphi} that  $\varphi_\sigma \colon M_\R \rightarrow \R$ is given by $(a,b) \mapsto 2(-ax-b)$. 
Recall from Definition \ref{def_local_ray} that a ray $\fd$ of $(\fD_{u,v}^{\PP^2})_\sigma$ is outgoing if $|\fd|=-\R_{\geqslant 0}m_\fd$, and ingoing if $|\fd|=\R_{\geqslant 0} m_\fd$. 
Also, by Definition 
\ref{def_local_scattering}
of a local scattering diagram, we have $\varphi_\sigma(m_\fd)>0$ for every ray $\fd$ of $(\fD_{u,v}^{\PP^2})_\sigma$.
Thus, the ingoing rays of $(\fD_{u,v}^{\PP^2})_\sigma$ are contained 
in the open half-plane 
$\varphi_\sigma >0$ of $M_\R$,
whereas the outgoing rays of $(\fD_{u,v}^{\PP^2})_\sigma$
are contained in the open half-plane 
$\varphi_\sigma <0$ of $M_\R$.

We label $\fd_1^{\iin}, \dots, \fd_K^{\iin}$ the finitely many ingoing rays $\fd$ of $(\fD_{u,v}^{\PP^2})_\sigma$ with $\varphi_\sigma(m_\fd) \leqslant k$,
in such way that 
\[\langle m_{\fd_a^\iin}, m_{\fd_{a'}^\iin} \rangle \leqslant 0\] if
$a \leqslant a'$.
We label $\fd_1^{\oout},\dots,\fd_L^{\oout}$ the finitely many outgoing rays $\fd$ of $\fD_{\sigma}^{\PP^2}$
 with $\varphi_\sigma(m_\fd) \leqslant k$, in such the way that 
 \[\langle m_{\fd_b^\oout}, m_{\fd_{b'}^\oout} \rangle \geqslant 0\] if
$b \leqslant b'$.
The order $k$ consistency of $(\fD_{u,v}^{\PP^2})_\sigma$
is then to equivalent to the equality of order $k$ path-ordered products
\[\Phi_{\fd_K^{\iin}} \dots \Phi_{\fd_1^{\iin}}
= \Phi_{\fd_L^{\oout}} \dots \Phi_{\fd_1^{\oout}}\]
in the group $G_{\varphi_\sigma}^k$.

By definition, $(\fD_{u,v}^{\PP^2})_\sigma$ is a local picture of $\fD^{\PP^2}_{u,v}$
at $\sigma$. It follows that, for every 
$a=1,\dots, K$ (resp.\ $b=1,\dots, L$), there exists 
unique
$\gamma \in \Gamma$ and $j \in J_\gamma$, 
such that $\sigma \in |\fd_{\gamma,j}|$,
$m_{\fd_a^{\iin}}=m_\gamma$, 
and 
$H_{\fd_a^{\iin}}=H_{\fd_{\gamma,j}}$ 
(resp.\
$m_{\fd_b^{\oout}}=m_\gamma$, and 
$H_{\fd_b^{\oout}}=H_{\fd_{\gamma,j}}$).
We denote $\gamma_a^{\iin} \coloneq \gamma$ and 
$j_a^{\iin} \coloneq j$
(resp.\ $\gamma_b^{\oout}\coloneq \gamma$
and $j_b^{\oout} \coloneq j$).

\begin{lem} \label{lem_rank_two}
The sublattice $\Gamma^\sigma$ of $\Gamma$ generated by $\gamma_a^{\iin}$, $1 \leqslant a \leqslant K$, and $\gamma_b^{\oout}$, $1 \leqslant b \leqslant L$, is of rank $2$.
\end{lem}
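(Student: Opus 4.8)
The plan is to prove separately that $\rank \Gamma^\sigma \leqslant 2$ and that $\rank \Gamma^\sigma \geqslant 2$. For the upper bound, each generator $\gamma = (r,d,\chi)$ of $\Gamma^\sigma$ is the class of a ray $\fd_{\gamma,j}$ of $\fD^{\PP^2}_{u,v}$ through $\sigma = (x_0,y_0)$, so $\sigma \in \bar{L}_\gamma$; by Definition \ref{def_central_charge_formula} this forces
\[ \Rea Z_\gamma^\sigma = r y_0 + d x_0 + r + \tfrac{3}{2} d - \chi = 0 \,. \]
Since the coefficient of $\chi$ here is $-1$, this is a non-degenerate linear equation on $\Gamma \otimes \R$, so all generators of $\Gamma^\sigma$ lie in the two-dimensional subspace $\{\gamma \in \Gamma \otimes \R : \Rea Z_\gamma^\sigma = 0\}$, whence $\rank \Gamma^\sigma \leqslant 2$.

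For the lower bound it suffices to exhibit two classes $\gamma_1, \gamma_2$ supporting rays of $\fD^{\PP^2}_{u,v}$ through $\sigma$ with $m_{\gamma_1}$ and $m_{\gamma_2}$ not collinear in $M$ --- equivalently, by Lemma \ref{lem_skew_sym_euler_form} and non-degeneracy of $\langle -,-\rangle$, with $\langle m_{\gamma_1}, m_{\gamma_2}\rangle \neq 0$ --- since for $k$ large enough such rays satisfy $\varphi_\sigma(m_{\gamma_i}) \leqslant k$ and are then among the generators $\gamma_a^{\iin}$, $\gamma_b^{\oout}$. As $\sigma \in \Sing(\fD^{\PP^2}_{u,v})$, $\sigma$ is either a transverse intersection of two rays or the initial point of a ray. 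In the transverse case, if the $m$-vectors of the two rays are not collinear we are done; if they are collinear, the two rays lie on a common line and, having $0$-dimensional intersection, meet only at a shared endpoint, reducing us to the second case. So assume $\sigma$ is an endpoint of some $\bar{R}_{\gamma,j}$. Since $\sigma \in U$ is not on $\partial U$, and every point of $|\fd_{\gamma,j}| \cap U$ has $\varphi_\sigma(m_\gamma) > 0$, which rules out the $\Ima Z_\gamma^\sigma = 0$ boundary of $L_\gamma$, an elementary analysis of the shape of $L_\gamma$ and of the index set $J_\gamma$ shows that $\sigma$ is a point of $L_\gamma$ at which $Ih_{\gamma'}^\sigma(u^{\frac{1}{2}},v^{\frac{1}{2}})$ jumps for some $\gamma' \in \Gamma_\gamma$; in particular $\sigma$ lies on an actual wall for $\gamma'$. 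By the theory of walls for Bridgeland stability on $\PP^2$ \cite{MR3010070, MR3921322}, this wall is contained in a potential wall $W_{\gamma',\gamma''}$ for some $\gamma'' \in \Gamma$ not collinear with $\gamma'$ (hence not with $\gamma$), with $\gamma''$ the class of a $\sigma$-semistable object, namely a Jordan--H\"older factor of a strictly $\sigma$-semistable object of class $\gamma'$. On $W_{\gamma',\gamma''}$ the charges $Z_{\gamma'}^\sigma$ and $Z_{\gamma''}^\sigma$ are positively collinear, and $\sigma \in L_{\gamma'} = L_\gamma$ gives $Z_{\gamma'}^\sigma \in i\R_{>0}$, hence $Z_{\gamma''}^\sigma \in i\R_{>0}$, i.e.\ $\sigma \in L_{\gamma''}$; together with $Ih_{\gamma''}^\sigma(u^{\frac{1}{2}},v^{\frac{1}{2}}) \neq 0$ (as $M_{\gamma''}^\sigma \neq \emptyset$), this gives $\sigma \in R_{\gamma''}$, so $\fD^{\PP^2}_{u,v}$ carries a ray of class $m_{\gamma''}$ through $\sigma$. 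With $\gamma$ this is the desired non-collinear pair, so $\rank \Gamma^\sigma \geqslant 2$.

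The main obstacle is the lower bound, and within it the input from the theory of walls: that the actual wall through $\sigma$ is cut out by a class $\gamma''$ that is simultaneously non-collinear with $\gamma'$ and realized by a $\sigma$-semistable object, so that both $\sigma \in L_{\gamma''}$ and $Ih_{\gamma''}^\sigma \neq 0$. Making this precise amounts to unwinding the definition of actual walls together with the fact that at a wall a strictly $\sigma$-semistable object of class $\gamma'$ has a Jordan--H\"older filtration all of whose factors are $\sigma$-semistable of the same phase; the results cited in \cref{section_moduli_invariants} furnish exactly this. The remaining steps --- the reduction of the transverse-intersection case to the initial-point case, and the identification of $\sigma$ as a jump locus of some $Ih_{\gamma'}^\sigma$ --- are elementary from the explicit description of $L_\gamma$ and the definition of $J_\gamma$.
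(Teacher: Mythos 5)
Your proof is correct. For the upper bound your argument and the paper's are the same observation dressed differently: you note that all generators lie in the two-dimensional subspace $\{\Rea Z_\gamma^\sigma = 0\}$ of $\Gamma \otimes \R$ because the coefficient of $\chi$ in $\Rea Z_\gamma^\sigma$ is $-1$, while the paper notes that rank $3$ would force $\Rea Z_\gamma^\sigma = 0$ for every $\gamma$, impossible since $Z^\sigma_{(0,0,1)} = -1$. The real difference is in the lower bound, which the paper dispatches in a single unexplained sentence (``As $\sigma \in U \cap \Sing(\fD^{\PP^2}_{u,v})$, the sublattice $\Gamma^\sigma$ has rank $\geqslant 2$''). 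That assertion is not immediate from the definition of $\Sing$, which also contains initial points of individual rays, and you supply the missing reasoning: reduce the transverse-collinear case to the endpoint case, rule out the $\Ima Z_\gamma^\sigma = 0$ boundary of $L_\gamma$ via condition (2) of the scattering-diagram axioms, conclude that $\sigma$ is a wall-crossing point, and extract from a Jordan--H\"older destabilization a class $\gamma''$ with $m_{\gamma''}$ not collinear with $m_\gamma$ and $\sigma \in R_{\gamma''}$. This is genuine added value.

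Two points worth tightening. First, the parenthetical ``(as $M_{\gamma''}^\sigma \neq \emptyset$)'' should read ``(as $M_{\gamma''}^{\sigma-\st} \neq \emptyset$, Jordan--H\"older factors being $\sigma$-stable)'': $Ih_{\gamma''}^\sigma$ is built from $IC_{\overline{M_{\gamma''}^{\sigma-\st}}}(\Q)$ and vanishes when the stable locus is empty, so mere nonemptiness of the semistable moduli space is not enough, though your conclusion survives since you did produce a stable object. Second, the generators of $\Gamma^\sigma$ are restricted to rays with $\varphi_\sigma(m_\fd) \leqslant k$, so for small $k$ the lemma as literally stated can fail (one may even have $K = L = 0$); you flag this with ``for $k$ large enough'', which is more honest than the paper but does mean your argument, like the paper's, really proves the lemma only in the regime where it is needed --- harmless in context, since order-$k$ consistency is trivial whenever $\Gamma^\sigma$ has rank $< 2$.
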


\begin{proof}
As $\sigma \in U \cap \Sing (\fD^{\PP^2}_{u,v})$, the sublattice $\Gamma^\sigma$
has rank $\geqslant 2$.
As the lattice $\Gamma$ is of rank $3$, it remains to show that the sublattice 
$\Gamma^\sigma$
is not of rank 3. 
As the rays $\fd_{\gamma_a^{\iin},
j_a^{\iin}}$, $1\leqslant a \leqslant K$,
and $\fd_{\gamma_b^{\oout},
j_b^{\oout} }$, 
$1 \leqslant b \leqslant L$, meet at $\sigma$, all the central charges $Z_{\gamma_a^{\iin}}^\sigma$,
$1 \leqslant a \leqslant K$, and $Z_{\gamma_b^{\oout}}^\sigma$,
$1 \leqslant b \leqslant L$, are purely imaginary. If the sublattice $\Gamma^\sigma$ had rank 3, we would conclude 
that $Z_\gamma^\sigma$ is purely imaginary for every $\gamma \in \Gamma$. 
But it cannot be the case as $Z_{(0,0,1)}^{\sigma'}=-1$ for every $\sigma' \in U$, 
see Remarks following Proposition \ref{prop_stability}.
\end{proof}

The fact that all the rays $\fd_{\gamma_a^{\iin},
j_a^{\iin} }$, $1 \leqslant a \leqslant K$
and $\fd_{\gamma_b^{\oout},
j_b^{\oout} }$, 
$1 \leqslant b \leqslant L$, meet at $\sigma$ implies 
that all the central charges $Z_{\gamma_a^{\iin}}^\sigma$,
$1 \leqslant a \leqslant K$, and $Z_{\gamma_b^{\oout}}^\sigma$,
$1 \leqslant b \leqslant L$, are purely imaginary, with positive imaginary parts, and 
so in particular are positively collinear. 
Therefore, there exists a potential wall $W_\sigma$ passing through $\sigma$, characterized by the alignement of the central charges $Z_\gamma$
for $\gamma \in \Gamma^\sigma$. According to Lemma \ref{lem_rank_two}, 
the lattice $\Gamma^\sigma$ is of rank $2$, and so $W_\sigma$ is the only potential wall passing through $\sigma$ which is a potential wall for some $\gamma \in \Gamma^\sigma$.

By the local finiteness of actual walls
reviewed in \cref{section_moduli_invariants}, there exists $U_\sigma$ open convex neighborhood of $\sigma$ in $U$ such that every actual wall for any of
$\gamma_{\fd_a^{\iin}}$, $1 \leqslant a \leqslant K$,
$\gamma_{\fd_b^{\oout}}$, $1 \leqslant b \leqslant L$, intersecting 
$U_\sigma$, necessarily coincides with 
$W_\sigma$ in restriction to $U_\sigma$. 
Up to shrinking further $U_\sigma$, we also assume that $W_\sigma$
is the only potential wall intersecting $U_\sigma$ of the form 
$W_{\gamma_{\fd_a^{\iin}},\gamma_{\fd_{a'}^{\iin}}}$, 
$1 \leqslant a, a' \leqslant K$, or
$W_{\gamma_{\fd_b^{\oout}},\gamma_{\fd_{b'}^{\oout}}}$, 
$1 \leqslant b, b' \leqslant L$.

\begin{lem} \label{key_lem_walls}
The tangent line to the wall $W_\sigma$ at the point 
$\sigma=(x_0,y_0)$ is the line in $U$ of equation
\[(x-x_0)x_0+(y-y_0)=0\,.\]
In other words, identifying $M_\R$ with the tangent
space to $U$ at $\sigma$, it is the line of equation 
$\varphi_\sigma=0$
\end{lem}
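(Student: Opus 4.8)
The plan is to realise $W_\sigma$, near $\sigma$, as the zero locus of an explicit smooth function on $U$ and to differentiate it at $\sigma$. By the discussion preceding the statement, $W_\sigma=W_{\gamma,\gamma'}$ for any two non-proportional $\gamma,\gamma'\in\Gamma^\sigma$; I would choose such a pair with the additional property that $m_\gamma$ and $m_{\gamma'}$ are linearly independent in $M$. Such a pair exists: $\Gamma^\sigma$ has rank $2$ by Lemma \ref{lem_rank_two}, the homomorphism $\gamma\mapsto m_\gamma$ has kernel $\Gamma^0$, and if its restriction to $\Gamma^\sigma$ had rank $\leqslant 1$ then $\Gamma^\sigma$ would contain some $(0,0,\chi)$ with $\chi\neq 0$, contradicting $Z^\sigma_{(0,0,\chi)}=-\chi\notin i\R$ together with the fact, established in the proof of Lemma \ref{lem_rank_two}, that $Z^\sigma_\gamma\in i\R$ for every $\gamma\in\Gamma^\sigma$. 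Since $Z^\sigma_\gamma$ and $Z^\sigma_{\gamma'}$ lie in $i\R_{>0}$ and positive collinearity is an open condition around $\sigma$, in a neighbourhood of $\sigma$ the wall $W_\sigma$ coincides with the vanishing locus of
\[
F(x,y)\coloneq \Rea Z^{(x,y)}_\gamma\cdot\Ima Z^{(x,y)}_{\gamma'}-\Rea Z^{(x,y)}_{\gamma'}\cdot\Ima Z^{(x,y)}_\gamma\,,
\]
which is smooth on $U$ because $x^2+2y>0$ there.

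Next I would compute $\nabla F(\sigma)$. Writing $\gamma=(r_1,d_1,\chi_1)$ and $\gamma'=(r_2,d_2,\chi_2)$, Definition \ref{def_central_charge_formula} gives $\Rea Z^{(x,y)}_{(r,d,\chi)}=ry+dx+r+\tfrac{3}{2}d-\chi$, affine with constant gradient $(d,r)$, while $\Ima Z^{(x,y)}_{(r,d,\chi)}=(d-rx)\sqrt{x^2+2y}$. Because $\Rea Z^\sigma_\gamma=\Rea Z^\sigma_{\gamma'}=0$, the terms carrying derivatives of the imaginary parts drop out of $\nabla F(\sigma)$, and setting $c\coloneq\sqrt{x_0^2+2y_0}>0$ a short computation gives
\[
\nabla F(\sigma)=c\bigl((d_1,r_1)(d_2-r_2x_0)-(d_2,r_2)(d_1-r_1x_0)\bigr)=c\,(r_1d_2-r_2d_1)\,(x_0,1)\,.
\]
As $m_\gamma=(r_1,-d_1)$ and $m_{\gamma'}=(r_2,-d_2)$ are independent we have $r_1d_2-r_2d_1\neq 0$, so $\nabla F(\sigma)\neq 0$ and the tangent line to $W_\sigma$ at $\sigma$ is $x_0(x-x_0)+(y-y_0)=0$.

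Finally I would identify this line with $\{\varphi_\sigma=0\}$: identifying $M_\R$ with the tangent space to $U$ at $\sigma$, so that a point $(x,y)$ near $\sigma$ corresponds to the displacement vector $(x-x_0,y-y_0)\in M_\R$, Definition \ref{def_varphi} gives $\varphi_\sigma(x-x_0,y-y_0)=-2\bigl(x_0(x-x_0)+(y-y_0)\bigr)$, whose zero locus is exactly the line found above. The only genuine computation is the evaluation of $\nabla F(\sigma)$, which is elementary once one uses the vanishing of the real parts of $Z_\gamma$ and $Z_{\gamma'}$ at $\sigma$; the one step that requires a little care is the existence of the pair $\gamma,\gamma'$ with linearly independent $m$-vectors, and it is there that Lemma \ref{lem_rank_two} and the non-degeneracy coming from the class $(0,0,1)$ enter.
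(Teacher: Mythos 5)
Your proof is correct and follows essentially the same route as the paper: write $W_\sigma$ near $\sigma$ as the zero locus of the cross-product of central charges $F=\Rea Z_\gamma\,\Ima Z_{\gamma'}-\Rea Z_{\gamma'}\,\Ima Z_\gamma$, compute $\nabla F(\sigma)$, and observe that the vanishing of both real parts at $\sigma$ kills the terms carrying derivatives of the imaginary parts, leaving $\nabla F(\sigma)$ a nonzero multiple of $(x_0,1)$. The paper works with the polynomial $F/\sqrt{x^2+2y}$ rather than $F$ itself, which changes nothing, and simply asserts the existence of $\gamma_1,\gamma_2$ with $r_2d_1-r_1d_2\neq 0$ defining $W_\sigma$; your explicit argument that $\gamma\mapsto m_\gamma$ restricted to the rank-$2$ lattice $\Gamma^\sigma$ must itself have rank $2$ (via $\ker=\Gamma^0$ and the observation $Z^\sigma_{(0,0,\chi)}=-\chi\notin i\R$) is a small but genuine improvement over the paper's unjustified claim, and it correctly reuses the key ingredient from the proof of Lemma \ref{lem_rank_two}.
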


\begin{proof}
By definition of a wall, there exists 
$\gamma_1=(r_1,d_1,\chi_1), \gamma_2=(r_2,d_2,\chi_2)
\in \Gamma$ with 
$r_2 d_1-r_1 d_2 \neq 0$ such that $W_\sigma$ is defined by the condition that $Z_{\gamma_1}$ and $Z_{\gamma_2}$ are positively collinear. It follows from the explicit formula given in Proposition \ref{def_central_charge_formula} for the central charge that $W_\sigma$ is defined by the equation 
$F(x,y)=0$, where 
\[F(x,y)=(d_1-r_1x)(r_2 y +d_2 x+r_2+\frac{3}{2}d_2-\chi_2)
-(d_2-r_2 x)(r_1 y +d_1 x+r_1+\frac{3}{2}d_1-\chi_1)\,,\]
and so the tangent line to $W_{\sigma}$ in a point of coordinates 
$(x_0,y_0)$ is given by the equation 
\[(\partial_x F)(x_0,y_0)(x-x_0)+(\partial_y F)(x_0,y_0)(y-y_0)=0\,.\] 

If we take 
$(x_0,y_0)=\sigma$, defined by the conditions 
$\Rea Z_{\gamma_1}=\Rea Z_{\gamma_2}=0$, then we find the particularly simple formulas
\[(\partial_x F)(x_0,y_0)=(r_2 d_1-r_1 d_2) x_0 \,\,,
(\partial_y F)(x_0,y_0)=r_2d_1-r_1d_2\,.\]
Using that $r_2 d_1-r_1 d_2 \neq 0$, we deduce that the tangent line to $W_\sigma$ at $\sigma$ is given by the equation $(x-x_0)x_0+(y-y_0)=0$.
\end{proof}

It follows from Lemma \ref{key_lem_walls} that all the rays 
$\fd_{\gamma^\iin_a, j^\iin_a}$, $1 \leqslant a \leqslant K$, are on one side of the wall 
$W_\sigma$ and that all the rays
$\fd_{\gamma^\oout_b, j^\oout_b}$, $1 \leqslant b \leqslant L$, are on the other side of the wall
$W_\sigma$. Let $U_\sigma^{\iin}$ be the connected component of 
$U_\sigma -(W_\sigma \cap U_\sigma)$ containing all 
the rays $\fd_{\gamma^\iin_a, j^\iin_a}$, $1 \leqslant a \leqslant K$, 
and let 
$U_\sigma^{\oout}$ be the connected component of 
$U_\sigma - (W_\sigma \cap U_\sigma)$
containing all the rays $\fd_{\gamma^\oout_b, j^\oout_b}$, $1\leqslant b \leqslant L$.

We choose a point $\sigma_{\iin} \in U_\sigma^{\iin}$ between 
$W_\sigma$ and $\fd_{\gamma_1^\iin,j_1^\iin}$, and a point $\sigma_{\oout}
\in U_\sigma^{\oout}$ between $W_\sigma$ and $\fd_{\gamma_1^\oout, j_1^\oout}$.
As there are only countably many potential walls, we can assume that 
$\sigma_\iin$ and $\sigma_\oout$ are away from every potential wall. Finally, as
according to Lemma \ref{key_lem_walls} the tangent line to $W_\sigma$ at $\sigma$ is never vertical, we can assume that 
$\sigma_\iin$ and $\sigma_\oout$ have the same $x$-coordinates, that is, that the line passing through $\sigma_\iin$ and $\sigma_\oout$ is vertical.

Figure \ref{figure_notation} gives a schematic summary of the notation introduced above.

\begin{figure}[h!]
\centering
\setlength{\unitlength}{1.2cm}
\begin{picture}(10,5)
\thicklines
\put(3.5,0){$\fd_{\gamma_1^\iin,j_1^\iin}$}
\put(5.5,0){$\fd_{\gamma_K^\iin,j_K^\iin}$}
\put(3.5,4.75){$\fd_{\gamma_1^\oout,j_1^\oout}$}
\put(5.5,4.75){$\fd_{\gamma_L^\oout,j_L^\oout}$}
\put(9.25,2.5){$\{\varphi_\sigma=0\}$}
\put(9.25,1){$W_\sigma$}
\put(3,1.5){\circle*{0.1}}
\put(3,1.2){$\sigma_\iin$}
\put(3,3.5){\circle*{0.1}}
\put(3,3.75){$\sigma_\oout$}
\put(1,0.2){$U_\sigma^\iin$}
\put(1,4){$U_\sigma^\oout$}
\put(5.15,2.25){$\sigma$}
\put(5,2.5){\line(-1, 2){1}}
\put(5,2.5){\line(1, 2){1}}
\put(5,2.5){\line(0,1){2}}
\put(5,2.5){\line(1,-2){1}}
\put(5,2.5){\line(-1,-2){1}}
\put(5,2.5){\line(1,0){4}}
\put(5,2.5){\line(-1,0){4}}
\qbezier(1,1)(5,4)(9,1)
\end{picture}
\caption{}
\label{figure_notation}
\end{figure}

\begin{lem}\label{lem_ordering} We have
$\Rea Z_{\gamma_a^\iin}^{\sigma_\iin}<0$ (resp.\ $\Rea Z_{\gamma_b^\oout}^{\sigma_\oout}<0$) for every $1 \leqslant a \leqslant K$ (resp.\ $1 \leqslant b \leqslant L$)
\end{lem}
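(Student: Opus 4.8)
The plan is to reduce the statement to a sign computation for the real linear functional $\sigma'\mapsto \Rea Z_\gamma^{\sigma'}$, and then to read off that sign from the angular position of $\sigma_\iin$ (respectively $\sigma_\oout$) among the ingoing (respectively outgoing) rays through $\sigma$. First I would record an affine identity. Fix $a\in\{1,\dots,K\}$ and write $\gamma_a^\iin=(r,d,\chi)$, so $m_{\gamma_a^\iin}=(r,-d)$ by Definition \ref{def_m_gamma}. By construction $\sigma$ lies on the ray $\fd_{\gamma_a^\iin,j_a^\iin}$, whose support is contained in $\bar L_{\gamma_a^\iin}\subseteq\{\Rea Z_{\gamma_a^\iin}=0\}$, so $\Rea Z_{\gamma_a^\iin}^\sigma=0$. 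Since $\Rea Z_{\gamma_a^\iin}^{(x,y)}=ry+dx+r+\tfrac32 d-\chi$ is affine in $(x,y)$ with linear part $(x,y)\mapsto dx+ry$, for any $\sigma'=(x',y')\in U$ and $\sigma=(x_0,y_0)$ we get
\[
\Rea Z_{\gamma_a^\iin}^{\sigma'}=d(x'-x_0)+r(y'-y_0)=-\tfrac13\langle m_{\gamma_a^\iin},\,\sigma'-\sigma\rangle ,
\]
using $\langle(r,-d),(u,w)\rangle=3(-du-rw)$. Hence $\Rea Z_{\gamma_a^\iin}^{\sigma_\iin}<0$ is equivalent to $\langle m_{\gamma_a^\iin},\sigma_\iin-\sigma\rangle>0$, and likewise $\Rea Z_{\gamma_b^\oout}^{\sigma_\oout}<0\iff\langle m_{\gamma_b^\oout},\sigma_\oout-\sigma\rangle>0$ for each $b$; the remaining work is to verify these inequalities.

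For the ingoing rays: by \cref{section_local_wall} every ingoing ray of $(\fD_{u,v}^{\PP^2})_\sigma$ lies in the open half-plane $H^+:=\{v\in M_\R\,|\,\varphi_\sigma(v)>0\}$, so all the $m_{\gamma_a^\iin}$ lie in $H^+$, and by Lemma \ref{key_lem_walls} the boundary line $\{\varphi_\sigma=0\}$ of $H^+$ is the tangent to $W_\sigma$ at $\sigma$. Since $\langle u,v\rangle=-3\det(u,v)$, the labelling condition $\langle m_{\gamma_a^\iin},m_{\gamma_{a'}^\iin}\rangle\le 0$ for $a\le a'$ says exactly that $m_{\gamma_1^\iin},\dots,m_{\gamma_K^\iin}$ occur in monotone counterclockwise order inside $H^+$, with $m_{\gamma_1^\iin}$ the extreme direction on the clockwise side. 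The point $\sigma_\iin$ lies in $U_\sigma^\iin$ between $W_\sigma$ and $\fd_{\gamma_1^\iin,j_1^\iin}$; because this region meets $H^+$ in a nonempty open set arbitrarily close to $\sigma$, I may in addition take $\varphi_\sigma(\sigma_\iin-\sigma)>0$. Then $\sigma_\iin-\sigma\in H^+$ lies strictly on the clockwise side of $m_{\gamma_1^\iin}$, hence strictly on the clockwise side of every $m_{\gamma_a^\iin}$, with all vectors in the one open half-plane $H^+$. Therefore $\det(m_{\gamma_a^\iin},\sigma_\iin-\sigma)<0$, i.e.\ $\langle m_{\gamma_a^\iin},\sigma_\iin-\sigma\rangle>0$, so $\Rea Z_{\gamma_a^\iin}^{\sigma_\iin}<0$. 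The outgoing case is parallel: by \cref{section_local_wall} the classes $m_{\gamma_b^\oout}$ again lie in $H^+$ while the supports $-m_{\gamma_b^\oout}$ lie in the opposite open half-plane $H^-$, and $\langle m_{\gamma_b^\oout},m_{\gamma_{b'}^\oout}\rangle\ge 0$ for $b\le b'$ makes $-m_{\gamma_1^\oout},\dots,-m_{\gamma_L^\oout}$ occur in monotone clockwise order inside $H^-$, with $-m_{\gamma_1^\oout}$ the extreme direction on the counterclockwise side of $H^-$. Choosing $\sigma_\oout\in U_\sigma^\oout$ between $W_\sigma$ and $\fd_{\gamma_1^\oout,j_1^\oout}$ with $\varphi_\sigma(\sigma_\oout-\sigma)<0$, the vector $\sigma_\oout-\sigma\in H^-$ lies strictly on the counterclockwise side of every $-m_{\gamma_b^\oout}$, hence strictly on the clockwise side of every $m_{\gamma_b^\oout}$; this gives $\langle m_{\gamma_b^\oout},\sigma_\oout-\sigma\rangle>0$ and so $\Rea Z_{\gamma_b^\oout}^{\sigma_\oout}<0$.

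I expect the only real difficulty to be the bookkeeping of orientations: verifying that ``$\sigma_\iin$ between $W_\sigma$ and $\fd_{\gamma_1^\iin,j_1^\iin}$'', the ordering conventions on the $m_{\gamma_a^\iin}$ and $m_{\gamma_b^\oout}$, and the identification of the tangent line to $W_\sigma$ from Lemma \ref{key_lem_walls}, combine consistently so as to place $\sigma_\iin-\sigma$ (resp.\ $\sigma_\oout-\sigma$) strictly clockwise of every ingoing (resp.\ outgoing) class within the relevant open half-plane, together with the minor point that $\sigma_\iin,\sigma_\oout$ can be taken on the chosen open side of $W_\sigma$ in addition to the genericity and vertical-alignment conditions already imposed; this last refinement is genuinely needed, since if $W_\sigma$ curves towards $H^-$ at $\sigma$ the component $U_\sigma^\iin$ contains a sliver inside $H^-$ where the sign could fail.
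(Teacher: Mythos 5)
Your proof is correct and essentially the same as the paper's, which likewise deduces the sign from the ordering convention on the classes together with the position of $\sigma_\iin$ between $W_\sigma$ and $\fd_{\gamma_1^\iin,j_1^\iin}$; you have simply made explicit the affine identity $\Rea Z_\gamma^{\sigma'}=-\tfrac13\langle m_\gamma,\sigma'-\sigma\rangle$ and the angular/determinant bookkeeping that the paper leaves implicit. Your closing refinement $\varphi_\sigma(\sigma_\iin-\sigma)>0$ is a clean and legitimate precaution, but it is not actually forced: the open cone $\{v:\langle m_{\gamma_a^\iin},v\rangle>0\ \text{for all }a\}$ already extends beyond the tangent line $\{\varphi_\sigma=0\}$ into $H^-$ by the fixed positive angle separating $m_{\gamma_K^\iin}$ from that line, so after shrinking $U_\sigma$ the sliver of $U_\sigma^\iin$ inside $H^-$ between $W_\sigma$ and $\fd_{\gamma_1^\iin,j_1^\iin}$ lies entirely in that cone and the conclusion still holds there.
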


\begin{proof}
The support of each ray $\fd_{\gamma_a^\iin, j_a^\iin}$ is contained in the line of equation 
$\Rea Z_{\gamma_a^\iin}=0$. 
Using that, if $\gamma=(r,d,\chi)$, $\Rea Z_\gamma=ry+dx+r+\frac{3}{2}d-\chi$, and 
$m_\gamma=(r,-d)$, we see that the condition 
\[\langle m_{\gamma_{\fd_a^\iin}}, m_{\gamma_{\fd_{a'}^\iin}} \rangle \leqslant 0 \,\,\,
(\text{resp}.\ \langle m_{\gamma_{\fd_b^\oout}}, m_{\gamma_{\fd_{b'}^\oout}} \rangle \geqslant 0)\,,\]
if $a \leqslant a'$ (resp.\ $b \leqslant b'$), is equivalent to the fact that the support of 
 $\fd_{\gamma_a^\iin, j_a^\iin}$
 (resp.\  $\fd_{\gamma_b^\oout, j_a^\oout}$)
 is contained in the half-plane
 $\Rea Z_{\gamma_{a'}^\iin} \leqslant 0$ 
 (resp.\ $\Rea Z_{\gamma_{b'}^\iin} \leqslant 0$)
 if 
 $a \leqslant a'$ (resp.\ $b \leqslant b'$). As $\sigma_\iin$
 (resp.\ $\sigma_\oout$)
 is between $W_\sigma$ and 
 $\fd_{\gamma_1^\iin, j_1^\iin}$
 (resp.\ $\fd_{\gamma_1^\oout, j_1^\oout}$), it follows that $\Rea Z_{\gamma_a^\in}^{\sigma_\iin}<0$ 
 (resp.\ $\Rea Z_{\gamma_b^\oout}^{\sigma_\oout}<0$) for every $1 \leqslant a \leqslant K$ (resp.\ $1 \leqslant b \leqslant L$).
\end{proof}

\subsection{Numerical invariants from mixed Hodge theory}
\label{section_numerical_mixed_Hodge_theory}
In this section we review some definitions and facts which will be useful in the proof 
Theorem \ref{thm_consistency} in \cref{section_proof_consistency}.

For every $X$ quasiprojective variety over 
$\C$, the cohomology groups with compact support
$H^j_c(X,\Q)$ come with a natural
mixed Hodge structure
\cite{MR0498551, MR0498552}.
In particular, we have an increasing weight filtration $W$ on $H^j_c(X,\Q)$ and a decreasing Hodge filtration $F$ on 
$H^j_c(X,\C)$. For every 
$p, q\in \Z$, we define virtual Hodge numbers 
\[ h^{p,q}_{\mathrm{vir}}(X)\coloneq
\sum_{j=0}^{2 \dim X}
(-1)^j \dim \mathrm{Gr}_F^p
\mathrm{Gr}^W_{p+q} H^j_c(X,\C) \in \Z
\,.\]
If $X$ is smooth and projective, the virtual Hodge numbers coincide with the usual Hodge numbers up to the sign $(-1)^{p+q}$.
We organize the virtual Hodge numbers in a virtual Hodge polynomial (also called $E$-polynomial):
\[ h(X)(u,v)\coloneq \sum_{p,q} h^{p,q}_{\mathrm{vir}}(X) 
u^p v^q \in \Z[u,v]\,.\]
Setting $u=v=q^{\frac{1}{2}}$, we get a virtual Poincar\'e polynomial:
\[ b(X)(q^{\frac{1}{2}})=\sum_j b_{j,\mathrm{vir}}(X)q^{\frac{j}{2}}\in\Z[q^{\frac{1}{2}}]\,,\]
where the virtual Betti numbers are
given by 
\[ b_{j,\mathrm{vir}}(X)=\sum_{p+q=j}
h_{\mathrm{vir}}^{p,q}(X) \in \Z\,.\]
The key property that we will use is that the virtual Hodge polynomial is motivic, in the sense that:
\begin{itemize}
\item[(i)] If $Z$ is a closed subvariety of a quasiprojective variety $X$ over $\C$, then 
\[h(X)=h(Z)+h(X-Z)\,.\]
\item[(ii)] If $X$ and $Y$ are two quasiprojective varieties over $\C$, then
$h(X \times Y)=h(X)h(Y)$.
\end{itemize}
The motivic property follows from the compatibility of the mixed Hodge structure with the excision long exact sequence 
in cohomology with compact support and with the Künneth formula. The virtual Hodge polynomial
$ X \mapsto h(X)$ is uniquely determined 
by its values on the smooth projective varieties and by the motivic property.

According to \cite[Theorem 3.10]{toen2005grothendieck}
(see also \cite{MR2354923}), the 
virtual Hodge polynomial \[X \mapsto h(X) 
\in \NN[u,v]\] with $X$ quasiprojective variety over $\C$ can be naturally extended 
in a virtual Hodge rational function 
\[ X \mapsto h(X) \in 
\Z[u,v][(uv)^{-1},\{((uv)^n-1)^{-1}\}_{n \geqslant 1}] \subset \Z(u,v)\,,\]
with $X$ Artin stack of finite type with 
affine stabilizers.
Setting $u=v=q^{\frac{1}{2}}$, we get a 
virtual Poincar\'e rational function 
\[ X \mapsto b(X) \in \Z[q^{\frac{1}{2}}]
[q^{-1},\{(q^n-1)^{-1}\}_{n\geqslant 1}]
\subset \Z(q^{\frac{1}{2}})\,.\]

We have for example, denoting $B \C^{*}$ the classifying stack of $\C^{*}$,
\[ h(B \C^{*})(u,v)=\frac{1}{uv-1}\,,
\]
and 
\[ b(B\C^{*})(q^{\frac{1}{2}})=\frac{1}{q-1}\,.\]

If $X$ is an Artin stack of finite type with affine stabilizers, writing $X$ as a
finite
disjoint union of locally closed equidimensional substacks $X_j$, we define a symmetrized version of the virtual Hodge rational function by
\[ \tilde{h}(X)(u^{\frac{1}{2}},v^{\frac{1}{2}})\coloneq \sum_j(-(uv)^{\frac{1}{2}})^{-\dim X_j} h(X_j)(u,v) \in \Z[u,v]((uv)^{\frac{1}{2}}) \subset \Z(u^{\frac{1}{2}},v^{\frac{1}{2}})\,.\]
Setting $u=v
=q^{\frac{1}{2}}$, $(uv)^{\frac{1}{2}}=q^{\frac{1}{2}}$, we get a symmetrized version 
$\tilde{b}(X)(q^{\frac{1}{2}}) \in \Z(q^{\frac{1}{2}})$ of the virtual Poincar\'e rational function.

We have for example, using that $\dim 
B\C^{*}=-1$, 
\[ \tilde{h}(B\C^{*})(u^{\frac{1}{2}},v^{\frac{1}{2}})
=-\frac{(uv)^{\frac{1}{2}}}{uv-1}
=-\frac{1}{(uv)^{\frac{1}{2}}-(uv)^{-\frac{1}{2}}}\,,\]
and 
\[ \tilde{b}(B\C^{*})(q^{\frac{1}{4}})
=-\frac{1}{q^{\frac{1}{2}}-q^{-\frac{1}{2}}}\,.\]
Beware that $X \mapsto \tilde{h}(X)$ does not satisfy the same motivic property 
than $X \mapsto h(X)$.

\subsection{Donaldson-Thomas formalism}
\label{section_DT_formalism}

In this section we prove Proposition 
\ref{prop_dt_ic}, which will be used in the the proof of 
Theorem \ref{thm_consistency}
in \cref{section_proof_consistency}. We use 
systematically the notation introduced in \cref{section_local_wall}.

Let
$\cA^{\sigma_{\iin}}$ be the abelian category heart of the stability condition $\sigma_{\iin}$, and let  $\cA^{\sigma_{\oout}}$ be the abelian category heart of the stability condition
$\sigma_{\oout}$.
As we have chosen $\sigma_\iin$ and $\sigma_\oout$ with the same $x$-coordinate, it follows from Definition \ref{defn_heart} that $\cA^{\sigma_\iin}=\cA^{\sigma_\oout}$. In what follows, we denote by $\cA$ this abelian category.

For every nonzero $\gamma \in \Gamma$ and $\sigma' \in U$, we denote
\[ \phi_\gamma^{\sigma'} \coloneq \frac{1}{\pi} \Arg Z_\gamma^{\sigma'}\,.\]
In particular, we have $\phi_\gamma^{\sigma'}=\frac{1}{2}$ if and only if $Z_\gamma^{\sigma'} \in i \R_{>0}$.

Shrinking $U_\sigma$ if necessary, it follows from the support property for stability conditions that there exists an interval $I \subset (0,1)$ containing $\frac{1}{2}$
and closed in $[0,1]$ such that:
\begin{itemize}
    \item[(i)] The set of $\gamma \in \Gamma$
such that $\varphi_{\sigma}(m_\gamma) \leqslant k$ 
and $\phi^{\sigma'}_\gamma \in I$
does not depend on $\sigma' \in U_\sigma$.
We denote by $\Lambda$ the union of this finite set with $\{0\}$
(which depends on $\sigma$ and $k$).
    \item[(ii)]  For every 
$\sigma' \in U_\sigma$, 
the set of $\gamma \in \Gamma$
such that $\varphi_{\sigma}(m_\gamma) \leqslant k$, $\phi^{\sigma'}_\gamma \in I$
and $Ih_\gamma^{\sigma'}(u^{\frac{1}{2}}, v^{\frac{1}{2}}) \neq 0$ is contained in the set of 
$\gamma_{\fd_a^{\iin}}$, $1 \leqslant a 
\leqslant K$,
and $\gamma_{\fd_b^{\oout}}$, $1 \leqslant b\leqslant L$.
\end{itemize}
In general, $I$ is a small interval around $\frac{1}{2}$.
The objects $U_\sigma$, $\Lambda$ and $I$ will play a fundamental in our proof
of the consistency of $\fD_{u,v}^{\PP^2}$ around $\sigma$, as 
$U_\sigma$ parametrizes the set of stability conditions in a neighborhood of $\sigma$, $I$ contains all the relevant phases
and $\Lambda$ parametrizes all the relevant classes locally around $\sigma$.

\begin{defn}For every $\phi \in I$, we denote 
$\cA^{\sigma_{\iin}}(\phi)$ (resp.\ $\cA^{\sigma_{\oout}}(\phi)$) the abelian subcategory of
$\cA$ whose objects are $0$ and the 
nonzero $\sigma_{\iin}$(resp.\ $\sigma_{\oout}$)-semistable objects $E$
 with 
$\phi_{\gamma(E)}^{\sigma_\iin}= \phi$ (resp.\ $\phi_{\gamma(E)}^{\sigma_\oout}= \phi$).
\end{defn}

The categories $\cA^{\sigma_{\iin}}(\phi)$ and \ $\cA^{\sigma_{\oout}}(\phi)$
are indeed abelian by \cite[Lemma 5.2]{MR2373143}.

\begin{defn}
As $\sigma_\iin$ (resp.\ 
$\sigma_\oout$) is away from every 
potential wall, for every $\phi \in I$, the lattice of 
$\gamma \in \Gamma$ such that $\phi_\gamma^{\sigma^\iin}=\phi$
(resp.\ $\phi_\gamma^{\sigma^\oout}=\phi$) is of rank $1$. In particular, there exists $\gamma^\iin_\phi \in \Gamma$
(resp.\ $\gamma^\oout_\phi \in \Gamma$) such that 
\[ \{ \gamma \in \Gamma\,|\,
\phi_\gamma^{\sigma_\iin}=\phi \,,
\fM_\gamma^{\sigma_\iin} \neq \emptyset\}
\subset \NN \gamma^\iin_\phi \,\]
(resp.\ $\{ \gamma \in \Gamma\,|\,
\phi_\gamma^{\sigma_\oout}=\phi \,,
\fM_\gamma^{\sigma_\oout} \neq \emptyset \}
\subset \NN \gamma^\oout_\phi$).
\end{defn}

Note that by varying $\phi \in I$, each $\gamma_{\fd_a^\iin}$, $1 \leq a \leq K$
(resp. $\gamma_{\fd_b^\oout}$, $1 \leq b \leq L$) will appear in some $\NN \gamma_\phi^\iin$ (resp.\ $\NN \gamma_\phi^\oout$).

We apply the formalism of 
Donaldson-Thomas theory,
such as described by Meinhardt \cite{meinhardt2015donaldson} (see in particular Example 3.34 for a discussion of surfaces)
to the abelian categories $\cA^{\sigma_{\iin}}(\phi)$
and $\cA^{\sigma_{\oout}}(\phi)$.

\begin{lem} \label{lem_technical_meinh}
For every $\phi \in I$, the abelian categories $\cA^{\sigma_{\iin}}(\phi)$ and 
$\cA^{\sigma_{\oout}}(\phi)$ satisfy the technical conditions
required to apply
\cite[Theorem 1.1]{meinhardt2015donaldson}.
\end{lem}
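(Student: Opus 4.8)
The plan is to verify, one by one, the finitely many hypotheses of \cite[Theorem 1.1]{meinhardt2015donaldson} for each of the abelian categories $\cA^{\sigma_{\iin}}(\phi)$ and $\cA^{\sigma_{\oout}}(\phi)$, $\phi \in I$. These categories are abelian by \cite[Lemma 5.2]{MR2373143}, so the starting point is legitimate. First I would recall that each of these is a full subcategory of the heart $\cA=\cA^{\sigma_\iin}=\cA^{\sigma_\oout}$ of a bounded $t$-structure on $\D^b(\PP^2)$, hence in particular $\Hom$-finite and with finite-dimensional $\Ext$-groups, and Noetherian and Artinian: indeed, since $\sigma_\iin$ (resp.\ $\sigma_\oout$) lies away from every potential wall, the lattice of classes $\gamma$ with $\phi^{\sigma_\iin}_\gamma = \phi$ realized by a nonzero semistable object is of rank $1$, so all objects of $\cA^{\sigma_\iin}(\phi)$ have class in $\NN \gamma^\iin_\phi$, and the finiteness of the slope forces any chain of sub/quotient objects to be bounded. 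This takes care of the finite-length condition. The category is also closed under extensions, kernels and cokernels of morphisms (of objects of the same slope) inside $\cA$, which is the substance of \cite[Lemma 5.2]{MR2373143}.

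Next I would address the moduli-theoretic and motivic conditions. The stack of objects of class $\gamma$ in $\cA^{\sigma_\iin}(\phi)$ is the open substack $\fM_\gamma^{\sigma_\iin}$ of $\sigma_\iin$-semistable objects inside the stack of all objects of $\cA$ of class $\gamma$; this is an Artin stack of finite type with affine stabilizers, by \cite{MR3010070} (description via quiver representations and GIT), which gives the existence of the good moduli space $M_\gamma^{\sigma_\iin}$ used throughout \cref{section_moduli_invariants}. The crucial smoothness input is the Li--Zhao vanishing \cite{MR3936077}: for every $\sigma \in U$ and every $\gamma \notin \Gamma^0$, $\Ext^2(E,E)=0$ for $\sigma$-semistable $E$ of class $\gamma$, so $\fM_\gamma^{\sigma_\iin}$ is smooth of dimension $-(\gamma,\gamma)$ (and the classes $\gamma \in \Gamma^0$ with realized semistable objects are $\NN(0,0,1)$, whose moduli stack is $[\PP^2/\C^\ast]$, also smooth). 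This is precisely the hypothesis that makes \cite[Theorem 1.1]{meinhardt2015donaldson} applicable without orientation data, and it is the reason the present paper can stay on $\PP^2$ rather than passing to $K_{\PP^2}$. I would also note that the category is linear over $\C$ with a well-behaved notion of (semi)stability coming from the slope function $\phi^{\sigma_\iin}_{(-)}$ restricted to the rank-one sublattice, satisfying the Harder--Narasimhan and see-saw properties; and that the relevant stacks have the global-quotient/affine-stabilizer form needed to make sense of the virtual Poincaré/Hodge series of \cref{section_numerical_mixed_Hodge_theory}.

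The main obstacle is bookkeeping rather than mathematics: \cite[Theorem 1.1]{meinhardt2015donaldson} is stated for a category of homological dimension at most one satisfying a specific list of axioms (existence of a good moduli space, smoothness of the stack, a suitable cohomological integrality/boundedness condition, compatibility with the motivic Hall algebra), and one must match our $\cA^{\sigma_\iin}(\phi)$ against that list term by term; the surface case is explicitly treated in \cite[Example 3.34]{meinhardt2015donaldson}, which I would cite for the verification of the more technical axioms, reducing our task to checking that the slice $U$ of $\Stab(\PP^2)$ and the Li--Zhao vanishing put us in the situation of that example. I would organize the argument as: (i) finite length + Hom/Ext finiteness; (ii) existence of good moduli spaces (from \cite{MR3010070}); (iii) smoothness and homological dimension $\leq 1$ on classes $\notin \Gamma^0$ plus the explicit description for $\Gamma^0$ (from \cite{MR3936077}); (iv) invoke \cite[Example 3.34]{meinhardt2015donaldson} for the remaining structural axioms; and finally (v) remark that all of this holds verbatim for $\cA^{\sigma_\oout}(\phi)$ since $\cA^{\sigma_\iin}=\cA^{\sigma_\oout}$ and $\sigma_\oout$ is likewise off every potential wall. $\qed$
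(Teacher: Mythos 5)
Your overall strategy matches the paper's: enumerate Meinhardt's technical conditions, cover the moduli-theoretic ones via the quiver description of \cite{MR3010070}, get smoothness from the Li--Zhao vanishing \cite{MR3936077}, and appeal to \cite[Example~3.34]{meinhardt2015donaldson} for the more structural axioms. Two points, however, deserve correction.

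First, your treatment of classes in $\Gamma^0$ is off. You assert that the only classes $\gamma\in\Gamma^0$ supporting semistable objects are $\NN(0,0,1)$ and that the corresponding moduli stack $[\PP^2/\C^*]$ is ``also smooth,'' as if this disposes of those classes. But for $\gamma=(0,0,n)$ with $n\geqslant 2$ the stack of semistable zero-dimensional sheaves of length $n$ is not smooth, and even for $n=1$ the relevant hypothesis is the vanishing of $\Ext^2$, which fails for skyscraper sheaves ($\Ext^2(k(p),k(p))\cong\C$ by Serre duality), so the dimension of $[\PP^2/\C^*]$ is not $-(\gamma,\gamma)$. The paper sidesteps all of this by a cleaner observation: since $Z_\gamma^{\sigma'}\in\R$ for every $\gamma\in\Gamma^0$ and every $\sigma'\in U$, objects of class in $\Gamma^0$ have phase $1$, while $I\subset(0,1)$ is a small interval around $\tfrac{1}{2}$; hence no object of class in $\Gamma^0$ lies in $\cA^{\sigma_\iin}(\phi)$ or $\cA^{\sigma_\oout}(\phi)$ for $\phi\in I$, and Li--Zhao applies to every object that does appear. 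You should replace your smoothness claim for $\Gamma^0$ with this exclusion argument.

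Second, you do not explicitly verify the symmetry of the pairing $\dim\Hom(E,F)-\dim\Ext^1(E,F)$ (Meinhardt's condition 8), which is genuinely nontrivial since the Euler form of $\PP^2$ is not symmetric. The paper handles this by noting that, with $\Ext^2$ vanishing, the pairing equals $\chi(E,F)$ computed by Hirzebruch--Riemann--Roch, and that, because $\sigma_\iin$ (resp.\ $\sigma_\oout$) lies off every potential wall, two objects of the same phase $\phi$ have collinear classes (and hence collinear Chern characters), which forces $\chi(E,F)=\chi(F,E)$. You allude to the rank-one sublattice of classes of fixed phase, which contains the needed collinearity, but the deduction of symmetry should be spelled out.
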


\begin{proof}
Technical conditions 1)-6) of \cite{meinhardt2015donaldson} are general assumptions about moduli spaces and deformation theory of objects in an abelian category. They follow for
the abelian categories
 $\cA^{\sigma_{\iin}}(\phi)$ and 
$\cA^{\sigma_{\oout}}(\phi)$
from the fact that the moduli stacks and moduli spaces 
of $\sigma'$-semistable in $\D^b(\PP^2)$ have a description as moduli stacks and moduli spaces 
of quiver representations for every $\sigma' \in U$, see \cite[Corollary 7.6]{MR3010070}.

Technical condition 7) of \cite{meinhardt2015donaldson} is the smoothness of the moduli stacks of 
objects, or equivalently the locally constant behaviour of 
$\dim \Hom (E,F)-\dim \Ext^1(E,F)$. As $Z_\gamma^{\sigma'} \in \R$ for every $\gamma \in 
\Gamma^0$
and for every $\sigma' \in U$ (see Remarks after Proposition
\ref{prop_stability}), and as $\varphi$ is neither $0$ or $1$, the class $\gamma$ of an object in 
$\cA^{\sigma_{\iin}}(\phi)$ or 
$\cA^{\sigma_{\oout}}(\phi)$ is never in $\Gamma^0$. 
Li-Zhao \cite{MR3936077} have shown that, for every 
$\sigma' \in U$, and for every $\sigma'$-semistable objects $E$ and $F$
with $\phi_{\gamma(E)}^{\sigma'}=\phi_{\gamma(F)}^{\sigma'}$ and 
$\gamma(E), \gamma(F) \notin \Gamma^0$, we have $\Ext^2(E,F)=0$.
It follows that $\cA^{\sigma_{\iin}}(\phi)$ and 
$\cA^{\sigma_{\oout}}(\phi)$ satisfy technical condition 7).

Technical condition 8) of \cite{meinhardt2015donaldson} is the symmetry of the pairing 
$\dim \Hom(E,F)-\dim \Ext^1(E,F)$. Let $E$ and $F$ be objects of $\cA^{\sigma_{\iin}}(\phi)$ (resp. 
$\cA^{\sigma_{\oout}}(\phi)$). Then we have seen in the check of technical condition 7)
that $\Ext^2(E,F)=0$. So we have 
\[\dim \Hom(E,F)-\dim \Ext^1(E,F)=\chi(E,F)\,,\] 
which by the Hirzebruch-Riemann-Roch formula is given by
\[\chi(E, F)=\int_{\PP^2} \ch(E^\vee) \ch(F) \td(\PP^2)\,.\]
As we have chosen $\sigma_\iin$ (resp.\ $\sigma_\oout$) away from every potential wall, the fact that $\phi_{\gamma(E)}^{\sigma_\iin}=\phi_{\gamma(F)}^{\sigma_\iin}=\phi$
(resp.\ $\phi_{\gamma(E)}^{\sigma_\oout}=\phi_{\gamma(F)}^{\sigma_\oout}=\phi$)
implies that $\gamma(E)$ and $\gamma(F)$, and so $\ch(E)$ and $\ch(F)$, are collinear, 
which by the above formula implies the symmetry $\chi(E,F)=\chi(F,E)$.
\end{proof}

The application of the Donaldson-Thomas formalism of \cite{meinhardt2015donaldson} to Gieseker
semistable sheaves on del Pezzo surfaces is discussed in 
\cite[Example 3.34]{meinhardt2015donaldson} and in \cite{MR3874687}. Lemma \ref{lem_technical_meinh}
will able us to apply this formalism to Bridgeland semistable objects in $\D^b(\PP^2)$.

Recall from \cref{section_moduli_invariants} that for every 
$\gamma \in \Gamma$ and 
$\sigma' \in U$, the moduli stack 
$\fM_\gamma^{\sigma'}$ of 
$\sigma'$-semistable objects of class 
$\gamma$ in $\D^b(\PP^2)$ is smooth. 
The symmetrized virtual Hodge rational functions 
$\tilde{h}(\fM_\gamma^{\sigma'})(u^{\frac{1}{2}},v^{\frac{1}{2}})$ are defined according to \cref{section_numerical_mixed_Hodge_theory}.

\begin{prop}\label{prop_dt_ic}
For every $\phi \in I$,
we have the equality
\[\sum_{n\geqslant 0}
\tilde{h}(\fM_{n\gamma^\iin_\phi}^{\sigma_{\iin}})(u^{\frac{1}{2}},v^{\frac{1}{2}})
z^{nm_{\gamma^\iin_\phi}}= \exp
\left(-\sum_{n \geqslant 1} 
\sum_{\ell \geqslant 1} \frac{1}{\ell}
\frac{Ih_{n\gamma^\iin_\phi}^{\sigma_{\iin}}(u^{\frac{\ell}{2}}, v^{\frac{\ell}{2}})}{(uv)^{\frac{\ell}{2}}-(uv)^{-\frac{\ell}{2}}} 
z^{\ell n m_{\gamma^\iin_\phi}} \right) \,,\]
of power series in $z^{m_{\gamma^\iin_\phi}}$ with coefficients in 
$\Q(u^{\frac{1}{2}}, v^{\frac{1}{2}})$,
and the equality
\[\sum_{n \geqslant 0}
\tilde{h}(\fM_{n\gamma^\oout_\phi}^{\sigma_{\oout}})(u^{\frac{1}{2}},v^{\frac{1}{2}})
z^{n m_{\gamma^\oout_\phi}} = \exp
\left(-\sum_{n \geqslant 1} \sum_{\ell \geqslant 1} 
\frac{1}{\ell}
\frac{
Ih_{n\gamma^\oout_\phi}^{\sigma_{\oout}}(u^{\frac{\ell}{2}}, v^{\frac{\ell}{2}})}{(uv)^{\frac{\ell}{2}}-(uv)^{-\frac{\ell}{2}}} 
z^{\ell n m_{\gamma^\oout_\phi}} \right) \,,\]
of power series in $z^{m_{\gamma^\oout_\phi}}$ with coefficients in 
$\Q(u^{\frac{1}{2}}, v^{\frac{1}{2}})$.
\end{prop}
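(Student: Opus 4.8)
The plan is to derive Proposition \ref{prop_dt_ic} as a direct application of the integrality/motivic result of Meinhardt \cite[Theorem 1.1]{meinhardt2015donaldson} to the abelian categories $\cA^{\sigma_\iin}(\phi)$ and $\cA^{\sigma_\oout}(\phi)$, whose eligibility for that machine was just established in Lemma \ref{lem_technical_meinh}. Since the two statements are proved identically, I would fix one of them, say the $\sigma_\iin$ case, and write $\cA(\phi) \coloneq \cA^{\sigma_\iin}(\phi)$, $\beta \coloneq \gamma^\iin_\phi$, so that every object of $\cA(\phi)$ has class in $\NN\beta$. The key structural input is that $\cA(\phi)$ is of homological dimension one in the relevant range: by the Li--Zhao vanishing $\Ext^2(E,F)=0$ for $\sigma_\iin$-semistable $E,F$ of the same phase with classes outside $\Gamma^0$ (recalled in the proof of Lemma \ref{lem_technical_meinh}), and since classes in $\cA(\phi)$ never lie in $\Gamma^0$. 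This is precisely the setting of \cite[Example 3.34]{meinhardt2015donaldson}.

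The main steps would be: First, recall from \cite{meinhardt2015donaldson} that for a category satisfying conditions 1)--8) one has a well-defined generating series of symmetrized virtual Hodge functions of the moduli stacks, and that it factors as the exponential of the plethystic-type sum of the \emph{intersection} Hodge functions of the good moduli spaces, with the denominator $(uv)^{1/2}-(uv)^{-1/2}$ coming from the stack $B\C^*$ of automorphisms. Concretely, \cite[Theorem 1.1]{meinhardt2015donaldson} identifies $\sum_n \tilde h(\fM_{n\beta}^{\sigma_\iin}) z^{nm_\beta}$ with $\operatorname{Exp}$ of the DT generating function, and the identification of the DT invariants with intersection cohomology of the coarse moduli spaces (going back to \cite{meinhardt2017donaldson}, extended in \cite{MR3874687,meinhardt2015donaldson}) gives the term $Ih^{\sigma_\iin}_{n\beta}/((uv)^{1/2}-(uv)^{-1/2})$. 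Second, unwinding the definition of the plethystic exponential $\operatorname{Exp}$ in the variable $z^{m_\beta}$ — which introduces the inner sum over $\ell \geqslant 1$ with the $\frac{1}{\ell}$ factor and the substitution $u^{1/2}\mapsto u^{\ell/2}$, $v^{1/2}\mapsto v^{\ell/2}$ — turns the statement into exactly the displayed formula. Third, note that the whole computation only involves classes $\gamma$ with $\varphi_\sigma(m_\gamma)\leqslant k$ up to any fixed order, so the series make sense and there are no convergence issues; the choice of $I$ and $U_\sigma$ in \cref{section_local_wall} guarantees the semistable objects contributing are exactly those along the rays $\fd^\iin_a$.

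Concretely I would organize the proof as follows. \emph{Step 1.} State that $\cA^{\sigma_\iin}(\phi)$ is a finite-type abelian category, of homological dimension $\leqslant 1$ in the relevant sense, with all objects of class in $\NN\gamma^\iin_\phi$; invoke Lemma \ref{lem_technical_meinh} to apply \cite[Theorem 1.1]{meinhardt2015donaldson}. \emph{Step 2.} Recall the precise output of that theorem: the existence of elements $\Omega_{n\gamma^\iin_\phi}(u^{1/2},v^{1/2}) \in \Z[u^{\pm 1/2},v^{\pm 1/2}]$ (the intersection Hodge polynomials, by the main comparison theorem) such that
\[
\sum_{n\geqslant 0}\tilde h(\fM^{\sigma_\iin}_{n\gamma^\iin_\phi})\,z^{nm_{\gamma^\iin_\phi}}
= \operatorname{Exp}\!\left(\frac{1}{(uv)^{1/2}-(uv)^{-1/2}}\sum_{n\geqslant 1}\Omega_{n\gamma^\iin_\phi}\,z^{nm_{\gamma^\iin_\phi}}\right),
\]
where $\operatorname{Exp}$ is the power-structure plethystic exponential on $\Z[u^{\pm 1/2},v^{\pm 1/2}]$-valued series in $z^{m_{\gamma^\iin_\phi}}$. \emph{Step 3.} Identify $\Omega_{n\gamma^\iin_\phi} = Ih^{\sigma_\iin}_{n\gamma^\iin_\phi}(u^{1/2},v^{1/2})$ via the Meinhardt--Reineke comparison \cite{meinhardt2017donaldson, MR3874687}, which applies because the coarse moduli space $M^{\sigma_\iin}_{n\gamma^\iin_\phi}$ carries the canonical pure Hodge module structure of \cref{section_intersection_invariants}. \emph{Step 4.} Expand $\operatorname{Exp}$ explicitly: for a single-variable series $\sum_{n\geqslant 1} a_n z^n$ one has $\operatorname{Exp}(\sum a_n z^n) = \exp(\sum_{\ell\geqslant 1}\frac{1}{\ell}\psi_\ell(\sum a_n z^n))$ where $\psi_\ell$ is the $\ell$-th Adams operation $u^{1/2}\mapsto u^{\ell/2}$, $v^{1/2}\mapsto v^{\ell/2}$, $z\mapsto z^\ell$; substituting $a_n = Ih^{\sigma_\iin}_{n\gamma^\iin_\phi}/((uv)^{1/2}-(uv)^{-1/2})$ and noting $\psi_\ell$ sends the denominator to $(uv)^{\ell/2}-(uv)^{-\ell/2}$, one gets exactly the right-hand side of the claimed identity. \emph{Step 5.} Repeat verbatim with $\sigma_\iin$ replaced by $\sigma_\oout$, using $\cA^{\sigma_\oout}(\phi)$ and $\gamma^\oout_\phi$, which is legitimate since $\cA^{\sigma_\iin}=\cA^{\sigma_\oout}=\cA$ and Lemma \ref{lem_technical_meinh} covers both.

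The step I expect to be the main obstacle is Step 3 together with the precise bookkeeping in Step 4: one must be careful that the normalization conventions in \cite{meinhardt2015donaldson} for the symmetrized virtual Hodge function $\tilde h$ (the sign $(-(uv)^{1/2})^{-\dim}$), for the $B\C^*$-factor, and for the plethystic exponential match the conventions fixed in \cref{section_numerical_mixed_Hodge_theory} and in Definition \ref{def_ray_stability_scattering}, so that the signs $(-1)^{p+q}$ and the overall $(-(uv)^{1/2})^{-\dim M^\sigma_\gamma}$ land correctly and no spurious sign or shift appears. Everything else is a formal manipulation of plethystic exponentials once the abelian categories have been certified by Lemma \ref{lem_technical_meinh} and the Li--Zhao smoothness is in hand.
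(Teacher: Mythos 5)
Your proposal is correct and follows essentially the same route as the paper: both reduce the statement to \cite[Theorem 1.1]{meinhardt2015donaldson} applied to $\cA^{\sigma_\iin}(\phi)$ and $\cA^{\sigma_\oout}(\phi)$, with the technical hypotheses supplied by Lemma \ref{lem_technical_meinh}, and then pass from the motivic identity to numerical Hodge invariants by applying the virtual Hodge function (normalized so $h(\mathbb{L}^{1/2})=-(uv)^{1/2}$). Your Steps 2--4, which unpack the plethystic exponential and the identification of the DT invariants with intersection cohomology, are exactly what the paper compresses into the sentence referring to Theorem 1.1 combined with Lemma 5.1 of Meinhardt; one small remark is that your Step 3 is not really a separate step but is already built into the statement of Meinhardt's Theorem 1.1 (the DT invariants there are by construction the intersection Hodge module classes of the good moduli spaces), and your attention to matching the $(-(uv)^{1/2})^{-\dim}$ normalization and the $B\C^*$ factor is the right thing to worry about and is consistent with the conventions fixed in \cref{section_numerical_mixed_Hodge_theory}.
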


\begin{proof}
We apply 
\cite[Theorem 1.1]{meinhardt2015donaldson} to 
$\cA^{\sigma_{\iin}}(\phi)$ and $\cA^{\sigma_{\oout}}(\phi)$.
The required assumptions have been checked in Lemma \ref{lem_technical_meinh}.

More precisely, the main result of \cite{meinhardt2015donaldson}
(Theorem 1.1 combined with the formula defining the Donaldson-Thomas invariants in 
Lemma 5.1) is an equality between generating series with coefficients 
in the Grothendieck group of mixed Hodge structures over $\C$ 
with addition of a square root 
$\mathbb{L}^{\frac{1}{2}}$ of the Tate motive, and inversion of 
$\mathbb{L}^{\frac{1}{2}}$, 
$(\mathbb{L}^n-1)$, $n \geqslant 1$.
We get numerical identities by application of the virtual Hodge function (extended such that $h(\mathbb{L}^{\frac{1}{2}})=-(uv)^{\frac{1}{2}}$).
\end{proof}

\subsection{Wall-crossing formula}
\label{section_wcf}
In this section, we prove Proposition 
\ref{prop_wcf}, which will be used in the the proof of 
Theorem \ref{thm_consistency}
in \cref{section_proof_consistency}.
We continue using the notation introduced in \cref{section_local_wall}-- \cref{section_DT_formalism}.

Recall from the definition of the interval $I$ 
in \cref{section_DT_formalism} that we denote by $\Lambda$ the finite subset of $\Gamma$ given by
\[ \Lambda=\{0\} \cup \{ \gamma \in \Gamma\,|\,
\varphi_\sigma(m_\gamma) \leqslant k\,,
\phi_\gamma^{\sigma_\iin} \in I\}
=\{0\} \cup \{\gamma \in \Gamma \,|\,
\varphi_\sigma(m_\gamma) \leqslant k\,,
\phi_\gamma^{\sigma_\oout} \in I\}\,.\]

\begin{defn}\label{def_algebra_A_Lambda}
We denote by $A_\Lambda$ the associative noncommutative
$\Q(u^{\frac{1}{2}},v^{\frac{1}{2}})$-algebra, given as a $\Q(u^{\frac{1}{2}},
v^{\frac{1}{2}})$-vector space by 
\[ A_\Lambda \coloneq \bigoplus_{\gamma \in \Lambda} 
\Q(u^{\frac{1}{2}},v^{\frac{1}{2}})z^{m_\gamma} \,,\]
and with the product defined by 
\[ z^{m_\gamma} \cdot z^{m_{\gamma'}}= (-1)^{\langle m_\gamma, m_{\gamma'} \rangle} (uv)^{\frac{\langle m_\gamma, m_{\gamma'}\rangle}{2}} z^{m_\gamma+m_{\gamma'}}\]
if $\gamma+\gamma' \in \Lambda$, and 
\[ z^{m_\gamma} \cdot z^{m_{\gamma'}}=0\]
if $\gamma+\gamma' \notin \Lambda$.
\end{defn}

\begin{defn}
For every $\phi \in I$, we define 
\[ \Lambda_\phi^\iin \coloneq 
\{0\} \cup \{ \gamma \in \Lambda \,|\, 
\phi_\gamma^{\sigma_\iin}=\phi\}\,\]
and 
\[ \Lambda_\phi^\oout \coloneq 
\{0\} \cup \{ \gamma \in \Lambda \,|\, 
\phi_\gamma^{\sigma_\oout}=\phi\}\,.\]
We have 
\[ \Lambda=\bigcup_{\phi \in I} \Lambda_\phi^\iin=\bigcup_{\phi \in I} \Lambda_\phi^\oout\,.\]
\end{defn}

\begin{defn}
We denote 
\[\prod_{\phi \in I}^{\rightarrow}\]
for an ordered product where the factors
with higher value of $\phi$ are on the left of those with lower value of $\phi$.
\end{defn}

\begin{prop} \label{prop_wcf}
We have the following equality in 
the $\Q(u^{\frac{1}{2}},v^{\frac{1}{2}})$-algebra $A_\Lambda$:
\[
\prod_{\phi \in I}^{\rightarrow} 
\left(
\sum_{\gamma \in \Lambda_\phi^\iin}
\tilde{h}(\fM_\gamma^{\sigma_{\iin}})(u^{\frac{1}{2}},
v^{\frac{1}{2}})
z^{m_\gamma}
\right)
= 
\prod_{\phi \in I}^{\rightarrow} 
\left(
\sum_{\gamma \in \Lambda_\phi^{\oout}}
\tilde{h}(\fM_\gamma^{\sigma_{\oout}})(u^{\frac{1}{2}},
v^{\frac{1}{2}})
z^{m_\gamma}
\right)\,.\]
\end{prop}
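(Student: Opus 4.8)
The plan is to prove Proposition \ref{prop_wcf} as an instance of the Joyce--Song/Kontsevich--Soibelman wall-crossing formula, using the fact that the stability conditions $\sigma_\iin$ and $\sigma_\oout$ lie in the two chambers adjacent to the wall $W_\sigma$ and have the same heart $\cA$. First I would observe that all classes in $\Lambda\setminus\{0\}$ have central charge at $\sigma$ lying on the positive imaginary axis, hence are supported on the potential wall $W_\sigma$ by Lemma \ref{lem_rank_two}, so the sublattice they generate has rank $2$ and one can introduce the total-phase ordering on $I$ used in the statement. The algebra $A_\Lambda$ of Definition \ref{def_algebra_A_Lambda} is precisely the (truncated) quantum torus controlling the wall-crossing for the rank-$2$ lattice $\Gamma^\sigma$ with skew-form $\langle m_\gamma,m_{\gamma'}\rangle = (\gamma,\gamma')-(\gamma',\gamma)$ of Lemma \ref{lem_skew_sym_euler_form}, and the signs $(-1)^{\langle m_\gamma,m_{\gamma'}\rangle}$ are the Behrend-function/orientation signs. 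So the content is: the product of the generating series of $\tilde h(\fM_\gamma^{\sigma})$ over all semistable phases, taken in phase order, is invariant under moving $\sigma$ across the wall.

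The key steps, in order, are as follows. Step 1: record that each $\cA^{\sigma_\iin}(\phi)$ (resp.\ $\cA^{\sigma_\oout}(\phi)$) is abelian (\cite[Lemma 5.2]{MR2373143}) and satisfies the technical hypotheses 1)--8) of \cite{meinhardt2015donaldson}, which is exactly Lemma \ref{lem_technical_meinh}; in particular $\Ext^2$ vanishes between semistables of the same phase by Li--Zhao \cite{MR3936077}, so all moduli stacks involved are smooth and the relevant bilinear form is symmetric. Step 2: identify the two finite filtrations. Since $\cA^{\sigma_\iin}=\cA^{\sigma_\oout}=\cA$, every object $E\in\cA$ with $\gamma(E)\in\Lambda$ has a Harder--Narasimhan filtration with respect to $\sigma_\iin$ whose factors lie in the $\cA^{\sigma_\iin}(\phi)$, and likewise for $\sigma_\oout$; because $\sigma_\iin$ and $\sigma_\oout$ are on opposite sides of $W_\sigma$ and away from every other potential wall, these two HN-stratifications of the stack $\fM^{\cA}_\gamma$ of all objects of class $\gamma$ in $\cA$ refine each other in the standard "no wall in between except $W_\sigma$" way. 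Step 3: apply the motivic/Hall-algebra identity: the generating series $\sum_{\gamma\in\Lambda} h(\fM^{\cA}_\gamma) z^{m_\gamma}$, expressed in the quantum torus $A_\Lambda$ via the $\Ext$-form (symmetrization of $(-,-)$, giving the factor $(uv)^{\langle m_\gamma,m_{\gamma'}\rangle/2}$ and the Behrend sign $(-1)^{\langle m_\gamma,m_{\gamma'}\rangle}$), factors as the ordered product $\prod^{\rightarrow}_{\phi\in I}\big(\sum_{\gamma\in\Lambda_\phi^\iin}\tilde h(\fM_\gamma^{\sigma_\iin})z^{m_\gamma}\big)$ when one uses the $\sigma_\iin$-HN filtration, and as the analogous $\sigma_\oout$-ordered product when one uses the $\sigma_\oout$-HN filtration. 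Since both equal the same element $\sum_{\gamma\in\Lambda}\tilde h(\fM^{\cA}_\gamma)z^{m_\gamma}$ of $A_\Lambda$, they are equal to each other, which is the claim. (Alternatively, one cites directly the integration-map formalism of \cite{meinhardt2015donaldson}, \cite{meinhardt2017donaldson}, \cite{MR3874687} for surfaces, which packages exactly this; I would phrase the proof as "apply \cite[Theorem 1.1 and \S5]{meinhardt2015donaldson} to the torsion pair / HN structures on $\cA$ defined by $\sigma_\iin$ and $\sigma_\oout$" and check the dictionary of signs and $q$-powers matches Definition \ref{def_algebra_A_Lambda}.)

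The main obstacle I expect is bookkeeping rather than conceptual: matching the conventions of \cite{meinhardt2015donaldson} (which works with the Grothendieck ring of mixed Hodge structures, a formal square root $\mathbb{L}^{1/2}$ of the Tate motive, and a specific twist in the Hall algebra product) to the explicit quantum-torus product of Definition \ref{def_algebra_A_Lambda}, including the sign $(-1)^{\langle m_\gamma,m_{\gamma'}\rangle}$, the power $(uv)^{\langle m_\gamma,m_{\gamma'}\rangle/2}$, and the symmetrization normalization $(-(uv)^{1/2})^{-\dim\fM_\gamma^\sigma}$ hidden in $\tilde h$. This requires (i) checking that the skew-symmetrized Euler form of $\D^b(\PP^2)$ is $\langle m_\gamma, m_{\gamma'}\rangle$ — this is Lemma \ref{lem_skew_sym_euler_form}; (ii) checking that the quadratic refinement used for orientation data on $K_{\PP^2}$ is compatible with the sign $(-1)^{\langle\cdot,\cdot\rangle}$, which reduces to the parity statement and the discussion of \cite{shi2018orientation} but is avoided thanks to the smoothness of the stacks; and (iii) confirming truncation at order $k$ is harmless because all classes in $\Lambda$ have $\varphi_\sigma(m_\gamma)\le k$ and products landing outside $\Lambda$ are set to zero, consistently with working modulo $\fm_k$. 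A secondary subtlety is that the ordered products are a priori infinite, but finiteness modulo the order-$k$ truncation follows from the support property, exactly as in the proof that $\fD^{\PP^2}_{u,v}$ is a scattering diagram. Once these identifications are pinned down, Proposition \ref{prop_wcf} follows immediately, and combined with Proposition \ref{prop_dt_ic} it will yield Theorem \ref{thm_consistency} in \cref{section_proof_consistency}.
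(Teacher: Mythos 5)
Your strategy matches the paper's: pass to the motivic Hall algebra of $\cA=\cA^{\sigma_\iin}=\cA^{\sigma_\oout}$, observe that the Harder--Narasimhan stratifications for $\sigma_\iin$ and $\sigma_\oout$ of the stack of all objects with class in $\Lambda$ give the identity $\prod^{\rightarrow}_\phi \delta_\phi^\iin = \prod^{\rightarrow}_\phi \delta_\phi^\oout$, and then push this forward through an integration map into $A_\Lambda$. The paper follows exactly this route, modelled on Joyce's \cite[Proposition 6.20]{MR2357325}.

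The one genuine gap is in your Step 3, where you say the series ``factors as the ordered product'' once you apply $h$ and twist by the Euler form. This is not automatic: it is precisely the assertion that the map $[Z\to\fM]\mapsto (-(uv)^{1/2})^{(\gamma,\gamma)}h([Z\to\fM]|_{\fM_\gamma})z^{m_\gamma}$ intertwines the Hall-algebra product with the $A_\Lambda$ product, and establishing it (the paper's Lemma \ref{lem_wcf}) requires the vanishing $\Ext^2(G,G')=0$ for $\sigma_\iin$-semistable $G,G'$ with $\phi^{\sigma_\iin}_{\gamma(G)}<\phi^{\sigma_\iin}_{\gamma(G')}$, i.e.\ across different phases, not just within a single $\cA^{\sigma_\iin}(\phi)$. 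Your Step 1 only invokes the same-phase $\Ext^2$-vanishing (Meinhardt's conditions 7)--8), Lemma \ref{lem_technical_meinh}), which yields smoothness and symmetry of the bilinear form on each $\cA^{\sigma_\iin}(\phi)$ but does not by itself control the extensions appearing in the Hall-algebra product of two HN strata of distinct phases. The cross-phase vanishing is what makes the fibres of the HN stratification into affine-space bundles of the right dimension $-\chi(E_n,F)$ and hence makes $h$ a ring homomorphism on the relevant strata; the paper even notes in a footnote that this extension of Li--Zhao's result is not explicitly stated in \cite{MR3936077} and must be extracted from the proofs there. A secondary, lesser issue: your fallback citation of \cite[Theorem 1.1]{meinhardt2015donaldson} is what the paper uses for Proposition \ref{prop_dt_ic} (the DT/IC comparison at a fixed stability condition), not for the wall-crossing step itself; for Proposition \ref{prop_wcf} the relevant template is Joyce \cite{MR2357325}, and the integration-map homomorphism still has to be verified by hand as above.
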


The proof of Proposition \ref{prop_wcf} takes the remaining part of
\cref{section_wcf}.

We follow the logic of the proof of
\cite[Proposition 6.20]{MR2357325},
in which Joyce considers Gieseker semistable sheaves on a surface $S$ with $K_S^{-1}$ nef.
We refer to \cite{MR2854172} and 
\cite{MR2303235} for details on definitions and on the use of motivic Hall algebras.

For every $\gamma \in \Gamma$, let 
$\fM_\gamma$ be the algebraic stack of objects of $\cA$ of class $\gamma$, and let 
\[ \fM \coloneq \bigcup_{\gamma \in \Gamma} \fM_\gamma \,.\]
Let $H(\fM)$ be the corresponding 
motivic Hall algebra. Elements of $H(\fM)$ are motivic stack functions on $\fM$, that is, classes
$[Z \rightarrow \fM]$ defined 
up to scissor relationships, with
$Z$ an Artin stack of finite type with affine stabilizers (see \cite{MR2854172}).  As a vector space, we have a $\Gamma$-grading
\[ H(\fM)=\bigoplus_{\gamma \in \Gamma}
H(\fM_\gamma) \,,\]
where $H(\fM_\gamma)$ is the space of motivic stack functions supported on
$\fM_\gamma$, that is of $[Z \rightarrow \fM]$ factoring through $\fM_\gamma \hookrightarrow \fM$.
The associative product $\star$ on 
$H(\fM)$ is $\Gamma$-graded, that is,
has components, for every $\gamma_1$, $\gamma_2 \in \Gamma$,
\[ \star \colon H(\fM_{\gamma_1}) \otimes H(\fM_{\gamma_2}) \rightarrow H(\fM_{\gamma_1+\gamma_2})\,,\]
determined by pullback and pushforward to and from the stack of extensions in $\cA$ of objects of class $\gamma_2$ by objects of class $\gamma_1$. 

We denote 
\[ H_\Lambda(\fM) \coloneq \bigoplus_{\gamma \in \Lambda} H(\fM_\gamma)\,\]
with product, for $\gamma_1, \gamma_2 \in \Lambda$, 
\[ \star \colon H(\fM_{\gamma_1}) \otimes H(\fM_{\gamma_2}) \rightarrow H(\fM_{\gamma_1+\gamma_2})\,,\]
restricted from $H(\fM)$ if $\gamma_1+\gamma_2 \in \Lambda$, and set to $0$ if $\gamma_1+\gamma_2 \notin \Lambda$. This defines a structure of associative algebra on
$H_\Lambda(\fM)$.

For every $\gamma \in \Lambda$, the characteristic function of the stack
$\fM_\gamma^{\sigma_\iin}$ (resp.\ 
$\fM_\gamma^{\sigma_\oout}$)
of $\sigma_\iin$-semistable 
(resp.\ $\sigma_\oout$-semistable) objects in $\cA$ of class $\gamma$ defines an element
\[\delta_\gamma^{\iin}
=[\fM_\gamma^{\sigma_\iin} \hookrightarrow \fM]\] 
(resp.\ 
$\delta_\gamma^{\oout}=[\fM_\gamma^{\sigma_\oout} \hookrightarrow \fM]$) of $H_\Lambda(\fM)$. For every $\phi \in I$, we define 
\[ \delta_\phi^{\iin} 
\coloneq \sum_{\gamma \in \Lambda_\phi^\iin} \delta_\gamma^{\iin} \in H_\Lambda(\fM) \,,\]
and 
\[ \delta_\phi^{\oout} 
\coloneq \sum_{\gamma \in \Lambda_\phi^\oout} \delta_\gamma^{\oout} \in H_\Lambda(\fM) \,.\]

From the
existence and uniqueness of the Harder-Narasimhan filtration for the stability conditions $\sigma_\iin$ and $\sigma_\oout$ of heart $\cA$, we have the identity
\[ \prod_{\phi \in I}^{\rightarrow} \delta_{\phi}^{\iin}
=\prod_{\phi \in I}^{\rightarrow} \delta_{\phi}^{\oout}\,\]
in $H_\Lambda(\fM)$.
If we denote by $\phi_1^{\iin},\dots,
\phi_N^{\iin}$ the values of $\phi \in I$ such that $\delta_\phi^{\iin} \neq 1 \in H_\Lambda(\fM)$, ordered such that 
\[ \phi_1^{\iin}>\dots >\phi_N^{\iin}\,,\]
and by $\phi_1^{\oout},\dots,
\phi_{N'}^{\oout}$ the values of 
$\phi$ such that 
$\delta_\phi^{\sigma_\oout} \neq 1 \in H_\Lambda(\fM)$, ordered such that 
\[ \phi_1^{\oout}>\dots>\phi_{N'}^{\oout}\,,\]
we can rewrite the above identity as 
\[ \delta_{\phi_1^{\iin}}^{\iin}
\star \dots \star \delta_{\phi_N^{\iin}}^{\iin}
=\delta_{\phi_1^{\oout}}^{\oout}
\star \dots \star \delta_{\phi_N^{\oout}}^{\oout}\,.\]

Given the motivic property reviewed in
\cref{section_numerical_mixed_Hodge_theory}, we can apply the virtual Hodge polynomial $h$ to an element $[Z \rightarrow \fM]$ of the motivic Hall algebra $H(\fM)$ to obtain a 
$\Z(u,v)$-valued constructible function $h([Z \rightarrow \fM])$ on $\fM$.
Recall that we introduced the bilinear Euler form $(-,-) \colon \Gamma \otimes \Gamma \rightarrow \Z$ in Definition \ref{def_bilinear_form}.
Multiplying further by $(-(uv)^{\frac{1}{2}})^{(\gamma,\gamma)}$ the restrictions to each component $\fM_\gamma$, 
we get an equality of $A_\Lambda$-valued constructible functions on 
$\fM$:
\[ \sum_{\gamma \in \Lambda}
(-(uv)^{\frac{1}{2}})^{(\gamma,\gamma)}
\,h\left( (\delta_{\phi_1^{\iin}}^{\iin}
\star \dots \star \delta_{\phi_N^{\iin}}^{\iin})|_{\fM_\gamma}\right)z^{m_\gamma}\]
\[=\sum_{\gamma \in \Lambda}
(-(uv)^{\frac{1}{2}})^{(\gamma,\gamma)}\,
h\left( (\delta_{\phi_1^{\oout}}^{\oout}
\star \dots \star \delta_{\phi_{N'}^{\oout}}^{\oout})|_{\fM_\gamma}\right)z^{m_\gamma}\,.
\]
For every $\underline{\gamma}
=(\gamma_1,\dots,\gamma_N) \in \Lambda^N$, we denote by
$\fM_{\underline{\gamma}}^{\sigma_\iin}$
the stack of objects $E$ with 
$\sigma_\iin$-Harder-Narasimhan factors
$E_1,\dots,E_N$ 
of class $\gamma_1,\dots,\gamma_N$.
Similarly, for every $\underline{\gamma}
=(\gamma_1,\dots,\gamma_{N'}) \in \Lambda^{N'}$, we denote by
$\fM_{\underline{\gamma}}^{\sigma_\oout}$
the stack of objects $E$ with 
$\sigma_\oout$-Harder-Narasimhan factors
$E_1,\dots,$ $E_{N'}$ 
of class $\gamma_1,\dots,\gamma_{N'}$.

As we have  
\[ \sum_{\gamma \in \Lambda}
(-(uv)^{\frac{1}{2}})^{(\gamma,\gamma)}
\,h\left( (\delta_{\phi_1^{\iin}}^{\iin}
\star \dots \star \delta_{\phi_N^{\iin}}^{\iin})|_{\fM_\gamma}\right)z^{m_\gamma}\]
\[=\sum_{\underline{\gamma}
\in \Lambda^N}
(-(uv)^{\frac{1}{2}})^{(\gamma_1+\dots+
\gamma_N,\gamma_1+\dots+\gamma_N)}\,h\left( (\delta_{\gamma_1}^{\iin}
\star \dots \star \delta_{\gamma_N}^{\iin})|_{\fM_{\underline{\gamma}}^{\sigma_\iin}}\right)
z^{m_{\gamma_1}+\dots +m_{\gamma_N}}\,,\]
and 
\[ \sum_{\gamma \in \Lambda}
(-(uv)^{\frac{1}{2}})^{(\gamma,\gamma)}
\,h\left( (\delta_{\phi_1^{\oout}}^{\oout}
\star \dots \star \delta_{\phi_{N'}^{\oout}}^{\oout})|_{\fM_\gamma}\right)z^{m_\gamma}\]
\[=\sum_{\underline{\gamma}
\in \Lambda^{N'}}
(-(uv)^{\frac{1}{2}})^{(\gamma_1+\dots
+\gamma_{N'},\gamma_1+\dots+\gamma_{N'})}\,h\left( (\delta_{\gamma_1}^{\oout}
\star \dots \star \delta_{\gamma_N}^{\oout})|_{\fM_{\underline{\gamma}}^{\sigma_\oout}}\right)
z^{m_{\gamma_1}+\dots +m_{\gamma_{N'}}}\,,\]
Proposition \ref{prop_wcf} follows from the following Lemma \ref{lem_wcf}.

\begin{lem}\label{lem_wcf}
For every $\underline{\gamma}
=(\gamma_1,\dots,\gamma_N)
\in \Lambda^N$, we have an equality of $A_\Lambda$-valued constructible functions on 
$\fM$:
\[ (-(uv)^{\frac{1}{2}})^{
(\gamma_1+\dots+\gamma_N,
\gamma_1+\dots+\gamma_N)}\,h (\delta_{\gamma_1}^{\iin}
\star \dots \star \delta_{\gamma_N}^{\iin})
z^{m_{\gamma_1}+\dots +m_{\gamma_N}}\]
\[=\left((-(uv)^{\frac{1}{2}})^{-\dim \fM_{\gamma_1}^{\sigma^\iin}} h(\delta_{\gamma_1}^\iin)\right)
z^{m_{\gamma_1}}
\dots\left((-(uv)^{\frac{1}{2}})^{-\dim \fM_{\gamma_N}^{\sigma_\iin}} h(\delta_{\gamma_N}^\iin)\right)
z^{m_{\gamma_N}}\,.
\]
Similarly, for every $\underline{\gamma}
=(\gamma_1,\dots,\gamma_{N'})
\in \Lambda^{N'}$, we have an equality of $A_\Lambda$-valued constructible functions on 
$\fM$:
\[ (-(uv)^{\frac{1}{2}})^{(\gamma_1+\dots+\gamma_{N'},\gamma_1+\dots+\gamma_{N'})}\,
h(\delta_{\gamma_1}^{\oout}
\star \dots \star \delta_{\gamma_{N'}}^{\oout})
z^{m_{\gamma_1}+\dots +m_{\gamma_{N'}}}\]
\[=\left((-(uv)^{\frac{1}{2}})^{-\dim \fM_{\gamma_1}^{\sigma_\oout}} h(\delta_{\gamma_1}^\oout)\right)
z^{m_{\gamma_1}}
\dots \left((-(uv)^{\frac{1}{2}})^{-\dim\fM_{\gamma_{N'}}^\oout} h(\delta_{\gamma_{N'}}^\oout)\right)
z^{m_{\gamma_{N'}}}\,.
\]
\end{lem}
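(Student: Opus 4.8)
The content of the lemma is purely the translation, for one fixed tuple of Harder--Narasimhan classes, of the factorization in the motivic Hall algebra of $\cA$ into the twisted product of $A_\Lambda$; the two displayed identities are proved by the same argument, since $\cA^{\sigma_\iin}=\cA^{\sigma_\oout}=\cA$, so I describe only the $\sigma_\iin$ one.

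First I would identify the Hall product. Every subquotient of a $\sigma_\iin$-Harder--Narasimhan filtration again lies in $\cA$, and a filtration $0=F_0\subset\cdots\subset F_N=E$ in $\cA$ with $F_i/F_{i-1}$ $\sigma_\iin$-semistable of class $\gamma_i$ and with strictly decreasing phases \emph{is} the Harder--Narasimhan filtration of $E$, which is unique; hence the forgetful morphism from the stack of such filtered objects to $\fM$ is an isomorphism onto the locally closed substack $\fM_{\underline{\gamma}}^{\sigma_\iin}$, on objects and on automorphisms. Therefore $\delta_{\gamma_1}^\iin\star\cdots\star\delta_{\gamma_N}^\iin$ (taken with the decreasing-phase ordering fixed by \cref{lem_ordering}) equals the characteristic stack function of $\fM_{\underline{\gamma}}^{\sigma_\iin}$, so that $h$ of its restriction to $\fM_{\underline{\gamma}}^{\sigma_\iin}$ is just the function that is $1$ on $\fM_{\underline{\gamma}}^{\sigma_\iin}$ and $0$ elsewhere. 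Expanding the right-hand side in the twisted convolution algebra of $A_\Lambda$-valued constructible functions produces the same characteristic function, now multiplied by $\prod_{i=1}^N(-(uv)^{\frac{1}{2}})^{-\dim\fM_{\gamma_i}^{\sigma_\iin}}$, by the monomial product $z^{m_{\gamma_1}}\cdots z^{m_{\gamma_N}}$ evaluated in $A_\Lambda$, and by the pairwise twist factors built into that product.

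It remains to check the resulting scalar identity. I would substitute $\dim\fM_{\gamma_i}^{\sigma_\iin}=-(\gamma_i,\gamma_i)$, which holds because each $\fM_{\gamma_i}^{\sigma_\iin}$ is smooth and equidimensional by \cref{section_moduli_invariants} --- using the Li--Zhao vanishing and the fact, noted in the proof of \cref{lem_technical_meinh}, that no $\gamma_i$ lies in $\Gamma^0$ --- then expand $(\gamma_1+\cdots+\gamma_N,\gamma_1+\cdots+\gamma_N)=\sum_i(\gamma_i,\gamma_i)+\sum_{i<j}\bigl((\gamma_i,\gamma_j)+(\gamma_j,\gamma_i)\bigr)$ by bilinearity of the Euler form, and use $(\gamma,\gamma')-(\gamma',\gamma)=\langle m_\gamma,m_{\gamma'}\rangle$ (\cref{lem_skew_sym_euler_form}) together with the definition of the product of $A_\Lambda$ (\cref{def_algebra_A_Lambda}). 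The powers of $-(uv)^{\frac{1}{2}}$ then match pair by pair, and the two sign contributions coincide because $\langle m_\gamma,m_{\gamma'}\rangle\equiv(\gamma,\gamma')+(\gamma',\gamma)\pmod 2$; the $\sigma_\oout$ case is word for word the same. The only structural inputs are uniqueness of Harder--Narasimhan filtrations (so that the Hall product is a characteristic function, with no motivic correction) and the smoothness of the semistable moduli stacks; the rest is an index-heavy but mechanical matching of symmetrization and twist factors, and keeping these straight over all pairs $i<j$ is where the main risk of error lies.
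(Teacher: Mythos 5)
Your structural setup is fine (the reduction to $\sigma_\iin$, the identification $\dim \fM_{\gamma_i}^{\sigma_\iin} = -(\gamma_i,\gamma_i)$, the appeal to uniqueness of Harder--Narasimhan filtrations), but there is a genuine gap in the final scalar matching, and it is not a bookkeeping slip: it is the actual content of the lemma.

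You assert that the Hall product is a characteristic function ``with no motivic correction,'' and that ``the powers of $-(uv)^{\frac{1}{2}}$ then match pair by pair.'' Write out what you are claiming for $N=2$: on your left-hand side the exponent of $-(uv)^{\frac{1}{2}}$ is $(\gamma_1+\gamma_2,\gamma_1+\gamma_2)=(\gamma_1,\gamma_1)+(\gamma_2,\gamma_2)+(\gamma_1,\gamma_2)+(\gamma_2,\gamma_1)$, while on your right-hand side, after expanding $z^{m_{\gamma_1}}z^{m_{\gamma_2}}$ in $A_\Lambda$ and using \cref{lem_skew_sym_euler_form}, the exponent is $(\gamma_1,\gamma_1)+(\gamma_2,\gamma_2)+(\gamma_1,\gamma_2)-(\gamma_2,\gamma_1)$. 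These differ by $2(\gamma_2,\gamma_1)$, and more generally by $2\sum_{i>j}(\gamma_i,\gamma_j)$, which is nonzero in general because the Euler form on $\Gamma$ is not symmetric between classes of different phase. Your parity remark $\langle m_\gamma,m_{\gamma'}\rangle\equiv(\gamma,\gamma')+(\gamma',\gamma)\pmod 2$ only shows the \emph{signs} agree; it does nothing for the $(uv)$-power, which is where the mismatch lives.

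What closes the gap --- and is the actual argument in the paper --- is computing the fiber of $\delta_{\gamma_1}^{\iin}\star\cdots\star\delta_{\gamma_n}^{\iin}$ not over $\fM$ but over $\fM_{(\gamma_1,\dots,\gamma_{n-1})}^{\sigma_\iin}\times\fM_{\gamma_n}^{\sigma_\iin}$: the fiber over $(F,E_n)$ is the extension stack $[\Ext^1(E_n,F)/\Hom(E_n,F)]$, whose virtual Hodge class is $(uv)^{\dim\Ext^1(E_n,F)-\dim\Hom(E_n,F)}$. The vanishing $\Ext^2(E_n,F)=0$ --- which requires the Li--Zhao result for semistable objects with $\phi_{\gamma(E_n)}^{\sigma_\iin}<\phi_{\gamma(F)}^{\sigma_\iin}$, not merely for objects of the same phase --- converts this to $(uv)^{-\chi(E_n,F)}=(uv)^{-(\gamma_n,\gamma_1+\cdots+\gamma_{n-1})}$. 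Summing over $n$ produces exactly the factor $(uv)^{-\sum_{i>j}(\gamma_i,\gamma_j)}$ that your calculation is missing. So the ``motivic correction'' you dismiss is not avoidable: the $A_\Lambda$-twist built into $z^m\cdot z^{m'}$ captures only the skew-symmetrization $(\gamma_i,\gamma_j)-(\gamma_j,\gamma_i)$, and the remaining $2\sum_{i>j}(\gamma_i,\gamma_j)$ must come from the iterated extension stacks, which is precisely what the inductive fiber computation supplies.
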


\begin{proof}
We prove the formula for $\sigma_\iin$. The proof of the formula for $\sigma_\oout$ is formally identical.

According to \cite{MR3936077}, for every 
$E$ $\sigma_\iin$-semistable object of class $\gamma$, we have \[\Ext^2(E,E)=0\,,\] and so, using Lemma \ref{lem_euler_form}, we have
\[ \dim \fM_\gamma^{\sigma_\iin}
=\dim \Ext^1(E,E)-\dim \Hom(E,E)=-\chi(E,E)
=-(\gamma,\gamma)\,.\]
On the other hand, by Definition
\ref{def_algebra_A_Lambda} of the product in $A_\Lambda$, we have 
\[ z^{m_{\gamma_1}}\dots z^{m_{\gamma_N}}
=(-(uv)^{\frac{1}{2}})^{\sum_{i<j} \langle \gamma_i,\gamma_j \rangle} z^{m_{\gamma_1}
+\dots+m_{\gamma_N}}\,,\]
which can be rewritten using Lemma
\ref{lem_skew_sym_euler_form}
as 
\[ z^{m_{\gamma_1}}\dots z^{m_{\gamma_N}}
=(-(uv)^{\frac{1}{2}})^{\sum_{i<j}
(\gamma_i,\gamma_j) - \sum_{i>j}
(\gamma_i,\gamma_j)}
z^{m_{\gamma_1}
+\dots+m_{\gamma_N}}\,.\]
Thus, given the identity
\[ (\sum_i \gamma_i,\sum_j \gamma_j)=
\sum_{i,j}(\gamma_i,\gamma_j)
=\sum_i(\gamma_i,\gamma_i)
+\sum_{i<j}(\gamma_i,\gamma_j)
+\sum_{i>j}(\gamma_i,\gamma_j)\]
\[=\sum_i(\gamma_i,\gamma_i)
+\left(\sum_{i<j}(\gamma_i,\gamma_j)
-\sum_{i>j}(\gamma_i,\gamma_j)\right)
+2\sum_{i>j}(\gamma_i,\gamma_j)\,,\]
Lemma \ref{lem_wcf} follows from the following equality of 
$\Z(u,v)$-constructible functions on $\fM$:
\[ h(\delta_{\gamma_1}^{\iin}
\star \dots \star \delta_{\gamma_N}^{\iin})
=(uv)^{-\sum_{N \geqslant i>j \geqslant 1}(\gamma_i,\gamma_j)}
h(\delta_{\gamma_1}^\iin)
\dots h(\delta_{\gamma_N}^\iin)\,.
\]
We prove by induction over $n$ that, for every $1 \leqslant n \leqslant N$, we have
\[ h(\delta_{\gamma_1}^{\iin}
\star \dots \star \delta_{\gamma_n}^{\iin})
=(uv)^{-\sum_{n \geqslant i>j \geqslant 1}(\gamma_i,\gamma_j)}
h(\delta_{\gamma_1}^\iin)
\dots h(\delta_{\gamma_n}^\iin)\,.
\] 
The case $n=1$ is trivial. Let us assume that the result is known for $n-1$ and that we wish to prove it for $n$.
By existence and uniqueness of the 
$\sigma_\iin$-Harder-Narasimhan filtration, an object $E$ in the support of $(\delta_{\gamma_1}^{\iin}
\star \dots \star \delta_{\gamma_{n-1}}^\iin) \star \delta_{\gamma_n}^\iin$ can be uniquely written as 
an extension 
\[ 0 \rightarrow F \rightarrow E \rightarrow E_n \rightarrow 0\,\]
with $E_n$ $\sigma_\iin$-semistable of class $\gamma_n$, and with $Y$ of Harder-Narasimhan factors $E_1, \dots, E_{n-1}$ of class 
$\gamma_1, \dots, \gamma_{n-1}$.
It follows from the arguments used in \cite{MR3936077} that, for every 
$G$ and $G'$ $\sigma_\iin$-semistable objects with 
$\phi_{\gamma(G)}^{\sigma_\iin} 
<\phi_{\gamma(G')}^{\sigma_\iin}$, we have \[\Ext^2(G,G')=0\,.\]
In particular, we have $\Ext^2(E_n, E_j)=0$, for every $1\leqslant j \leqslant n-1$, and as $F$ is obtained by successive extensions of the $E_j$, $1 \leqslant j \leqslant n-1$, we have also $\Ext^2(E_n,F)=0$.

From the explicit description of the product in the motivic Hall algebra 
(see \cite[Proposition 6.2]{MR2854172}
or \cite[Corollary 5.15]{MR2303235}), the fiber of  $(\delta_{\gamma_1}^{\iin}
\star \dots \star \delta_{\gamma_{n-1}})\star \delta_{\gamma_n}^\iin$ at the point 
$(F,E_n)$ of $\fM_{(\gamma_1,\dots,\gamma_{n-1})}^{\sigma_\iin} \times \fM_{\gamma_n}^{\sigma_\iin}$ is given by 
\[ [\Ext^1(E_n,F)/\Hom(E_n,F)]=[\A^1]^{\dim \Ext^1(E_n,F)-\dim \Hom(E_n,F)}\,.\] 
As $\Ext^2(E_n,F)=0$, we have, using 
Lemma \ref{lem_euler_form}, 
\[ \dim \Ext^1(E_n,F)-\dim \Hom (E_n,F)
=-\chi(E_n,F)=-(\gamma_n, \gamma_1+\dots +\gamma_{n-1})\,,\]
hence, using 
$h(\A^1)=uv$, the desired relation
\[h(\delta_{\gamma_1}^{\iin}
\star \dots \star 
\delta_{\gamma_{n-1}}^\iin \star \delta_{\gamma_n}^{\iin})
=(uv)^{-(\gamma_n,\gamma_1+\dots+\gamma_{n-1})} h(\delta_{\gamma_1}^{\iin}
\star \dots \star 
\delta_{\gamma_{n-1}}^\iin)
h(\delta_{\gamma_n}^{\iin})\,.\]
\end{proof}

\subsection{End of the proof of Theorem \ref{thm_consistency}}
\label{section_proof_consistency}

\begin{lem} \label{lem_in_out}
We have the following equalities in the 
$\Q(u^{\frac{1}{2}}, v^{\frac{1}{2}})$-algebra $A_\Lambda$:
\[ \prod_{\phi \in I}^{\rightarrow}  \exp
\left(-\sum_{\gamma \in \Lambda_\phi^\iin} 
\sum_{\ell \geqslant 1} \frac{1}{\ell}
\frac{Ih_\gamma^{\sigma_{\iin}}(u^{\frac{\ell}{2}}, v^{\frac{\ell}{2}})}{(uv)^{\frac{\ell}{2}}-(uv)^{-\frac{\ell}{2}}} 
z^{\ell m_\gamma} \right)
= \prod_{1 \leqslant a \leqslant K}^{\rightarrow}
\exp \left(
H_{\fd_{\gamma_a^\iin, j_a^\iin}} \right) \,,\]
and 
\[ \prod_{\phi \in I}^{\rightarrow}  \exp
\left(- \sum_{\gamma \in \Lambda_\phi^\oout} 
\sum_{\ell \geqslant 1} \frac{1}{\ell}
\frac{Ih_\gamma^{\sigma_{\oout}}(u^{\frac{\ell}{2}}, v^{\frac{\ell}{2}})}{(uv)^{\frac{\ell}{2}}-(uv)^{-\frac{\ell}{2}}} 
z^{\ell m_\gamma} \right)
= \prod_{1 \leqslant b \leqslant L}^{\rightarrow}
\exp \left(
H_{\fd_{\gamma_b^\oout,j_b^\oout}} \right) \,.\]
\end{lem}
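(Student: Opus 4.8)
The plan is to prove the two displayed identities by a direct reindexing inside the finite-dimensional $\Q(u^{\frac12},v^{\frac12})$-algebra $A_\Lambda$; since the two statements are formally identical I would carry out only the one involving $\sigma_\iin$. Every $H_{\fd_{\gamma,j}}$ lies in the ideal of $A_\Lambda$ generated by the $z^{m_{\gamma'}}$ with $\gamma'\neq 0$, and that ideal is nilpotent because $\varphi_\sigma$ is additive and strictly positive on the (finitely many) classes of $\Lambda$ carrying a $\sigma_\iin$-semistable object; hence all the exponentials are finite sums, and it suffices to match the exponents after suitably grouping the rays on the right-hand side.

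First I would invoke the local structure from \cref{section_local_wall}. Since $\sigma_\iin$ lies off every potential wall, for each value $\phi$ the classes of phase $\phi$ at $\sigma_\iin$ are exactly the positive multiples of one primitive class $\gamma_\phi^\iin$, so $\sum_{\gamma\in\Lambda_\phi^\iin}$ runs over $\gamma=n\gamma_\phi^\iin$ and every monomial occurring is a power of $z^{m_{\gamma_\phi^\iin}}$. Correspondingly the ingoing rays $\fd_a^\iin$ whose class is a positive multiple $N_a\gamma_\phi^\iin$ all have local support $\R_{\geqslant 0}m_{\gamma_\phi^\iin}$, so $\langle N_a m_{\gamma_\phi^\iin},N_{a'}m_{\gamma_\phi^\iin}\rangle=0$ and their exponentials commute; the subproduct of $\prod_{1\leqslant a\leqslant K}^{\rightarrow}\exp(H_{\fd_a^\iin})$ over this group is therefore $\exp\bigl(\sum_a H_{\fd_{\gamma_a^\iin,j_a^\iin}}\bigr)$. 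Plugging in the definition of $H_{\fd_{\gamma,j}}$ from \ref{def_ray_stability_scattering} and reorganising by divisors, the coefficient of $z^{Nm_{\gamma_\phi^\iin}}$ in $\sum_a H_{\fd_{\gamma_a^\iin,j_a^\iin}}$ is $-\sum_{\ell\mid N}\frac1\ell\,\frac{Ih_{(N/\ell)\gamma_\phi^\iin,\,j_a^\iin}(u^{\frac{\ell}{2}},v^{\frac{\ell}{2}})}{(uv)^{\frac{\ell}{2}}-(uv)^{-\frac{\ell}{2}}}$ (with $a$ the unique index with $N_a=N$, or $0$ if there is none), while the same coefficient on the left-hand side is $-\sum_{\ell\mid N}\frac1\ell\,\frac{Ih_{(N/\ell)\gamma_\phi^\iin}^{\sigma_\iin}(u^{\frac{\ell}{2}},v^{\frac{\ell}{2}})}{(uv)^{\frac{\ell}{2}}-(uv)^{-\frac{\ell}{2}}}$. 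So everything reduces to two facts: (i) $Ih_{\gamma',j_a^\iin}=Ih_{\gamma'}^{\sigma_\iin}$ for every $\gamma'$ dividing $\gamma_a^\iin$, and (ii) the classes $N\gamma_\phi^\iin$ for which the left-hand coefficient is nonzero are exactly those for which an ingoing ray $\fd_a^\iin$ of that class occurs.

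For (i) and (ii) I would shrink $U_\sigma$ further so that, for each of the finitely many classes $\gamma'$ dividing some $\gamma_a^\iin$ or $\gamma_b^\oout$, the only actual wall for $\gamma'$ meeting $U_\sigma$ is the restriction of $W_\sigma$ — that a potential wall through $\sigma$ for such $\gamma'$ must be $W_\sigma$ follows from \ref{lem_rank_two} exactly as for the $\gamma_a^\iin$. Then $M_{\gamma'}^{\sigma'}$ is constant on the connected open set $U_\sigma^\iin$, which contains both $\sigma_\iin$ and the portion of $L_{N\gamma_\phi^\iin}$ just on the $\{\varphi_\sigma>0\}$-side of $\sigma$; the chamber of $R_{N\gamma_\phi^\iin}$ containing that portion is the one indexing $\fd_a^\iin$, so constancy gives $Ih_{\gamma',j_a^\iin}=Ih_{\gamma'}^{\sigma_\iin}$. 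For (ii), if the left-hand coefficient of $z^{Nm_{\gamma_\phi^\iin}}$ is nonzero then some $Ih_{(N/\ell)\gamma_\phi^\iin}^{\sigma_\iin}\neq 0$, hence (forming polystable direct sums) $M_{N\gamma_\phi^\iin}^{\sigma_\iin}\neq\emptyset$ and $Ih_{N\gamma_\phi^\iin}^{\sigma_\iin}\neq0$; constancy on $U_\sigma^\iin$ forces a piece of $L_{N\gamma_\phi^\iin}$ just inside $U_\sigma^\iin$ to lie in $R_{N\gamma_\phi^\iin}$, so $\sigma$ is not the initial point of the adjacent chamber and that chamber contributes an ingoing local ray of class $N\gamma_\phi^\iin$; the converse inclusion is immediate from the definition of $R_\gamma$.

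It remains to match the two orderings: I would check that labelling the $\fd_a^\iin$ so that $\langle m_{\gamma_a^\iin},m_{\gamma_{a'}^\iin}\rangle\leqslant0$ for $a\leqslant a'$ is the same as ordering them by decreasing phase $\phi^{\sigma_\iin}$, so that $\prod_{1\leqslant a\leqslant K}^{\rightarrow}$ agrees with $\prod_{\phi\in I}^{\rightarrow}$ over the contributing phases (and identically with $\sigma_\oout$ for the second identity). Here the useful observation is that $\Rea Z_\gamma^{(x,y)}$ is affine in $(x,y)$ with gradient the $90^\circ$ rotation of $m_\gamma$, so $\Rea Z_\gamma^{\sigma_\iin}=\det(m_\gamma,\sigma_\iin-\sigma)$ whenever $Z_\gamma^\sigma\in i\R_{>0}$; feeding this into \ref{lem_ordering} (these real parts are all negative at $\sigma_\iin$, so all phases exceed $\frac12$) and \ref{key_lem_walls} (the tangent to $W_\sigma$ at $\sigma$ is $\{\varphi_\sigma=0\}$), a short $2\times2$ determinant computation identifies the $\langle-,-\rangle$-order with the decreasing-argument order. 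I expect the genuine obstacle to be precisely the bookkeeping of the two middle paragraphs — tracking which $\phi\in I$ feed the ingoing versus the outgoing side, ruling out the case where the relevant ray merely touches $\sigma$ instead of passing through it, and making sure the chambers $j_a^\iin$ in $H_{\fd_{\gamma_a^\iin,j_a^\iin}}$ are the ones adjacent to $U_\sigma^\iin$ — rather than any single analytic estimate.
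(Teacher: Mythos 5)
Your plan follows the same overall strategy as the paper's proof: use the fact that $\sigma_\iin$ and the supports of the $\fd_{\gamma_a^\iin,j_a^\iin}$ lie in the same chamber $U_\sigma^\iin$ (so the relevant moduli spaces and hence the $Ih$-polynomials agree), identify which classes contribute, and check the two orderings coincide. The paper handles the ordering by a path-crossing argument through $U_\sigma^\iin$ using Lemma \ref{lem_ordering} and the fact that no wall $W_{\gamma_a^\iin,\gamma_{a'}^\iin}$ meets $U_\sigma^\iin$; your $2\times2$ determinant computation via $\Rea Z_\gamma^{\sigma_\iin}=\det(m_\gamma,\sigma_\iin-\sigma)$ and the tangency of $W_\sigma$ from Lemma \ref{key_lem_walls} is a clean equivalent, and your explicit further shrinking of $U_\sigma$ to control walls for the divisors of the $\gamma_a^\iin$, $\gamma_b^\oout$ (not just those classes themselves) tidies up a point the paper's set-up leaves implicit.

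One step in your argument for (ii) needs care. You deduce $Ih_{N\gamma_\phi^\iin}^{\sigma_\iin}\neq0$ from nonemptiness of $M_{N\gamma_\phi^\iin}^{\sigma_\iin}$ via polystable direct sums, but direct sums of lower-rank stable objects are strictly semistable, so they witness nonemptiness of the moduli space of semistable objects, not of the stable locus whose closure supports the $IC$-sheaf used to define $Ih$. This is not a vacuous distinction: for an exceptional primitive class such as $\gamma_\phi^\iin=\gamma(\cO)$ one has $M_{n\gamma(\cO)}^{\sigma-\st}=\emptyset$ for $n>1$ while $M_{n\gamma(\cO)}^{\sigma}\neq\emptyset$, which is precisely the configuration your argument is meant to handle (it is responsible for the infinitely many initial rays $\fd_{0,\ell}^\pm$, $\ell\geqslant1$, of $\fD^\iin_{u,v}$). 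Whether this is a gap depends on which normalisation of $R_\gamma$ is in force: the introduction defines $R_\gamma$ by existence of $\sigma$-semistable objects (under which your direct-sum argument is exactly what is needed and the inference to $Ih\neq0$ is unnecessary), while the displayed definition in \cref{section_scattering_from_stability} uses $Ih_\gamma^\sigma\neq0$ (under which the inference fails and the exceptional case must be treated separately). You should make explicit which reading you adopt, and with the $Ih\neq0$ reading replace the direct-sum inference by the observation that $M_{N\gamma_\phi^\iin}^{\sigma_\iin}\neq\emptyset$ already suffices to produce the required local ray.
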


\begin{proof}
As $\sigma_{\iin}$ and $\fd_{\gamma_a^{\iin},j_a^\iin}$
(resp.\ $\sigma_{\oout}$ and $\fd_{\gamma_a^{\oout},j_a^\oout}$)
are both 
in $U_\sigma^{\iin}$ (resp.\ $U_\sigma^{\oout}$), and so are not separated by walls for $\gamma_a^{\iin}$ (reps.\ $\gamma_a^{\oout}$), we have $M_{\gamma_a^{\iin}}^{\sigma_{\iin}}=M_{\gamma_a^{\iin}}^{\sigma'}$
(resp.\ $M_{\gamma_a^{\oout}}^{\sigma_{\oout}}=M_{\gamma_a^{\oout}}^{\sigma'}$), for every 
$\sigma' \in U_\sigma \cap \fd_a^{\iin}$ (resp.\ $\sigma' \in U_\sigma \cap \fd_a^{\oout}$).
By the construction of $I$, every $\gamma \in \Gamma$ with $\phi_\gamma^{\sigma_\iin} \in I$
(resp.\ $\phi_\gamma^{\sigma_\oout} \in I$)
and $\varphi_\sigma(m_{\gamma}) \leqslant k$ is of the form $\gamma_a^\iin$ (resp.\ $\gamma_b^{\oout}$) for some $1 \leqslant a \leqslant K$ (resp.\ $1 \leqslant b \leqslant L$).

It remains to show that the ordering according to decreasing value of 
$\phi_\gamma^{\sigma_\iin}$ (resp.\ $\phi_\gamma^{\sigma_\oout}$)
agrees with the ordering according to decreasing value of 
$1 \leqslant a \leqslant K$ (resp.\ $1 \leqslant b \leqslant L$).

We consider a small parametrized path $\fp_\iin$ (resp.\ $\fp_\oout$)
in $U_\sigma^\iin$ (resp.\ $U_\sigma^\oout$) starting at $\sigma_\iin$
(resp.\ $\sigma_\oout$) and intersecting successively 
the rays 
$\fd_{\gamma_{a}^\iin,j_a^\iin}$ (resp.\ $\fd_{\gamma_{b}^\oout,j_b^\oout}$)
in the order of increasing $1 \leqslant a \leqslant K$ (resp.\ 
$1 \leqslant b \leqslant L$). 
According to Lemma \ref{lem_ordering}, we have
$\Rea Z_{\gamma_a^\in}^{\sigma_\iin}<0$ (resp.\ $\Rea Z_{\gamma_a^\oout}^{\sigma_\oout}<0$) for every $1 \leqslant a \leqslant K$ (resp.\ $1 \leqslant b \leqslant L$).
Recall that by definition of $U_\sigma$, there is no potential wall of the form 
$W_{\gamma_{\fd_a^{\iin}},\gamma_{\fd_{a'}^{\iin}}}$, 
$1 \leqslant a, a' \leqslant K$,
(resp.\ $W_{\gamma_{\fd_b^{\oout}},\gamma_{\fd_{b'}^{\oout}}}$, 
$1 \leqslant b, b' \leqslant L$) intersecting $U_\sigma^\iin$
(resp.\ $U_\sigma^\oout$).
It follows that the relative ordering of the phases
$\phi_{\gamma_a^\iin}^{\sigma_\iin}$ coincides with the relative 
ordering according to which $\fp_\iin$ (resp.\ $\fp_\oout$) intersects 
the rays $\fd_{\gamma_{a}^\iin,j_a^\iin}$ (resp.\ $\fd_{\gamma_{b}^\oout,j_b^\oout}$).
\end{proof}

We can now end the proof of Theorem
\ref{thm_consistency}.
According to \cref{section_local_wall}, we need to show the equality 
\[\Phi_{\fd_K^{\iin}}  \dots  \Phi_{\fd_1^{\iin}}
= \Phi_{\fd_L^{\oout}}  \dots \Phi_{\fd_1^{\oout}}\]
of order $k$ path-ordered products in the group 
$G_{\varphi_\sigma}^k$.
By Definition \ref{def_autom}, the group element $\Phi_\fd$ attached to a ray 
$\fd$ is given by 
\[\Phi_{\fd}\coloneqq \exp(H_\fd)   \,.\]
As the commutator in the associative algebra $A_\Lambda$ coincides with the Lie bracket in the Lie algebra $\fg_{u,v}$
(compare Definitions \ref{def_algebra_A_Lambda} and \ref{def_lie_algebra}), the above equality in the group $G_{\varphi_\sigma}^k$ is equivalent to the following equality in $A_\Lambda$:
\[  \prod_{1 \leqslant a \leqslant K}^{\rightarrow}
\exp \left(
H_{\fd_{\gamma_a^\iin, j_a^\iin}} \right)
= 
\prod_{1 \leqslant b \leqslant L}^{\rightarrow}
\exp \left(
H_{\fd_{\gamma_b^\oout,j_b^\oout}} \right)\,.\]
According to Lemma \ref{lem_in_out}, this is equivalent to 
\[ \prod_{\phi \in I}^{\rightarrow}  \exp
\left(-\sum_{\gamma \in \Lambda_\phi^\iin} 
\sum_{\ell \geqslant 1} \frac{1}{\ell}
\frac{Ih_\gamma^{\sigma_{\iin}}(u^{\frac{\ell}{2}}, v^{\frac{\ell}{2}})}{(uv)^{\frac{\ell}{2}}-(uv)^{-\frac{\ell}{2}}} 
z^{\ell m_\gamma} \right)
=
 \prod_{\phi \in I}^{\rightarrow}  \exp
\left(-\sum_{\gamma \in \Lambda_\phi^\oout} 
\sum_{\ell \geqslant 1} \frac{1}{\ell}
\frac{Ih_\gamma^{\sigma_{\oout}}(u^{\frac{\ell}{2}}, v^{\frac{\ell}{2}})}{(uv)^{\frac{\ell}{2}}-(uv)^{-\frac{\ell}{2}}} 
z^{\ell m_\gamma} \right)\,,\]
which by Proposition \ref{prop_dt_ic}, is equivalent to 
\[
\prod_{\phi \in I}^{\rightarrow} 
\left(
\sum_{\gamma \in \Lambda_\phi^\iin}
\tilde{h}(\fM_\gamma^{\sigma_{\iin}})(u^{\frac{1}{2}},
v^{\frac{1}{2}})
z^{m_\gamma}
\right)
= 
\prod_{\phi \in I}^{\rightarrow} 
\left(
\sum_{\gamma \in \Lambda_\phi^{\oout}}
\tilde{h}(\fM_\gamma^{\sigma_{\oout}})(u^{\frac{1}{2}},
v^{\frac{1}{2}})
z^{m_\gamma}
\right)\,.\]
But this last equality is exactly Proposition \ref{prop_wcf}, and this ends the proof of Theorem \ref{thm_consistency}.

\section{Initial data for $\fD^{\PP^2}_{u,v}$}
\label{section_initial_data_proof}

In this section, we prove that the scattering diagrams $\fD^{\PP^2}_{u,v}$ and 
$S(\fD_{u,v}^\iin)$ have the same initial data. 
More precisely, we prove Theorem \ref{thm_coincide_initial}, according to which the scattering diagrams 
$\fD^{\PP^2}_{u,v}$ and 
$S(\fD_{u,v}^\iin)$ coincide in restriction to a neighbourhood 
$\bar{U}^\iin$ of the boundary of $U$.
We define $\bar{U}^\iin$ in  
\cref{section_initial_region} and we state 
Theorem \ref{thm_coincide_initial} 
in \cref{section_statement_initial}.
We introduce a description of 
$\D^b(\PP^2)$ in terms of quiver representations in
\cref{section_quiver_description}, which is then used in \cref{section_proof_initial} to prove 
Theorem \ref{thm_coincide_initial}.

\subsection{The initial region $\bar{U}^\iin$}
\label{section_initial_region}

Recall from \cref{section_initial} that for every 
$n \in \Z$, we introduced naked rays 
$|\fd_n^+|=s_n-[0,\frac{1}{2}] m_n^+$ and 
$|\fd_n^-|=s_n-[0,\frac{1}{2}] m_n^-$,
which were then used to define the initial scattering diagrams
$\fD_{u,v}^\iin$.

\begin{lem}
For every $n \in \Z$, the line segments 
$|\fd_n^+|$ and $|\fd_n^-|$ are contained 
in the line of equation
\[ y+nx-\frac{n^2}{2}=0\,,\]
that is,
\[ \Rea Z_{\gamma(\cO(n))}^{(x,y)}=0\,.\]
\end{lem}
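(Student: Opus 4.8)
This is an elementary computation, so the plan is simply to make the two claimed equations explicit and check they agree. First I would recall the two pieces of data involved. On the geometric side, Definition \ref{def_central_charge_formula} gives, for $\gamma = (r,d,\chi)$,
\[
\Rea Z_\gamma^{(x,y)} = ry + dx + r + \tfrac{3}{2}d - \chi\,,
\]
and I need this for $\gamma = \gamma(\cO(n))$. Using \cref{section_sheaves}, the line bundle $\cO(n)$ has $r(\cO(n)) = 1$, $d(\cO(n)) = n$, and $\chi(\cO(n)) = \binom{n+2}{2} = \tfrac{(n+1)(n+2)}{2} = \tfrac{n^2 + 3n + 2}{2}$ (the dimension of degree-$n$ homogeneous polynomials in three variables). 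Substituting,
\[
\Rea Z_{\gamma(\cO(n))}^{(x,y)} = y + nx + 1 + \tfrac{3}{2}n - \tfrac{n^2 + 3n + 2}{2} = y + nx - \tfrac{n^2}{2}\,,
\]
which shows the equation $\Rea Z_{\gamma(\cO(n))}^{(x,y)} = 0$ is indeed $y + nx - \tfrac{n^2}{2} = 0$, establishing the second displayed equality in the statement.

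Next I would check that the line segments $|\fd_n^+|$ and $|\fd_n^-|$ lie on this line. By Definition \ref{def_first_scattering}, $|\fd_n^+| = s_n - [0,\tfrac{1}{2}]m_n^+$ and $|\fd_n^-| = s_n - [0,\tfrac{1}{2}]m_n^-$, where $s_n = (n, -\tfrac{n^2}{2})$, $m_n^+ = (-1,n)$, and $m_n^- = (1,-n)$. Since both $m_n^+$ and $m_n^-$ are $\pm$ the direction vector $(1,-n)$, it suffices to check that the point $s_n$ lies on the line $y + nx - \tfrac{n^2}{2} = 0$ and that the direction $(1,-n)$ is parallel to it. For the point: $-\tfrac{n^2}{2} + n\cdot n - \tfrac{n^2}{2} = 0$. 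For the direction: the line has normal vector $(n,1)$, and $(n,1)\cdot(1,-n) = n - n = 0$, so $(1,-n)$ is indeed a direction vector of the line. Hence every point of $|\fd_n^+|$ and $|\fd_n^-|$ satisfies $y + nx - \tfrac{n^2}{2} = 0$.

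There is no real obstacle here — the only mild care needed is getting the Euler characteristic $\chi(\cO(n)) = \tfrac{(n+1)(n+2)}{2}$ right (valid for all $n \in \Z$ by Riemann–Roch / Serre duality, not just $n \geq 0$), and remembering that the two classes $m_n^\pm$ differ only by sign so the single computation covers both segments. I would present these two short verifications and conclude.
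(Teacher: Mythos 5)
Your proof is correct and matches the paper's argument exactly: compute $\gamma(\cO(n)) = (1,n,\tfrac{n^2+3n+2}{2})$, substitute into the formula for $\Rea Z_\gamma^{(x,y)}$ from Definition \ref{def_central_charge_formula} to get the line $y+nx-\tfrac{n^2}{2}=0$, and note that $s_n$ lies on it while $m_n^\pm$ are parallel to it. Nothing further is needed.
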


\begin{proof}
The fact that $|\fd_n^+|$ and $|\fd_n^-|$ are contained 
in the line of equation
\[ y+nx-\frac{n^2}{2}=0\]
is immediate given that $|\fd_n^+|=s_n-[0,\frac{1}{2}] m_n^+$ and 
$|\fd_n^-|=s_n-[0,\frac{1}{2}] m_n^-$, with
$s_n = (n,-\frac{n^2}{2})$, 
$m_n^{-} =(1,-n)$,
$m_n^{+}=(-1,n)$.

It remains to show that $y+nx-\frac{n^2}{2}=0$ is equivalent to $\Rea Z_{\gamma(\cO(n))}^{(x,y)}=0$. Recall
from Definition
\ref{def_central_charge_formula} that, writing $\gamma=(r,d,\chi)$, we have 
\[ \Rea Z_\gamma^{(x,y)}=ry+dx+r+\frac{3}{2}d-\chi\,.\]
On the other hand, we have 
\[ \gamma(\cO(n))
=(1,n,\frac{n^2}{2}+\frac{3n}{2}+1)\,.\]
 
\end{proof}

\begin{defn}\label{def_initial_region}
For every $n \in \Z$, we define 
\[ \bar{U}_n^\iin
\coloneq \{ \sigma=(x,y) \in \bar{U} |
\Rea Z_{\gamma(\cO(n-1))}^\sigma \leqslant 0 \,,
\Rea Z_{\gamma(\cO(n+1))}^\sigma \leqslant 0\}\]
\[ =\{(x,y) \in \bar{U}| y\leqslant -(n-1)x+\frac{1}{2}(n-1)^2\,, y\leqslant -(n+1)x+\frac{1}{2}(n+1)^2\}\,.\]
\end{defn}

For every $n \in \Z$, the boundary of 
$\bar{U}_n^\iin$ is the union of:
\begin{itemize}
\item[(i)] The line segment $s_{n-1}-[0,1]m_{n-1}^+$, contained in the line of equation 
\[y+(n-1)x-\frac{1}{2}(n-1)^2=0\,,\] 
that is, $\Rea Z_{\gamma(\cO(n-1))}^\sigma=0$.
\item[(ii)] The line segment 
$s_{n+1}-[0,1]m_{n+1}^-$, contained in the line of equation \[y+(n+1)x-\frac{1}{2}(n+1)^2=0\,,\]
that is $\Rea Z_{\gamma(\cO(n+1))}^\sigma=0$.
\item[(iii)] The arc of the parabola 
$y=-\frac{x^2}{2}$, boundary of 
$\bar{U}$, delimited by the points $s_{n-1}$ and $s_{n+1}$.
\end{itemize}

Remark that the line segments 
$s_{n-1}-[0,1]m_{n-1}^+$ and 
$s_{n+1}-[0,1]m_{n+1}^-$
intersect at the point
\[ (n-1,-\frac{1}{2}(n-1)^2)
-(-1,n-1)=
(n+1,-\frac{1}{2}(n+1)^2)-(1,-(n+1))
=(n, -\frac{1}{2}(n^2-1))\,,\]
which is the
intersection point of the lines $\Rea Z_{\gamma(\cO(n-1))}^\sigma=0$ and 
$\Rea Z_{\gamma(\cO(n+1))}^\sigma=0$.

The line of equation 
$y+nx-\frac{n^2}{2}=0$, that is, 
$\Rea Z_{\gamma(\cO(n))}^\sigma=0$, divides $\bar{U}_n^\iin$ into three regions.
We have 
\[ \bar{U}_n^\iin=\bar{U}_{n,T}^\iin
\cup \bar{U}_{n,L}^\iin \cup 
\bar{U}_{n,R}^\iin\,,\]
where:
\begin{itemize}
\item[(i)] $\bar{U}_{n,T}^\iin$ is the triangle delimited by the three lines 
$\Rea Z_{\gamma(\cO(n-1))}^\sigma=0$,
$\Rea Z_{\gamma(\cO(n))}^\sigma=0$ and 
$\Rea Z_{\gamma(\cO(n+1)}^\sigma=0$.
Remark that we have 
$\{ \Rea Z_{\gamma(\cO(n))}^\sigma=0\}
\cap \bar{U}_n^\iin=|\fd_n^+|\cup 
|\fd_n^-|$.
\item[(ii)] $\bar{U}_{n,L}^\iin$ is delimited 
by the lines $\Rea Z_{\gamma(\cO(n-1))}^\sigma=0$, $\Rea Z_{\gamma(\cO(n))}^\sigma=0$, and the arc of the parabola 
$y=-\frac{x^2}{2}$ delimited by the points 
$s_{n-1}$ and $s_n$. Remark that we have 
$\{ \Rea Z_{\gamma(\cO(n-1))}^\sigma 
=0\} \cap \bar{U}_{n,L}^\iin=|\fd_{n-1}^+|$ and 
$\{ \Rea Z^\sigma_{\gamma(\cO(n))}=0\} 
\cap \bar{U}^\iin_{n,L}=|\fd_n^-|$.
\item[(iii)] $\bar{U}_{n,R}^\iin$ is delimited by the lines 
$\Rea Z_{\gamma(\cO(n+1))}^\sigma=0$, 
$\Rea Z_{\gamma(\cO(n)}^\sigma=0$,
and the arc of the parabola 
$y=-\frac{x^2}{2}$ delimited by the points 
$s_n$ and $s_{n+1}$. Remark that we have 
$\{ \Rea Z_{\gamma(\cO(n+1))}^\sigma 
=0\} \cap \bar{U}_{n,R}^\iin=|\fd_{n+1}^-|$ and 
$\{ \Rea Z^\sigma_{\gamma(\cO(n))}=0\} 
\cap \bar{U}^\iin_{n,R}=|\fd_n^+|$.
\end{itemize}

Figure \ref{figure_notation_2}
gives a schematic summary of some of the notation introduced above.

\begin{figure}[h!] 
\centering
\setlength{\unitlength}{1.2cm}
\begin{picture}(10,5)
\thicklines
\put(9.2,2.5){$\Rea Z_{\gamma
(\cO(n))}^\sigma=0$}
\put(9.25,0.9){$y=-\frac{x^2}{2}$}
\put(2,1.65){\circle*{0.1}}
\put(2,1.65){\line(2,1){5}}
\put(7.2,4){$\Rea Z_{\gamma(\cO(n-1))}^\sigma=0$}
\put(2,1.65){\line(-2,-1){1}}
\put(2,1.45){$s_{n-1}$}
\put(8,1.65){\circle*{0.1}}
\put(8,1.65){\line(-2,1){5}}
\put(0.45,4){$\Rea Z_{\gamma(\cO(n+1))}^\sigma=0$}
\put(8,1.65){\line(2,-1){1}}
\put(7.6,1.45){$s_{n+1}$}
\put(5,2.5){\circle*{0.1}}
\put(5,2.25){$s_n$}
\put(5,2.5){\line(1,0){4}}
\put(5,2.5){\line(-1,0){4}}
\qbezier(1,1)(5,4)(9,1)
\put(4.7,2.7){$\bar{U}_{n,T}^\iin$}
\put(3.4,1.9){$\bar{U}_{n,L}^\iin$}
\put(6.3,1.9){$\bar{U}_{n,R}^\iin$}
\end{picture}
\caption{}
\label{figure_notation_2}
\end{figure}

\begin{defn}
We denote 
\[ \bar{U}^\iin \coloneq \bigcup_{n \in \Z}
\bar{U}_n^\iin\,.\]
\end{defn}

\begin{lem}
$\bar{U}^\iin$ is a neighbourhood in 
$\bar{U}$ of 
the boundary $\partial \bar{U}=\{(x,y)|y=-\frac{x^2}{2}\}$ of $\bar{U}$.
\end{lem}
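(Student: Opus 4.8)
The claim to prove is that $\bar{U}^\iin := \bigcup_{n \in \Z} \bar{U}_n^\iin$ is a neighbourhood in $\bar{U}$ of the boundary parabola $\partial \bar{U} = \{y = -x^2/2\}$.

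The plan is to show that every boundary point $s = (x_0, -x_0^2/2) \in \partial \bar{U}$ has a neighbourhood in $\bar{U}$ contained in $\bar{U}^\iin$. First I would observe that, since the points $s_n = (n, -n^2/2)$ are spaced out along the parabola and the regions $\bar{U}_n^\iin$ are built around them, it suffices to check that for each $n$ the arc of $\partial \bar{U}$ strictly between $s_{n-1}$ and $s_{n+1}$ lies in the interior (relative to $\bar{U}$) of $\bar{U}_n^\iin$, or at worst is covered together with the arc from the neighbouring region $\bar{U}_{n\pm 1}^\iin$ near the endpoints $s_{n\pm 1}$. Concretely, the boundary of $\bar{U}_n^\iin$ described in the excerpt consists of the two line segments $s_{n-1} - [0,1]m_{n-1}^+$ and $s_{n+1}-[0,1]m_{n+1}^-$ together with the parabolic arc between $s_{n-1}$ and $s_{n+1}$. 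So the only part of $\partial\bar{U}_n^\iin$ lying on $\partial\bar{U}$ is that parabolic arc, and all of $\partial\bar{U}$ between $s_{n-1}$ and $s_{n+1}$ is contained in $\bar{U}_n^\iin$.

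Next I would make this quantitative: a point $(x,y) \in \bar{U}$ lies in $\bar{U}_n^\iin$ iff $\Rea Z_{\gamma(\cO(n-1))}^{(x,y)} \le 0$ and $\Rea Z_{\gamma(\cO(n+1))}^{(x,y)} \le 0$, i.e. $y \le -(n-1)x + \tfrac12(n-1)^2$ and $y \le -(n+1)x + \tfrac12(n+1)^2$. Since the parabola $y = -x^2/2$ is tangent to the line $y = -kx + k^2/2$ at the single point $s_k$ and lies strictly below it everywhere else (because $-x^2/2 - (-kx + k^2/2) = -\tfrac12(x-k)^2 \le 0$ with equality only at $x=k$), every boundary point $(x_0, -x_0^2/2)$ with $n-1 < x_0 < n+1$, $x_0 \ne n\pm1$, satisfies both inequalities \emph{strictly}, hence lies in the interior of the intersection of the two half-planes. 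By continuity of the two affine functions, a full $\bar{U}$-neighbourhood of such a point still satisfies both inequalities and so lies in $\bar{U}_n^\iin \subset \bar{U}^\iin$. This handles all boundary points except the $s_n$ themselves.

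Finally I would dispose of the points $s_n$: at $s_n$ one has $\Rea Z_{\gamma(\cO(n-1))}^{s_n} = -\tfrac12(n-1-n)^2 = -\tfrac12 < 0$ and similarly $\Rea Z_{\gamma(\cO(n+1))}^{s_n} = -\tfrac12 < 0$, so $s_n$ also lies in the interior of the two half-planes defining $\bar{U}_n^\iin$, and again continuity gives a $\bar{U}$-neighbourhood of $s_n$ inside $\bar{U}_n^\iin$. Thus every point of $\partial\bar{U}$ has a $\bar{U}$-open neighbourhood contained in $\bar{U}^\iin$, which is exactly the statement. I do not expect any real obstacle here — the only thing to be a little careful about is the precise meaning of ``neighbourhood in $\bar{U}$'' (relative topology, so neighbourhoods are allowed to touch $\partial\bar{U}$), and the elementary computation $-x^2/2 - (-kx+k^2/2) = -\tfrac12(x-k)^2$ which drives everything.
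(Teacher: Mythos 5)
Your proof is correct and takes essentially the same route as the paper: show each $\bar{U}_n^\iin$ is a $\bar{U}$-neighbourhood of the open parabolic arc $n-1<x<n+1$ via the computation $-\tfrac{x^2}{2}-(-kx+\tfrac{k^2}{2})=-\tfrac12(x-k)^2$, then observe these arcs cover $\partial\bar{U}$. (Minor remark: your separate treatment of $s_n$ is redundant, since $s_n$ has $x_0=n$ and so is already covered by the case $n-1<x_0<n+1$; the tangency points to worry about are $s_{n-1}$ and $s_{n+1}$, which are instead covered by the neighbouring regions.)
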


\begin{proof}
For every $n \in \Z$, $\bar{U}_n^\iin$
is a neighbourhood in 
$\bar{U}$ of the arc of the parabola 
$y=-\frac{x^2}{2}$ defined by $n-1<x<n+1$.
The result follows from the fact that every point $(x,y)$ of the parabola 
$y=-\frac{x^2}{2}$ satisfies $n-1<x<n+1$ for some $n \in \Z$. Equivalently, one can remark that 
\[ \{(x,y)\in\bar{U}|\,y<-\frac{x^2}{2}+\frac{1}{8}\} \subset \bar{U}^\iin\,.\]
\end{proof}

\subsection{Statement of Theorem \ref{thm_coincide_initial}}
\label{section_statement_initial}

\begin{thm} \label{thm_coincide_initial}
The scattering diagrams $\fD_{u,v}^{\PP^2}$ and $S(\fD_{u,v}^\iin)$
coincide in restriction to $\bar{U}^\iin$.
\end{thm}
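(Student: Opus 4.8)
The plan is to show that inside each region $\bar U_n^\iin$ the only rays of $\fD^{\PP^2}_{u,v}$ are the ones supported on $|\fd_n^\pm|$ and carrying exactly the data $H_{n,\ell}^\pm$, and simultaneously that the same is true of $S(\fD^\iin_{u,v})$; then the two scattering diagrams agree on $\bar U^\iin = \bigcup_n \bar U_n^\iin$. The key geometric input, already prepared in \cref{section_initial_region}, is the decomposition $\bar U_n^\iin = \bar U_{n,T}^\iin \cup \bar U_{n,L}^\iin \cup \bar U_{n,R}^\iin$ into three regions cut out by the lines $\Rea Z^\sigma_{\gamma(\cO(n-1))}=0$, $\Rea Z^\sigma_{\gamma(\cO(n))}=0$, $\Rea Z^\sigma_{\gamma(\cO(n+1))}=0$. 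A ray of $\fD^{\PP^2}_{u,v}$ through a point $\sigma$ comes from a class $\gamma$ with $\Rea Z^\sigma_\gamma=0$ and with nonempty moduli space $M_\gamma^\sigma$; so the heart of the matter is a vanishing statement: no object class $\gamma$ with $M_\gamma^\sigma\neq\emptyset$ can satisfy $\Rea Z^\sigma_\gamma=0$ in the interior of the triangle $\bar U_{n,T}^\iin$, and in the interiors of $\bar U_{n,L}^\iin$ and $\bar U_{n,R}^\iin$ the only such classes are positive multiples of $\gamma(\cO(m))$ for $m\in\{n-1,n\}$, respectively $\{n,n+1\}$.

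To prove this vanishing I would use the quiver description of $\D^b(\PP^2)$ developed in \cref{section_quiver_description}. For $\sigma$ in the interior of one of these triangular regions, the stability condition $\sigma$ has a heart $\cA^\sigma$ of the form $\langle \cO(a)[2],\cO(b)[1],\cO(c)\rangle$ (up to shift and twist) associated to an exceptional collection $(\cO(a),\cO(b),\cO(c))$ — equivalently a quiver heart whose three simple objects are (shifts of) line bundles, e.g.\ for $\bar U_{n,T}^\iin$ the collection $(\cO(n-1),\cO(n),\cO(n+1))$. Then every object $E$ of $\cA^\sigma$ has class $\gamma(E)$ equal to a nonnegative integer combination of the classes of the three simples. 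Computing the three corresponding real parts $\Rea Z^\sigma_{\gamma_i}$ on the interior of the triangle, one checks they all have the same strict sign (with the normalization of $\varphi_\sigma$ from Definition \ref{def_varphi}, all $\Rea Z^\sigma<0$ in the interior, as in Lemma \ref{lem_ordering}); hence $\Rea Z^\sigma_{\gamma(E)}$ is a nonnegative combination of three strictly-same-sign quantities, so it is strictly nonzero and in particular $\neq 0$. This forces $R_\gamma\cap (\text{interior of } \bar U_{n,T}^\iin)=\emptyset$ for all $\gamma$, so $\fD^{\PP^2}_{u,v}$ has no ray meeting the open triangle. For $\bar U_{n,L}^\iin$ and $\bar U_{n,R}^\iin$ the same computation shows that of the three simple classes exactly one (respectively two adjacent) can have $\Rea Z^\sigma=0$ only along a boundary edge, which are precisely the segments $|\fd_{n-1}^+|,|\fd_n^-|$ (resp.\ $|\fd_n^+|,|\fd_{n+1}^-|$); and the only classes $\gamma$ with $\Rea Z^\sigma_\gamma=0$ along such an edge with $M_\gamma^\sigma\neq\emptyset$ are the positive multiples $\ell\gamma(\cO(m))$, whose moduli spaces are points, giving $Ih^\sigma_{\ell\gamma(\cO(m))}=1$ and hence exactly the coefficient $H_{m,\ell}^\pm$.

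The remaining step is to identify $S(\fD^\iin_{u,v})$ on $\bar U^\iin$. By construction (Proposition \ref{prop_consistent_completion}), $S_0(\fD^\iin_{u,v})=\fD^\iin_{u,v}$, and any newly added ray has initial point in $U\cap\Sing$; by Lemma \ref{lem_first_scattering} the only singular points of $\fD^\iin_{u,v}$ are the points $|\fd_n^+|\cap|\fd_{n+1}^-|$, which lie on the common boundary of adjacent regions $\bar U_n^\iin$ and $\bar U_{n+1}^\iin$, not in the interior of any $\bar U_n^\iin$; moreover by Lemma \ref{lem_x2+2y_increasing} rays emitted from these points move into the region $x^2+2y$ larger, i.e.\ away from the parabola, hence out of $\bar U^\iin$ (using the inclusion $\{y<-\tfrac{x^2}{2}+\tfrac18\}\subset\bar U^\iin$ and a direction count). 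So $S(\fD^\iin_{u,v})$ restricted to $\bar U^\iin$ is just $\fD^\iin_{u,v}$, which by Definition \ref{def_scattering_D_in_u_v} has exactly the rays $(|\fd_n^\pm|,H_{n,\ell}^\pm)$ — matching what we found for $\fD^{\PP^2}_{u,v}$. I expect the main obstacle to be the quiver-heart bookkeeping: correctly matching each triangular region of $\bar U_n^\iin$ with the right exceptional collection/quiver heart, verifying that a general $\sigma$ in the open region really lies in the interior of the corresponding quiver chamber of $\Stab(\PP^2)$ (so that the heart is genuinely a quiver heart and the nonnegativity-of-classes argument applies), and handling the class $\Gamma^0$ of zero-dimensional sheaves, for which $Z^\sigma$ is real and which therefore never contributes a ray — a point already isolated in \cref{section_moduli_invariants} and in the proof of Lemma \ref{lem_technical_meinh}.
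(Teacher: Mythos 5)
The reduction by $\psi(1)$-symmetry to $\bar{U}_0^\iin$, the use of the decomposition into $\bar{U}_{0,T}^\iin$, $\bar{U}_{0,L}^\iin$, $\bar{U}_{0,R}^\iin$, and the idea of passing to the tilted quiver heart $\cA_0$ with simples $\cO(-1)[2],\cO[1],\cO(1)$ are all exactly as in the paper, and your sign argument does work in the triangle $\bar{U}_{0,T}^\iin$: after the $\phi=\tfrac12$ rotation all three simples have $\Rea Z^\sigma<0$ there, so a nonnegative combination of simple classes can never have $\Rea Z^\sigma=0$, as in Lemma \ref{lem_coincide_int_T}.

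However there is a genuine gap where you write ``for $\bar U_{n,L}^\iin$ and $\bar U_{n,R}^\iin$ the same computation shows\ldots''. In the interiors of $\bar{U}_{0,L}^\iin$ and $\bar{U}_{0,R}^\iin$ the real parts of $Z^\sigma$ on the three simple classes of $\cA_0$ do \emph{not} all have the same sign. Taking $\sigma$ in the interior of $\bar{U}_{0,L}^\iin$, one has $\Rea Z^\sigma_{\gamma(\cO(-1)[2])}<0$, $\Rea Z^\sigma_{\gamma(\cO[1])}>0$, $\Rea Z^\sigma_{\gamma(\cO(1))}<0$, and similarly with $-,+,-$ in $\bar{U}_{0,R}^\iin$. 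So a nonnegative integer combination $n_1\gamma(\cO(-1)[2])+n_2\gamma(\cO[1])+n_3\gamma(\cO(1))$ can easily have vanishing real part, and the ``same-sign'' argument collapses. What the paper does instead in Lemma \ref{lem_coincide_int_R} is to choose the rotation angle $\phi$ equal to the phase of the extremal simple (e.g.\ $\phi=\tfrac1\pi\Arg Z^\sigma_{\gamma(\cO(-1)[1])}$ for $R$), split according to whether the dimension vector has $n_3\geqslant 1$ or $n_3=0$, use the fact that the simple $S_1$ is a subrepresentation to constrain the phase when $n_3\geqslant1$, and in the case $n_3=0$ invoke the nonemptiness constraint $n_2\leqslant 3n_1$ for the Kronecker quiver $K_3$ (Lemma \ref{lem_quiver_kronecker}), together with an explicit estimate showing $-2y-x-\tfrac12<0$ on the relevant region, to force $\Rea Z^\sigma<0$. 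Your proposal never produces these ingredients, and without them the claimed vanishing in the interiors of $L$ and $R$ does not follow.

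There is also a smaller but real inaccuracy on the $S(\fD^\iin_{u,v})$ side: $S(\fD^\iin_{u,v})$ restricted to $\bar{U}^\iin$ is not literally $\fD^\iin_{u,v}$. The local scatterings at the points $|\fd_n^+|\cap|\fd_{n+1}^-|$, which lie on the tangent segments, produce outgoing rays; by Lemma \ref{lem_scattering_forward} the extreme ones are the continuations $\fd^{+,(1)}_{n,\ell}$ and $\fd^{-,(1)}_{n+1,\ell}$ (carrying the same $H_{n,\ell}^{\pm}$), and these run along the boundary lines $\Rea Z^\sigma_{\gamma(\cO(n))}=0$ up to the apex of the neighboring triangle, staying entirely inside $\bar U^\iin$. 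Your claim that the rays emitted at the scattering points immediately leave $\bar U^\iin$ by the $x^2+2y$-monotonicity is true for the genuinely new outgoing rays (classes $\ell m_n^+ + \ell' m_{n+1}^-$ with both $\ell,\ell'\geqslant 1$), but not for the continuations, which must be listed explicitly, as the paper does at the start of \cref{section_statement_initial}.
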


The proof of Theorem
\ref{thm_coincide_initial} takes the remaining part of 
\cref{section_initial_data_proof}.

Recall that we defined an action
$\psi(1)$ on $U$ in
\cref{subsection_psi(1)}.
We first remark that, for every $n \in \Z$, we have $\psi(1)(\bar{U}_n^\iin)=\bar{U}_{n+1}^\iin$. We also defined in
\cref{subsection_psi(1)} an action 
$\psi(1)$ on scattering diagrams on $U$.
According to Proposition \ref{prop_psi(1)}, 
$\psi(1)$ preserves $S(\fD_{u,v}^\iin)$
and according to Proposition
\ref{prop_psi(1)_P2}, $\psi(1)$ also 
preserves $\fD_{u,v}^{\PP^2}$.
Therefore, in order to prove Theorem
\ref{thm_coincide_initial}, it is enough to show that the scattering diagrams $\fD_{u,v}^{\PP^2}$ and $S(\fD_{u,v}^\iin)$
coincide in restriction to $\bar{U}_0^\iin$.

We describe $S(\fD_{u,v}^\iin)$ in restriction to $\bar{U}_0^\iin$.
It follows from Lemma \ref{lem_scattering_forward} that $S(\fD_{u,v}^\iin)$
restricted to $\bar{U}_0^\iin$ consists in the following rays:
\begin{itemize}
\item[(i)] For every integer $\ell \geqslant 1$, 
$\fd_{0,\ell}^+=(|\fd_0^+|, H_{0,\ell}^+)$ and 
$\fd_{0,\ell}^-=(|\fd_0^-|, H_{0,\ell}^-)$
(see \cref{section_initial}-\cref{section_scattering_final}).
\item[(ii)] For every integer 
$\ell \geqslant 1$, 
$\fd_{-1,\ell}^+=(|\fd_{-1}^+|, H_{-1,\ell}^+)$ and 
$\fd_{1,\ell}^-=(|\fd_1^-|, H_{1,\ell}^-)$
(see \cref{section_initial}-
\cref{section_scattering_final}).
\item[(iii)] For every integer $\ell \geqslant 1$, 
\[\fd_{-1,\ell}^{+,(1)} \coloneq 
\left(s_{-1}-[1/2,1]m_{-1}^+, 
H_{-1,\ell}^+ \right)\,,\] and 
\[\fd_{1,\ell}^{-,(1)} \coloneq 
\left(s_{1}-[1/2,1]m_{1}^-, 
H_{1,\ell}^-\right)\,.\]
\end{itemize}

In order to prove 
Theorem
\ref{thm_coincide_initial},
it remains to show that 
$\fD_{u,v}^{\PP^2}$ restricted to $\bar{U}_0^{\iin}$ has an identical description.

\subsection{Quiver description}
\label{section_quiver_description}

In this section we review a description of $\D^b(\PP^2)$ in terms of quiver representations, which will be used in the
proof of Theorem \ref{thm_coincide_initial}
in \cref{section_proof_initial}. 
A similar discussion can be found in 
\cite[\S 4.3]{MR2681544}. In order to compare with the notation of \cite{MR2681544}, we recall that the exterior product 
$\Omega_{\PP^2} \otimes \Omega_{\PP^2} 
\rightarrow \wedge^2 \Omega_{\PP^2}
=K_{\PP^2}=\cO(-3)$ induces an isomorphism
$T_{\PP^2} \simeq \Omega_{\PP^2}(3)$.

We denote by $T_{\PP^2}$ the tangent bundle of 
$\PP^2$ and we consider the strong exceptional collection
\cite{MR885779} 
\[ \cO(1)\,,T_{\PP^2}\,,\cO(2) \,,\]
of objects in $\D^b(\PP^2)$.
We denote 
\[ \mathbf{T} \coloneq \cO(1)\oplus T_{\PP^2}\oplus\cO(2)\,.\]
It follows from the Beilinson spectral sequence \cite{MR509388} that, introducing the algebra
\[ A_0 \coloneq \Hom(\mathbf{T},\mathbf{T})^{\mathrm{op}}\,,\]
and $\cA_0$ the abelian category of 
finitely generated left $A_0$-modules, the 
functor 
\[ \D^b(\PP^2) \rightarrow \D^b(\cA_0)\,\]
\[ E \mapsto \RHom(\mathbf{T},E) \]
is an equivalence of triangulated categories. Using this equivalence, we view 
$\cA_0$ as a subcategory of $\D^b(\PP^2)$.

As 
\[ \Hom(\cO(1),T_{\PP^2})=H^0(T_{\PP^2}(-1)) \simeq \C^3 \,,\]
\[ \Hom(T_{\PP^2},\cO(2))
=H^0(\Omega_{\PP^2}(2)) \simeq \C^3 \,,\]
\[ \Hom(\cO(1),\cO(2)) =H^0(\cO(1))
\simeq \C^3\,,\]
the algebra $A_0$ is the path-algebra of the quiver $Q_0$, consisting of three vertices $v_{-1}$, $v_0$, $v_1$, three arrows from $v_{-1}$ to $v_0$, three arrows from 
$v_0$ to $v_1$, and six linearly independent relations coming from the kernel of the composition map
\[ \Hom(\cO(1),T_{\PP^2}) \otimes \Hom(T_{\PP^2},\cO(2)) \rightarrow \Hom(\cO(1),\cO(2))\,.\]
Denoting by $\delta_0$, 
$\delta_1$, $\delta_2$ the three arrows from $v_{-1}$ to $v_0$, and 
by $\gamma_0$,
$\gamma_1$, $\gamma_2$ the three arrows from $v_0$ to $v_1$, one can write the relations as 
$\gamma_i \delta_j+\gamma_j \delta_i=0$, for every $i,j=0,1,2$, see 
\cite[\S 4.3]{MR2681544}.

\[Q_0=\]
\begin{center}
\begin{tikzpicture}[>=angle 90]
\matrix(a)[matrix of math nodes,
row sep=3em, column sep=5em,
text height=1.5ex, text depth=0.25ex]
{v_{-1}&v_0&v_1\\};
\path[->](a-1-1) edge [bend left] (a-1-2);
\path[->](a-1-1) edge  (a-1-2);
\path[->](a-1-1) edge [bend right] (a-1-2);
\path[->](a-1-2) edge [bend left]  (a-1-3);
\path[->](a-1-2) edge (a-1-3);
\path[->](a-1-2) edge [bend right] (a-1-3);
\end{tikzpicture}
\end{center}
The abelian category $\cA_0$ is the category of finite-dimensional 
representations of $Q_0$. 
In particular, the dimension of a quiver representation, that is, the triple of dimensions of the vector spaces attached to the three vertices, defines an isomorphism
\[ \dim \colon K_0(\cA_0) \simeq \Z^3\,,\]
such that $\dim([V]) \in \NN^3$ if 
$V$ is an object of $\cA_0$.

The simple objects $S_{-1}$, $S_0$, $S_1$
of $\cA_0$, one-dimensional representations of $Q_0$ supported at the vertices 
$v_{-1}$, $v_0$, $v_1$ of $Q_0$, of dimensions $(1,0,0)$, 
$(0,1,0)$, $(0,0,1)$, 
correspond respectively to the objects 
\[ \cO(-1)[2]\,,\cO[1] \,, \cO(1) \,,\]
of $\D^b(\PP^2)$. The indecomposable 
projective objects 
$P_{-1}$, $P_0$, $P_1$ of 
$\cA_0$, characterized by 
$\Ext^{\bullet}(P_j,S_k)=\delta_{jk} \C$,
correspond respectively to the objects
\[ \cO(2)\,, T_{\PP^2} \,,\cO(1) \,,\]
of $\D^b(\PP^2)$.
They correspond to quiver representations of dimension 
$(1,3,3)$, $(0,1,3)$, $(0,0,1)$ respectively.
In $\cA_0$, we have natural projective resolutions of the simple objects:
\[ 0 \rightarrow P_1 \rightarrow S_1 \rightarrow 0\,,\]
\[ 0 \rightarrow P_1^{ \oplus 3} \rightarrow P_0 \rightarrow S_0 \rightarrow 0\,,\]
\[ 0 \rightarrow P_1^{\oplus 6} \rightarrow 
P_0^{\oplus 3} \rightarrow P_{-1}
\rightarrow S_{-1} \rightarrow 0\,.\]

We now focus on the simple objects
$\cO(-1)[2]$, $\cO[1]$, $\cO(1)$,
of $\cA_0$.
We can describe $\cA_0$ as the extension-closed
subcategory of $\D^b(\PP^2)$ generated by 
$ \cO(-1)[2]\,,\cO[1] \,, \cO(1)$.
The only nonzero
$\Ext$-groups between these objects are 
\[ \Ext^1(\cO(-1)[2],\cO[1])
=H^0(\cO(1)) \simeq \C^3 \,,\]
\[ \Ext^1(\cO[1],\cO(1))=H^0(\cO(1))
\simeq \C^3\,,\]
\[ \Ext^2(\cO(-1)[2],\cO(1))
=H^0(\cO(2)) \simeq \C^6\,.\]
In particular, the ordered collection 
\[ \cO(-1)[2]\,,\cO[1] \,, \cO(1) \,,\]
is Ext-exceptional in the sense of 
\cite[Definition 3.10]{MR2335991}.
Therefore, by \cite[Lemma 3.16]{MR2335991}, 
if $\sigma=(Z,\cA)$ is a stability condition on $\D^b(\PP^2)$ such that 
$\cO(-1)[2]$, $\cO[1]$, $\cO(1)$ belong to 
$\cA$, then $\cA=\cA_0$. 

\begin{defn}
We denote by $K_3$ the quiver with two vertices $V_1$, $V_2$ and three arrows from 
$V_1$ to $V_2$.
\end{defn}

\[K_3=\]
\begin{center}
\begin{tikzpicture}[>=angle 90]
\matrix(a)[matrix of math nodes,
row sep=3em, column sep=5em,
text height=1.5ex, text depth=0.25ex]
{V_1&V_2\\};
\path[->](a-1-1) edge [bend left] (a-1-2);
\path[->](a-1-1) edge  (a-1-2);
\path[->](a-1-1) edge [bend right] (a-1-2);
\end{tikzpicture}
\end{center}

One way to obtain the quiver $K_3$ is to restrict the set of arrows of $Q_0$ to those starting and ending at $v_{-1}$ and $v_0$, or to restrict the set of arrows of $Q_0$ to those starting and ending at $v_0$ and $v_1$. In other words, $K_3$ is in two possible ways a subquiver of $Q_0$.

\begin{lem}\label{lem_quiver_kronecker}
Let $\sigma$ be a Bridgeland stability condition on the derived category of representations of $K_3$, of heart the category of representations of $K_3$. If $V$ is a $\sigma$-semistable representation of $K_3$ of dimension 
$(n_1,n_2) \in \NN^2$ with $n_1 \geqslant 1$, then we have 
\[n_2 \leqslant 3n_1\,.\]
\end{lem}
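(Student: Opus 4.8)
The plan is to reduce the statement to a one‑line dimension count, with the only substantial input being a single use of $\sigma$‑semistability. A representation $V$ of $K_3$ of dimension vector $(n_1,n_2)$ is the datum of three linear maps $f_1,f_2,f_3\colon V_1\to V_2$ with $\dim V_1=n_1$, $\dim V_2=n_2$, equivalently of a single linear map $\Phi\colon V_1^{\oplus 3}\to V_2$, $(a,b,c)\mapsto f_1(a)+f_2(b)+f_3(c)$. If $\Phi$ is surjective then immediately $n_2=\dim V_2\leqslant 3\dim V_1=3n_1$, which is the desired inequality; so the whole content of the Lemma is that, under the hypotheses, $\Phi$ must be surjective (we may assume $n_2\geqslant 1$, the case $n_2=0$ being trivial).

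To establish surjectivity I would argue by contradiction. Write $S_1$, $S_2$ for the two simple representations of $K_3$, of dimension $(1,0)$ and $(0,1)$ respectively, and set $W\coloneqq\operatorname{im}\Phi\subseteq V_2$. Then $(V_1,W)$ is a subrepresentation of $V$, and the quotient $V/(V_1,W)$ has zero space at the source vertex and $V_2/W$ at the target, all structure maps being zero; hence $V/(V_1,W)\cong S_2^{\oplus(n_2-\dim W)}$. Moreover any line $L\subseteq V_2$ gives a subrepresentation $(0,L)\cong S_2\hookrightarrow V$. If $\Phi$ were not surjective, i.e.\ $\dim W<n_2$, then $S_2^{\oplus(n_2-\dim W)}$ would be a nonzero quotient of $V$, so by $\sigma$‑semistability of $V$ (used in the form ``$\phi(V)\leqslant\phi(Q)$ for every nonzero quotient $Q$'', equivalent to Bridgeland's subobject formulation) we would get $\phi(V)\leqslant\phi(S_2)$, while the subobject $S_2\hookrightarrow V$ gives $\phi(S_2)\leqslant\phi(V)$. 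Hence $\phi(V)=\phi(S_2)$, so $Z(V)$ is a positive real multiple of $Z(S_2)$. But $Z(V)=n_1Z(S_1)+n_2Z(S_2)$ with $n_1\geqslant 1$, and both $Z(S_1)$ and $Z(S_2)$ lie in the strict upper half‑plane; so $Z(S_1)$ must itself be a positive real multiple of $Z(S_2)$, that is $\phi(S_1)=\phi(S_2)$.

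The argument above therefore proves the Lemma as soon as the two simple objects of the heart $\operatorname{Rep}(K_3)$ have distinct phases, and I expect this exclusion of the degenerate case $\phi(S_1)=\phi(S_2)$ to be the only real obstacle — it is genuinely needed, for if $Z$ maps the whole heart onto a single ray then every object is $\sigma$‑semistable of that phase and, e.g., $S_1\oplus S_2^{\oplus m}$ violates the inequality for $m>3$. In the use of this Lemma in \cref{section_proof_initial}, $\sigma$ is obtained by restricting, to the $K_3$‑subquiver of $Q_0$, a stability condition coming from a point in the \emph{interior} of one of the triangular regions $\bar{U}_{n,L}^\iin$, $\bar{U}_{n,R}^\iin$, $\bar{U}_{n,T}^\iin$; there the two simple objects of the $K_3$‑heart are two of the objects $\cO(n-1)[2]$, $\cO(n)[1]$, $\cO(n+1)$ up to shift, and the corresponding central charges are pairwise non‑aligned precisely because the three lines $\Rea Z_{\gamma(\cO(n-1))}^\sigma=0$, $\Rea Z_{\gamma(\cO(n))}^\sigma=0$, $\Rea Z_{\gamma(\cO(n+1))}^\sigma=0$ meet only on the boundary of the region. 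Hence $\phi(S_1)\neq\phi(S_2)$ there, $\Phi$ is surjective, and $n_2\leqslant 3n_1$, as desired.
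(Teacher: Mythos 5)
Your argument is correct and takes a genuinely different route from the paper's. The paper's one-line proof cites King's formula $3n_1n_2-n_1^2-n_2^2+1$ for the dimension of the moduli space of $\sigma$-semistable representations of $K_3$ of dimension vector $(n_1,n_2)$: nonemptiness forces this quantity to be nonnegative, hence $n_2(3n_1-n_2)\geqslant n_1^2-1\geqslant 0$, hence $n_2\leqslant 3n_1$. Your proof instead works directly inside the representation: you assemble the three arrows into a single linear map $\Phi\colon V_1^{\oplus 3}\to V_2$, show by contradiction that $\sigma$-semistability forces $\Phi$ to be surjective (otherwise $S_2$ is both a nonzero quotient and a nonzero subobject of $V$, forcing $\phi(V)=\phi(S_2)$ and then $\phi(S_1)=\phi(S_2)$), and read off $n_2=\dim V_2\leqslant 3n_1$. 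The paper's argument is shorter and applies verbatim to any $m$-Kronecker quiver; yours is more elementary and makes the role of semistability transparent. It also brings out a hypothesis the Lemma statement leaves implicit: when $\phi(S_1)=\phi(S_2)$ the central charge collapses the heart onto a single ray, every representation is semistable, and $S_1\oplus S_2^{\oplus m}$ with $m>3$ genuinely violates the inequality. The paper's dimension-formula proof carries exactly the same hidden assumption, since King's formula computes the dimension of the semistable moduli space only when stable representations of the given dimension vector exist, which fails in the degenerate case. Your observation that this degeneracy does not occur at the interior points of the regions considered in \cref{section_proof_initial} is the right way to close the gap; one may add that those applications actually invoke the Lemma only for $\sigma$-\emph{stable} representations, and the conclusion for stable representations survives even when $\phi(S_1)=\phi(S_2)$, since the only stable objects are then the two simples.
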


\begin{proof}
If the moduli space of $\sigma$-semistable representations of $K_3$ of dimension $(n_1,n_2)$ is nonempty, then 
(e.g.\ see \cite{MR1315461}) this moduli space has dimension 
$3n_1n_2-n_1^2-n_2^2+1$. In particular, we have $3n_1 n_2-n_1^2-n_2^2+1\geqslant 0$,
so for $n_1 \geqslant 1$, we have 
$n_2(3n_1-n_2) \geqslant n_1^2-1 \geqslant 0$, so as $n_2 \geqslant 0$, we have 
$3n_1 \geqslant n_2$.
\end{proof}

We end this section by a review of a natural operation on stability conditions that will be useful in
\cref{section_proof_initial}.
It is a particular case of a more general action of the group  $\widetilde{GL}^+_2(\R)$, the universal covering space of 
$GL^+_2(\R)$, on spaces of stability conditions, see 
\cite[Lemma 8.2]{MR2373143}.
For every $0 <\phi<1$ and $\sigma=(Z,\cA)$ a stability condition on $\D^b(\PP^2)$, we can construct a new stability condition
$\sigma[\phi]\coloneq (Z[\phi],\cA[Z,\phi])$
on $\D^b(\PP^2)$. For every $\gamma \in \Gamma$, we define
$Z[\phi]_\gamma \coloneq e^{-i\pi \phi} Z_\gamma$.
Let $\cA_\phi$
be the subcategory of 
$\cA$ generated (by extensions)
by the $\sigma$-semistable objects $E$ with  $\frac{1}{\pi}\Arg Z(E)
>\phi$, 
and let 
$\cF_\phi$ is the subcategory of $\cA$
generated (by extensions) by the 
$\sigma$-semistable objects $E$ with $\frac{1}{\pi}
\Arg Z(E) \leqslant \phi$.
Then, denoting $\cH^i_\cA$ the cohomology functors with respect to the bounded 
$t$-structure of heart $\cA$,
$\cA[Z,\phi]$ 
is the subcategory of $\D^b(\PP^2)$ of objects $E$ such that 
$\cH^i_\cA=0$ for $i \neq -1,0$, 
$\cH^{-1}_\cA(E)$ is an object of 
$\cF_\phi$, and $\cH^0_\cA(E)$ is an object of $\cA_\phi$.

For every $0<\phi<1$ and $\sigma=(Z,\cA)$
a stability condition on $\D^b(\PP^2)$, 
moduli spaces of $\sigma$-semistable (resp.\
$\sigma$-stable) objects coincide with moduli spaces of $\sigma[\phi]$-semistable
(resp.\ $\sigma[\phi]$-stable) objects.
More precisely:
\begin{itemize}
\item[(i)] $\sigma$-semistable (resp. $\sigma$-stable) objects with
$\frac{1}{\pi} \Arg=\psi > \phi$ are identified with $\sigma[\phi]$-semistable
(resp.\ $\sigma[\phi]$-stable) objects with $\frac{1}{\pi} \Arg Z[\phi] = \psi - \phi$.
\item[(ii)] $\sigma$-semistable (resp.\ 
$\sigma$-stable) objects with 
$\frac{1}{\pi}\Arg Z = \psi \leqslant \phi$ are identified with $\sigma[\phi]$-semistable  (resp.\ $\sigma[\phi]$-stable) objects with
$\frac{1}{\pi} \Arg Z[\phi] = 1 + (\psi- \phi)$ via $E \mapsto E[1]$.
\end{itemize}

\subsection{Proof of Theorem \ref{thm_coincide_initial}}
\label{section_proof_initial}

In this section we end the proof of Theorem 
\ref{thm_coincide_initial}, that is, that the scattering diagrams $\fD^{\PP^2}_{u,v}$ and 
$S(\fD_{u,v}^\iin)$ coincide in restriction to $\bar{U}^\iin$. 
We will use the notions and notation introduced in the previous \cref{section_quiver_description}.

For every $\sigma=(x,y) \in U$ such that 
$-1<x<1$, it follows from Definition
\ref{defn_heart} that $\cO(-1)[1]$ and 
$\cO(1)$ belong to the heart
$\cA^\sigma$ of the stability condition defined by $\sigma$.

As $\gamma(\cO(-1))=(1,-1,0)$, 
$\gamma(\cO)=(1,0,1)$, and 
$\gamma(\cO(1))=(1,1,3)$, we have by Definition \ref{def_central_charge_formula}, for every 
$(x,y) \in U$:
\[ Z_{\gamma(\cO(-1))}^{(x,y)}
=y-x-\frac{1}{2}-i(x+1)\sqrt{x^2+2y}\,,\]
\[Z_{\gamma(\cO)}^{(x,y)}=y-ix\sqrt{x^2+2y}\,,\]
\[Z_{\gamma(\cO(1))}^{(x,y)}
=y+x-\frac{1}{2}-i(x-1)\sqrt{x^2+2y}\,.\]

\begin{lem} \label{lem_coincide_int_T}
The scattering diagram $\fD^{\PP^2}_{u,v}$ is empty in restriction to the interior of 
$\bar{U}^\iin_{0,T}$.
In other words, the scattering diagrams 
$\fD^{\PP^2}_{u,v}$ and 
$S(\fD^\iin_{u,v})$ coincide in restriction to the interior of 
$\bar{U}^\iin_{0,T}$.
\end{lem}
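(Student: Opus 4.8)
The plan is to show that $\fD^{\PP^2}_{u,v}$ has no ray meeting the interior of $\bar{U}^\iin_{0,T}$. Since any ray $\fd_{\gamma,j}$ of $\fD^{\PP^2}_{u,v}$ is supported inside $\bar{R}_\gamma\subset\{\sigma\in\bar U\mid\Rea Z_\gamma^\sigma=0\}$, and $R_\gamma$ is nonempty only where there exists a $\sigma$-semistable object of class $\gamma$ with $Z_\gamma^\sigma\in i\R_{>0}$, it suffices to prove: for every $\sigma$ in the interior of $\bar{U}^\iin_{0,T}$ and every $\gamma\in\Gamma$ with $Z_\gamma^\sigma\in i\R_{>0}$, there is no $\sigma$-semistable object of class $\gamma$. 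Granting this, the lemma follows, since the rays of $S(\fD^\iin_{u,v})$ described near the boundary in \cref{section_statement_initial} all have support contained in the three edges $\Rea Z_{\gamma(\cO(-1))}^\sigma=0$, $\Rea Z_{\gamma(\cO)}^\sigma=0$, $\Rea Z_{\gamma(\cO(1))}^\sigma=0$ of $\bar{U}^\iin_{0,T}$, so neither scattering diagram has a ray in the open triangle.

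First I would record that, by Definition \ref{def_initial_region}, the interior of $\bar{U}^\iin_{0,T}$ is exactly $\{\Rea Z_{\gamma(\cO(-1))}^\sigma<0,\ \Rea Z_{\gamma(\cO)}^\sigma>0,\ \Rea Z_{\gamma(\cO(1))}^\sigma<0\}$. The simple objects of the quiver heart $\cA_0$ of \cref{section_quiver_description} are $S_{-1}=\cO(-1)[2]$, $S_0=\cO[1]$, $S_1=\cO(1)$, with classes $(1,-1,0)$, $(-1,0,-1)$, $(1,1,3)$; as shifting by $[1]$ negates the class in $K_0$, we get $\Rea Z_{\gamma(S_{-1})}^\sigma=\Rea Z_{\gamma(\cO(-1))}^\sigma$, $\Rea Z_{\gamma(S_0)}^\sigma=-\Rea Z_{\gamma(\cO)}^\sigma$ and $\Rea Z_{\gamma(S_1)}^\sigma=\Rea Z_{\gamma(\cO(1))}^\sigma$. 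Hence $\Rea Z_{\gamma(S_i)}^\sigma<0$ for all $i\in\{-1,0,1\}$ and all $\sigma$ in the interior of $\bar{U}^\iin_{0,T}$.

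Next I would use the $[\phi]$-operation recalled at the end of \cref{section_quiver_description} to rotate $\sigma$ into a stability condition $\sigma'=\sigma[\phi_0]$ with heart $\cA_0$. The line bundles $\cO(-1),\cO,\cO(1)$ are $\sigma$-stable for $\sigma$ in the interior of the (small) triangle — this should follow from the fact that the three edges are pieces of the walls $\Rea Z_{\gamma(\cO(m))}^\sigma=0$, together with Lemma \ref{lem_quiver_kronecker} applied to the two Kronecker subquivers of $Q_0$ to constrain potential destabilizers — and a direct computation with Definition \ref{def_central_charge_formula} shows that on the interior of the triangle $\cO(-1)[1]$ has $\sigma$-phase $<\tfrac12$, $\cO(1)$ (and $\cO[1]$, when it lies in $\cA^\sigma$) has $\sigma$-phase $>\tfrac12$, and $\cO$, when it lies in $\cA^\sigma$, has $\sigma$-phase $<\tfrac12$. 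Choosing $\phi_0\in(0,1)\setminus\{\tfrac12\}$ between the largest among these "small" phases and $\tfrac12$ then puts $\cO(-1)[2]=\cO(-1)[1][1]$, $\cO[1]$, $\cO(1)$ all in $\cA^{\sigma'}$. Since $(\cO(-1)[2],\cO[1],\cO(1))$ is Ext-exceptional (established in \cref{section_quiver_description}), \cite[Lemma 3.16]{MR2335991} forces $\cA^{\sigma'}=\cA_0$.

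Finally, I would conclude as follows. Suppose $E$ is $\sigma$-semistable with $Z_{\gamma(E)}^\sigma\in i\R_{>0}$, i.e.\ of $\sigma$-phase $\tfrac12$. Under the $[\phi_0]$-operation, $E$ or $E[1]$ becomes a $\sigma'$-semistable object of phase in $(0,1]$, hence lies in $\cA^{\sigma'}=\cA_0$; therefore $\pm\gamma(E)=a\,\gamma(S_{-1})+b\,\gamma(S_0)+c\,\gamma(S_1)$ with $a,b,c\in\NN$ not all zero. Taking $\Rea Z^\sigma$ and using the earlier step, $0=\Rea Z_{\gamma(E)}^\sigma=\pm\bigl(a\,\Rea Z_{\gamma(S_{-1})}^\sigma+b\,\Rea Z_{\gamma(S_0)}^\sigma+c\,\Rea Z_{\gamma(S_1)}^\sigma\bigr)\neq 0$, a contradiction. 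So no such $E$ exists, proving the claim. The main obstacle is the middle step: verifying that the rotated heart is exactly $\cA_0$, which comes down to controlling the $\sigma$-stability and the phases of the three line bundles over the triangle; the rest is a short computation with the explicit central charge of Definition \ref{def_central_charge_formula}.
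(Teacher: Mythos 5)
Your argument is essentially the paper's: both proofs rotate the stability condition (by some $\phi_0$ close to $\tfrac12$ for you, by exactly $\tfrac12$ in the paper) so that the tilted heart is forced to equal $\cA_0$ via the Ext-exceptionality of $(\cO(-1)[2],\cO[1],\cO(1))$ and Macr\`\i's Lemma, and both then observe that every class in $\cA_0$ is a nonnegative combination of the three simple classes, all of which have $\Rea Z^\sigma<0$, so no semistable object can have $\Rea Z^\sigma=0$. One small caution in your write-up: the step claiming that $\cO(-1),\cO,\cO(1)$ are $\sigma$-stable cannot be justified by invoking Lemma \ref{lem_quiver_kronecker}, since that lemma concerns representations inside a quiver heart, which is exactly what you are trying to identify at that point — the argument would be circular. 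What is actually needed (and used silently by the paper too) is that these exceptional line bundles (or their shifts) are $\sigma$-semistable for $\sigma$ in the interior of the triangle, so that $\cO(-1)[2]$, $\cO[1]$, $\cO(1)$ genuinely lie in the tilted heart before Macr\`\i's lemma can be applied; this is a standard fact about exceptional objects for geometric Bridgeland stability conditions, but it should be cited rather than derived from the Kronecker lemma.
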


\begin{proof}
Let $\sigma$ be a point in the interior of $\bar{U}^\iin_{0,T}$.
Then $\cO(-1)[1]$ and $\cO(1)$ belong to 
$\cA^\sigma$, with 
\[\Rea Z^\sigma_{\gamma(\cO(-1)[1])}=-\Rea Z^\sigma_{\gamma(\cO(-1))}>0\,,\]
and
\[\Rea Z^\sigma_{\gamma(\cO(1))}<0\,.\]
We also have 
\[ \Rea Z^\sigma_{\gamma(\cO)}>0\,,\]
and, depending if $x \geqslant 0$ or 
$x<0$, $\cO$ belongs to $\cA^\sigma$ or 
$\cO[1]$ belongs to 
$\cA^\sigma$.
In any case, the objects $\cO(-1)[2]$, $\cO[1]$, and $\cO(1)$ belong to $\cA^\sigma[Z^\sigma,1/2]$, and so 
$\cA^\sigma[Z^\sigma, 1/2]=\cA_0$.
On the other hand, we have 
\[ \Rea Z_{\gamma(\cO(-1)[2])}^\sigma 
<0\,, \Rea Z_{\gamma(\cO[1])}^\sigma<0 \,,
\Rea Z_{\gamma(\cO(1))}^\sigma <0\,.\]

As $\cA_0$ is a category of quiver representations, with simple objects 
$\cO(-1)[2]$, $\cO[1]$ and $\cO(1)$, we have that, for every $E$ nonzero object of 
$\cA^\sigma[Z^\sigma,1/2]$, the central charge
$Z^\sigma[1/2](E)$ is contained in the cone of linear combinations with 
nonnegative coefficients of 
$Z^\sigma[1/2]_{\gamma(\cO(-1))[2])}$,
$Z^\sigma[1/2]_{\gamma(\cO[1])}
$, 
$Z^\sigma[1/2]_{\gamma(\cO(1))}$.
As $Z^\sigma[1/2]_\gamma=-i Z^\sigma_\gamma$ for every $\gamma \in \Gamma$, 
we have  
\[ \Ima Z^\sigma[1/2]_{\gamma(\cO(-1))[2])}>0\,,
\Ima Z^\sigma[1/2]_{\gamma(\cO[1])}>0
\,,
\Ima Z^\sigma[1/2]_{\gamma(\cO(1))}>0\,.\]
and so
we conclude that 
\[\Ima Z^\sigma[1/2](E)>0\,.\]
Thus, if $E$ is a $\sigma$-semistable object of $\cA^\sigma$, then 
\begin{itemize}
\item[(i)] either, $\frac{1}{\pi} \Arg Z^\sigma(E)>\frac{1}{2}$, and so 
$\Rea Z^\sigma(E)<0$,
\item[(ii)] or, $\frac{1}{\pi} \Arg Z^{\sigma}(E) 
\leqslant \frac{1}{2}$, and so $E[1]$
is a $\sigma[1/2]$-semistable object 
of $\cA^\sigma[Z^\sigma, \frac{1}{2}]$, so
$\Ima Z^\sigma[1/2](E[1])>0$, and so
$\Rea Z^\sigma(E)>0$.
\end{itemize}
In any case, we have $\Rea Z^\sigma(E) \neq 0$. It follows that $\fD^{\PP^2}_{u,v}$, restricted to the interior of 
$\bar{U}_{0,T}^{\iin}$, is empty, as 
$S(\fD^\iin_{u,v})$.

Figure: an example of configuration of central charges for $\sigma=(x,y)$ in the interior of $\bar{U}^\iin_{0,T}$ with $x <0$. If $E$ is a $\sigma$-semistable object of $\cA^\sigma$, then $Z^\sigma(E)$ belongs to the dotted region.
\begin{center}
\setlength{\unitlength}{1.2cm}
\begin{picture}(10,6)
\put(5,3){\line(1,0){4}}
\put(5,3){\line(-1,0){4}}
\put(5,3){\line(0,1){3}}
\put(5,3){\line(0,-1){3}}
\put(5,3){\vector(1,1){2}}
\put(7.1,5){$Z^\sigma_{\gamma(\cO(-1)[1])}$}
\put(5,3){\vector(-1,-1){2}}
\put(2.5,0.8){$Z^\sigma_{\gamma(\cO(-1)[2])}$}
\put(5,3){\vector(-1,3){1}}
\put(3,5.5){$Z^\sigma_{\gamma(\cO(1))}$}
\put(5,3){\vector(-2,1){2}}
\put(1.8,4){$Z^\sigma_{\gamma(\cO[1])}$}
\put(5.25,3.25){\circle*{0.05}}
\put(5.5,3.25){\circle*{0.05}}
\put(5.75,3.25){\circle*{0.05}}
\put(6,3.25){\circle*{0.05}}
\put(6.25,3.25){\circle*{0.05}}
\put(6.5,3.25){\circle*{0.05}}
\put(6.75,3.25){\circle*{0.05}}
\put(7,3.25){\circle*{0.05}}
\put(7.25,3.25){\circle*{0.05}}
\put(7.5,3.25){\circle*{0.05}}
\put(7.75,3.25){\circle*{0.05}}
\put(8,3.25){\circle*{0.05}}
\put(8.25,3.25){\circle*{0.05}}
\put(8.5,3.25){\circle*{0.05}}
\put(5.5,3.5){\circle*{0.05}}
\put(5.75,3.5){\circle*{0.05}}
\put(6,3.5){\circle*{0.05}}
\put(6.25,3.5){\circle*{0.05}}
\put(6.5,3.5){\circle*{0.05}}
\put(6.75,3.5){\circle*{0.05}}
\put(7,3.5){\circle*{0.05}}
\put(7.25,3.5){\circle*{0.05}}
\put(7.5,3.5){\circle*{0.05}}
\put(7.75,3.5){\circle*{0.05}}
\put(8,3.5){\circle*{0.05}}
\put(8.25,3.5){\circle*{0.05}}
\put(8.5,3.5){\circle*{0.05}}
\put(5.75,3.75){\circle*{0.05}}
\put(6,3.75){\circle*{0.05}}
\put(6.25,3.75){\circle*{0.05}}
\put(6.5,3.75){\circle*{0.05}}
\put(6.75,3.75){\circle*{0.05}}
\put(7,3.75){\circle*{0.05}}
\put(7.25,3.75){\circle*{0.05}}
\put(7.5,3.75){\circle*{0.05}}
\put(7.75,3.75){\circle*{0.05}}
\put(8,3.75){\circle*{0.05}}
\put(8.25,3.75){\circle*{0.05}}
\put(8.5,3.75){\circle*{0.05}}
\put(6,4){\circle*{0.05}}
\put(6.25,4){\circle*{0.05}}
\put(6.5,4){\circle*{0.05}}
\put(6.75,4){\circle*{0.05}}
\put(7,4){\circle*{0.05}}
\put(7.25,4){\circle*{0.05}}
\put(7.5,4){\circle*{0.05}}
\put(7.75,4){\circle*{0.05}}
\put(8,4){\circle*{0.05}}
\put(8.25,4){\circle*{0.05}}
\put(8.5,4){\circle*{0.05}}
\put(6.25,4.25){\circle*{0.05}}
\put(6.5,4.25){\circle*{0.05}}
\put(6.75,4.25){\circle*{0.05}}
\put(7,4.25){\circle*{0.05}}
\put(7.25,4.25){\circle*{0.05}}
\put(7.5,4.25){\circle*{0.05}}
\put(7.75,4.25){\circle*{0.05}}
\put(8,4.25){\circle*{0.05}}
\put(8.25,4.25){\circle*{0.05}}
\put(8.5,4.25){\circle*{0.05}}
\put(6.5,4.5){\circle*{0.05}}
\put(6.75,4.5){\circle*{0.05}}
\put(7,4.5){\circle*{0.05}}
\put(7.25,4.5){\circle*{0.05}}
\put(7.5,4.5){\circle*{0.05}}
\put(7.75,4.5){\circle*{0.05}}
\put(8,4.5){\circle*{0.05}}
\put(8.25,4.5){\circle*{0.05}}
\put(8.5,4.5){\circle*{0.05}}
\put(1.5,3){\circle*{0.05}}
\put(1.5,3.25){\circle*{0.05}}
\put(1.5,3.5){\circle*{0.05}}
\put(1.5,3.75){\circle*{0.05}}
\put(1.5,4){\circle*{0.05}}
\put(1.5,4.25){\circle*{0.05}}
\put(1.5,4.5){\circle*{0.05}}
\put(1.5,4.75){\circle*{0.05}}
\put(1.5,5){\circle*{0.05}}
\put(1.5,5.25){\circle*{0.05}}
\put(1.5,5.5){\circle*{0.05}}
\put(1.5,5.75){\circle*{0.05}}
\put(1.5,6){\circle*{0.05}}
\put(1.75,3){\circle*{0.05}}
\put(1.75,3.25){\circle*{0.05}}
\put(1.75,3.5){\circle*{0.05}}
\put(1.75,3.75){\circle*{0.05}}
\put(1.75,4.5){\circle*{0.05}}
\put(1.75,4.75){\circle*{0.05}}
\put(1.75,5){\circle*{0.05}}
\put(1.75,5.25){\circle*{0.05}}
\put(1.75,5.5){\circle*{0.05}}
\put(1.75,5.75){\circle*{0.05}}
\put(1.75,6){\circle*{0.05}}
\put(2,3){\circle*{0.05}}
\put(2,3.25){\circle*{0.05}}
\put(2,3.5){\circle*{0.05}}
\put(2,3.75){\circle*{0.05}}
\put(2,4.5){\circle*{0.05}}
\put(2,4.75){\circle*{0.05}}
\put(2,5){\circle*{0.05}}
\put(2,5.25){\circle*{0.05}}
\put(2,5.5){\circle*{0.05}}
\put(2,5.75){\circle*{0.05}}
\put(2,6){\circle*{0.05}}
\put(2.25,3){\circle*{0.05}}
\put(2.25,3.25){\circle*{0.05}}
\put(2.25,3.5){\circle*{0.05}}
\put(2.25,3.75){\circle*{0.05}}
\put(2.25,4.5){\circle*{0.05}}
\put(2.25,4.75){\circle*{0.05}}
\put(2.25,5){\circle*{0.05}}
\put(2.25,5.25){\circle*{0.05}}
\put(2.25,5.5){\circle*{0.05}}
\put(2.25,5.75){\circle*{0.05}}
\put(2.25,6){\circle*{0.05}}
\put(2.5,3){\circle*{0.05}}
\put(2.5,3.25){\circle*{0.05}}
\put(2.5,3.5){\circle*{0.05}}
\put(2.5,3.75){\circle*{0.05}}
\put(2.5,4.5){\circle*{0.05}}
\put(2.5,4.75){\circle*{0.05}}
\put(2.5,5){\circle*{0.05}}
\put(2.5,5.25){\circle*{0.05}}
\put(2.5,5.5){\circle*{0.05}}
\put(2.5,5.75){\circle*{0.05}}
\put(2.5,6){\circle*{0.05}}
\put(2.75,3){\circle*{0.05}}
\put(2.75,3.25){\circle*{0.05}}
\put(2.75,3.5){\circle*{0.05}}
\put(2.75,3.75){\circle*{0.05}}
\put(2.75,4.5){\circle*{0.05}}
\put(2.75,4.75){\circle*{0.05}}
\put(2.75,5){\circle*{0.05}}
\put(2.75,5.25){\circle*{0.05}}
\put(2.75,5.5){\circle*{0.05}}
\put(2.75,5.75){\circle*{0.05}}
\put(2.75,6){\circle*{0.05}}
\put(3,3){\circle*{0.05}}
\put(3,3.25){\circle*{0.05}}
\put(3,3.5){\circle*{0.05}}
\put(3,3.75){\circle*{0.05}}
\put(3,4){\circle*{0.05}}
\put(3,4.25){\circle*{0.05}}
\put(3,4.5){\circle*{0.05}}
\put(3,4.75){\circle*{0.05}}
\put(3,5){\circle*{0.05}}
\put(3,5.25){\circle*{0.05}}
\put(3,6){\circle*{0.05}}
\put(3.25,3){\circle*{0.05}}
\put(3.25,3.25){\circle*{0.05}}
\put(3.25,3.5){\circle*{0.05}}
\put(3.25,3.75){\circle*{0.05}}
\put(3.25,4){\circle*{0.05}}
\put(3.25,4.25){\circle*{0.05}}
\put(3.25,4.5){\circle*{0.05}}
\put(3.25,4.75){\circle*{0.05}}
\put(3.25,5){\circle*{0.05}}
\put(3.25,5.25){\circle*{0.05}}
\put(3.25,6){\circle*{0.05}}
\put(3.5,3){\circle*{0.05}}
\put(3.5,3.25){\circle*{0.05}}
\put(3.5,3.5){\circle*{0.05}}
\put(3.5,3.75){\circle*{0.05}}
\put(3.5,4){\circle*{0.05}}
\put(3.5,4.25){\circle*{0.05}}
\put(3.5,4.5){\circle*{0.05}}
\put(3.5,4.75){\circle*{0.05}}
\put(3.5,5){\circle*{0.05}}
\put(3.5,5.25){\circle*{0.05}}
\put(3.5,6){\circle*{0.05}}
\put(3.75,3){\circle*{0.05}}
\put(3.75,3.25){\circle*{0.05}}
\put(3.75,3.5){\circle*{0.05}}
\put(3.75,3.75){\circle*{0.05}}
\put(3.75,4){\circle*{0.05}}
\put(3.75,4.25){\circle*{0.05}}
\put(3.75,4.5){\circle*{0.05}}
\put(3.75,4.75){\circle*{0.05}}
\put(3.75,5){\circle*{0.05}}
\put(3.75,5.25){\circle*{0.05}}
\put(3.75,5.75){\circle*{0.05}}
\put(3.75,6){\circle*{0.05}}
\put(4,3){\circle*{0.05}}
\put(4,3.25){\circle*{0.05}}
\put(4,3.5){\circle*{0.05}}
\put(4,3.75){\circle*{0.05}}
\put(4,4){\circle*{0.05}}
\put(4,4.25){\circle*{0.05}}
\put(4,4.5){\circle*{0.05}}
\put(4,4.75){\circle*{0.05}}
\put(4,5){\circle*{0.05}}
\put(4,5.25){\circle*{0.05}}
\put(4,6){\circle*{0.05}}
\put(4.25,3){\circle*{0.05}}
\put(4.25,3.25){\circle*{0.05}}
\put(4.25,3.5){\circle*{0.05}}
\put(4.25,3.75){\circle*{0.05}}
\put(4.25,4){\circle*{0.05}}
\put(4.25,4.25){\circle*{0.05}}
\put(4.25,4.5){\circle*{0.05}}
\put(4.25,4.75){\circle*{0.05}}
\put(4.25,5){\circle*{0.05}}
\put(4.25,5.25){\circle*{0.05}}
\put(4.5,3){\circle*{0.05}}
\put(4.5,3.25){\circle*{0.05}}
\put(4.5,3.5){\circle*{0.05}}
\put(4.5,3.75){\circle*{0.05}}
\put(4.5,4){\circle*{0.05}}
\put(4.5,4.25){\circle*{0.05}}
\put(4.5,4.5){\circle*{0.05}}
\put(4.75,3){\circle*{0.05}}
\put(4.75,3.25){\circle*{0.05}}
\put(4.75,3.5){\circle*{0.05}}
\put(4.75,3.75){\circle*{0.05}}

\end{picture}
\end{center}

\end{proof}

\begin{lem}\label{lem_coincide_bound_T}
We have $\fD^{\PP^2}_{u,v}=S(\fD_{u,v}^\iin)$
in restriction to the boundary of 
$\bar{U}^\iin_{0,T}$.
\end{lem}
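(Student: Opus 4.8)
The plan is to make the boundary combinatorics explicit, reduce to a manageable list of segments, and then on each segment identify the rays of $\fD^{\PP^2}_{u,v}$ by a boundary refinement of the argument used for the interior in Lemma \ref{lem_coincide_int_T}.

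First I would spell out the boundary. The boundary $\partial \bar U^\iin_{0,T}$ is the union of three line segments: the segment $|\fd_0^-|\cup|\fd_0^+|$, contained in the line $\Rea Z_{\gamma(\cO)}^\sigma=0$; the segment $\ell_-\coloneq s_{-1}-[1/2,1]m_{-1}^+$, contained in $\Rea Z_{\gamma(\cO(-1))}^\sigma=0$; and the segment $\ell_+\coloneq s_1-[1/2,1]m_1^-$, contained in $\Rea Z_{\gamma(\cO(1))}^\sigma=0$. By the description of $S(\fD^\iin_{u,v})$ restricted to $\bar U_0^\iin$ recalled in \cref{section_statement_initial}, $S(\fD^\iin_{u,v})$ restricted to these three segments consists, respectively, of the rays $\fd_{0,\ell}^\mp$ on $|\fd_0^\mp|$, of the rays $\fd_{-1,\ell}^{+,(1)}$ on $\ell_-$, and of the rays $\fd_{1,\ell}^{-,(1)}$ on $\ell_+$, for $\ell\geqslant 1$. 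So it remains to show that $\fD^{\PP^2}_{u,v}$, restricted to these three segments, has the same rays; the $\psi(1)$-symmetry of both scattering diagrams (Propositions \ref{prop_psi(1)} and \ref{prop_psi(1)_P2}) fixes the index $n=0$, and each of the three segments is handled the same way.

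Next I would determine which classes $\gamma\in\Gamma$ can contribute a ray of $\fD^{\PP^2}_{u,v}$ supported on a given boundary segment. Such a ray $\fd_{\gamma,j}$ has class $m_\gamma$ a positive integer multiple of the direction of the segment, and its support lies on the line $\Rea Z_\gamma^\sigma=0$; combining these two constraints with the explicit formula of Definition \ref{def_central_charge_formula} forces $\gamma$ to be a positive integer multiple of $\gamma(\cO)$ on $|\fd_0^-|$, of $\gamma(\cO[1])$ on $|\fd_0^+|$, of $\gamma(\cO(-1)[1])$ on $\ell_-$, and of $\gamma(\cO(1))$ on $\ell_+$. Write $\gamma_0$ for the relevant primitive class and $L$ for the corresponding line bundle or shift ($\cO$, $\cO[1]$, $\cO(-1)[1]$, $\cO(1)$). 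Then, for $\sigma$ in the relative interior of such a segment, I would argue, by the same mechanism as in the proof of Lemma \ref{lem_coincide_int_T} --- using the quiver heart of \cref{section_quiver_description} and the fact that the class of an object of a quiver heart is a nonnegative combination of the simple classes --- that the only $\sigma$-semistable objects with central charge in $i\R_{>0}$ and class a multiple of $\gamma_0$ are the direct sums $L^{\oplus\ell}$, and that $L$ is $\sigma$-stable and is the unique $\sigma$-stable object of class $\gamma_0$. Since $(\gamma_0,\gamma_0)=1$ in each of the four cases, $\dim M_{\ell\gamma_0}^{\sigma-\st}=1-\ell^2<0$ for $\ell\geqslant 2$, so the stable locus $M_{\ell\gamma_0}^{\sigma-\st}$ is empty for $\ell\geqslant 2$, whereas $M_{\gamma_0}^\sigma$ is a single reduced point. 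Because the intersection cohomology entering Definition \ref{def_ray_stability_scattering} is the cohomology of the IC sheaf supported on the closure of the stable locus, this gives $Ih_{\gamma_0}^\sigma(u^{1/2},v^{1/2})=1$ and $Ih_{\ell\gamma_0}^\sigma(u^{1/2},v^{1/2})=0$ for $\ell\geqslant 2$. Plugging this into Definition \ref{def_ray_stability_scattering}, the only surviving summand in $H_{\fd_{\ell\gamma_0,j}}$ is the one with $\gamma'=\gamma_0$, yielding $H_{\fd_{\ell\gamma_0,j}}=-\frac{1}{\ell}\frac{1}{(uv)^{\ell/2}-(uv)^{-\ell/2}}z^{\ell m_{\gamma_0}}$, which is precisely $H_{0,\ell}^\mp$ (resp.\ $H_{-1,\ell}^+$, $H_{1,\ell}^-$). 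Since no actual wall for $\ell\gamma_0$ is crossed in a neighbourhood of the segment, $R_{\ell\gamma_0}$ is connected there and sweeps out the whole segment, so the supports match as well, and $\fD^{\PP^2}_{u,v}$ and $S(\fD^\iin_{u,v})$ coincide on $\partial\bar U^\iin_{0,T}$.

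The main obstacle is this last identification: proving that along the three boundary segments the only $\sigma$-semistable objects of phase $\tfrac12$ are the powers of the distinguished line bundle, and that this line bundle is $\sigma$-stable. In the interior of $\bar U^\iin_{0,T}$ (Lemma \ref{lem_coincide_int_T}) there were no $\sigma$-semistable objects of phase $\tfrac12$ at all; on the boundary exactly one new object --- the line bundle whose central charge now has vanishing real part --- appears, and one must rule out anything else, which requires a careful analysis of the tilted hearts and of the relevant $\Ext$-vanishing along the boundary using the exceptional collection structure of \cref{section_quiver_description}.
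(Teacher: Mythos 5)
Your proof is correct and follows essentially the same route as the paper's: decompose $\partial\bar U^\iin_{0,T}$ into segments, pass to the quiver heart $\cA_0$ of \cref{section_quiver_description} by a limiting version of the tilting argument in Lemma \ref{lem_coincide_int_T}, observe that the nonnegativity of dimension vectors in $\cA_0$ forces the only classes with $Z_\gamma^\sigma\in i\R_{>0}$ to be positive multiples of the appropriate $\gamma(\cO(n))$ or its shift, identify the moduli spaces, and match the Hamiltonians against $H^\pm_{n,\ell}$.

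The one point where you diverge slightly is how you rule out $\sigma$-stable objects of class $\ell\gamma_0$ for $\ell\geqslant 2$: you invoke the smooth-moduli dimension formula $\dim M_{\ell\gamma_0}^{\sigma-\st}=1-(\ell\gamma_0,\ell\gamma_0)=1-\ell^2<0$, whereas the paper argues directly that a representation of $Q_0$ of dimension vector $(\ell,0,0)$ (resp.\ $(0,\ell,0)$, $(0,0,\ell)$) is necessarily a direct sum of $\ell$ copies of a simple, hence not stable for $\ell\geqslant 2$ and a point for $\ell=1$. Both arguments rest on the same $\Ext^2$-vanishing of Li--Zhao and are interchangeable; the numerical dimension count is perhaps a touch slicker, while the paper's version also hands you the precise description of $M_{\gamma_0}^\sigma$ as a single reduced point, which you need anyway and have to re-derive from the quiver picture. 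One small thing to be careful about: you phrase the key claim as ``the only $\sigma$-semistable objects with central charge in $i\R_{>0}$ \emph{and class a multiple of} $\gamma_0$ are $L^{\oplus\ell}$'', having first restricted attention to rays $\fd_{\gamma,j}$ \emph{supported on} a boundary segment. To also exclude rays of $\fD^{\PP^2}_{u,v}$ crossing the boundary segment transversally you should state the quiver-heart conclusion in its stronger unconditional form, as the paper does: for $\sigma$ in the relative interior of the segment, the \emph{only} $\gamma\in\Gamma$ with $Z_\gamma^\sigma\in i\R_{>0}$ admitting semistable objects are multiples of $\gamma_0$, which then forces any ray through $\sigma$ to be parallel to the segment. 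This is exactly what the nonnegativity of dimension vectors in $\cA_0$ gives, so it is a wording issue rather than a real gap.
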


\begin{proof}
The proof is similar to the proof of Lemma
\ref{lem_coincide_int_T}.
Let $\sigma$ be a point in the interior of the boundary of $\bar{U}^\iin_{0,T}$: we have $\sigma \in \{\Rea Z^\sigma_{\gamma(\cO(-1))}=0\}$, or
$\sigma \in \{ \Rea Z^\sigma_{\gamma(\cO)}
=0\}$, or 
$\sigma \in \{ \Rea Z^\sigma_{\gamma(\cO(1))}=0\}$.
We apply a limit version of the argument for
$\sigma$ in the interior of 
$\bar{U}^\iin_{0,T}$
given in the proof of Lemma 
\ref{lem_coincide_int_T}. 
There exists 
$\epsilon >0$ such that 
$\cA^\sigma[Z^\sigma,\frac{1}{2}-\epsilon]=
\cA_0$, and one shows that the only $\gamma \in \Gamma$ such that $Z_\gamma^\sigma \in i \R_{>0}$ are positive multiples of 
$\gamma(\cO(-1)[1])$, or $\gamma(\cO(1))$, 
or $\gamma(\cO)$, or $\gamma(\cO[1])$. 
In each case, the corresponding moduli space of stable objects is a moduli space 
of representations of $Q_0$ of dimension vector $(n,0,0)$, or $(0,n,0)$, or 
$(0,0,n)$, so is empty if $n>1$
(a representation of dimension $(n,0,0)$ is 
necessarily the direct sum of $n$ copies of the simple representation of dimension $(1,0,0)$, and so cannot be stable if $n>1$), and is a point if $n=1$.
So, using Definition 
\ref{def_ray_stability_scattering}
and Definition \ref{def_scattering_D_in_u_v},
the scattering diagrams $\fD^{\PP^2}_{u,v}$ and $S(\fD^\iin_{u,v})$ coincide in restriction to the boundary of 
$\bar{U}^\iin_{0,T}$.
\end{proof}

\begin{lem}\label{lem_coincide_int_R}
The scattering diagram
$\fD^{\PP^2}_{u,v}$ is empty in restriction to the interior of 
$\bar{U}^\iin_{0,R}$ and in restriction to the interior of
$\bar{U}^\iin_{0,L}$.
In other words, we have 
$\fD^{\PP^2}_{u,v}
=S(\fD^\iin_{u,v})$ in restriction to the interior of 
$\bar{U}^\iin_{0,R}$ and in restriction to the interior of 
$\bar{U}^\iin_{0,L}$. 
\end{lem}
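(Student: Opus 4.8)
The plan is to argue as in the proof of \cref{lem_coincide_int_T}, the key difference being that, in contrast with the triangle $\bar{U}^\iin_{0,T}$, the simple objects of the relevant quiver heart no longer all have central charge with the same sign of the real part; the ``convex cone'' argument therefore fails and is replaced by a reduction to a Kronecker sub-quiver together with the dimension bound of \cref{lem_quiver_kronecker}.

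First I would reduce to one region. Since $\fD^{\PP^2}_{u,v}$ is $\psi(1)$-invariant by \cref{prop_psi(1)_P2}, and $\psi(1)^{-1}(\bar{U}^\iin_{0,R})=\bar{U}^\iin_{-1,R}=\bar{U}^\iin_{0,L}$, it is enough to show that $\fD^{\PP^2}_{u,v}$ is empty in restriction to the interior of $\bar{U}^\iin_{0,R}$; as the rays of $\fD^{\PP^2}_{u,v}$ are supported on the loci $\{\Rea Z^\sigma_\gamma=0\}$ and that interior is open, this will follow once I show that for every $\sigma$ in the interior of $\bar{U}^\iin_{0,R}$ and every $\gamma\in\Gamma$ with $Z^\sigma_\gamma\in i\R_{>0}$ there is no $\sigma$-semistable object of class $\gamma$. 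Since, by the description recalled before the statement, $S(\fD^\iin_{u,v})$ is already empty in the interiors of $\bar{U}^\iin_{0,R}$ and $\bar{U}^\iin_{0,L}$, this also gives the asserted equality of the two scattering diagrams there.

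Next I would set up a quiver heart. Writing $\sigma=(x,y)$, the interior of $\bar{U}^\iin_{0,R}$ is cut out by $0<x<1$, $-x^2/2<y<0$ and $y+x-\tfrac12<0$. Computing the central charges of $\cO(-1),\cO,\cO(1)$ from \cref{def_central_charge_formula} shows that $\cO(-1)[1]$, $\cO[1]$, $\cO(1)$ all lie in the heart $\cA^\sigma$, with $0<\phi^\sigma_{\gamma(\cO(-1)[1])}<\phi^\sigma_{\gamma(\cO[1])}<\tfrac12<\phi^\sigma_{\gamma(\cO(1))}<1$; the only nontrivial inequality $\phi^\sigma_{\gamma(\cO(-1)[1])}<\phi^\sigma_{\gamma(\cO[1])}$ reduces after clearing denominators to $y>-x^2-x/2$, which holds since $y>-x^2/2$. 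Setting $\phi\coloneqq\phi^\sigma_{\gamma(\cO(-1)[1])}$ and passing to the rotated stability condition $\sigma'\coloneqq\sigma[\phi]$ (using that $\cO(-1)[1]$ is $\sigma$-semistable), the heart $\cA^{\sigma'}$ contains $\cO(-1)[2]$, $\cO[1]$, $\cO(1)$, and hence equals the quiver heart $\cA_0$ of \cref{section_quiver_description} by \cite[Lemma 3.16]{MR2335991}. A computation of the rotated phases of the three simples $S_{-1}=\cO(-1)[2]$ (at the source vertex $v_{-1}$ of $Q_0$), $S_0=\cO[1]$ and $S_1=\cO(1)$ (at the sink vertex $v_1$) then gives $\phi^{\sigma'}_{S_0}<\tfrac12-\phi<\phi^{\sigma'}_{S_1}<1=\phi^{\sigma'}_{S_{-1}}$.

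Finally I would derive the contradiction. Assume there is a $\sigma$-semistable object $E$ of class $\gamma$ with $Z^\sigma_\gamma\in i\R_{>0}$, so that $\Rea Z^\sigma(E)=0$ and $\phi^\sigma_E=\tfrac12$; then $E$ is a nonzero $\sigma'$-semistable representation of $Q_0$ of phase $\tfrac12-\phi$, lying strictly between $\phi^{\sigma'}_{S_0}$ and $\phi^{\sigma'}_{S_1}$. Since $v_1$ is a sink of $Q_0$, the subspace of $E$ at $v_1$ is a subrepresentation isomorphic to $S_1^{\oplus n_1}$ with $n_1=\dim E_{v_1}$; if $n_1\geqslant1$, then $S_1\hookrightarrow E$ would force $\phi^{\sigma'}_{S_1}\leqslant\phi^{\sigma'}_E=\tfrac12-\phi$, a contradiction, so $n_1=0$ and $E$ is a representation of the Kronecker sub-quiver $K_3$ on $\{v_{-1},v_0\}$, and a $\sigma'$-semistable one since the notion of subrepresentation is the same in $\Rep K_3$ and in $\Rep Q_0$. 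Writing $(n_{-1},n_0)$ for its dimension vector, the condition $\Rea Z^\sigma(E)=0$ reads $n_{-1}(y-x-\tfrac12)+n_0(-y)=0$; since $y<0$ and $y-x-\tfrac12<0$, this forces $n_{-1}\geqslant1$ and $n_0=n_{-1}\,(x+\tfrac12-y)/(-y)$, and the inequality $y>-x^2/2>-(x+\tfrac12)/2$ (valid for $0<x<1$) gives $n_0>3n_{-1}$. This contradicts \cref{lem_quiver_kronecker}, applied to the Kronecker quiver $K_3$ with source $v_{-1}$, which yields $n_0\leqslant3n_{-1}$ when $n_{-1}\geqslant1$. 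Hence no such $E$ exists, which with the first paragraph completes the argument. The main obstacle is the third paragraph: producing $\cA_0$ via the rotation $\sigma\mapsto\sigma[\phi]$ and pinning down the ordering of the phases of its simples, since this is precisely where the location of $\bar{U}^\iin_{0,R}$ relative to the boundary parabola enters, through $y>-x^2/2$; once it is in place, the confinement of $E$ to $K_3$ and the numerical contradiction are short.
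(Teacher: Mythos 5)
Your proof is correct and follows essentially the same route as the paper's: rotate to the Ext-exceptional heart $\cA_0$ via $\sigma[\phi]$ with $\phi=\phi^\sigma_{\gamma(\cO(-1)[1])}$, use the sink vertex $v_1$ to kill the dimension there, and then combine the boundary inequality $y>-x^2/2$ with the Kronecker bound of Lemma \ref{lem_quiver_kronecker}. The two small differences from the paper are both fine and mildly streamlining: you reduce $\bar{U}^\iin_{0,L}$ to $\bar{U}^\iin_{0,R}$ by the $\psi(1)$-equivariance of $\fD^{\PP^2}_{u,v}$ rather than running the symmetric argument twice, and you assume $\Rea Z^\sigma(E)=0$ from the start and derive $n_0>3n_{-1}$ as a direct contradiction to Lemma \ref{lem_quiver_kronecker}, whereas the paper shows $\Rea Z^\sigma(V)<0$ for all semistable $V$ supported on the $K_3$ subquiver with $n_{-1}\geqslant 1$; these are equivalent. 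One point worth making explicit in a polished write-up (the paper is equally terse here) is that $\cO(-1)[1]$ is $\sigma$-semistable for $\sigma$ in the interior of $\bar{U}^\iin_{0,R}$, so that the rotated heart $\cA^\sigma[Z^\sigma,\phi]$ really does contain $\cO(-1)[2]$.
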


\begin{proof}
Let $\sigma=(x,y)$ be a point in the interior of $\bar{U}^\iin_{0,R}$.
Then $\cO(-1)[1]$, $\cO(1)$, and $\cO[1]$
belong to $\cA^\sigma$, with 
\[ \Rea Z^\sigma_{\gamma(\cO(-1)[1])}>0\,,
\Rea Z^\sigma_{\gamma(\cO(1))}<0\,,
\Rea Z^\sigma_{\gamma(\cO[1])}>0\,.\]
We claim that 
$\Arg Z^{\sigma}_{\gamma(\cO[1])}>
\Arg Z^\sigma_{\gamma(\cO(-1)[1])}$.
Indeed, using $y<0$, $x>0$, 
$-y+x+\frac{1}{2}>0$,
\[ Z_{\gamma(\cO(-1))[1]}^{(x,y)}
=-y+x+\frac{1}{2}+i(x+1)\sqrt{x^2+2y}\,,\]
\[Z_{\gamma(\cO[1])}^{(x,y)}=-y+ix\sqrt{x^2+2y}\,,\]
this inequality is equivalent to 
\[ \frac{x}{-y}>\frac{x+1}{-y+x+\frac{1}{2}}\,,\]
that is,
\[ y>-x^2-\frac{x}{2}\,.\]
But as $(x,y)$ is in the interior of
$\bar{U}^\iin_{0,R}$, we have $y>-\frac{x^2}{2}$, and $x>0$, so $-\frac{x^2}{2}>
-x^2-\frac{x}{2}$.

\begin{center}
\setlength{\unitlength}{1.2cm}
\begin{picture}(10,6)
\put(5,3){\line(1,0){4}}
\put(5,3){\line(-1,0){4}}
\put(5,3){\line(0,1){3}}
\put(5,3){\line(0,-1){3}}
\put(5,3){\vector(-1,-1){2}}
\put(2.5,0.8){$Z^\sigma_{\gamma(\cO(-1)[2])}$}
\put(5,3){\vector(-1,3){1}}
\put(3,5.5){$Z^\sigma_{\gamma(\cO(1))}$}
\put(5,3){\vector(1,2){0.5}}
\put(5.05,4.25){$Z^\sigma_{\gamma(\cO[1])}$}
\put(5,3){\vector(1,1){2}}
\put(7.1,5){$Z^\sigma_{\gamma(\cO(-1)[1])}$}
\end{picture}
\end{center}

It follows from the previous inequalities that, denoting $\phi \coloneq \frac{1}{\pi}\Arg 
Z^\sigma_{\gamma(\cO(-1)[1])}$, the objects 
$\cO(-1)[2]$, $\cO[1]$, and $\cO(1)$ belong to 
$\cA^\sigma[Z^\sigma,\phi]$, and so 
$\cA^\sigma[Z^\sigma, \phi]=\cA_0$.
In terms of quiver representations, $\cO(1)$ correspond to the simple 
representation $S_1$ of $Q_0$, which is a subrepresentation of every representation of $Q_0$ of dimension $(n_1,n_2,n_3)$ with 
$n_3 \geqslant 1$. So if $V$ is a stable representation of $Q_0$ of dimension 
$(n_1,n_2,n_3)$ with $n_3 \geqslant 1$, then $\Arg Z^\sigma[\phi](V) \geqslant 
\Arg Z^\sigma[\phi](\cO(1))$.

If $V$ is a stable representation of 
$Q_0$ of dimension $(n_1,n_2,n_3)$ with 
$n_3=0$ and $n_1 \neq 0$, it follows from Lemma \ref{lem_quiver_kronecker} that $n_2 \leqslant 3n_1$.
For every nonnegative integers $n_1$, $n_2$ with $n_1 \neq 0$ and $n_2 \leqslant 3 n_1$, we have 
\[ n_1 \Rea Z^\sigma_{\gamma(\cO(-1))[2])}
+n_2 \Rea Z^\sigma_{\gamma(\cO[1])} 
\leqslant 
n_1 \Rea Z^\sigma_{\gamma(\cO(-1))[2])}
+3n_1 \Rea Z^\sigma_{\gamma(\cO[1])}\]
\[\leqslant n_1(y-x- \frac{1}{2}-3y)
=n_1(-2y-x-\frac{1}{2})\,,\]
and, for $0<x<1$ and $y>-\frac{x^2}{2}$, 
\[ -2y-x-\frac{1}{2}<x^2-x-\frac{1}{2}=(x-\frac{1}{2})^2-\frac{3}{4}<-\frac{1}{2}<0\,.\]

It follows that, for every $E$ nonzero $\sigma$-stable object of $\cA^\sigma$, we have either $\Arg Z^\sigma(E) \leqslant \Arg Z^\sigma_{\gamma(\cO(-1))[1]}$ and so
$\Rea Z^\sigma(E)>0$, or $\Rea Z^\sigma(E) <0$. In particular, 
$\Rea Z^\sigma(E) \neq 0$.

Thus, the scattering diagram $\fD_{u,v}^{\PP^2}$ restricted to the interior of 
$\bar{U}_{0,R}^\iin$ is empty, as
$S(\fD^\iin_{u,v})$.

Figure: an example of configuration of central charges for 
$\sigma=(x,y)$ in the interior of 
$\bar{U}^\iin_{0,R}$. 
If $E$ is a $\sigma$-stable object of 
$\cA^\sigma$, then 
$Z^\sigma(E)$ belongs to the dotted region (it is indeed possible to show that 
$\Arg(Z^\sigma_{\gamma(\cO(-1))[2])}+3
Z^\sigma_{\gamma(\cO[1])})>
\Arg Z^\sigma_{\gamma(\cO(1))}$).
\begin{center}
\setlength{\unitlength}{1.2cm}
\begin{picture}(10,6)
\put(5,3){\line(1,0){4}}
\put(5,3){\line(-1,0){4}}
\put(5,3){\line(0,1){3}}
\put(5,3){\line(0,-1){3}}
\put(5,3){\vector(-1,-1){2}}
\put(2.5,0.8){$Z^\sigma_{\gamma(\cO(-1)[2])}$}
\put(5,3){\vector(-1,3){1}}
\put(3,5.5){$Z^\sigma_{\gamma(\cO(1))}$}
\put(5,3){\vector(1,2){0.5}}
\put(5.05,4.25){$Z^\sigma_{\gamma(\cO[1])}$}
\put(5,3){\vector(1,1){2}}
\put(7.1,4.8){$Z^\sigma_{\gamma(\cO(-1)[1])}$}
\put(5.25,3.25){\circle*{0.05}}
\put(5.5,3.25){\circle*{0.05}}
\put(5.75,3.25){\circle*{0.05}}
\put(6,3.25){\circle*{0.05}}
\put(6.25,3.25){\circle*{0.05}}
\put(6.5,3.25){\circle*{0.05}}
\put(6.75,3.25){\circle*{0.05}}
\put(7,3.25){\circle*{0.05}}
\put(7.25,3.25){\circle*{0.05}}
\put(7.5,3.25){\circle*{0.05}}
\put(7.75,3.25){\circle*{0.05}}
\put(8,3.25){\circle*{0.05}}
\put(8.25,3.25){\circle*{0.05}}
\put(8.5,3.25){\circle*{0.05}}
\put(5.5,3.5){\circle*{0.05}}
\put(5.75,3.5){\circle*{0.05}}
\put(6,3.5){\circle*{0.05}}
\put(6.25,3.5){\circle*{0.05}}
\put(6.5,3.5){\circle*{0.05}}
\put(6.75,3.5){\circle*{0.05}}
\put(7,3.5){\circle*{0.05}}
\put(7.25,3.5){\circle*{0.05}}
\put(7.5,3.5){\circle*{0.05}}
\put(7.75,3.5){\circle*{0.05}}
\put(8,3.5){\circle*{0.05}}
\put(8.25,3.5){\circle*{0.05}}
\put(8.5,3.5){\circle*{0.05}}
\put(5.75,3.75){\circle*{0.05}}
\put(6,3.75){\circle*{0.05}}
\put(6.25,3.75){\circle*{0.05}}
\put(6.5,3.75){\circle*{0.05}}
\put(6.75,3.75){\circle*{0.05}}
\put(7,3.75){\circle*{0.05}}
\put(7.25,3.75){\circle*{0.05}}
\put(7.5,3.75){\circle*{0.05}}
\put(7.75,3.75){\circle*{0.05}}
\put(8,3.75){\circle*{0.05}}
\put(8.25,3.75){\circle*{0.05}}
\put(8.5,3.75){\circle*{0.05}}
\put(6,4){\circle*{0.05}}
\put(6.25,4){\circle*{0.05}}
\put(6.5,4){\circle*{0.05}}
\put(6.75,4){\circle*{0.05}}
\put(7,4){\circle*{0.05}}
\put(7.25,4){\circle*{0.05}}
\put(7.5,4){\circle*{0.05}}
\put(7.75,4){\circle*{0.05}}
\put(8,4){\circle*{0.05}}
\put(8.25,4){\circle*{0.05}}
\put(8.5,4){\circle*{0.05}}
\put(6.25,4.25){\circle*{0.05}}
\put(6.5,4.25){\circle*{0.05}}
\put(6.75,4.25){\circle*{0.05}}
\put(7,4.25){\circle*{0.05}}
\put(7.25,4.25){\circle*{0.05}}
\put(7.5,4.25){\circle*{0.05}}
\put(7.75,4.25){\circle*{0.05}}
\put(8,4.25){\circle*{0.05}}
\put(8.25,4.25){\circle*{0.05}}
\put(8.5,4.25){\circle*{0.05}}
\put(6.5,4.5){\circle*{0.05}}
\put(6.75,4.5){\circle*{0.05}}
\put(7,4.5){\circle*{0.05}}
\put(7.25,4.5){\circle*{0.05}}
\put(7.5,4.5){\circle*{0.05}}
\put(7.75,4.5){\circle*{0.05}}
\put(8,4.5){\circle*{0.05}}
\put(8.25,4.5){\circle*{0.05}}
\put(8.5,4.5){\circle*{0.05}}
\put(1.5,3){\circle*{0.05}}
\put(1.5,3.25){\circle*{0.05}}
\put(1.5,3.5){\circle*{0.05}}
\put(1.5,3.75){\circle*{0.05}}
\put(1.5,4){\circle*{0.05}}
\put(1.5,4.25){\circle*{0.05}}
\put(1.5,4.5){\circle*{0.05}}
\put(1.5,4.75){\circle*{0.05}}
\put(1.5,5){\circle*{0.05}}
\put(1.5,5.25){\circle*{0.05}}
\put(1.5,5.5){\circle*{0.05}}
\put(1.5,5.75){\circle*{0.05}}
\put(1.5,6){\circle*{0.05}}
\put(1.75,3){\circle*{0.05}}
\put(1.75,3.25){\circle*{0.05}}
\put(1.75,3.5){\circle*{0.05}}
\put(1.75,3.75){\circle*{0.05}}
\put(1.75,4){\circle*{0.05}}
\put(1.75,4.25){\circle*{0.05}}
\put(1.75,4.5){\circle*{0.05}}
\put(1.75,4.75){\circle*{0.05}}
\put(1.75,5){\circle*{0.05}}
\put(1.75,5.25){\circle*{0.05}}
\put(1.75,5.5){\circle*{0.05}}
\put(1.75,5.75){\circle*{0.05}}
\put(1.75,6){\circle*{0.05}}
\put(2,3){\circle*{0.05}}
\put(2,3.25){\circle*{0.05}}
\put(2,3.5){\circle*{0.05}}
\put(2,3.75){\circle*{0.05}}
\put(2,4){\circle*{0.05}}
\put(2,4.25){\circle*{0.05}}
\put(2,4.5){\circle*{0.05}}
\put(2,4.75){\circle*{0.05}}
\put(2,5){\circle*{0.05}}
\put(2,5.25){\circle*{0.05}}
\put(2,5.5){\circle*{0.05}}
\put(2,5.75){\circle*{0.05}}
\put(2,6){\circle*{0.05}}
\put(2.25,3){\circle*{0.05}}
\put(2.25,3.25){\circle*{0.05}}
\put(2.25,3.5){\circle*{0.05}}
\put(2.25,3.75){\circle*{0.05}}
\put(2.25,4){\circle*{0.05}}
\put(2.25,4.25){\circle*{0.05}}
\put(2.25,4.5){\circle*{0.05}}
\put(2.25,4.75){\circle*{0.05}}
\put(2.25,5){\circle*{0.05}}
\put(2.25,5.25){\circle*{0.05}}
\put(2.25,5.5){\circle*{0.05}}
\put(2.25,5.75){\circle*{0.05}}
\put(2.25,6){\circle*{0.05}}
\put(2.5,3){\circle*{0.05}}
\put(2.5,3.25){\circle*{0.05}}
\put(2.5,3.5){\circle*{0.05}}
\put(2.5,3.75){\circle*{0.05}}
\put(2.5,4){\circle*{0.05}}
\put(2.5,4.25){\circle*{0.05}}
\put(2.5,4.5){\circle*{0.05}}
\put(2.5,4.75){\circle*{0.05}}
\put(2.5,5){\circle*{0.05}}
\put(2.5,5.25){\circle*{0.05}}
\put(2.5,5.5){\circle*{0.05}}
\put(2.5,5.75){\circle*{0.05}}
\put(2.5,6){\circle*{0.05}}
\put(2.75,3){\circle*{0.05}}
\put(2.75,3.25){\circle*{0.05}}
\put(2.75,3.5){\circle*{0.05}}
\put(2.75,3.75){\circle*{0.05}}
\put(2.75,4){\circle*{0.05}}
\put(2.75,4.25){\circle*{0.05}}
\put(2.75,4.5){\circle*{0.05}}
\put(2.75,4.75){\circle*{0.05}}
\put(2.75,5){\circle*{0.05}}
\put(2.75,5.25){\circle*{0.05}}
\put(2.75,5.5){\circle*{0.05}}
\put(2.75,5.75){\circle*{0.05}}
\put(2.75,6){\circle*{0.05}}
\put(3,3){\circle*{0.05}}
\put(3,3.25){\circle*{0.05}}
\put(3,3.5){\circle*{0.05}}
\put(3,3.75){\circle*{0.05}}
\put(3,4){\circle*{0.05}}
\put(3,4.25){\circle*{0.05}}
\put(3,4.5){\circle*{0.05}}
\put(3,4.75){\circle*{0.05}}
\put(3,5){\circle*{0.05}}
\put(3,5.25){\circle*{0.05}}
\put(3,6){\circle*{0.05}}
\put(3.25,3){\circle*{0.05}}
\put(3.25,3.25){\circle*{0.05}}
\put(3.25,3.5){\circle*{0.05}}
\put(3.25,3.75){\circle*{0.05}}
\put(3.25,4){\circle*{0.05}}
\put(3.25,4.25){\circle*{0.05}}
\put(3.25,4.5){\circle*{0.05}}
\put(3.25,4.75){\circle*{0.05}}
\put(3.25,5){\circle*{0.05}}
\put(3.25,5.25){\circle*{0.05}}
\put(3.25,6){\circle*{0.05}}
\put(3.5,3){\circle*{0.05}}
\put(3.5,3.25){\circle*{0.05}}
\put(3.5,3.5){\circle*{0.05}}
\put(3.5,3.75){\circle*{0.05}}
\put(3.5,4){\circle*{0.05}}
\put(3.5,4.25){\circle*{0.05}}
\put(3.5,4.5){\circle*{0.05}}
\put(3.5,4.75){\circle*{0.05}}
\put(3.5,5){\circle*{0.05}}
\put(3.5,5.25){\circle*{0.05}}
\put(3.5,6){\circle*{0.05}}
\put(3.75,3){\circle*{0.05}}
\put(3.75,3.25){\circle*{0.05}}
\put(3.75,3.5){\circle*{0.05}}
\put(3.75,3.75){\circle*{0.05}}
\put(3.75,4){\circle*{0.05}}
\put(3.75,4.25){\circle*{0.05}}
\put(3.75,4.5){\circle*{0.05}}
\put(3.75,4.75){\circle*{0.05}}
\put(3.75,5){\circle*{0.05}}
\put(3.75,5.25){\circle*{0.05}}
\put(3.75,5.75){\circle*{0.05}}
\put(3.75,6){\circle*{0.05}}
\put(4,3){\circle*{0.05}}
\put(4,3.25){\circle*{0.05}}
\put(4,3.5){\circle*{0.05}}
\put(4,3.75){\circle*{0.05}}
\put(4,4){\circle*{0.05}}
\put(4,4.25){\circle*{0.05}}
\put(4,4.5){\circle*{0.05}}
\put(4,4.75){\circle*{0.05}}
\put(4,5){\circle*{0.05}}
\put(4,5.25){\circle*{0.05}}
\put(4,6){\circle*{0.05}}
\put(4.25,3){\circle*{0.05}}
\put(4.25,3.25){\circle*{0.05}}
\put(4.25,3.5){\circle*{0.05}}
\put(4.25,3.75){\circle*{0.05}}
\put(4.25,4){\circle*{0.05}}
\put(4.25,4.25){\circle*{0.05}}
\put(4.25,4.5){\circle*{0.05}}
\put(4.25,4.75){\circle*{0.05}}
\put(4.25,5){\circle*{0.05}}
\put(4.25,5.25){\circle*{0.05}}
\put(4.5,3){\circle*{0.05}}
\put(4.5,3.25){\circle*{0.05}}
\put(4.5,3.5){\circle*{0.05}}
\put(4.5,3.75){\circle*{0.05}}
\put(4.5,4){\circle*{0.05}}
\put(4.5,4.25){\circle*{0.05}}
\put(4.5,4.5){\circle*{0.05}}
\put(4.75,3){\circle*{0.05}}
\put(4.75,3.25){\circle*{0.05}}
\put(4.75,3.5){\circle*{0.05}}
\put(4.75,3.75){\circle*{0.05}}
\end{picture}
\end{center}

The proof for $\bar{U}^\iin_{0,L}$ instead of $\bar{U}^\iin_{0,R}$ is completely analogous.
Let $\sigma=(x,y)$ be a point in the interior of $\bar{U}^\iin_{0,L}$.
Then $\cO(-1)[1]$, $\cO(1)$, and $\cO$
belongs to $\cA^\sigma$, with 
\[ \Rea Z^\sigma_{\gamma(\cO(-1)[1])}>0\,,
\Rea Z^\sigma_{\gamma(\cO(1))}<0\,,
\Rea Z^\sigma_{\gamma(\cO)}<0\,.\]
We claim that $\Arg Z^\sigma_{\gamma(\cO(1))} >\Arg Z^\sigma_{\gamma(\cO)}$.
Indeed, using $y<0$, $x<0$, $-y-x+\frac{1}{2}>0$,
\[Z_{\gamma(\cO)}^{(x,y)}=y-ix\sqrt{x^2+2y}\,,\]
\[Z_{\gamma(\cO(1))}^{(x,y)}
=y+x-\frac{1}{2}-i(x-1)\sqrt{x^2+2y}\,,\]
this inequality is equivalent to 
\[ \frac{-x}{-y}>\frac{1-x}{-y-x+\frac{1}{2}}\,,\]
that is, 
\[ y>-x^2+\frac{x}{2}\,.\]
But as $(x,y)$ is in the interior of 
$\bar{U}^\iin_{0,L}$, we have 
$y>-\frac{x^2}{2}$, and $x<0$, so
$-\frac{x^2}{2}>-x^2+\frac{x}{2}$.

\begin{center}
\setlength{\unitlength}{1.2cm}
\begin{picture}(10,6)
\put(5,3){\line(1,0){4}}
\put(5,3){\line(-1,0){4}}
\put(5,3){\line(0,1){3}}
\put(5,3){\line(0,-1){3}}
\put(5,3){\vector(-1,-1){2}}
\put(2.5,0.8){$Z^\sigma_{\gamma(\cO(-1)[2])}$}
\put(5,3){\vector(-3,4){2}}
\put(2,5.5){$Z^\sigma_{\gamma(\cO(1))}$}
\put(5,3){\vector(-1,3){0.3}}
\put(4.1,4.2){$Z^\sigma_{\gamma(\cO)}$}
\put(5,3){\vector(1,-3){0.3}}
\put(5.5,2){$Z^\sigma_{\gamma(\cO[1])}$}
\put(5,3){\vector(1,1){2}}
\put(7.1,4.8){$Z^\sigma_{\gamma(\cO(-1)[1])}$}
\end{picture}
\end{center}

It follows from the previous inequalities
that, denoting $\phi \coloneq \frac{1}{\pi} \Arg Z^\sigma_{\gamma(\cO)}$, the objects $\cO(-1)[2]$,
$\cO[1]$, and $\cO(1)$ belong to 
$\cA^\sigma[Z^\sigma,\phi]$, and so 
$\cA^\sigma[Z^\sigma,\phi]=\cA_0$.
In terms of quiver representations, 
$\cO(-1)[2]$ correspond to the simple representation $S_{-1}$ of 
$Q_0$, which is a representation quotient of every representation of $Q_0$ of dimension $(n_1,n_2,n_3)$ with $n_1 \geqslant 1$. 
So, if $V$ is a stable representation of $Q_0$ of dimension $(n_1,n_2,n_3)$ with $n_1 \geqslant 1$, then 
$\Arg Z^\sigma[\phi](\cO(-1)[2]) \geqslant 
\Arg Z^\sigma[\phi](V)$.

If $V$ is a stable representation of $Q_0$ of dimension 
$(n_1,n_2,n_3)$ with $n_1=0$ and 
$n_3 \neq 0$, it follows from 
Lemma \ref{lem_quiver_kronecker} that 
$n_2 \leqslant 3n_3$.
For every nonnegative integers $n_2$, $n_3$
with $n_3 \neq 0$ and $n_2 \leqslant 3n_3$, we have 
\[ n_2 \Rea Z^\sigma_{\gamma(\cO[1])}
+n_3 \Rea Z^\sigma_{\gamma(\cO(1))}
\leqslant 3n_3 \Rea Z^\sigma_{\gamma(\cO[1])}
+n_3 \Rea Z^\sigma_{\gamma(\cO(1))} \]
\[ \leqslant n_3(-3y+y+x-\frac{1}{2})
=n_3(-2y+x-\frac{1}{2})\,,\]
and, for $-1<x<0$ and $y >-\frac{x^2}{2}$, 
\[ -2y+x-\frac{1}{2}<x^2+x-\frac{1}{2}
=(x+\frac{1}{2})^2-\frac{3}{4}<-\frac{1}{2}<0\,.\]

It follows that, for every $E$ nonzero $\sigma$-stable object of $\cA^\sigma$, we have either $\Arg Z^\sigma(E) \leqslant 
\Arg Z^{\sigma}_{\gamma(\cO(-1))[1]}$ and so $\Rea Z^\sigma(E)>0$, or $\Rea Z^\sigma(E)<0$. In particular, 
$\Rea Z^\sigma(E) \neq 0$.

Thus, the scattering diagram $\fD_{u,v}^{\PP^2}$ restricted to the interior of 
$\bar{U}^\iin_{0,L}$ is empty, as 
$S(\fD^\iin_{u,v})$.

Figure: an example of configuration of central charges for 
$\sigma=(x,y)$ in the interior of 
$\bar{U}^\iin_{0,L}$. 
If $E$ is a $\sigma$-stable object of 
$\cA^\sigma$, then 
$Z^\sigma(E)$ belongs to the dotted region (it is indeed possible to show that 
$
\Arg Z^\sigma_{\gamma(\cO(-1)[2])}
>\Arg(Z^\sigma_{\gamma(\cO(1))}+3
Z^\sigma_{\gamma(\cO[1])})$).
\begin{center}
\setlength{\unitlength}{1.2cm}
\begin{picture}(10,6)
\put(5,3){\line(1,0){4}}
\put(5,3){\line(-1,0){4}}
\put(5,3){\line(0,1){3}}
\put(5,3){\line(0,-1){3}}
\put(5,3){\vector(-1,-1){2}}
\put(2.5,0.8){$Z^\sigma_{\gamma(\cO(-1)[2])}$}
\put(5,3){\vector(-3,4){2}}
\put(1.9,5.5){$Z^\sigma_{\gamma(\cO(1))}$}
\put(5,3){\vector(-1,3){0.3}}
\put(4.1,4.2){$Z^\sigma_{\gamma(\cO)}$}
\put(5,3){\vector(1,-3){0.3}}
\put(5.5,2){$Z^\sigma_{\gamma(\cO[1])}$}
\put(5,3){\vector(1,1){2}}
\put(7.1,4.8){$Z^\sigma_{\gamma(\cO(-1)[1])}$}
\put(5.25,3.25){\circle*{0.05}}
\put(5.5,3.25){\circle*{0.05}}
\put(5.75,3.25){\circle*{0.05}}
\put(6,3.25){\circle*{0.05}}
\put(6.25,3.25){\circle*{0.05}}
\put(6.5,3.25){\circle*{0.05}}
\put(6.75,3.25){\circle*{0.05}}
\put(7,3.25){\circle*{0.05}}
\put(7.25,3.25){\circle*{0.05}}
\put(7.5,3.25){\circle*{0.05}}
\put(7.75,3.25){\circle*{0.05}}
\put(8,3.25){\circle*{0.05}}
\put(8.25,3.25){\circle*{0.05}}
\put(8.5,3.25){\circle*{0.05}}
\put(5.5,3.5){\circle*{0.05}}
\put(5.75,3.5){\circle*{0.05}}
\put(6,3.5){\circle*{0.05}}
\put(6.25,3.5){\circle*{0.05}}
\put(6.5,3.5){\circle*{0.05}}
\put(6.75,3.5){\circle*{0.05}}
\put(7,3.5){\circle*{0.05}}
\put(7.25,3.5){\circle*{0.05}}
\put(7.5,3.5){\circle*{0.05}}
\put(7.75,3.5){\circle*{0.05}}
\put(8,3.5){\circle*{0.05}}
\put(8.25,3.5){\circle*{0.05}}
\put(8.5,3.5){\circle*{0.05}}
\put(5.75,3.75){\circle*{0.05}}
\put(6,3.75){\circle*{0.05}}
\put(6.25,3.75){\circle*{0.05}}
\put(6.5,3.75){\circle*{0.05}}
\put(6.75,3.75){\circle*{0.05}}
\put(7,3.75){\circle*{0.05}}
\put(7.25,3.75){\circle*{0.05}}
\put(7.5,3.75){\circle*{0.05}}
\put(7.75,3.75){\circle*{0.05}}
\put(8,3.75){\circle*{0.05}}
\put(8.25,3.75){\circle*{0.05}}
\put(8.5,3.75){\circle*{0.05}}
\put(6,4){\circle*{0.05}}
\put(6.25,4){\circle*{0.05}}
\put(6.5,4){\circle*{0.05}}
\put(6.75,4){\circle*{0.05}}
\put(7,4){\circle*{0.05}}
\put(7.25,4){\circle*{0.05}}
\put(7.5,4){\circle*{0.05}}
\put(7.75,4){\circle*{0.05}}
\put(8,4){\circle*{0.05}}
\put(8.25,4){\circle*{0.05}}
\put(8.5,4){\circle*{0.05}}
\put(6.25,4.25){\circle*{0.05}}
\put(6.5,4.25){\circle*{0.05}}
\put(6.75,4.25){\circle*{0.05}}
\put(7,4.25){\circle*{0.05}}
\put(7.25,4.25){\circle*{0.05}}
\put(7.5,4.25){\circle*{0.05}}
\put(7.75,4.25){\circle*{0.05}}
\put(8,4.25){\circle*{0.05}}
\put(8.25,4.25){\circle*{0.05}}
\put(8.5,4.25){\circle*{0.05}}
\put(6.5,4.5){\circle*{0.05}}
\put(6.75,4.5){\circle*{0.05}}
\put(7,4.5){\circle*{0.05}}
\put(7.25,4.5){\circle*{0.05}}
\put(7.5,4.5){\circle*{0.05}}
\put(7.75,4.5){\circle*{0.05}}
\put(8,4.5){\circle*{0.05}}
\put(8.25,4.5){\circle*{0.05}}
\put(8.5,4.5){\circle*{0.05}}
\put(1.5,3){\circle*{0.05}}
\put(1.5,3.25){\circle*{0.05}}
\put(1.5,3.5){\circle*{0.05}}
\put(1.5,3.75){\circle*{0.05}}
\put(1.5,4){\circle*{0.05}}
\put(1.5,4.25){\circle*{0.05}}
\put(1.5,4.5){\circle*{0.05}}
\put(1.5,4.75){\circle*{0.05}}
\put(1.5,5){\circle*{0.05}}
\put(1.5,5.25){\circle*{0.05}}
\put(1.5,5.5){\circle*{0.05}}
\put(1.5,5.75){\circle*{0.05}}
\put(1.5,6){\circle*{0.05}}
\put(1.75,3){\circle*{0.05}}
\put(1.75,3.25){\circle*{0.05}}
\put(1.75,3.5){\circle*{0.05}}
\put(1.75,3.75){\circle*{0.05}}
\put(1.75,4){\circle*{0.05}}
\put(1.75,4.25){\circle*{0.05}}
\put(1.75,4.5){\circle*{0.05}}
\put(1.75,4.75){\circle*{0.05}}
\put(1.75,5){\circle*{0.05}}
\put(1.75,5.25){\circle*{0.05}}
\put(1.75,5.5){\circle*{0.05}}
\put(1.75,5.75){\circle*{0.05}}
\put(1.75,6){\circle*{0.05}}
\put(2,3){\circle*{0.05}}
\put(2,3.25){\circle*{0.05}}
\put(2,3.5){\circle*{0.05}}
\put(2,3.75){\circle*{0.05}}
\put(2,4){\circle*{0.05}}
\put(2,4.25){\circle*{0.05}}
\put(2,4.5){\circle*{0.05}}
\put(2,4.75){\circle*{0.05}}
\put(2,5){\circle*{0.05}}
\put(2,5.25){\circle*{0.05}}
\put(2,6){\circle*{0.05}}
\put(2.25,3){\circle*{0.05}}
\put(2.25,3.25){\circle*{0.05}}
\put(2.25,3.5){\circle*{0.05}}
\put(2.25,3.75){\circle*{0.05}}
\put(2.25,4){\circle*{0.05}}
\put(2.25,4.25){\circle*{0.05}}
\put(2.25,4.5){\circle*{0.05}}
\put(2.25,4.75){\circle*{0.05}}
\put(2.25,5){\circle*{0.05}}
\put(2.25,5.25){\circle*{0.05}}
\put(2.25,6){\circle*{0.05}}
\put(2.5,3){\circle*{0.05}}
\put(2.5,3.25){\circle*{0.05}}
\put(2.5,3.5){\circle*{0.05}}
\put(2.5,3.75){\circle*{0.05}}
\put(2.5,4){\circle*{0.05}}
\put(2.5,4.25){\circle*{0.05}}
\put(2.5,4.5){\circle*{0.05}}
\put(2.5,4.75){\circle*{0.05}}
\put(2.5,5){\circle*{0.05}}
\put(2.5,5.25){\circle*{0.05}}
\put(2.5,6){\circle*{0.05}}
\put(2.75,3){\circle*{0.05}}
\put(2.75,3.25){\circle*{0.05}}
\put(2.75,3.5){\circle*{0.05}}
\put(2.75,3.75){\circle*{0.05}}
\put(2.75,4){\circle*{0.05}}
\put(2.75,4.25){\circle*{0.05}}
\put(2.75,4.5){\circle*{0.05}}
\put(2.75,4.75){\circle*{0.05}}
\put(2.75,5){\circle*{0.05}}
\put(2.75,5.25){\circle*{0.05}}
\put(2.75,6){\circle*{0.05}}
\put(3,3){\circle*{0.05}}
\put(3,3.25){\circle*{0.05}}
\put(3,3.5){\circle*{0.05}}
\put(3,3.75){\circle*{0.05}}
\put(3,4){\circle*{0.05}}
\put(3,4.25){\circle*{0.05}}
\put(3,4.5){\circle*{0.05}}
\put(3,4.75){\circle*{0.05}}
\put(3,5){\circle*{0.05}}
\put(3,5.25){\circle*{0.05}}
\put(3,6){\circle*{0.05}}
\put(3.25,3){\circle*{0.05}}
\put(3.25,3.25){\circle*{0.05}}
\put(3.25,3.5){\circle*{0.05}}
\put(3.25,3.75){\circle*{0.05}}
\put(3.25,4){\circle*{0.05}}
\put(3.25,4.25){\circle*{0.05}}
\put(3.25,4.5){\circle*{0.05}}
\put(3.25,4.75){\circle*{0.05}}
\put(3.25,5){\circle*{0.05}}
\put(3.25,5.25){\circle*{0.05}}
\put(3.25,5.5){\circle*{0.05}}
\put(3.25,5.75){\circle*{0.05}}
\put(3.25,6){\circle*{0.05}}
\put(3.5,3){\circle*{0.05}}
\put(3.5,3.25){\circle*{0.05}}
\put(3.5,3.5){\circle*{0.05}}
\put(3.5,3.75){\circle*{0.05}}
\put(3.5,4){\circle*{0.05}}
\put(3.5,4.25){\circle*{0.05}}
\put(3.5,4.5){\circle*{0.05}}
\put(3.5,4.75){\circle*{0.05}}
\put(3.5,5){\circle*{0.05}}
\put(3.5,5.25){\circle*{0.05}}
\put(3.5,5.5){\circle*{0.05}}
\put(3.5,5.75){\circle*{0.05}}
\put(3.5,6){\circle*{0.05}}
\put(3.75,3){\circle*{0.05}}
\put(3.75,3.25){\circle*{0.05}}
\put(3.75,3.5){\circle*{0.05}}
\put(3.75,3.75){\circle*{0.05}}
\put(3.75,4){\circle*{0.05}}
\put(3.75,4.25){\circle*{0.05}}
\put(3.75,4.5){\circle*{0.05}}
\put(3.75,4.75){\circle*{0.05}}
\put(3.75,5){\circle*{0.05}}
\put(3.75,5.25){\circle*{0.05}}
\put(3.75,5.5){\circle*{0.05}}
\put(3.75,5.75){\circle*{0.05}}
\put(3.75,6){\circle*{0.05}}
\put(4,3){\circle*{0.05}}
\put(4,3.25){\circle*{0.05}}
\put(4,3.5){\circle*{0.05}}
\put(4,3.75){\circle*{0.05}}
\put(4,4){\circle*{0.05}}
\put(4,4.25){\circle*{0.05}}
\put(4,4.5){\circle*{0.05}}
\put(4,4.75){\circle*{0.05}}
\put(4,5){\circle*{0.05}}
\put(4,5.25){\circle*{0.05}}
\put(4,5.5){\circle*{0.05}}
\put(4,5.75){\circle*{0.05}}
\put(4,6){\circle*{0.05}}
\put(4.25,3){\circle*{0.05}}
\put(4.25,3.25){\circle*{0.05}}
\put(4.25,3.5){\circle*{0.05}}
\put(4.25,3.75){\circle*{0.05}}
\put(4.25,4){\circle*{0.05}}
\put(4.25,4.75){\circle*{0.05}}
\put(4.25,5){\circle*{0.05}}
\put(4.25,5.25){\circle*{0.05}}
\put(4.5,3){\circle*{0.05}}
\put(4.5,3.25){\circle*{0.05}}
\put(4.5,3.5){\circle*{0.05}}
\put(4.5,3.75){\circle*{0.05}}
\put(4.5,4){\circle*{0.05}}
\put(4.75,3){\circle*{0.05}}
\put(4.75,3.25){\circle*{0.05}}
\put(4.75,3.5){\circle*{0.05}}
\put(4.75,3.75){\circle*{0.05}}
\end{picture}
\end{center}

\end{proof}

\begin{lem} \label{lem_coincide_bound_R}
We have $\fD^{\PP^2}_{u,v}
=S(\fD^\iin_{u,v})$ in restriction to the boundary of $\bar{U}^\iin_{0,R}$ and in restriction to the boundary of 
$\bar{U}^\iin_{0,L}$.
\end{lem}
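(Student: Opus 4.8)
The argument follows the pattern of the proof of Lemma \ref{lem_coincide_bound_T}: it is the boundary counterpart of Lemma \ref{lem_coincide_int_R}, obtained by passing to a limiting version of the central charge analysis carried out there. First we record the shape of $\partial \bar{U}^\iin_{0,R}$: by \cref{section_initial_region} it is the union of the line segment $|\fd_1^-| \subset \{\Rea Z_{\gamma(\cO(1))}^\sigma = 0\}$, the line segment $|\fd_0^+| \subset \{\Rea Z_{\gamma(\cO)}^\sigma = 0\}$, and the arc of the parabola $\partial U$ joining $s_0$ and $s_1$; likewise $\partial \bar{U}^\iin_{0,L}$ is the union of $|\fd_{-1}^+| \subset \{\Rea Z_{\gamma(\cO(-1))}^\sigma = 0\}$, $|\fd_0^-| \subset \{\Rea Z_{\gamma(\cO)}^\sigma = 0\}$ and the parabola arc joining $s_{-1}$ and $s_0$. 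On the parabola arcs there is nothing to prove: the rays of both $S(\fD^\iin_{u,v})$ and $\fD^{\PP^2}_{u,v}$ meet $\partial U$ only at their initial points $s_n$, and the support directions $-m_n^\pm$ and attached Hamiltonians there agree by the construction of $\fD^\iin_{u,v}$ (Definitions \ref{def_first_scattering} and \ref{def_scattering_D_in_u_v}) together with the identification, made in \cref{section_initial_data_proof}, of the classes of the rays of $\fD^{\PP^2}_{u,v}$ emanating from $s_n$ with $\gamma(\cO(n))$ and $\gamma(\cO(n)[1])$.

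It remains to treat the boundary segments. Fix $\sigma$ in the interior of $|\fd_1^-|$; the cases of $|\fd_0^+|$, $|\fd_{-1}^+|$ and $|\fd_0^-|$ are parallel. At such $\sigma$ one has $\Rea Z_{\gamma(\cO(1))}^\sigma = 0$, $\Rea Z_{\gamma(\cO(-1))}^\sigma > 0$ and $\Rea Z_{\gamma(\cO)}^\sigma < 0$, exactly as in the interior of $\bar{U}^\iin_{0,R}$ in Lemma \ref{lem_coincide_int_R} but with one strict inequality degenerated to an equality. The plan is to run the same argument: choose $\epsilon > 0$ with $\cA^\sigma[Z^\sigma, \phi - \epsilon] = \cA_0$ for $\phi \coloneq \tfrac{1}{\pi}\Arg Z^\sigma_{\gamma(\cO(-1)[1])}$, then combine the inequalities on $\Rea Z^\sigma_{\gamma(\cO(-1)[2])}$, $\Rea Z^\sigma_{\gamma(\cO[1])}$, $\Rea Z^\sigma_{\gamma(\cO(1))}$ valid along $|\fd_1^-|$ with the Kronecker bound of Lemma \ref{lem_quiver_kronecker} to conclude that the only classes $\gamma \in \Gamma$ with $Z_\gamma^\sigma \in i\R_{>0}$ are the positive multiples of $\gamma(\cO(1))$, and that for $\gamma = n\gamma(\cO(1))$ the moduli space of $\sigma$-stable objects is the moduli of $Q_0$-representations of dimension vector $(0,0,n)$, hence a point for $n=1$ and empty for $n>1$. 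Feeding this into Definition \ref{def_ray_stability_scattering} and comparing with Definition \ref{def_scattering_D_in_u_v} then identifies the rays of $\fD^{\PP^2}_{u,v}$ through $\sigma$ with those of $S(\fD^\iin_{u,v})$ along $|\fd_1^-|$. For $\bar{U}^\iin_{0,L}$ the same scheme applies, with the role of the relevant exceptional object played by $\cO(-1)[1]$ on $|\fd_{-1}^+|$ and by $\cO$ on $|\fd_0^-|$, both of which lie in $\cA^\sigma$ and are $\sigma$-stable there, so that the moduli of $\sigma$-stable objects of the $n$-fold multiples of their classes is again a point or empty.

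The main obstacle I expect is the limiting central charge bookkeeping. On each boundary segment one must check that the three classes $\gamma(\cO(-1)[2])$, $\gamma(\cO[1])$, $\gamma(\cO(1))$ still lie in the closed cone cut out by the tilted quiver heart $\cA_0$, that this forces $\Ima Z^\sigma$ of every nonzero object of that heart to be nonnegative and hence, after excluding the boundary rays, positive, and that the estimate of Lemma \ref{lem_quiver_kronecker} is still strong enough to rule out $\sigma$-stable objects with $\Rea Z^\sigma = 0$ other than the expected multiples of the single exceptional line bundle or of its shift. Once this is secured, matching the Hamiltonians is immediate from the primitivity of $\gamma(\cO(n))$ and from $IH^\bullet(\pt) = \Q$.
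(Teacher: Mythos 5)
Your proof is correct and follows the paper's approach: it treats each boundary segment as the limiting case of the central charge analysis in Lemma \ref{lem_coincide_int_R}, in which one of the strict inequalities on $\Rea Z^\sigma$ of the simple objects of $\cA_0$ degenerates to an equality, and then identifies the sole purely imaginary class along that segment with a multiple of the corresponding line bundle (or its shift), whose stable moduli of $Q_0$-representations of dimension vector $(n,0,0)$, $(0,n,0)$ or $(0,0,n)$ is a point for $n=1$ and empty for $n>1$. The only slip is a sign: on $|\fd_1^-|$ the inequality should read $\Rea Z_{\gamma(\cO(-1)[1])}^\sigma > 0$ (equivalently $\Rea Z_{\gamma(\cO(-1))}^\sigma < 0$), consistent with Lemma \ref{lem_coincide_int_R} and with the tilt by $\phi=\tfrac{1}{\pi}\Arg Z^\sigma_{\gamma(\cO(-1)[1])}$ that you correctly invoke a few lines later.
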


\begin{proof}
Lemma \ref{lem_coincide_bound_R} is a limiting case of Lemma
\ref{lem_coincide_int_R} in the same way as 
Lemma \ref{lem_coincide_bound_T}
is a limiting case of Lemma 
\ref{lem_coincide_int_T}.

On the boundary $|\fd_0^+|$
(resp.\ $|\fd_1^-|$)
of 
$\bar{U}^\iin_{0,R}$,
 the only $\gamma \in \Gamma$ such that $Z_\gamma^\sigma \in i \R_{>0}$ are positive multiples of 
$\gamma(\cO[1])$ (resp. $\gamma(\cO(1))$).
The corresponding moduli space of stable objects is a moduli space 
of representations of $Q_0$ of dimension vector $(0,n,0)$ (resp. $(0,0,n)$), so is empty if $n>1$
(a representation of dimension $(0,n,0)$ is 
necessarily the direct sum of $n$ copies of the simple representation of dimension $(0,1,0)$, and so cannot be stable if $n>1$), and is a point if $n=1$.
So, using Definition 
\ref{def_ray_stability_scattering}
and Definition \ref{def_scattering_D_in_u_v},
the scattering diagrams $\fD^{\PP^2}_{u,v}$ and $S(\fD^\iin_{u,v})$ coincide in restriction to the boundary of 
$\bar{U}^\iin_{0,R}$.

Similarly, on the boundary $|\fd_0^-|$
(resp.\ $|\fd_{-1}^+|$)
of 
$\bar{U}^\iin_{0,L}$,
 the only $\gamma \in \Gamma$ such that $Z_\gamma^\sigma \in i \R_{>0}$ are positive multiples of 
$\gamma(\cO)$ (resp.\ $\gamma(\cO(-1)[1])$).
The corresponding moduli space of stable objects is a moduli space 
of representations of $Q_0$ of dimension vector $(0,n,0)$ (resp. $(n,0,0)$), so is empty if $n>1$, and is a point if $n=1$.
So, using Definition 
\ref{def_ray_stability_scattering}
and Definition \ref{def_scattering_D_in_u_v},
the scattering diagrams $\fD^{\PP^2}_{u,v}$ and $S(\fD^\iin_{u,v})$ coincide in restriction to the boundary of 
$\bar{U}^\iin_{0,L}$.
\end{proof}

We can now end the proof of Theorem 
\ref{thm_coincide_initial}: the scattering diagrams 
$\fD^{\PP^2}_{u,v}$ and 
$S(\fD^\iin_{u,v})$ coincide in restriction to
$\bar{U}_{0,T}^\iin$ according to Lemma
\ref{lem_coincide_int_T} and Lemma
\ref{lem_coincide_bound_T}, 
in restriction to
$\bar{U}_{0,L}^\iin$ and 
$\bar{U}_{0,R}^\iin$
according to Lemma 
\ref{lem_coincide_int_R} and Lemma
\ref{lem_coincide_bound_R},  and so, as
\[\bar{U}^\iin_0=\bar{U}^\iin_{0,T}
\cup \bar{U}^\iin_{0,L}
\cup \bar{U}^\iin_{0,R}\,,\]
in restriction to $\bar{U}^\iin_0$, and so,
using the action of 
$\psi(1)$, in restriction to $\bar{U}^\iin$.

\section{The equality $\fD^{\PP^2}_{u,v}=S(\fD_{u,v}^\iin)$}
\label{section_main_result}

In \cref{section_criterion} we prove that a consistent scattering diagram which
has the same initial data as $S(\fD_{u,v}^\iin)$ in fact coincides with $S(\fD_{u,v}^\iin)$.
In 
\cref{subsection_main_result} we use 
Theorem \ref{thm_consistency} and Theorem
\ref{thm_coincide_initial} to show that 
$\fD^{\PP^2}_{u,v}$ satisfies this criterion, and so we get our first main result, Theorem \ref{thm_main_precise}, 
stated as Theorem \ref{main_thm} in the introduction, that is
the equality $\fD^{\PP^2}_{u,v}=S(\fD_{u,v}^\iin)$.

\subsection{Criterion for $\fD=S(\fD^\iin_{u,v})$}
\label{section_criterion}

In \cref{section_scattering_final} we defined a scattering diagram 
$S(\fD^\iin_{u,v})$ on $U$ for 
$(M,\fg_{u,v})$.
In this section we prove 
Proposition \ref{prop_criterion}, which gives a sufficient criterion to prove that a scattering diagram $\fD$ on $U$
for $(M,\fg_{u,v})$ coincides with $S(\fD^\iin_{u,v})$.

\begin{defn} \label{def_parent}
Let $\fD$ be a scattering diagram on 
$U$ for $(M,\fg_{u,v})$, and let $\fd$ and $\fd'$ be two rays of 
$\fD$. The ray $\fd$ is a \emph{parent} of
the ray $\fd'$ if:
\begin{itemize}
\item[(i)] $\fd$ is bounded, that is, we have 
$|\fd|=\Init(\fd)-[0,T_\fd]m_\fd$ for some 
$T_{\fd} \in \R_{>0}$.
\item[(ii)] the endpoint of $\fd$ coincides with the initial point $\Init(\fd')$ of 
$\fd'$.
\item[(iii)] $\fd'$ defines an outgoing ray of the local scattering diagram $\fD_{\Init(\fd')}$.
\item[(iv)] $\fd$ defines an ingoing ray of the 
local scattering diagram 
$\fD_{\Init(\fd')}$.
\item[(v)] $\varphi_{\Init(\fd')}(m_\fd) \leqslant \varphi_{\Init(\fd')}(m_{\fd'})$.
\end{itemize}
\end{defn}

It follows from condition (ii) in the Definition 
\ref{def_scattering} of a scattering diagram that a given ray $\fd'$ has finitely many parents.

\begin{lem} \label{lem_desc_phi}
Let $\fD$ be a scattering diagram on 
$U$ for $(M,\fg_{u,v})$, and let $\fd$ and $\fd'$ be two rays of 
$\fD$. If $\fd$ is a parent of $\fd'$, then 
\[ \varphi_{\Init(\fd)}(m_{\fd}) < \varphi_{\Init(\fd')}(m_{\fd'}) \,. \]
\end{lem}

\begin{proof}
By Definition \ref{def_parent}, the endpoint of $\fd$ is the initial point of $\fd'$, and so we have $\varphi_{\Init(\fd)}(m_{\fd})<\varphi_{\Init(\fd')}(m_{\fd})$ by Lemma \ref{lem_increasing}. On the other hand, we have 
$\varphi_{\Init(\fd')}(m_{\fd}) \leqslant \varphi_{\Init(\fd')}(m_{\fd'})$ by Definition \ref{def_parent}(v).
\end{proof}

\begin{defn} \label{def_descendant}
Let $\fD$ be a scattering diagram on 
$U$ for $(M,\fg_{u,v})$, and let $\fd$ and $\fd'$ be two rays of 
$\fD$. The ray $\fd'$ is a \emph{descendant} of the ray 
$\fd$ of if there exists a finite sequence of rays $\fd_j$ of $\fD$, for
$0 \leqslant j \leqslant N$, such that:
\begin{itemize}
\item[(i)] $\fd_0=\fd$.
\item[(ii)] $\fd_N=\fd'$
\item[(iii)] For every $0 \leqslant j \leqslant N-1$, the ray $\fd_j$ is a parent of the ray $\fd_{j+1}$.
\end{itemize}
\end{defn}


\begin{defn}
Let $\fD$ be a scattering diagram on 
$U$ for $(M,\fg_{u,v})$, and let $\fd$ be a ray of 
$\fD$. A ray $\fd$ of 
$\fD$ is an \emph{absolute ancestor} if $\fd$ has no parent.
\end{defn}

\begin{lem} \label{lem_x_bound}
Let $\fD$ be a scattering diagram 
on $U$ for 
$(M, \fg_{u,v})$, and 
let $\fd$ and $\fd'$ be two rays of $\fD$
such that $\fd'$ is a descendant of $\fd$. 
Then, writing $\Init(\fd)=(x,y)$ and 
$\Init(\fd')=(x',y')$,
we have 
\[|x'-x|\leqslant \frac{1}{2}\varphi_{\Init(\fd')}(m_{\fd'})\,.\] 
\end{lem}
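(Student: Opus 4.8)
The statement should follow by induction on the level $N$ such that $\fd'$ is a descendant of $\fd$ of level $\leqslant N$, combined with the monotonicity statements for $\varphi_\sigma$ along a ray that are already established in Lemma \ref{lem_increasing}. First I would treat the base case $N=1$: here $\fd$ is a parent of $\fd'$, so $\fd$ is bounded with $|\fd|=\Init(\fd)-[0,T_\fd]m_\fd$ and its endpoint is $\Init(\fd')$. The second part of Lemma \ref{lem_increasing}, applied to the ray $\fd$, gives
\[\varphi_{\Init(\fd')}(m_\fd)-\varphi_{\Init(\fd)}(m_\fd)\geqslant 2|x'-x|\,,\]
since $\Init(\fd)=(x,y)$ is the initial point and $\Init(\fd')=(x',y')$ is the endpoint of $|\fd|$. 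Because $\varphi_{\Init(\fd)}(m_\fd)>0$ (condition (2) in Definition \ref{def_scattering}) and, by the definition of a parent, $\varphi_{\Init(\fd')}(m_\fd)\leqslant\varphi_{\Init(\fd')}(m_{\fd'})$, we deduce $2|x'-x|\leqslant\varphi_{\Init(\fd')}(m_\fd)\leqslant\varphi_{\Init(\fd')}(m_{\fd'})$, which is the claimed inequality.

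For the inductive step, suppose $\fd'$ is a descendant of $\fd$ of level $\leqslant N$ via a chain $\fd=\fd_0,\fd_1,\dots,\fd_N=\fd'$ where each $\fd_j$ is a parent of $\fd_{j+1}$. Write $\Init(\fd_j)=(x_j,y_j)$. By the base-case argument applied to the pair $(\fd_{N-1},\fd_N)$ we have $2|x_N-x_{N-1}|\leqslant\varphi_{\Init(\fd_N)}(m_{\fd_N})$, and iterating, $2|x_{j+1}-x_j|\leqslant\varphi_{\Init(\fd_{j+1})}(m_{\fd_{j+1}})$ for each $j$. The point I then need is that the numbers $\varphi_{\Init(\fd_{j+1})}(m_{\fd_{j+1}})$ are non-decreasing in $j$ — more precisely that $\varphi_{\Init(\fd_{j+1})}(m_{\fd_{j+1}})\leqslant\varphi_{\Init(\fd_{j+2})}(m_{\fd_{j+2}})$. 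This follows because $\fd_{j+1}$ is a parent of $\fd_{j+2}$: the parent condition $\varphi_{\Init(\fd_{j+2})}(m_{\fd_{j+1}})\leqslant\varphi_{\Init(\fd_{j+2})}(m_{\fd_{j+2}})$ together with the first part of Lemma \ref{lem_increasing} applied to the ray $\fd_{j+1}$ (which is increasing in $t$, so $\varphi_{\Init(\fd_{j+1})}(m_{\fd_{j+1}})\leqslant\varphi_{\Init(\fd_{j+2})}(m_{\fd_{j+1}})$, the endpoint value) gives $\varphi_{\Init(\fd_{j+1})}(m_{\fd_{j+1}})\leqslant\varphi_{\Init(\fd_{j+2})}(m_{\fd_{j+2}})$. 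Hence every term in the telescoping sum $|x'-x|=|x_N-x_0|\leqslant\sum_{j=0}^{N-1}|x_{j+1}-x_j|$ is bounded by $\tfrac12\varphi_{\Init(\fd_{j+1})}(m_{\fd_{j+1}})\leqslant\tfrac12\varphi_{\Init(\fd')}(m_{\fd'})$.

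The main obstacle is that this naive telescoping only gives a bound of the form $|x'-x|\leqslant\tfrac{N}{2}\varphi_{\Init(\fd')}(m_{\fd'})$, which is too weak. To remove the factor $N$, I would instead argue more directly using a single application of Lemma \ref{lem_increasing} along the bounded ray $\fd_{N-1}$ together with a strengthened induction hypothesis: I will prove by induction on $N$ that $|x_N-x_0|\leqslant\tfrac12\bigl(\varphi_{\Init(\fd_N)}(m_{\fd_N})-\varphi_{\Init(\fd_0)}(m_{\fd_0})\bigr)$ when $\fd_0$ is additionally bounded, or $|x_N-x_0|\leqslant\tfrac12\varphi_{\Init(\fd_N)}(m_{\fd_N})$ in general. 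The step $(x_{N-1},y_{N-1})\to(x_N,y_N)$ contributes $|x_N-x_{N-1}|\leqslant\tfrac12(\varphi_{\Init(\fd_N)}(m_{\fd_{N-1}})-\varphi_{\Init(\fd_{N-1})}(m_{\fd_{N-1}}))$ by Lemma \ref{lem_increasing}; the inductive hypothesis applied to $\fd_0,\dots,\fd_{N-1}$ (with $\fd_{N-1}$ bounded, since it is a parent) gives $|x_{N-1}-x_0|\leqslant\tfrac12\varphi_{\Init(\fd_{N-1})}(m_{\fd_{N-1}})$; adding these and using the parent inequality $\varphi_{\Init(\fd_N)}(m_{\fd_{N-1}})\leqslant\varphi_{\Init(\fd_N)}(m_{\fd_N})$ yields $|x_N-x_0|\leqslant\tfrac12\varphi_{\Init(\fd_N)}(m_{\fd_N})$, closing the induction. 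I would write the formal proof in exactly this order: state the strengthened claim, verify $N=1$, then perform the one-step reduction, citing Lemma \ref{lem_increasing} for monotonicity and the definition of parent (Definition \ref{def_parent}) for the $\varphi$-inequalities.
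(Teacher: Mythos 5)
Your proof is correct and fleshes out the same telescoping/inductive argument that the paper's terse one-line proof (citing Definition \ref{def_descendant} and Lemma \ref{lem_increasing}) is alluding to: the key step is that $\varphi_{\Init(\fd_{j+1})}(m_{\fd_{j+1}})-\varphi_{\Init(\fd_j)}(m_{\fd_j})\geqslant 2|x_{j+1}-x_j|$, obtained from the second half of Lemma \ref{lem_increasing} along $\fd_j$ together with the parent inequality, and summing these gives the result at once, which is exactly what your final paragraph establishes. One very minor point to tidy up: in the base case you invoke $\varphi_{\Init(\fd)}(m_\fd)>0$ via condition~(2) of Definition \ref{def_scattering}, but that condition only constrains points of $|\fd|\cap U$, and $\Init(\fd)$ can lie on $\partial U$ (for the initial rays $\fd_n^\pm$ one has $\varphi_{s_n}(m_n^\pm)=0$); by continuity one gets $\varphi_{\Init(\fd)}(m_\fd)\geqslant 0$, which is all your argument actually uses.
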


\begin{proof}
It is a consequence of Definition 
\ref{def_descendant} and Lemma \ref{lem_increasing}.
\end{proof}

\begin{lem} \label{lem_all_descendant}
Let $\fD$ be a consistent scattering diagram on 
$U$ for 
$(M,\fg_{u,v})$ such that 
$\fD$ coincides with $S(\fD^\iin_{u,v})$ in restriction to 
$\bar{U}^\iin$, and let 
$\fd'$ be a ray of $\fD$. Then there exists 
a ray $\fd$ of $\fD$, which is an absolute ancestor, such that 
$\fd'$ is a descendant of $\fd$. 
\end{lem}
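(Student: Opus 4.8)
The plan is to prove Lemma~\ref{lem_all_descendant} by a descent argument on rays, using the "increasing $\varphi$" property of Lemma~\ref{lem_increasing} to guarantee termination. Starting from a ray $\fd'$ of $\fD$, I would construct a sequence of rays $\fd' = \fd^{(0)}, \fd^{(1)}, \fd^{(2)}, \dots$ of $\fD$ such that each $\fd^{(j+1)}$ is a parent of $\fd^{(j)}$, as long as $\fd^{(j)}$ has a parent. The claim is that this process must terminate, i.e.\ that after finitely many steps we reach a ray $\fd^{(N)}$ with no parent, which is then the required absolute ancestor, and $\fd'$ is a descendant of it by Definition~\ref{def_descendant}.

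First I would address termination. By Definition~\ref{def_parent}, if $\fd^{(j+1)}$ is a parent of $\fd^{(j)}$ then $\fd^{(j+1)}$ is bounded, its endpoint is $\Init(\fd^{(j)})$, and $\varphi_{\Init(\fd^{(j)})}(m_{\fd^{(j+1)}}) \leqslant \varphi_{\Init(\fd^{(j)})}(m_{\fd^{(j)}})$. Applying the second part of Lemma~\ref{lem_increasing} to the bounded ray $\fd^{(j+1)}$, whose initial point is $\Init(\fd^{(j+1)})$ and whose endpoint is $\Init(\fd^{(j)})$, we get
\[
\varphi_{\Init(\fd^{(j)})}(m_{\fd^{(j+1)}}) - \varphi_{\Init(\fd^{(j+1)})}(m_{\fd^{(j+1)}}) \geqslant 2|x_j - x_{j+1}| \geqslant 0 \,,
\]
writing $\Init(\fd^{(j)}) = (x_j, y_j)$. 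Hence the quantity $\varphi_{\Init(\fd^{(j)})}(m_{\fd^{(j)}})$ is (weakly) decreasing along the sequence: indeed $\varphi_{\Init(\fd^{(j+1)})}(m_{\fd^{(j+1)}}) \leqslant \varphi_{\Init(\fd^{(j)})}(m_{\fd^{(j+1)}}) \leqslant \varphi_{\Init(\fd^{(j)})}(m_{\fd^{(j)}})$. Combined with condition (2) of Definition~\ref{def_scattering} (all these values are positive) and the local finiteness condition (3) of Definition~\ref{def_scattering}, I would argue that only finitely many rays of $\fD$ can appear: all the $\fd^{(j)}$ have bounded $\varphi$-value, their initial points all lie in the region $|x_j - x_0| \leqslant \tfrac{1}{2}\varphi_{\Init(\fd^{(0)})}(m_{\fd^{(0)}})$ by Lemma~\ref{lem_x_bound} (which bounds the $x$-coordinate, and then $\varphi$-boundedness together with staying in $\bar U$ confines the $y$-coordinate to a compact set), so only finitely many rays of $\fD$ intersect this compact set with $\varphi$-value below the bound. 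Therefore the sequence cannot be infinite; the only way it stops is that some $\fd^{(N)}$ has no parent, i.e.\ is an absolute ancestor.

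The one subtlety I would want to handle carefully is that the sequence could \emph{a priori} cycle rather than extend infinitely, but the strict monotonicity coming from Lemma~\ref{lem_increasing} when $x_j \neq x_{j+1}$, together with the fact that a parent is strictly "earlier" (its endpoint is the initial point of the child, and it is a distinct ray), rules this out: distinct consecutive rays in the chain cannot be repeated since the $\varphi$-values are non-increasing and the supports are genuinely being traversed backwards toward the boundary. To make this airtight I would note that the chain of initial points $\Init(\fd^{(j)})$ moves, at each step, from $\Init(\fd^{(j)})$ back to $\Init(\fd^{(j+1)})$ along the support of $\fd^{(j+1)}$, and by Lemma~\ref{lem_x2+2y_increasing} the value $x^2 + 2y$ is strictly increasing along any ray in the direction away from the boundary parabola $\partial U$, hence strictly decreasing as we pass to parents; this strictly decreasing invariant forbids cycles outright and, being bounded below by $0$ (points stay in $\bar U$), forces the chain to be finite. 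The hypothesis that $\fD$ agrees with $S(\fD^\iin_{u,v})$ on $\bar U^\iin$ is not actually needed for this particular lemma's conclusion — it merely guarantees that the absolute ancestors are the initial rays $\fd^\pm_{n,\ell}$, which will be used in the following results — so I would state the proof using only consistency and the scattering-diagram axioms. The main obstacle is organizing the compactness/finiteness bookkeeping (Lemma~\ref{lem_x_bound} plus confinement of $y$) cleanly enough that the non-existence of an infinite descending chain is manifest; once that is in place the rest is immediate from the definitions.
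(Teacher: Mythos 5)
Your proof has a genuine gap, and it is exactly the point you decided to discard. You assert that the hypothesis that $\fD$ coincides with $S(\fD^\iin_{u,v})$ on $\bar U^\iin$ ``is not actually needed'' for this lemma. This is false, and the place where your argument fails is precisely the place where the paper invokes that hypothesis.

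The confinement you set up only produces a set that is compact in $\bar U$, not in $U$. Tracking the chain of parents $\fd^{(0)}, \fd^{(1)}, \dots$, the $x$-coordinates of $\Init(\fd^{(j)})$ are bounded by Lemma \ref{lem_x_bound}, and the quantity $x^2+2y$ at $\Init(\fd^{(j)})$ is strictly decreasing by Lemma \ref{lem_x2+2y_increasing}, hence bounded. That confines the initial points to a bounded region of $\bar U$, but nothing prevents $x^2+2y$ from tending to $0$ along the chain, i.e.\ nothing prevents the initial points from accumulating on the boundary parabola $\partial U$. Condition (3) of Definition \ref{def_scattering} gives local finiteness only for compact sets $K$ \emph{contained in $U$}, and indeed $S(\fD^\iin_{u,v})$ itself violates the corresponding statement on compacts of $\bar U$: near each boundary point $s_n$ there are infinitely many initial rays $\fd^{\pm}_{n,\ell}$, $\ell \geqslant 1$. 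So ``only finitely many rays of $\fD$ intersect this compact set with $\varphi$-value below the bound'' is simply unavailable once the chain gets close to $\partial U$; an infinite parent-chain accumulating on $\partial U$ is not ruled out by the scattering-diagram axioms and consistency alone.

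This is why the paper splits the argument. It sets $K_{\fd'}$ to be the truncation of your confining set by $x^2+2y \geqslant \frac14$, which is then genuinely compact in $U$, and observes that the remaining piece $\bar U_{\fd'} - K_{\fd'}$ is contained in $\bar U^\iin$. In the first regime condition (3) terminates the chain; in the second regime the hypothesis that $\fD = S(\fD^\iin_{u,v})$ on $\bar U^\iin$ is used, since by the explicit description of $S(\fD^\iin_{u,v})$ on $\bar U^\iin$ every ray there is already a descendant of an absolute ancestor. Your descent construction and your use of Lemmas \ref{lem_increasing}, \ref{lem_x2+2y_increasing}, \ref{lem_x_bound} otherwise mirror the paper's proof, and your remark about cycles is harmless (the strict decrease of $x^2+2y$ already forbids them), but the claim that the $\bar U^\iin$ hypothesis can be dropped is wrong, and the argument does not close without it.
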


\begin{proof}
Writing $\Init(\fd')=(x_{\fd'},y_{\fd'})$,
we denote 
\[ \bar{U}_{\fd'} \coloneq \{ (x,y) \in \bar{U}\,|
x^2+2y \leqslant x_{\fd'}^2+2y_{\fd'}\,,
|x-x_{\fd'}| \leqslant \frac{1}{2} 
\varphi_{\Init(\fd')}(m_{\fd'})\}\,.\]
According to Lemma \ref{lem_x2+2y_increasing} and 
Lemma \ref{lem_x_bound}, if $\fd$ is a ray of
$\fD$ such that $\fd'$ is a descendant of 
$\fd$, then we have 
$\Init(\fd) \in \bar{U}_{\fd'}$.

We denote 
\[ K_{\fd'} \coloneq \{ (x,y) \in \bar{U}\,|\,
\frac{1}{4} \leqslant x^2+2y \leqslant x_{\fd'}^2+2y_{\fd'}\,,
|x-x_{\fd'}| \leqslant \frac{1}{2} 
\varphi_{\Init(\fd')}(m_{\fd'})\}\,.\]
The set $K_{\fd'}$ is a compact subset of $U$ and we have $\bar{U}_{\fd'}-K_{\fd'} \subset \bar{U}^\iin$.

As $\fD$ is consistent, every ray of 
$\fD$ is either an absolute ancestor or 
is the descendant of an other ray.
So, if we assume by contradiction that 
$\fd'$ is not the descendant of an absolute ancestor, we can find an infinite sequence of rays 
$\fd_j$, $j \in \NN$, such that 
$\fd_0=\fd'$, and such that, for every $j \in \NN$, $\fd_{j+1}$ is a parent
of $\fd_j$. If there exists $j \in \NN$ such that $\Init(\fd_j) \in \bar{U}^\iin$, 
then we get a contradiction as by assumption $\fD$ coincides with $S(\fD^\iin)$ and every ray of 
$S(\fD^\iin)$ is a descendant of an absolute ancestor. If not, then we have 
$\Init(\fd_j) \in K_{\fd'}$ for every 
$j \in \NN$, and as $\varphi_{\Init(\fd_j)}(m_{\fd_j}) \leqslant \varphi_{\Init(\fd')}(m_{\fd'})$
for every $j \in \NN$ by Lemma \ref{lem_desc_phi}, we get a contradiction with condition (iii) of Definition \ref{def_scattering} of a scattering diagram.
\end{proof}

\begin{lem} \label{lem_absolute_ancestor}
Let $\fD$ be a consistent scattering diagram on $U$ for $(M,\fg_{u,v})$. Then, every absolute ancestor ray of $\fD$ intersects $\bar{U}^\iin$.
\end{lem}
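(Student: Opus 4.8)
The plan is to argue by contradiction: suppose $\fd'$ is an absolute ancestor of $\fD$ with $|\fd'|\cap\bar{U}^\iin=\emptyset$, set $\sigma_0:=\Init(\fd')$ and $v_0:=\varphi_{\sigma_0}(m_{\fd'})>0$, and derive a contradiction by showing that consistency of $\fD$ at $\sigma_0$ forces $\fd'$ to have a parent. First I would use the two facts recorded in \cref{section_initial_region}, namely that $\partial\bar U\subseteq\bar U^\iin$ and that $\{(x,y)\in\bar U\mid x^2+2y<\tfrac14\}\subseteq\bar U^\iin$: since $\sigma_0\in|\fd'|$, the hypothesis $|\fd'|\cap\bar U^\iin=\emptyset$ forces $\sigma_0\in U$ (and $x_0^2+2y_0\ge\tfrac14$). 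As $\sigma_0$ is an initial point of a ray, $\sigma_0\in\Sing(\fD)$, so by Definition \ref{def_consistency} the local scattering diagram $\fD_{\sigma_0}$ for $(M,\langle-,-\rangle,\varphi_{\sigma_0},\fg_{u,v})$ is consistent, and $\fd'$ contributes an outgoing local ray of $\fD_{\sigma_0}$ of class $m_{\fd'}$ with $\varphi_{\sigma_0}(m_{\fd'})=v_0$.

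The key input is then a local statement: in a consistent local scattering diagram, the class of every outgoing ray is a non-negative integer combination $\sum_i a_i m_i$ (with $\sum_i a_i\ge1$) of the classes $m_i$ of the ingoing rays. This follows from Lemma \ref{lem_scattering_forward} together with the Kontsevich--Soibelman construction underlying Proposition \ref{prop_local_consistent_completion}: restricting to the finitely many rays with $\varphi_{\sigma_0}\le v_0$ and rescaling $\varphi_{\sigma_0}$ by its minimal value on them, one may invoke the uniqueness in Proposition \ref{prop_local_consistent_completion} to identify $\fD_{\sigma_0}$ (up to this order) with the consistent completion of its set of ingoing rays, and every ray produced there has class obtained by repeatedly adding classes of ingoing rays. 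In particular $\fD_{\sigma_0}$ has at least one ingoing ray; choosing $i_0$ with $a_{i_0}\ge1$ in $m_{\fd'}=\sum_i a_i m_i$ and using $\varphi_{\sigma_0}(m_i)>0$ for every ingoing ray gives $\varphi_{\sigma_0}(m_{i_0})\le\sum_i a_i\varphi_{\sigma_0}(m_i)=v_0$. This ingoing ray of class $m_{i_0}$ is the local picture at $\sigma_0$ of a ray $\fe$ of $\fD$ with $\sigma_0\in|\fe|$, $\sigma_0\neq\Init(\fe)$, $m_\fe=m_{i_0}$, and $\varphi_{\sigma_0}(m_\fe)\le v_0=\varphi_{\sigma_0}(m_{\fd'})$. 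If $\sigma_0$ is the endpoint of $|\fe|$, then $\fe$ is bounded with endpoint $\Init(\fd')$, defines an ingoing local ray of $\fD_{\sigma_0}$, and $\varphi_{\sigma_0}(m_\fe)\le\varphi_{\sigma_0}(m_{\fd'})$, so $\fe$ is a parent of $\fd'$ in the sense of Definition \ref{def_parent} --- contradicting that $\fd'$ is an absolute ancestor.

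It remains to treat the case where $\sigma_0$ lies in the \emph{interior} of $|\fe|$. Here $\sigma_1:=\Init(\fe)$ satisfies $x(\sigma_1)^2+2y(\sigma_1)<x_0^2+2y_0$ by Lemma \ref{lem_x2+2y_increasing} and $\varphi_{\sigma_1}(m_\fe)\le\varphi_{\sigma_0}(m_\fe)\le v_0$ by Lemma \ref{lem_increasing}, so one can iterate the argument at $\sigma_1$ (following a parent whenever one exists, and otherwise repeating the step above), obtaining an infinite sequence of pairwise distinct rays $\fd'=\fd^{(0)},\fd^{(1)},\dots$ of $\fD$ with initial points $\sigma_j$ of strictly decreasing value $c_j:=x(\sigma_j)^2+2y(\sigma_j)$, with $\varphi_{\sigma_j}(m_{\fd^{(j)}})\le v_0$, with $|x_j-x_{j+1}|\le\tfrac12 v_0$ (Lemma \ref{lem_increasing}) and with the estimate $c_j-c_{j+1}\ge(x_j-x_{j+1})^2$ coming from the computation $\tfrac{d}{dt}(x(t)^2+2y(t))=\varphi_{\sigma(t)}(m)$ in the proof of Lemma \ref{lem_x2+2y_increasing}. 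The plan is to conclude that this chain stays in a compact subset of $U$ and so contradicts the local finiteness condition~(3) of Definition \ref{def_scattering} (cf.\ the compactness argument in Lemma \ref{lem_all_descendant} and the estimate of Lemma \ref{lem_x_bound}). I expect this last point to be the main obstacle: one must rule out that the chain drifts off to infinity in the $x$-direction while $c_j$ stays bounded away from $0$. The cleanest route is to replace $\fD$ at the outset by the scattering diagram obtained by subdividing every ray at each point of $\Sing(\fD)$ in the interior of its support; this has the same local pictures $\fD_\sigma$, hence is still consistent, and in it no ray passes through the initial point of another ray, so that the "interior" case above never occurs and the contradiction in the previous paragraph applies directly --- the remaining work being to verify that this subdivision does not lose the conclusion for $\fd'$.
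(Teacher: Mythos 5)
Your proposal follows the same outline as the paper's one-sentence proof: assume $\Init(\fd')\notin\bar U^\iin$, so $\Init(\fd')\in U$, and use consistency of the local scattering diagram $\fD_{\Init(\fd')}$ to produce ingoing local rays and hence a parent of $\fd'$. Where you differ is that you actually try to make the step ``ingoing local ray $\Rightarrow$ parent'' precise, and in doing so you correctly surface a real subtlety that the paper's proof silently elides: an ingoing local ray of $\fD_{\sigma_0}$ may come from a ray $\fe$ of $\fD$ that passes \emph{through} $\sigma_0$ in the interior of $|\fe|$, not from a ray that terminates there, and such an $\fe$ is not a parent in the sense of Definition \ref{def_parent} (which explicitly requires $\fd$ bounded with endpoint equal to $\Init(\fd')$). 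Your ``conservation law'' for consistent local scattering diagrams, deduced from Lemma \ref{lem_scattering_forward} and the Kontsevich--Soibelman construction behind Proposition \ref{prop_local_consistent_completion}, is the right local ingredient, and the rescaling device to reach the $\varphi\ge 1$ hypothesis is reasonable.

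The gap you flag at the end is genuine, and the proposed patch does not close it. Two problems. First, subdividing every ray at each interior singular point produces a collection that violates clause (2) of Definition \ref{def_normalized}: the two pieces of a subdivided ray share the same $H$ and abut endpoint-to-initial-point, so the result is no longer a scattering diagram in the sense of Definition \ref{def_scattering} and none of the definitions (parent, descendant, consistency, $\fD_\sigma$) can be applied to it directly. Second, and more basically, the paper's construction of $\fD_\sigma$ assigns \emph{both} an ingoing and an outgoing local ray (with the same $H_\fd$) to every ray $\fd$ with $\sigma\in|\fd|$ and $\sigma\ne\Init(\fd)$ --- it does not single out the case where $\sigma$ is the endpoint of $|\fd|$ --- so subdividing $\fe$ at $\sigma$ into $\fe_1$ (ending at $\sigma$) and $\fe_2$ (starting at $\sigma$) does not actually remove the paired ingoing/outgoing contribution and does not obviously change the local picture in the way you want. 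Your alternative route through the chain $(\sigma_j)_j$ is also left open exactly where you say it is: strict decrease of $c_j=x_j^2+2y_j$ and the estimate $c_j-c_{j+1}\ge(x_j-x_{j+1})^2$ only show $\sum(x_j-x_{j+1})^2<\infty$, which does not confine $x_j$ to a bounded interval, so local finiteness (condition (3) of Definition \ref{def_scattering}) cannot be invoked as in Lemma \ref{lem_all_descendant} without an additional argument pinning down the $x$-coordinates. In fairness, the paper's own proof is simply the assertion ``consistency of $\fD$ at $\Init(\fd)$ contradicts the absolute-ancestor assumption'' and does not address any of this; your write-up is more honest about where the real work is, but as it stands the step from an ingoing local ray at $\sigma_0$ to a parent of $\fd'$ is not established.
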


\begin{proof}
Let $\fd$ be an absolute ancestor ray of 
$\fD^{\PP^2}_{u,v}$. We claim that $\Init(\fd) \in \bar{U}^\iin$.
Indeed, assume by contraction that $\Init(\fd) \notin \bar{U}^\iin$.
Then we have in particular $\Init(\fd) \in U$ and consistency 
of $\fD$ at the point 
$\Init(\fd)$ contradicts the assumption that $\fd$ is an absolute ancestor ray.
\end{proof}

\begin{prop} \label{prop_criterion}
Let $\fD$ be a consistent scattering diagram on $U$ for
$(M,\fg_{u,v})$ which coincides with $S(\fD_{u,v}^\iin)$
in restriction to $\bar{U}^\iin$.
Then, we have
$\fD=S(\fD_{u,v}^\iin)$.
\end{prop}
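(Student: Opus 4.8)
The plan is to show that $\fD$ and $S(\fD_{u,v}^\iin)$ have exactly the same rays, by an induction on the level of descendance. Both scattering diagrams are consistent (for $\fD$ by hypothesis, for $S(\fD_{u,v}^\iin)$ by Proposition \ref{prop_consistent_completion}) and they agree in restriction to $\bar{U}^\iin$ by hypothesis. By Lemma \ref{lem_all_descendant} applied to each of the two diagrams, every ray of $\fD$ is a descendant of some absolute ancestor of $\fD$, and every ray of $S(\fD_{u,v}^\iin)$ is a descendant of some absolute ancestor of $S(\fD_{u,v}^\iin)$. By Lemma \ref{lem_absolute_ancestor}, every absolute ancestor of either diagram meets $\bar{U}^\iin$; combined with the hypothesis that the two diagrams coincide on $\bar{U}^\iin$, this will show that $\fD$ and $S(\fD_{u,v}^\iin)$ have the same absolute ancestor rays.

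First I would make precise what ``same initial data'' buys us: the absolute ancestors of $S(\fD_{u,v}^\iin)$ are exactly the initial rays $\fd_{n,\ell}^{\pm}$ (by construction of $S(\fD_{u,v}^\iin)$ in \cref{section_initial}, each non-initial ray is produced at a singular point of a previous stage, hence has a parent), and all of them lie in $\bar{U}^\iin$. An absolute ancestor $\fd$ of $\fD$ must, by Lemma \ref{lem_absolute_ancestor}, have $\Init(\fd) \in \bar{U}^\iin$, and near that point $\fD$ looks like $S(\fD_{u,v}^\iin)$; since $\fd$ has no parent it cannot be one of the rays of $S(\fD_{u,v}^\iin)$ that is ``continued'' through $\bar U^\iin$ from outside, so it must be one of the $\fd_{n,\ell}^{\pm}$. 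Conversely each $\fd_{n,\ell}^{\pm}$ is a ray of $\fD$ (as it lies in $\bar{U}^\iin$) and is an absolute ancestor there. So the sets of absolute ancestors coincide.

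Next comes the induction. Suppose all rays of $\fD$ and of $S(\fD_{u,v}^\iin)$ that are descendants of level $\leqslant N$ coincide. Let $\fd'$ be a ray of $\fD$ which is a descendant of level $\leqslant N+1$; it has a parent $\fd$, which is a descendant of level $\leqslant N$, hence by the induction hypothesis is also a ray of $S(\fD_{u,v}^\iin)$, with the same support, class and attached element $H$. The ray $\fd'$ is produced as an outgoing ray of the local scattering diagram $\fD_{\sigma}$ at $\sigma = \Init(\fd')$ (the endpoint of $\fd$), which by consistency of $\fD$ is a consistent local scattering diagram. The ingoing rays of $\fD_{\sigma}$ with $\varphi_\sigma(m_{\fd_a})\leqslant \varphi_\sigma(m_{\fd'})$ are precisely the parents of $\fd'$ together with continuations through $\sigma$ of rays passing through $\sigma$ — and here is where I must argue that all of those ingoing data are already determined by level-$\leqslant N$ rays: a ray passing through $\sigma$ that is not an ancestor of $\fd'$ has a parent with strictly larger $\varphi_\sigma$-depth is impossible, so the ``relevant'' ingoing part of $\fD_\sigma$ up to the $\varphi_\sigma$-order of $m_{\fd'}$ agrees with that of $S(\fD_{u,v}^\iin)_\sigma$. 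Then by the uniqueness part of Proposition \ref{prop_local_consistent_completion} (local consistent completion is unique), the outgoing rays of $\fD_\sigma$ and of $S(\fD_{u,v}^\iin)_\sigma$ of class $m_{\fd'}$ coincide, so $\fd'$ is a ray of $S(\fD_{u,v}^\iin)$; symmetrically for a ray of $S(\fD_{u,v}^\iin)$. Taking the union over all $N$, and using that by Lemma \ref{lem_all_descendant} every ray of either diagram is some finite-level descendant, gives $\fD = S(\fD_{u,v}^\iin)$.

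The main obstacle I anticipate is the bookkeeping in the induction step: carefully checking that when one runs the local consistency algorithm at $\sigma=\Init(\fd')$, the collection of ingoing rays of $\fD_\sigma$ that can influence the outgoing ray of class $m_{\fd'}$ (i.e.\ those with $\varphi_\sigma$-value $\leqslant \varphi_\sigma(m_{\fd'})$, the ones that matter modulo $\fm_k$ for the relevant $k$) are all descendants of level $\leqslant N$, so that the induction hypothesis genuinely applies. This requires using Definition \ref{def_parent} (in particular the inequality $\varphi_{\Init(\fd')}(m_\fd)\leqslant\varphi_{\Init(\fd')}(m_{\fd'})$ built into the notion of parent) together with the monotonicity Lemma \ref{lem_increasing} to control $\varphi_\sigma$-values along chains of descendance, and a finiteness statement (condition (3) of Definition \ref{def_scattering}) guaranteeing the local picture is order-by-order finite. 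Once this is set up, the conclusion is immediate from the uniqueness in Proposition \ref{prop_local_consistent_completion}.
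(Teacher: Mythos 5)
Your proposal is correct and follows essentially the same route as the paper's own proof: identify the absolute ancestors using Lemma \ref{lem_absolute_ancestor} and the coincidence on $\bar{U}^\iin$, then induct on descendance level using the uniqueness statement in Proposition \ref{prop_local_consistent_completion}, and close using Lemma \ref{lem_all_descendant}. The one ingredient the paper's proof cites that you do not name explicitly is the normalization condition (condition (1) of Definition \ref{def_scattering}), which is what guarantees that agreement of the local pictures $\fD_\sigma$ and $S(\fD^\iin_{u,v})_\sigma$ actually pins down the global rays through $\sigma$ rather than just their sum.
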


\begin{proof}
We prove by induction on $k \in \Z_{\geqslant 0}$ that $\fD$ coincides with 
$S(\fD_{u,v}^\iin)$ in restrictions to the set of rays $\fd$ such that $\varphi_{\Init(\fd)} (m_\fd) \leqslant k$. 

We first show the result for $k=0$. 
A ray $\fd$ with $\varphi_{\Init(\fd)} (m_\fd) =0$ is necessarily an absolute ancestor: if not, a parent $\fd'$ of $\fd$ would have to satisfy $\varphi_{\Init(\fd')}(m_{\fd'})<0$ by Lemma \ref{lem_desc_phi}, and this would be a contradiction with (ii) of the Definition \ref{def_scattering} of a scattering diagram on $U$.
By Lemma \ref{lem_absolute_ancestor}, every absolute ancestor ray of 
$\fD$ intersects $\bar{U}^\iin$, and so the assumption that $\fD$ and $S(\fD_{u,v}^\iin)$ coincide in restriction to
$\bar{U}^\iin$  implies that $\fD$ and $S(\fD_{u,v}^\iin)$ have the same rays $\fd$ with $\varphi_{\Init(\fd)} (m_\fd) =0$.

Before treating the induction step, we remark that a similar argument also shows that for any point $\sigma \in U$ around which we have a non-trivial local scattering, and for any ray $\fd$ of $\fD$ containing $\sigma$, we have $\varphi_{\sigma}(m_\fd) \geqslant 1$. Indeed, by Lemma \ref{lem_all_descendant}, every ray $\fd$ of $\fD$ is descendant from absolute ancestors, and by Lemma \ref{lem_desc_phi}, $\varphi(m_\fd)$ is increasing from ancestors to descendants, so it is enough to prove the result for points $\sigma$ where we have a non-trivial local scattering of absolute ancestors of $\fD$.
But by Lemma \ref{lem_absolute_ancestor} and the assumption that $\fD$ and $S(\fD_{u,v}^\iin)$ coincide in restriction to $\bar{U}^\iin$, this follows from the same property for $S(\fD_{u,v}^\iin)$, which holds by the explicit description of 
$S(\fD_{u,v}^\iin)$ (see Lemmas \ref{lem_first_scattering}-\ref{lem_varphi_first-scattering}).

We can now treat the induction step. Assume that the result holds for $k$. Let $\fd$ be a ray of $\fD$ with $k<\varphi_{\Init(\fd)}(m_\fd)\leqslant k+1$. Then $\Init(\fd) \in U$ (else $\fd$ would be an initial ray of $S(\fD_{u,v}^\iin)$ with
$\varphi_{\Init(\fd)}(m_\fd)=0$), and so by the previous remark, we have $\varphi_{\Init(\fd)}(m_{\fd'}) \geqslant 1$ for every ray $\fd'$ of $\fD$ containing $\Init(\fd)$. Arguing as in the proof of Proposition \ref{prop_consistent_completion}, it follows that the ray $\fd$ is uniquely determined by the local consistency of $\fD$ around $\Init(\fd)$ from the rays 
$\fd'$ of $\fD$ containing $\Init(\fd)$ and satisfying $\varphi_{\Init(\fd)}(m_{\fd'}) \leqslant k$. The same is true for $S(\fD_{u,v}^\iin)$. By Lemma \ref{lem_increasing}, these rays $\fd'$ satisfy $\varphi_{\Init(\fd')}(m_{\fd'})<
\varphi_{\Init(\fd)}(m_{\fd'})\leqslant k$, and so by the induction hypothesis are identical for $\fD$ and $S(\fD^\iin)$.
\end{proof}

\subsection{Proof of the main result}
\label{subsection_main_result}
In \cref{section_scattering_final} we have defined $S(\fD^\iin_{u,v})$ a scattering diagram 
on $U$ for $(M, \fg_{u,v})$, in a completely algorithmic way: as consistent completion of an explicitly given initial scattering diagram $\fD^\iin_{u,v}$.
In \cref{section_scattering_from_stability} we have defined 
$\fD_{u,v}^{\PP^2}$, a scattering diagram on $U$ for 
$(M, \fg_{u,v})$, in terms of intersection Hodge polynomials 
of moduli spaces of Bridgeland semistable objects in $\D^b(\PP^2)$.
Our main result, stated as Theorem
\ref{main_thm} in the Introduction, is that these two scattering diagrams coincide:

\begin{thm}\label{thm_main_precise}
We have the equality
$\fD^{\PP^2}_{u,v}=S(\fD^\iin_{u,v})$
of scattering diagrams on $U$ for 
$(M, \fg_{u,v})$.
\end{thm}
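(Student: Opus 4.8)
The plan is to deduce Theorem \ref{thm_main_precise} directly from the criterion established in \cref{section_criterion} (Proposition \ref{prop_criterion}) once the two inputs proved earlier are in place. Proposition \ref{prop_criterion} says: if $\fD$ is a consistent scattering diagram on $U$ for $(M,\langle-,-\rangle,\fg_{u,v})$ which agrees with $S(\fD^\iin_{u,v})$ in restriction to $\bar U^\iin$, then $\fD = S(\fD^\iin_{u,v})$. So it suffices to verify that $\fD = \fD^{\PP^2}_{u,v}$ meets both hypotheses. The first hypothesis, consistency of $\fD^{\PP^2}_{u,v}$, is exactly Theorem \ref{thm_consistency}, proved in \cref{section_consistency_proof} as a consequence of the Kontsevich--Soibelman/Joyce--Song wall-crossing formula together with the Li--Zhao $\Ext^2$-vanishing and the Meinhardt intersection-cohomology/Donaldson--Thomas comparison. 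The second hypothesis, that $\fD^{\PP^2}_{u,v}$ and $S(\fD^\iin_{u,v})$ coincide in restriction to $\bar U^\iin$, is exactly Theorem \ref{thm_coincide_initial}, proved in \cref{section_initial_data_proof} via the quiver/exceptional-collection analysis showing that near the boundary parabola the only objects with $\Rea Z_\gamma^\sigma = 0$ are the line bundles $\cO(n)$ and their shifts $\cO(n)[1]$.

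First I would record that $\fD^{\PP^2}_{u,v}$ is indeed a scattering diagram on $U$ for $(M,\langle-,-\rangle,\fg_{u,v})$ in the sense of Definition \ref{def_scattering}; this is the proposition proved in \cref{section_scattering_from_stability}, using the support property of Bridgeland stability conditions to check the local-finiteness condition (3). Then I would invoke Theorem \ref{thm_consistency} to get that $\fD^{\PP^2}_{u,v}$ is consistent, and Theorem \ref{thm_coincide_initial} to get that $\fD^{\PP^2}_{u,v}$ restricts to $S(\fD^\iin_{u,v})$ on $\bar U^\iin$. At that point both hypotheses of Proposition \ref{prop_criterion} are satisfied with $\fD = \fD^{\PP^2}_{u,v}$, and the conclusion $\fD^{\PP^2}_{u,v} = S(\fD^\iin_{u,v})$ follows immediately. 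The proof is therefore essentially a one-line assembly of the three prior results; there is no new computation to carry out in this section.

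The only point requiring a word of care is the logical structure of Proposition \ref{prop_criterion} itself, which I would not reprove here but whose mechanism is worth recalling so the assembly is transparent: it runs by first matching the absolute ancestor rays of $\fD$ and $S(\fD^\iin_{u,v})$ (via Lemma \ref{lem_absolute_ancestor}, every absolute ancestor ray meets $\bar U^\iin$, where the two diagrams agree), and then matching all descendant rays of each level $\leqslant N$ by induction on $N$, using the uniqueness of local consistent completions (Proposition \ref{prop_local_consistent_completion}) together with the normalization condition; Lemma \ref{lem_all_descendant} guarantees that every ray of $\fD$ arises this way, which needs precisely the consistency of $\fD$ and the agreement on $\bar U^\iin$. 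Since all of this was already established abstractly in \cref{section_criterion}, the substantive content of Theorem \ref{thm_main_precise} lies entirely in Theorems \ref{thm_consistency} and \ref{thm_coincide_initial}; the ``main obstacle'' was cleared in those two sections (the consistency proof, resting on the wall-crossing formula and the Li--Zhao smoothness result, being the genuinely hard part), and the present section merely combines them.

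\begin{proof}[Proof of Theorem \ref{thm_main_precise}]
By the proposition of \cref{section_scattering_from_stability}, $\fD^{\PP^2}_{u,v}$ is a scattering diagram on $U$ for $(M,\langle-,-\rangle,\fg_{u,v})$. By Theorem \ref{thm_consistency}, $\fD^{\PP^2}_{u,v}$ is consistent. By Theorem \ref{thm_coincide_initial}, $\fD^{\PP^2}_{u,v}$ and $S(\fD^\iin_{u,v})$ coincide in restriction to $\bar U^\iin$. Hence $\fD^{\PP^2}_{u,v}$ satisfies the hypotheses of Proposition \ref{prop_criterion}, and we conclude $\fD^{\PP^2}_{u,v} = S(\fD^\iin_{u,v})$.
\end{proof}
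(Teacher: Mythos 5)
Your proof is correct and follows exactly the same route as the paper: invoke Proposition \ref{prop_criterion} and feed in Theorem \ref{thm_consistency} for consistency and Theorem \ref{thm_coincide_initial} for agreement on $\bar{U}^\iin$. The additional commentary you give about the internal mechanism of Proposition \ref{prop_criterion} is accurate but not needed for the proof itself.
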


\begin{proof}
According to Proposition \ref{prop_criterion}, it is enough to show that the scattering diagram $\fD^{\PP^2}_{u,v}$ is consistent and coincides with 
$S(\fD^\iin_{u,v})$ in restriction to 
$\bar{U}^\iin$. 
The scattering diagram 
$\fD_{u,v}^{\PP^2}$ is consistent by Theorem 
\ref{thm_consistency}
and coincides with $S(\fD^\iin_{u,v})$ in 
restriction to 
$\bar{U}^\iin$ by Theorem \ref{thm_coincide_initial}. 
\end{proof}

\begin{thm}\label{thm_main_precise_limit}
We have the equality 
$\fD_{q^-}^{\PP^2}=S(\fD^\iin_{q^-})$
of scattering diagrams on $U$ for 
$(M,\fg_{q^-})$.
Similarly, we have the equality
$\fD_{\cl^-}^{\PP^2}=S(\fD^\iin_{\cl^-})$
of scattering diagrams on $U$ for 
$(M,\fg_{\cl^-})$.
\end{thm}

\begin{proof}
This follows from Theorem \ref{thm_main_precise} by the specialization $u=v=q^{\frac{1}{2}}$ and the semiclassical limit $q^{\frac{1}{2}}
\rightarrow 1$.
\end{proof}

\begin{thm}\label{thm_main_precise_sign}
We have the equality 
$\fD_{q^+}^{\PP^2}=S(\fD^\iin_{q^+})$
of scattering diagrams on $U$ for 
$(M,\fg_{q^+})$.
Similarly, we have the equality
$\fD_{\cl^+}^{\PP^2}=S(\fD^\iin_{\cl^+})$
of scattering diagrams on $U$ for 
$(M,\fg_{\cl^+})$.
\end{thm}

\begin{proof}
It will follow from Theorem \ref{thm_main_precise_limit}. We have to compare the Lie algebras $\fg_{q^-}$ and 
$\fg_{q^+}$. The Lie bracket of $\fg_{q,-}$ is the commutator of the associative algebra 
$A_{q^-}=\bigoplus_{m \in M} \Q(q^{\pm \frac{1}{2}})$, where 
$z^m \cdot z^{m'} \coloneq 
(-1)^{\langle m,m'\rangle}
q^{\frac{\langle m,m'\rangle}{2}}
z^{m+m'}$. Similarly, the 
Lie bracket of $\fg_{q,-}$ is the commutator of the associative algebra 
$A_{q^+}=\bigoplus_{m \in M} \Q(q^{\pm \frac{1}{2}})$, where 
$z^m \cdot z^{m'} \coloneq 
q^{\frac{\langle m,m'\rangle}{2}}
z^{m+m'}$.

We consider $\sigma \colon M=\Z^2 \mapsto \{\pm 1\}$, $(a,b) \mapsto (-1)^{ab+a+b}$.
One checks easily (or see
\cite[Lemma 8.3]{bousseau2018quantum_tropical}) that 
$\sigma$ is a quadratic refinement, in the sense that 
\[\sigma(m+m')=
(-1)^{\langle m,m'\rangle} \sigma(m)\sigma(m')\]
for every $m, m'\in M$.
It follows that $F_\sigma \colon A_{q^-} 
\mapsto A_{q^+}$, 
$z^m \mapsto \sigma(m)z^m$ is an isomorphism of associative algebras.
If $\fD$ is a scattering diagram for 
$\fg_{q^-}$, of rays $\fd=(|\fd|,H_\fd)$, we denote by $F_\sigma(\fD)$ the scattering diagram for $\fg_{q^+}$ of rays 
$\fd=(|\fd|,F_\sigma(H_\fd))$. As $F_\sigma$ is an isomorphism, $F_\sigma(\fD)$ is consistent if and only if $\fD$ is consistent.

From the explicit description of the scattering diagrams 
$\fD^\iin_{q^-}$ and $\fD^\iin_{q^+}$ in Definitions \ref{def_scattering_q_-} and 
\ref{def_scattering_q_+}, we have 
$F_\sigma(\fD^\iin_{q^-})=\fD^\iin_{q^+}$
(indeed $\sigma(\ell m_n^+)
=\sigma(\ell m_n^-)=(-1)^{\ell(n+n+1)}
=(-1)^{\ell}$ for every $n \in \Z$ and 
$\ell \geqslant 1$).
By uniqueness of the consistent completion given by Proposition \ref{prop_consistent_completion}, we deduce that $F_\sigma(S(\fD^\iin_{q^-}))=S(\fD^\iin_{q^+})$. 

On the other hand, we claim that 
$F_\sigma(\fD^{\PP^2}_{q^-})
=\fD^{\PP^2}_{q^+}$. From the explicit description given by Definition
\ref{def_scattering_stability_q},
it is enough to show that, for every 
$\gamma \in \Gamma$, we have 
$(-1)^{(\gamma,\gamma)}=\sigma(m_\gamma)$.
But for $\gamma=(r,d,\chi)$, we have by Definition \ref{def_bilinear_form},
$(\gamma,\gamma)=-3dr-r^2-d^2+2r\chi$, so
$(-1)^{(\gamma,\gamma)}
=(-1)^{dr+r+d}$, and as $m_\gamma=(r,-d)$
by Definition \ref{def_m_gamma}, we also have $\sigma(m_\gamma)=(-1)^{dr+r+d}$.

Thus, we get the equality $\fD_{q^+}^{\PP^2}=S(\fD^\iin_{q^+})$ by applying $F_\sigma$ to the equality 
$\fD_{q^-}^{\PP^2}=S(\fD^\iin_{q^-})$
given by Theorem \ref{thm_main_precise_limit}. The equality 
$\fD_{\cl^+}^{\PP^2}=S(\fD^\iin_{\cl^+})$ 
follows by taking the semiclassical limit
$q^{\frac{1}{2}} \rightarrow 1$.
\end{proof}

\section{Applications to moduli spaces of Gieseker semistable sheaves}
\label{section_application_gieseker}
In \cref{section_algorithm} we show how Theorem \ref{thm_main_precise} can be used to obtain an algorithm
computing intersection Hodge numbers of moduli spaces $M_\gamma$ of Gieseker semistable sheaves.
As an application, we prove Theorem 
\ref{thm_hodge}, stated as Theorem 
\ref{thm_hodge_intro}, that is, 
the fact that these intersection Hodge 
numbers are concentrated in bidegree $(p,p)$. In \cref{section_real_moduli} we prove results on the cohomology of the moduli spaces $M_\gamma$ viewed as real algebraic varieties.
In \cref{section_tree_decomposition} we explain that our scattering algorithm leads naturally to a decomposition indexed by trees of the Poincar\'e polynomial of $M_\gamma$ and we give explicit examples.

\subsection{Algorithm}
\label{section_algorithm}
 
For every $\gamma \in \Gamma$, we denote by
$M_\gamma$ the moduli space of $S$-equivalence classes of Gieseker semistable sheaves on 
$\PP^2$ of class $\gamma$. 
The moduli space $M_\gamma$ is a projective scheme, singular in general, which 
can be constructed by geometric invariant theory, see for example \cite{huybrechts2010geometry}.
The moduli space $M_\gamma^\st$ of Gieseker stable sheaves is a quasiprojective scheme, open in 
$M_\gamma$. It follows directly from Serre duality and from the definition
of Gieseker semistability that $\Ext^2(E,E)=0$ for every Gieseker semistable sheaf $E$
with $\gamma(E) \notin \Gamma^0$, and so the moduli space $M_\gamma^\st$ of stable object is smooth
if $\gamma \notin \Gamma^0$. In fact the moduli space $M_\gamma^\st$ is also smooth if 
$\gamma \in \Gamma^0$, as it is $\PP^2$ if $\gamma=(0,0,1)$, and the empty set else.

As in  \cref{section_intersection_invariants} for $M_\gamma^\sigma$,
we define intersection Hodge numbers $Ih^{p,q}(M_\gamma)$, 
intersection Betti numbers $Ib_k(M_\gamma)$, and intersection Euler characteristics
$Ie^+(M_\gamma)$. If $M_\gamma=M_\gamma^{\st}$, which happens for example if 
$\gamma$ is a primitive element of the lattice $\Gamma$, then $M_\gamma$
is a smooth projective varieties and the intersection Hodge numbers, Betti numbers,
Euler characteristics, are the usual Hodge numbers, Betti numbers, Euler characteristics
of $M_\gamma$.

The following Lemma
gives the relation between the 
moduli spaces $M_\gamma^\sigma$ of Bridgeland-semistable objects 
and the moduli spaces $M_\gamma$ of Gieseker semistable
sheaves. It is a precise version of the general idea that 
the notion of stability in the sense of Gieseker can be recovered as an asymptotic version
of Bridgeland stability condition.

\begin{lem} \label{lem_comparison_bridgeland_gieseker}
We fix $\gamma=(r,d,\chi)\in \Gamma$
such that $r \geqslant 0$.
Then:
\begin{itemize}
\item[(i)] If $r=0$ and $d=0$, we have $M_\gamma^{\sigma}=M_\gamma$
and $M_\gamma^{\sigma-\st}=M_\gamma^\st$ for every $\sigma \in U$.
\item[(ii)] If $r=0$ and $d>0$, we have $M_\gamma^{\sigma}
=M_\gamma$ and $M_\gamma^{\sigma-\st}=M_\gamma^\st$ for $\sigma=(x,y) \in U$ with $y$ large enough.
\item[(iii)] If $r>0$, we have  
$M_\gamma^{\sigma}=M_\gamma$ and $M_\gamma^{\sigma-\st}=M_\gamma^\st$ for 
$\sigma=(x,y) \in U$, 
$x < \frac{d}{r}$ and $y$ large enough.
\end{itemize}
\end{lem}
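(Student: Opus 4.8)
The plan is to reduce everything to the classical "large volume limit" comparison between Bridgeland stability on $\D^b(\PP^2)$ and Gieseker stability, which is already established in the literature (e.g. \cite{MR2376815, MR2852118, MR2998828}). Recall that in the usual coordinates $(s,t) \in \HH$, as $t \to +\infty$ with $s$ fixed, the stability condition $(\tilde{Z}^{(s,t)}, \Coh^{\texttt{\#}s}(\PP^2))$ converges to Gieseker stability in the sense that an object of $\D^b(\PP^2)$ is $(\tilde{Z}^{(s,t)}, \Coh^{\texttt{\#}s}(\PP^2))$-semistable for $t$ large (depending on the class $\gamma$) if and only if it is, up to shift, a Gieseker semistable sheaf of class $\gamma$. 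The first step is therefore to translate this statement through the change of variables $\psi \colon (x,y) \mapsto (x,\sqrt{x^2+2y})$ from Proposition \ref{prop_stability}: since $Z^{(x,y)} = \tilde Z^{\psi(x,y)}$ and $\Coh^{\texttt{\#}x}(\PP^2)$ is the heart for both, the locus "$t$ large" corresponds to "$x$ fixed, $\sqrt{x^2+2y}$ large", i.e. "$x$ fixed, $y$ large". This gives for free that for any $\gamma$ there is $y_0(\gamma, x)$ with $M_\gamma^\sigma = M_\gamma$ and $M_\gamma^{\sigma-\st} = M_\gamma^\st$ for $\sigma = (x,y)$, $y > y_0$, provided the heart $\Coh^{\texttt{\#}x}(\PP^2)$ is the right one for detecting $\gamma$.

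The second step is to pin down, case by case, which values of $x$ make the heart $\Coh^{\texttt{\#}x}(\PP^2)$ adapted to the given class $\gamma = (r,d,\chi)$. For $r > 0$: a Gieseker semistable sheaf $E$ of positive rank and slope $\mu(E) = d/r$ lies in $\Coh^{\texttt{\#}x}(\PP^2) = \Coh(\PP^2)$ precisely when it is in the torsion-free part $\Coh^{>x}(\PP^2)$, which requires $x < \mu(E) = d/r$ — this is exactly the hypothesis $x < d/r$ in the statement; then the large-$t$ (hence large-$y$) limit argument applies verbatim. For $r = 0$, $d > 0$: a pure one-dimensional sheaf is torsion, hence automatically lies in $\Coh^{>x}(\PP^2) \subset \Coh^{\texttt{\#}x}(\PP^2)$ for every $x$, so no constraint on $x$ is needed and only "$y$ large" is required. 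For $r = 0$, $d = 0$: here $\gamma \in \Gamma^0$, the class of a zero-dimensional sheaf, and by Lemma 6.3 and Lemma 10.1 of \cite{MR2376815} (already invoked in \cref{section_moduli_invariants}) one has $M_\gamma^\sigma = M_\gamma$ and $M_\gamma^{\sigma-\st} = M_\gamma^\st$ for \emph{all} $\sigma \in U$ — indeed $M_{(0,0,1)}^\sigma = \PP^2 = M_{(0,0,1)}$ independently of $\sigma$, and for $\chi \neq \pm 1$ (not primitive, or $|\chi|>1$) the stable locus is empty while the semistable moduli space is a symmetric product of $\PP^2$, again $\sigma$-independent; one verifies the general case by noting that $Z^\sigma_{(0,0,\chi)} = -\chi$ has fixed phase $\pi$ for all $\sigma$, so no wall for $\gamma \in \Gamma^0$ ever meets $U$.

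The third, essentially bookkeeping, step is to check that "$\sigma$-semistable object of $\D^b(\PP^2)$ of class $\gamma$" with $\gamma$ as above really means "Gieseker semistable sheaf of class $\gamma$" and not some shift or complex: this follows because for $r \geqslant 0$, $d \geqslant 0$ and the chosen range of $x$, any object of $\Coh^{\texttt{\#}x}(\PP^2)$ of class $\gamma$ with $\Ima Z^\sigma_\gamma \geqslant 0$ and phase approaching $1/2$ (as forced by being semistable for $y$ large along $L_\gamma$ or simply in the limit) is forced to be an honest sheaf — its $\cH^{-1}$ would contribute negatively to the rank or to $\Ima \tilde Z$, which is incompatible with $r \geqslant 0$ in the limit; one then matches the HN/Gieseker filtrations. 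I expect the main obstacle to be a clean, self-contained justification of this last identification in the rank-zero degree-zero case and the precise dependence of $y_0$ on $\gamma$ and $x$; the cleanest route is to avoid re-deriving anything and instead cite \cite{MR2376815, MR2852118} for the large-volume limit and \cite[Lemma 6.3, Lemma 10.1]{MR2376815} for $\Gamma^0$, transporting the statements along $\psi$ and observing $\psi$ sends $\{t \gg 0\}$ to $\{y \gg 0\}$ and $\{x < d/r\}$ to $\{x < d/r\}$.
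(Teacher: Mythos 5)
Your strategy --- reduce to the established large-volume comparison between Bridgeland and Gieseker stability on $\PP^2$, transport it through the change of coordinates $\psi$, and case-split on $(r,d)$ --- is exactly the paper's strategy: its proof consists of the single citation to \cite[Proposition 6.2]{MR3010070} (Arcara--Bertram--Coskun--Huizenga), which establishes the Gieseker chamber on $\PP^2$ itself and covers all three regimes in the lemma. That is the reference you want; \cite{MR2376815} (K3 surfaces) and \cite{MR2998828} ($K$-trivial surfaces) do not apply to $\PP^2$, and \cite{MR2852118} (local $\PP^2$) is only indirectly relevant. One small slip in your third step: for $r > 0$, $x < d/r$, and $y \to \infty$, the phase of $Z^\sigma_\gamma$ tends to $0$, not $1/2$, because $\Rea Z^\sigma_\gamma \sim ry$ grows linearly in $y$ while $\Ima Z^\sigma_\gamma \sim (d - rx)\sqrt{2y}$ grows only like $\sqrt{y}$; the phase approaches $1/2$ only when $r = 0$. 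This is harmless here since you defer to the literature in any case, but the heuristic that ``phase near $1/2$ forces an honest sheaf'' is not the mechanism the large-volume argument runs on; the ABCH argument proceeds by a direct analysis of destabilizing subobjects and quotients in the tilted heart.
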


\begin{proof}
See the proof of \cite[Proposition 6.2]{MR3010070}.
\end{proof}

We now explain how Theorem \ref{thm_main_precise} gives an algorithm to compute the intersection Hodge numbers $Ih^{p,q}(M_\gamma)$.

We fix $\gamma \in \Gamma$. 
If $\gamma =(0,0,1)$, then $M_\gamma=M_\gamma^\st=\PP^2$
so $Ih^{p,q}(M_\gamma)=h^{p,q}(\PP^2)$. If $\gamma \in \Gamma^0$
and $\gamma \neq (0,0,1)$, then $M_\gamma^\st$ is empty so $Ih^{p,q}(M_\gamma)=0$, for every $p$ and $q$.

So we can assume that $\gamma \notin \Gamma^0$. In such cases, it follows from the explicit description of $L_\gamma$ given in \cref{section_scattering_from_stability}
and from Lemma \ref{lem_comparison_bridgeland_gieseker} that
$L_\gamma$ intersects the region in $U$ where $M_{\gamma'}^\sigma=M_{\gamma'}$,
$M_{\gamma'}^{\sigma-\st}=M_{\gamma'}^\st$, and so $Ih^{p,q}(M_{\gamma'}^\sigma)=Ih^{p,q}(M_{\gamma'})$
for every $\gamma' \in \Gamma_\gamma$.
Let $\Omega_\gamma(u^{\frac{1}{2}},v^{\frac{1}{2}})$ defined from the 
polynomials $Ih_{\gamma'}(u^{\frac{1}{2}},v^{\frac{1}{2}})$ as in Definition 
\ref{def_omega}.
If there exists $j \in J_\gamma$ such that $\bar{R}_{\gamma,j}$ is a half-line, then we have  $H_{\fd_\gamma,j}=\Omega_\gamma(u^{\frac{1}{2}},v^{\frac{1}{2}}) z^{m_\gamma}$ (and $\Omega_\gamma(u^{\frac{1}{2}},v^{\frac{1}{2}}) \neq 0$).
If not, this means that $\Omega_\gamma(u^{\frac{1}{2}},v^{\frac{1}{2}})= 0$.
In all cases, one can read off $\Omega_\gamma(u^{\frac{1}{2}},v^{\frac{1}{2}})$
from $\fD_{u,v}^{\PP^2}$.
So, after having determined by induction the $Ih_{\gamma',j}(u^{\frac{1}{2}},
v^{\frac{1}{2}})$ for $\gamma' \in \Gamma_\gamma$, $\gamma' \neq \gamma$, we can read off 
$Ih_\gamma(u^{\frac{1}{2}},v^{\frac{1}{2}})$
from $\fD^{\PP^2}_{u,v}$.
But according to Theorem
\ref{thm_main_precise}, we have 
$\fD_{u,v}^{\PP^2}=S(\fD^\iin_{u,v})$. As 
$S(\fD^\iin_{u,v})$ can be computed in an algorithmic way (see the remark ending
\cref{section_initial}), it is also the case 
for $Ih_\gamma(u^{\frac{1}{2}}, v^{\frac{1}{2}})$.

In order to get an effective algorithm, we have to know, for a given $\gamma \in \Gamma$, how to bound the number of steps necessary in the construction of $S(\fD_{u,v}^{\iin})$
to compute 
$Ih_\gamma(u^{\frac{1}{2}},v^{\frac{1}{2}})$. Li-Zhao \cite{MR3921322} have given an algorithm to compute the actual walls for 
$\gamma$, and from there, we can get a bound on the value $x^2+2y$ for the initial point $(x,y)$ of the half-line $\bar{R}_{\gamma,j}$.
Using Lemma \ref{lem_x_bound}, we get a bound on the $x$-coordinates of the relevant initial rays of $S(\fD^\iin_{u,v})$. Thus, we can algorithmically compute a compact set $K_{\gamma}$ in $\bar{U}$ such that $Ih_\gamma(u^{\frac{1}{2}},v^{\frac{1}{2}})$ is computed by the restriction of 
$S(\fD^\iin_{u,v})$ to $K_\gamma$.
On the other hand, by combination of 
Lemma \ref{lem_scattering_forward} and Lemma \ref{lem_increasing}, we know that 
$\varphi_\sigma(m_\fd)$ is increasing when moving from the initial rays by successive local scatterings. Thus, we can bound $\varphi_\sigma(m_\fd)$ for $\fd$ a ray contributing by successive scatterings to the formation of $\bar{R}_{\gamma,j}$, and for $\sigma \in |\fd|$, by 
$\varphi_{(x,y)}(m_\gamma)$.
By condition (iii) of Definition \ref{def_scattering}, and the description of $S(\fD^\iin_{u,v})$ in restriction to 
$\bar{U}^\iin$ at the beginning of
\cref{section_statement_initial}, the restriction of $S(\fD^\iin_{u,v})$ to
$K_\gamma$ contains finitely many such rays . By Lemma \ref{lem_scattering_forward}, we know when they are exhausted: when all the newly added rays have $\varphi_{\Init(\fd)}(m_\fd)>\varphi_{(x,y)}(m_\gamma)$.

We can now prove that, for every $\gamma \in \Z^3$, the intersection cohomology of $M_\gamma$ is concentrated in Hodge bidegrees $(p,p)$, that is Theorem 
\ref{thm_hodge_intro} of the introduction.

\begin{thm} \label{thm_hodge}
For every $\gamma \in \Gamma$, we have $Ih^{p,q}(M_\gamma)=0$ if $p \neq q$.
\end{thm}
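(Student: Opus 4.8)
The plan is to prove Theorem \ref{thm_hodge} by induction on $\gamma$, exploiting the algorithmic description of $Ih_\gamma(u^{\frac{1}{2}},v^{\frac{1}{2}})$ via the scattering diagram $S(\fD^\iin_{u,v})$ provided by Theorem \ref{thm_main_precise}. The key observation is that the statement ``$Ih^{p,q}(M_\gamma)=0$ for $p\neq q$'' is equivalent to saying that the symmetrized intersection Hodge polynomial $Ih_\gamma(u^{\frac{1}{2}},v^{\frac{1}{2}})$ is a polynomial in the single variable $(uv)^{\frac{1}{2}}$, i.e.\ lies in $\Z[(uv)^{\pm\frac{1}{2}}]\subset\Z[u^{\pm\frac12},v^{\pm\frac12}]$. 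Equivalently, under the change of variables $q^{\frac12}=(uv)^{\frac12}$ of Corollary \ref{cor_q_rationality}, $Ih_\gamma$ depends only on $q$. So the goal reduces to showing that every $H_\fd$ appearing in $\fD^{\PP^2}_{u,v}$ is, up to the universal factor $\frac{1}{(uv)^{\frac\ell2}-(uv)^{-\frac\ell2}}$, a function of $uv$ only — which is exactly the content of Corollary \ref{cor_q_rationality} applied to $S(\fD^\iin_{u,v})=\fD^{\PP^2}_{u,v}$.

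Concretely, first I would recall from \cref{section_algorithm} that $Ih_\gamma(u^{\frac12},v^{\frac12})$ can be extracted from the ray $\fd_{\gamma,j}$ of $\fD^{\PP^2}_{u,v}$ whose support $\bar R_{\gamma,j}$ is the unbounded half-line, via the formula
\[
H_{\fd_{\gamma,j}}=\left(-\sum_{\gamma=\ell\gamma'}\frac{1}{\ell}\frac{Ih_{\gamma',j}(u^{\frac\ell2},v^{\frac\ell2})}{(uv)^{\frac\ell2}-(uv)^{-\frac\ell2}}\right)z^{m_\gamma}\,,
\]
together with the inductive knowledge of $Ih_{\gamma',j}$ for proper divisors $\gamma'$ of $\gamma$. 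Then I would invoke Theorem \ref{thm_main_precise}, which gives $\fd_{\gamma,j}\in S(\fD^\iin_{u,v})$, and Corollary \ref{cor_q_rationality}, which asserts that $H_\fd\in\Q(q^{\pm\frac12})$ for every ray $\fd$ of $S(\fD^\iin_{u,v})$, where $\Q(q^{\pm\frac12})\hookrightarrow\Q(u^{\pm\frac12},v^{\pm\frac12})$ via $q^{\frac12}=(uv)^{\frac12}$. Hence $H_{\fd_{\gamma,j}}$, multiplied by the $z^{m_\gamma}$-coefficient, is a function of $uv$ alone. By the inductive hypothesis the contributions from proper divisors $\gamma'$ are already functions of $uv$ alone, and the factor $(uv)^{\frac\ell2}-(uv)^{-\frac\ell2}$ is manifestly a function of $uv$; therefore $Ih_\gamma(u^{\frac12},v^{\frac12})$ itself is a function of $uv$, i.e.\ $Ih^{p,q}(M_\gamma)=0$ whenever $p\neq q$. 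The base cases $\gamma\in\Gamma^0$ are handled directly: $M_{(0,0,1)}=\PP^2$ has $h^{p,q}(\PP^2)$ concentrated in $(p,p)$, and $M_\gamma^{\st}=\emptyset$ otherwise, so $Ih^{p,q}(M_\gamma)=0$.

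I expect the main subtlety to be bookkeeping rather than a genuine obstacle: one must be careful that the extraction of $Ih_\gamma$ from $H_{\fd_{\gamma,j}}$ via Möbius inversion over the divisor poset $\Gamma_\gamma$ preserves the ``function of $uv$ only'' property — this is immediate since Möbius inversion is a $\Z$-linear operation with rational coefficients and the factor $(uv)^{\frac\ell2}-(uv)^{-\frac\ell2}$ is a unit in the relevant completion of $\Q(q^{\pm\frac12})$. One should also double-check that the reduction to the unbounded half-line $\bar R_{\gamma,j}$ computing precisely $Ih_\gamma$ (as opposed to some $Ih_\gamma^\sigma$ at a non-Gieseker point) is legitimate, which follows from Lemma \ref{lem_comparison_bridgeland_gieseker}: for $y$ large enough (and $x<d/r$ when $r>0$) one has $M_\gamma^\sigma=M_\gamma$, and this region meets the interior of the unbounded segment of $R_\gamma$. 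With these points addressed, the proof is a short deduction from Theorem \ref{thm_main_precise} and Corollary \ref{cor_q_rationality}.
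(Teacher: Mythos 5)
Your proposal is correct and takes essentially the same route as the paper's proof: reduce the Hodge--Tate statement to the claim that $Ih_\gamma(u^{\frac12},v^{\frac12})$ depends only on $(uv)^{\frac12}$, then combine Theorem \ref{thm_main_precise} (identifying $\fD^{\PP^2}_{u,v}$ with $S(\fD^\iin_{u,v})$), Corollary \ref{cor_q_rationality} (rationality in $q^{\frac12}=(uv)^{\frac12}$ for every ray of $S(\fD^\iin_{u,v})$), and the extraction algorithm of \S\ref{section_algorithm} via Lemma \ref{lem_comparison_bridgeland_gieseker}. The paper's proof is a two-sentence compression of exactly this deduction (``true on initial rays, propagates by wall-crossing''), with your version spelling out the bookkeeping over the divisor poset $\Gamma_\gamma$ that the paper leaves implicit.
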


\begin{proof}
As we explained above how to compute 
the intersection Hodge 
polynomials
\[Ih_\gamma(u^{\frac{1}{2}},v^{\frac{1}{2}}) =
(-(uv)^{\frac{1}{2}})^{- \dim M_\gamma}
\sum_{p,q=0}^{\dim M_\gamma}
(-1)^{p+q}
Ih^{p,q}(M_\gamma) u^p v^q \]
from the scattering diagram $\fD_{u,v}^{\PP^2}=S(\fD^\iin_{u,v})$, the result follows from Corollary 
\ref{cor_q_rationality}. In short, the result is true for the initial rays of $\fD^\iin_{u,v}$ (the corresponding moduli spaces are points), and then propagates by wall-crossing.
\end{proof}

As we recalled in the Introduction, Theorem \ref{thm_hodge} is well-known
 \cite{MR1228610, MR1351502, MR2304330} if $\gamma$ is primitive: in this case, semistability coincides with stability,
$M_\gamma$ is smooth, intersection cohomology coincides with ordinary cohomology, and one can show that the cohomology ring is generated by the Künneth components of the Chern classes of the universal sheaf. In general, $M_\gamma$ is singular and Theorem \ref{thm_hodge}
is much less clear. In fact, Theorem 
\ref{thm_hodge} does not seem to appear previously in the 
literature, 
even if, with the extra assumption $r>0$, a different proof can be obtained from the results proved in \cite{MR3874687}.

Theorem \ref{thm_hodge} implies immediately the following Corollary
\ref{cor_hodge}.

\begin{cor}\label{cor_hodge}
For every $\gamma \in \Gamma$, we have:
\begin{itemize}
\item[(i)] $Ib_{2k+1}(M_\gamma)=0$ for every $k \in \NN$.
\item[(ii)] $Ie^+(M_\gamma) \in \NN$, and, if 
$M_\gamma^\st$ is nonempty, $Ie^+(M_\gamma)
\in \Z_{\geqslant 1}$.
\end{itemize}
\end{cor}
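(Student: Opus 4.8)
The plan is to deduce both items directly from Theorem \ref{thm_hodge}, which asserts that $Ih^{p,q}(M_\gamma)=0$ whenever $p\neq q$; no new geometric input is needed.

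First I would establish the vanishing of odd intersection Betti numbers. By definition,
\[
Ib_{2k+1}(M_\gamma)=\sum_{p+q=2k+1} Ih^{p,q}(M_\gamma)\,.
\]
Every pair $(p,q)$ with $p+q=2k+1$ satisfies $p\neq q$ (the sum is odd, so $p$ and $q$ cannot be equal), and hence each summand vanishes by Theorem \ref{thm_hodge}. This gives $Ib_{2k+1}(M_\gamma)=0$ for all $k\in\NN$.

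Next I would treat the intersection Euler characteristic. By its definition in \cref{section_intersection_invariants}, together with the vanishing of odd Betti numbers just proved,
\[
Ie^+(M_\gamma)=\sum_{j=0}^{2\dim M_\gamma}(-1)^j Ib_j(M_\gamma)=\sum_{k=0}^{\dim M_\gamma} Ib_{2k}(M_\gamma)=\sum_{k=0}^{\dim M_\gamma} Ih^{k,k}(M_\gamma)\,,
\]
which is a sum of nonnegative integers, hence lies in $\NN$. For the refined statement when $M_\gamma^{\st}$ is nonempty, I would note that $M_\gamma^{\st}$ is a nonempty smooth quasiprojective variety (by the $\Ext^2$-vanishing recalled in \cref{section_algorithm}), that $\overline{M_\gamma^{\st}}$ is a nonempty projective variety of the same dimension $D\coloneq\dim M_\gamma$, and that the intersection complex $IC_{\overline{M_\gamma^{\st}}}(\Q)$ restricts to the constant sheaf on the dense open $M_\gamma^{\st}$; therefore $IH^0(M_\gamma,\Q)=H^0(\overline{M_\gamma^{\st}},\Q)$ contains the fundamental class and is nonzero, so $Ih^{0,0}(M_\gamma)=Ib_0(M_\gamma)\geqslant 1$. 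Since all the summands $Ih^{k,k}(M_\gamma)$ are nonnegative, $Ie^+(M_\gamma)\geqslant Ih^{0,0}(M_\gamma)\geqslant 1$, i.e.\ $Ie^+(M_\gamma)\in\Z_{\geqslant 1}$.

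There is essentially no obstacle here; the only point requiring a word of care is the nonvanishing of $Ih^{0,0}$ in the strictly semistable case, where one must use that intersection cohomology in degree $0$ of an irreducible (or at least equidimensional connected) projective variety still detects the fundamental class — this follows from the standard normalization property $IC_Z(\Q)|_{Z^\circ}=\Q$ recalled in \cref{section_intersection_invariants} together with the fact that $M_\gamma^{\st}$ nonempty forces $\overline{M_\gamma^{\st}}$ nonempty. If $M_\gamma$ is reducible one applies this component by component. Everything else is a formal manipulation of the definitions.
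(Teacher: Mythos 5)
Your proof is correct and fills in exactly the routine manipulations the paper skips by calling the corollary an immediate consequence of Theorem~\ref{thm_hodge}: odd Betti vanishing since $p+q$ odd forces $p\neq q$, nonnegativity of $Ie^+$ as a sum of even intersection Betti numbers, and positivity from $Ih^{0,0}\geqslant 1$ when $M_\gamma^\st\neq\emptyset$. This is the same approach as the paper.
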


\subsection{Real algebraic geometry}
\label{section_real_moduli}

We now discuss an application to the real algebraic geometry of the moduli spaces 
$M_\gamma$.
We equip $\PP^2$ with its natural real structure whose real locus is the real projective plane $\R \PP^2$. As the definition of Gieseker semistable sheaves
makes sense over essentially any base, it follows that for every 
$\gamma \in \Z^3$, the moduli space $M_\gamma$ has a natural real structure and we denote by $M_\gamma(\R)$ its real locus.
In particular, there is a natural action of $\Gal(\C/\R)=\Z/2$ on the intersection cohomology groups $IH^k(M_\gamma,\Q)$.

\begin{thm} \label{thm_real_moduli}
For every $\gamma \in \Gamma$, and for every $0 \leqslant p \leqslant \dim M_\gamma$, the complex conjugation in
$\Gal(\C/\R)$ acts as $(-1)^p$
on $IH^{2p}(M_\gamma,\Q)$.
\end{thm}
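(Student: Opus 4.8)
The plan is to deduce Theorem \ref{thm_real_moduli} from Theorem \ref{thm_hodge} by the standard comparison between the Galois action of $\Gal(\C/\R)$ on the $\ell$-adic or Betti intersection cohomology of a variety defined over $\R$ and the Hodge structure of its complex points. The key mechanism is that for a smooth projective variety $X/\R$ (or, more generally, for the intersection cohomology of a projective variety defined over $\R$), the action of complex conjugation $c \in \Gal(\C/\R)$ on $IH^k(X_\C,\Q)$ is compatible with the Hodge decomposition in the following sense: $c$ is a $\Q$-linear involution which, after complexification, swaps the Hodge pieces $IH^{p,q}$ and $IH^{q,p}$ and acts on the diagonal part $IH^{p,p}$ by a scalar that can be normalized. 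More precisely, once we know by Theorem \ref{thm_hodge} that $IH^{2p}(M_\gamma,\Q)$ is pure of Hodge--Tate type $(p,p)$, the complexified cohomology $IH^{2p}(M_\gamma,\C)$ is a sum of copies of the Tate twist, and complex conjugation acts on each such copy through the cyclotomic character, i.e. as multiplication by $(-1)^p$. This is exactly the content of the desired statement. First I would recall the general fact (see e.g. the discussion of real structures on cohomology, or Deligne's weight formalism) that for $X$ defined over $\R$, the comparison isomorphism between Betti and de Rham cohomology intertwines complex conjugation on Betti cohomology with the composition of complex conjugation on $X(\C)$ and on coefficients; on a pure Hodge--Tate piece of weight $2p$ this forces the conjugation to act by $(-1)^p$.

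The steps, in order, would be: (1) observe that $M_\gamma$ is defined over $\R$, since Gieseker semistability, $S$-equivalence, and the GIT construction of \cite{huybrechts2010geometry} all work over $\Spec \R$, so that $M_\gamma(\C)$ carries a $\Gal(\C/\R)$-action and hence an antiholomorphic involution; (2) invoke the functoriality of the intersection complex $IC_{\overline{M_\gamma^{\st}}}(\Q)$ under this real structure, so that $\Gal(\C/\R)$ acts on $IH^\bullet(M_\gamma,\Q)$ compatibly with the mixed Hodge module structure of \cite{MR1047415}; (3) apply Theorem \ref{thm_hodge} to conclude $IH^{2p}(M_\gamma,\Q)$ is a direct sum of Tate structures $\Q(-p)^{\oplus Ih^{p,p}}$ and $IH^{2p+1}=0$; (4) use that on a Tate structure $\Q(-p)$ realized as the $2p$-th cohomology of a real variety, complex conjugation necessarily acts by $(-1)^p$ --- this is the normalization fixing the real structure of $\Q(-1)$ as $H^2$ of $\PP^1_\R$, where conjugation acts by $-1$, and $\Q(-p) = \Q(-1)^{\otimes p}$. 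One can phrase step (4) cleanly by first treating the case $\gamma$ primitive, where $M_\gamma$ is smooth projective over $\R$ and the classical statement about $H^{2p}$ being generated by algebraic cycles (Künneth components of Chern classes of the universal sheaf, defined over $\R$) makes the $(-1)^p$ transparent, and then propagating through the scattering/wall-crossing decomposition exactly as in the proof of Theorem \ref{thm_hodge}: each elementary building block is a point or a projective-space bundle, for which the claim is immediate, and the operations assembling $IH^\bullet(M_\gamma)$ from these blocks are defined over $\R$ and compatible with the Tate twists.

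The main obstacle I expect is step (2)--(4) made rigorous: namely, pinning down precisely in what sense the intersection cohomology of the singular variety $M_\gamma/\R$ carries a Galois action compatible with its being Hodge--Tate, and that this action is the "arithmetic" one whose normalization gives $(-1)^p$ on $\Q(-p)$. For smooth projective $X/\R$ this is classical, but for intersection cohomology of a singular $M_\gamma$ one should either (a) cite the theory of mixed Hodge modules over $\R$ / motivic sheaves with real structure, or (b) bypass the general theory altogether and argue inductively via the scattering algorithm, which is my preferred route since it stays within the toolkit already developed in the paper. Concretely, the tree/wall-crossing decomposition of $Ih_\gamma(u^{1/2},v^{1/2})$ expresses $IH^\bullet(M_\gamma)$ in terms of intersection cohomologies of moduli of quiver representations and projective spaces, all defined over $\R$, glued by $\R$-defined maps; since $IH^{2p}$ of each piece is $\Q(-p)^{\oplus \bullet}$ with conjugation acting by $(-1)^p$ (the base cases being $\PP^2_\R$ and points, where this is elementary), and since Tate twists add under the gluing operations, the claim for $M_\gamma$ follows by induction. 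I would then conclude by remarking, as in \S\ref{section_real_moduli}, the consequence for the Betti numbers of the real locus $M_\gamma(\R)$ via the Borel--Haefliger / Krasnov-type comparison, if that is what the subsequent text develops.
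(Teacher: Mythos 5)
Your preferred route (b) is in fact the paper's proof: rerun the entire scattering/wall-crossing argument of Theorem \ref{thm_main_precise} with values in the Grothendieck group of mixed Hodge structures carrying a $\Gal(\C/\R)$-action rather than in $\Z[u^{\pm1/2},v^{\pm1/2}]$, replace $uv$ throughout by the class of $H^2_c(\A^1,\Q)$ (on which conjugation acts by $-1$), take the trivial base case at the initial rays (moduli spaces that are points), and propagate. So the substantive part of your proposal is correct and matches the paper.

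However, your steps (1)--(4), which you present as the ``main plan'' and characterize as merely ``needing rigor,'' actually contain a genuine gap rather than a technical difficulty. The claim in step (4) --- that on a Tate structure $\Q(-p)$ arising as $H^{2p}$ of a real variety, complex conjugation \emph{necessarily} acts by $(-1)^p$ --- is false. The Hodge type $(p,p)$ does not determine the $\Gal(\C/\R)$-module structure: the underlying Hodge structure cannot distinguish $\Q(-p)$ from $\Q(-p)\otimes\mathrm{sgn}$, where $\mathrm{sgn}$ is the nontrivial character. Concretely, take $Q \subset \PP^3_\R$ a smooth quadric surface whose two rulings are interchanged by conjugation (e.g.\ the anisotropic quadric $x^2+y^2+z^2+w^2=0$). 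Then $H^2(Q(\C),\Q) \cong \Q(-1)^{\oplus 2}$ as a Hodge structure, but conjugation acts with matrix $\begin{psmallmatrix}0&-1\\-1&0\end{psmallmatrix}$ in the basis of ruling classes, which has a $+1$-eigenvalue, not uniformly $(-1)^1$. Thus knowing Theorem \ref{thm_hodge} does not, by itself and by general Hodge-theoretic nonsense, yield Theorem \ref{thm_real_moduli}. What pins down the Galois action here is precisely the extra structure provided by the scattering diagram: the entire answer is a universal Laurent polynomial in $[\A^1]$ built from the initial rays by the wall-crossing formalism, so the Galois module structure is forced, not inferred from the Hodge type. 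You should therefore drop steps (1)--(4) entirely (or keep them only as motivation) and make route (b) the actual proof. One further small imprecision in your route (b): the scattering algorithm is not a locally closed decomposition of $M_\gamma$ into $\R$-varieties; it is an identity in a Grothendieck ring of stacks/Hodge structures, so the ``gluing by $\R$-defined maps'' language should be replaced by the statement that all the operations in the motivic Hall algebra and the wall-crossing formula are defined over $\R$ and the resulting equality of scattering diagrams holds in the Galois-enhanced Grothendieck group.
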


\begin{proof}
In all the arguments used in the proof of Theorem \ref{thm_main_precise}, we can replace the intersection Hodge 
polynomials valued in $\Z[u,v]$ by the class of the intersection cohomology in the Grothendieck group of the category of
mixed Hodge structures with action of 
$\Gal(\C/\R)$. In all the explicit formulas defining 
$S(\fD^\iin_{u,v})$, we only have to replace $uv$ by 
$H^2_c(\A^1,\Q)$. The analogue of Theorem \ref{thm_main_precise} is then an equality of scattering diagrams with values in the Grothendieck group of the category of
mixed Hodge structures with action of 
$\Gal(\C/\R)$.  The result then follows from the fact that the complex conjugation in $\Gal(\C/\R)$ acts as $-1$ on $H^2_c(\A^1,\Q)$. In short, the logic is identical to the one used to prove Theorem \ref{thm_hodge}: the result is true for the initial rays of $\fD^{\iin}_{u,v}$
(the corresponding moduli spaces are points), and then propagates by wall-crossing.
\end{proof}

If $\gamma$ is primitive, then
$M_\gamma$ is smooth, so $M_\gamma(\R)$ is a compact manifold. We denote by
$e(M_\gamma(\R))$ its topological Euler
characteristic.

\begin{cor} \label{cor_real_moduli}
For every $\gamma \in \Gamma$ primitive, we have 
\[ e(M_\gamma(\R))
=\sum_{p=0}^{\dim M_\gamma}
(-1)^p b_{2p}(M_\gamma)
=\sum_{p=0}^{\dim M_\gamma}
(-1)^p h^{p,p}(M_\gamma) \,.\]
\end{cor}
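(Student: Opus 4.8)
The plan is to combine the Hodge-Tate statement Theorem \ref{thm_hodge} and the Galois-action statement Theorem \ref{thm_real_moduli} with the topological Lefschetz fixed point formula applied to the complex-conjugation involution of $M_\gamma$.

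First I would dispose of the second equality, which is essentially formal. Since $\gamma$ is primitive, semistability coincides with stability, so $M_\gamma=M_\gamma^\st$ is a smooth projective variety and its intersection cohomology coincides with its ordinary singular cohomology, $IH^k(M_\gamma,\Q)=H^k(M_\gamma,\Q)$. By Theorem \ref{thm_hodge} the pure Hodge structure on $H^k(M_\gamma,\Q)$ is concentrated in bidegree $(p,p)$; hence $H^{2p+1}(M_\gamma,\Q)=0$ for all $p$ and $b_{2p}(M_\gamma)=h^{p,p}(M_\gamma)$, giving $\sum_{p}(-1)^p b_{2p}(M_\gamma)=\sum_{p}(-1)^p h^{p,p}(M_\gamma)$. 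It then remains to prove $e(M_\gamma(\R))=\sum_{p}(-1)^p b_{2p}(M_\gamma)$.

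For this, equip $\PP^2$ with its standard real structure (real locus $\R\PP^2$), which induces the real structure on $M_\gamma$ of Section \ref{section_real_moduli}, and let $\iota\colon M_\gamma\to M_\gamma$ be the associated anti-holomorphic involution of complex conjugation. Since $M_\gamma$ is smooth and projective over $\R$, the complex points form a compact smooth manifold on which $\iota$ acts as a smooth involution with fixed locus $\operatorname{Fix}(\iota)=M_\gamma(\R)$. By the topological Lefschetz fixed point formula for smooth involutions (the local contribution of each component of the smooth fixed submanifold being its own Euler characteristic), one gets
\[ e(M_\gamma(\R))=L(\iota)=\sum_{k}(-1)^k\,\Tr\!\big(\iota^{\ast}\mid H^k(M_\gamma,\Q)\big)\,, \]
where $\iota^{\ast}$ is exactly the action of the nontrivial element of $\Gal(\C/\R)$ on $H^k(M_\gamma,\Q)=IH^k(M_\gamma,\Q)$ studied in Theorem \ref{thm_real_moduli}. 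Finally, plugging in the two inputs: only even degrees $k=2p$ contribute (odd cohomology vanishes), and by Theorem \ref{thm_real_moduli} the involution $\iota^{\ast}$ acts on $H^{2p}(M_\gamma,\Q)$ as the scalar $(-1)^p$, so $\Tr(\iota^{\ast}\mid H^{2p}(M_\gamma,\Q))=(-1)^p b_{2p}(M_\gamma)$, whence
\[ e(M_\gamma(\R))=\sum_{p}(-1)^{2p}(-1)^p b_{2p}(M_\gamma)=\sum_{p}(-1)^p b_{2p}(M_\gamma)\,. \]

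The main (and essentially only) nontrivial point is the identification $L(\iota)=e(M_\gamma(\R))$ for the smooth involution $\iota$, together with the verification that $\iota^{\ast}$ on cohomology is the Galois action appearing in Theorem \ref{thm_real_moduli}; both are standard once the real structure is set up, and the case $M_\gamma(\R)=\emptyset$ is automatically consistent since then both $L(\iota)$ and the alternating sum vanish. Everything else is a direct substitution of Theorems \ref{thm_hodge} and \ref{thm_real_moduli}.
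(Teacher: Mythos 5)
Your proof is correct and takes essentially the same route as the paper: use Theorem \ref{thm_hodge} to reduce to even cohomology and identify $b_{2p}=h^{p,p}$, then apply the Lefschetz fixed point formula to complex conjugation and substitute the $(-1)^p$-action from Theorem \ref{thm_real_moduli}. The paper also notes in passing an alternative and more elementary proof via generation of $H^\bullet(M_\gamma)$ by Künneth components of Chern classes of the universal sheaf, but your argument mirrors the one the paper actually gives.
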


\begin{proof}
For $\gamma$ primitive, $M_\gamma$ is smooth, intersection cohomology coincides with usual cohomology, and so the result follows from Theorem \ref{thm_real_moduli} 
by the Lefschetz fixed point formula applied to the complex conjugation.
\end{proof}

It is possible to give a simpler proof of Corollary \ref{cor_real_moduli}.
According to \cite{MR1228610, MR1351502, MR2304330} if $\gamma$ is primitive, then the cohomology ring of $M_\gamma$ is generated by the Künneth components of the Chern classes of the universal sheaf on 
$M_\gamma \times \PP^2$. So the result follows from the fact that complex conjugation acts as $(-1)^p$ on the
$p$-th Chern class of a real sheaf, and as 
$(-1)^p$ on $H^{2p}(\PP^2,\Q)$.

\subsection{Tree decompositions}
\label{section_tree_decomposition}
According to Theorem \ref{thm_hodge}, the study of the intersection Hodge numbers 
$Ih^{p,q}(M_\gamma)$ can be reduced to the study of the intersection Betti numbers 
$Ib_{2k}(M_\gamma)$.
In particular, the intersection Poincar\'e polynomial 
\[ P(M_\gamma)\coloneq 
\sum_{p=0}^{\dim M_\gamma} Ib_{2p}(M_\gamma) q^p\]
has nonnegative coefficients.

The scattering algorithm of \cref{section_algorithm} induces a decomposition 
\[ P(M_\gamma)=\sum_{T \in \mathcal{T}_\gamma} P_T(M_\gamma)\,,\]
where $\mathcal{T}_\gamma$ is a set of oriented weighted trees immersed in $\bar{U}$, and where each 
$P_T(M_\gamma)$ is a polynomial in $q$ with integer coefficients.
Each $T \in \mathcal{T}_\gamma$ is contained in the support of $\fD^{\PP^2}_{u,v}=S(\fD^\iin_{u,v})$, has roots at some of the initial points $s_n$ of $\fD^\iin_{u,v}$, and has a unique unbounded leave coinciding with $\bar{R}_{\gamma,j}$, where $j$ is the unique $j \in J_\gamma$ such that $\bar{R}_{\gamma,j}$ is a half-lines. Each edge of $T$ is weighted by a positive integer and oriented in the direction of increasing value of 
$\varphi_\sigma(m_\fd)$. Each non-root vertex of $T$ has finitely many ingoing edges and one outgoing edge, and the tropical balancing condition is satisfied at each such vertex. The only obstruction to the embedding of $T$ in $\bar{U}$ is the fact that two roots of $T$ can map to the same initial point $s_n$. It is related to the fact that, in the support of $\fD^\iin_{u,v}$, there are two rays coming out from each $s_n$, and $T$ might contain these two rays.

Indeed, such trees naturally index the various terms obtained by successive local scatterings in the algorithmic construction of $S(\fD_{u,v}^\iin)$. The fact that the contribution of each tree is a polynomial in $q$ with integer coefficients follows from the formalism of admissible series of Kontsevich-Soibelman (see \cite[\S 6]{MR2851153}). We conjecture that the coefficients of the polynomials $P_T(M_\gamma)$ are nonnegative, and that  $q^{-(\dim M_\gamma -\deg P_T(M_\gamma))}P_T(M_\gamma)$ are palindromic polynomials with constant term $1$.

Even if our result holds only at the numerical level of Betti numbers, we can think of the various terms indexed by $T$ as coming from various locally closed strata in $M_\gamma$, of codimension $\dim M_\gamma -\deg P_T(M_\gamma)$, parametrizing Gieseker semistable sheaves which are obtained from the line bundles 
$\cO(n)$ by a precise pattern of exact
triangles in $\D^b(\PP^2)$.

This decomposition of $P(M_\gamma)$ seems to be new in general and probably deserves further study. 
We make only one observation.
The moduli spaces of Gieseker semistable sheaves supported in dimension $1$, that is with $\gamma=(0,d,\chi)$, have been quite explicitly studied for low values of $d$
in 
\cite{MR2795234, MR3082879, MR3160600, MR3217411, MR3319919, MR3275289, MR3324766}.
One key aspect of these studies is the construction of explicit locally closed decompositions of the moduli spaces $M_{(0,d,\chi)}$ (for low degree, 
$d \leqslant 6$), whose strata are characterized by the existence of a 
resolution by a direct sum of line bundles of some numerical type. For $d \leqslant 5$, one can check explicitly that 
our decomposition according to the trees $T \in \mathcal{T}_\gamma$
matches the decomposition of $P(M_\gamma)$
induced by these decompositions, and for 
$d=(6,1)$, it matches the slightly more refined decomposition according to the
first destabilizing wall, see \cite{MR3319919}.

A quite simple example: shape of the unique tree $T$ contributing 
to $\gamma=(0,1,1)$ (the precise embedding in $\bar{U}$ can be recovered from the explicit description of the initial rays of $\fD^\iin_{u,v}$ and from the tropical balancing condition):
\begin{center}
\setlength{\unitlength}{1cm}
\begin{picture}(10,3)
\thicklines
\put(2.8,0.2){$s_{-1}$}
\put(5,0.2){$s_0$}
\put(3,0.5){\circle*{0.1}}
\put(5,0.5){\circle*{0.1}}
\put(3,0.5){\vector(1,1){1}}
\put(5,0.5){\vector(-1,1){1}}
\put(4,1.5){\vector(0,1){1}}
\end{picture}
\end{center}
Above the vertex, extensions of $\cO(-1)[1]$ by $\cO$ become stable, corresponding to the fact that all the elements of 
$M_{(0,1,1)}$ are of the form $\cO_l$ for $l$ a line in $\PP^2$, and so admit a resolution of the form 
\[ 0 \rightarrow \cO(-1) \rightarrow \cO \rightarrow \cO_l \rightarrow 0\,.\]
We can check that $P_T(M_{(0,1,1)})
=1+q+q^2$, which is indeed the Poincar\'e polynomial of 
$M_{(0,1,1)} \simeq \PP^2$. More complicated examples are given in the proof of 
Theorem \ref{thm_chi_indep_test} in the following section \S \ref{section_chi_indep}.

The above trees can be viewed as examples of tropical disks in the sense of 
\cite[Definition 5.1]{cps} and are
essentially identical to the attractor trees discussed in a more general and partially conjectural context in
\cite[\S 3.2]{MR3330788}. They are of the same nature as the attractor flow trees of the supergravity literature 
\cite{MR1792870}, except that, in the supergravity context, the trees can have a root at an attractor point, which is a smooth point of the moduli space, analogue of our $U$, whereas all our trees end at a singular point of the moduli space
(the points $s_n$ where the central charge of $\cO(n)$ goes to $0$). The supergravity context should be relevant for compact Calabi-Yau 3-folds, whereas the present paper is about the noncompact Calabi-Yau 3-fold $K_{\PP^2}$.

\subsection{Test of the $\chi$-independence conjecture: proof of Theorem \ref{thm_chi_indep_test}} \label{section_chi_indep}

We prove Theorem \ref{thm_chi_indep_test} that is, for every $d \leq 4$, the intersection 
Poincar\'e polynomial $P(M_{(0,d,\chi)})$ is independent of $\chi$.
In order to save space, we will use Theorem \ref{thm_chi_indep_gcd}, proved 
in \cite{bousseau2019takahashi}, according to which 
$P(M_{(0,d,\chi)})$ only depends on $\chi$ through $\gcd(d,\chi)$. 

For $d=1$ and $d=2$, the moduli space $M_{(0,d,\chi)}$ is isomorphic to the linear system of degree $d$ curves and so does not depend on $\chi$: there is nothing to prove.

For $d=3$, there are two cases to consider, that we can choose to be $\chi=1$
and $\chi=3$, and there is something to prove.
For every $n \in \NN$, we denote $[n]_q:=\sum_{k=0}^{n-1}q^k$. 

\begin{prop}
We have 
\[ P(M_{(0,3,1)})=P(M_{(0,3,3)})=[9]_q [3]_q\,.\]
In particular, Theorem \ref{thm_chi_indep_test} holds for $d=3$.
\end{prop}

\begin{proof}
There is a unique tree contributing to 
$\gamma=(0,3,1)$:
\[T=\]
\begin{center}
\setlength{\unitlength}{1cm}
\begin{picture}(10,4)
\thicklines
\put(4,0.5){\circle*{0.1}}
\put(6,0.5){\circle*{0.1}}
\put(8,0.5){\circle*{0.1}}
\put(4,0.2){$s_{-2}$}
\put(6,0.2){$s_{-1}$}
\put(8,0.2){$s_0$}
\put(4,0.5){\vector(1,1){1}}
\put(6,0.5){\vector(-1,1){1}}
\put(8,0.5){\vector(-1,1){2}}
\put(5,1.5){\vector(1,1){1}}
\put(7,1.5){\vector(-1,1){1}}
\put(6,2.5){\vector(0,1){1}}
\put(4,1){$2$}
\end{picture}
\end{center}
The uniqueness of $T$ is related to the fact that every sheaf $E$ in $M_{(0,3,1)}$
admits a resolution of the form 
\[ 0 \rightarrow \cO(-2)^{\oplus 2} \rightarrow \cO(-1) \oplus \cO \rightarrow E \rightarrow 0\,, \]
see \cite{MR2795234}.
We can check running the scattering algorithm that 
\[ P(M_{(0,3,1)})=P_T(M_{(0,3,1)})=[9]_q [3]_q \,.\]

There are two trees contributing to $\gamma=(0,3,3)$:
\[T_0=\]
\begin{center}
\setlength{\unitlength}{1cm}
\begin{picture}(10,2)
\thicklines
\put(6,0.5){\circle*{0.1}}
\put(8,0.5){\circle*{0.1}}
\put(6,0.2){$s_{-1}$}
\put(8,0.2){$s_0$}
\put(6,0.5){\vector(1,1){1}}
\put(8,0.5){\vector(-1,1){1}}
\put(7,1.5){\vector(0,1){1}}
\put(6.2,1){$3$}
\put(7.6,1){$3$}
\end{picture}
\end{center}

\[T_1=\]
\begin{center}
\setlength{\unitlength}{1cm}
\begin{picture}(10,4)
\thicklines
\put(4,0.5){\circle*{0.1}}
\put(10,0.5){\circle*{0.1}}
\put(4,0.2){$s_{-2}$}
\put(10,0.2){$s_1$}
\put(4,0.5){\vector(1,1){3}}
\put(10,0.5){\vector(-1,1){3}}
\put(7,3.5){\vector(0,1){1}}
\end{picture}
\end{center}

The tree $T_0$ corresponds to the codimension $0$ stratum in 
$M_{(0,3,3)}$ of sheaves $E$ admitting a resolution of the form
\[ 0 \rightarrow \cO(-1)^{\oplus 3} \rightarrow \cO^{\oplus 3} \rightarrow E \rightarrow 0\,,\]
and the tree $T_1$ corresponds to the codimension $1$ stratum in $M_{(0,3,3)}$
of sheaves $E$ admitting a resolution of the form 
\[ 0 \rightarrow \cO(-2) \rightarrow \cO(1) \rightarrow E \rightarrow 0\,,\]
see \cite{MR2795234}. We can check running the scattering algorithm that 
\[P_{T_0}(M_{(0,3,3)})=[9]_q (1+q^2) \,,\]
\[P_{T_1}(M_{(0,3,3)})=[9]_q q\,,\]
and so 
\[P(M_{(0,3,3)})=P_{T_0}(M_{(0,3,3)})+P_{T_1}(M_{(0,3,3)})=[9]_q[3]_q \,.\]
\end{proof}

For $d=4$, there are three cases to consider, that we can chose to be 
$\chi=1$, $\chi=2$, and $\chi=4$.

\begin{prop}
We have 
\[ P(M_{(0,4,1)})=P(M_{(0,4,2)})=P(M_{(0,4,4)})=[12]_q (1+q+4q^2+4q^3+4q^4+q^5+q^6) \,.\] 
In particular, Theorem \ref{thm_chi_indep_test} holds for $d=4$.
\end{prop}

\begin{proof}
There are two trees contributing to 
$\gamma=(0,4,1)$:
\[T_0=\]
\begin{center}
\setlength{\unitlength}{1cm}
\begin{picture}(10,2.8)
\thicklines
\put(4,0.5){\circle*{0.1}}
\put(6,0.5){\circle*{0.1}}
\put(8,0.5){\circle*{0.1}}
\put(4,0.2){$s_{-2}$}
\put(6,0.2){$s_{-1}$}
\put(8,0.2){$s_0$}
\put(4,0.5){\vector(1,1){1}}
\put(6,0.5){\vector(-1,1){1}}
\put(8,0.5){\vector(-1,1){2}}
\put(5,1.5){\vector(1,1){1}}
\put(6,2.5){\vector(0,1){1}}
\put(4,1){$3$}
\put(5.8,1){$2$}
\end{picture}
\end{center}

\[T_1=\]
\begin{center}
\setlength{\unitlength}{1cm}
\begin{picture}(10,4.5)
\thicklines
\put(2,0.5){\circle*{0.1}}
\put(6,0.5){\circle*{0.1}}
\put(8,0.5){\circle*{0.1}}
\put(2,0.2){$s_{-3}$}
\put(6,0.2){$s_{-1}$}
\put(8,0.2){$s_0$}
\put(2,0.5){\vector(1,1){3}}
\put(6,0.5){\vector(1,1){1}}
\put(8,0.5){\vector(-1,1){1}}
\put(7,1.5){\vector(-1,1){2}}
\put(5,3.5){\vector(0,1){1}}
\put(7.5,1.5){$2$}
\end{picture}
\end{center}

Using the notation of  
\cite[\S 3]{MR2795234}, $T_0$ corresponds to the
codimension $0$ stratum $X_0$ in $M_{(0,4,1)}$ of sheaves $E$ admitting a resolution
of the form 
\[ 0 \rightarrow \cO(-3)^{\oplus 3} \rightarrow 
\cO(-1)^{\oplus 2} \oplus \cO \rightarrow E \rightarrow 0\,,\]
and $T_1$ corresponds to the codimension $2$ stratum $X_1$ in $M_{(0,4,1)}$ of sheaves $E$ admitting a resolution
of the form 
\[ 0 \rightarrow \cO(-3) \oplus \cO(-1) \rightarrow \cO^{\oplus 2} \rightarrow E \rightarrow 0\,.\]
We can check running the scattering algorithm that 
\[P_{T_0}(M_{(0,4,1)})=[12]_q (1+q+3q^{2}+3q^3+3q^4+q^5+q^6) \,,\]
\[P_{T_1}(M_{(0,4,1)})=[12]_q[3]_q  q^2\,,\]
and so 
\[P(M_{(0,4,1)})=P_{T_0}(M_{(0,4,1)})+P_{T_1}(M_{(0,4,2)})=[12]_q
 (1+q+4q^2+4q^3+4q^4+q^5+q^6)\,.\]
The moduli space $M_{(0,4,1)}$ is smooth and its Poincar\'e polynomial has been previously computed by torus localization in \cite{MR3217411}. 

There are three trees contributing to 
$\gamma=(0,4,2)$:
\[T_0=\]
\begin{center}
\setlength{\unitlength}{1cm}
\begin{picture}(10,4)
\thicklines
\put(4,0.5){\circle*{0.1}}
\put(4,0.2){$s_{-2}$}
\put(8,0.2){$s_0$}
\put(4,0.5){\vector(1,1){2}}
\put(8,0.5){\vector(-1,1){2}}
\put(6,2.5){\vector(0,1){1}}
\put(4.8,1.8){$2$}
\put(7.2,1.8){$2$}
\end{picture}
\end{center}

\[T_1=\]
\begin{center}
\setlength{\unitlength}{1cm}
\begin{picture}(10,3)
\thicklines
\put(4,0.5){\circle*{0.1}}
\put(6,0.5){\circle*{0.1}}
\put(8,0.5){\circle*{0.1}}
\put(4,0.2){$s_{-2}$}
\put(6,0.2){$s_{-1}$}
\put(8,0.2){$s_0$}
\put(4,0.5){\vector(1,1){1}}
\put(6,0.5){\vector(-1,1){1}}
\put(6,0.5){\vector(1,1){1}}
\put(8,0.5){\vector(-1,1){1}}
\put(5,1.5){\vector(1,1){1}}
\put(7,1.5){\vector(-1,1){1}}
\put(6,2.5){\vector(0,1){1}}
\put(8,1){$2$}
\put(4,1){$2$}
\end{picture}
\end{center}

\[T_2=\]
\begin{center}
\setlength{\unitlength}{1cm}
\begin{picture}(10,5)
\thicklines
\put(2,0.5){\circle*{0.1}}
\put(10,0.5){\circle*{0.1}}
\put(2,0.2){$s_{-3}$}
\put(10,0.2){$s_1$}
\put(2,0.5){\vector(1,1){4}}
\put(10,0.5){\vector(-1,1){4}}
\put(6,4.5){\vector(0,1){1}}
\end{picture}
\end{center}

Using the notation of  
\cite[\S 4]{MR2795234}, $T_0$ corresponds to the
codimension $0$ stratum $X_0$ in $M_{(0,4,2)}$ of sheaves $E$ admitting a resolution
of the form 
\[ 0 \rightarrow \cO(-2)^{\oplus 2}
\rightarrow \cO^{\oplus 2} 
\rightarrow E \rightarrow 0\,,\]
 $T_1$ corresponds to the codimension $1$ stratum $X_1$ in $M_{(0,4,2)}$ of sheaves $E$ admitting a resolution
of the form 
\[ 0 \rightarrow \cO(-2)^{\oplus 2}\oplus 
\cO(-1)
\rightarrow \cO(-1) \oplus \cO^{\oplus 2} 
\rightarrow E \rightarrow 0\,,\]
and $T_2$ corresponds to the codimension $3$ stratum $X_2$ in $M_{(0,4,2)}$ of sheaves $E$ admitting a resolution
of the form 
\[ 0 \rightarrow \cO(-3)
\rightarrow \cO(1) 
\rightarrow E \rightarrow 0\,,\]
We can check running the scattering algorithm that 
\[ P_{T_0}(M_{(0,4,2)})=[12]_q(1+2q^2+2q^4+q^6)\,,\]
\[ P_{T_1}(M_{(0,4,2)})=[12]_q q(1+2q+3q^2+2q^3+q^4)\,,\]
\[P_{T_2}(M_{(0,4,2)})=[12]_q q^3\,,\]
and so 
\[ P(M_{(0,4,2)})=[12]_q (1+q+4q^2+4q^3+4q^4+q^5+q^6)\,.\]

There are two trees contributing to $\gamma=(0,4,4)$:
\[T_0=\]
\begin{center}
\setlength{\unitlength}{1cm}
\begin{picture}(10,2)
\thicklines
\put(6,0.5){\circle*{0.1}}
\put(8,0.5){\circle*{0.1}}
\put(6,0.2){$s_{-1}$}
\put(8,0.2){$s_0$}
\put(6,0.5){\vector(1,1){1}}
\put(8,0.5){\vector(-1,1){1}}
\put(7,1.5){\vector(0,1){1}}
\put(6.2,1){$4$}
\put(7.6,1){$4$}
\end{picture}
\end{center}

\[T_1=\]
\begin{center}
\setlength{\unitlength}{1cm}
\begin{picture}(10,4)
\thicklines
\put(4,0.5){\circle*{0.1}}
\put(6,0.5){\circle*{0.1}}
\put(8,0.5){\circle*{0.1}}
\put(10,0.5){\circle*{0.1}}
\put(4,0.2){$s_{-2}$}
\put(6,0.2){$s_{-1}$}
\put(8,0.2){$s_0$}
\put(10,0.2){$s_1$}
\put(4,0.5){\vector(1,1){3}}
\put(6,0.5){\vector(1,1){1}}
\put(8,0.5){\vector(-1,1){1}}
\put(7,1.5){\vector(1,1){1}}
\put(7,3.5){\vector(0,1){1}}
\put(10,0.5){\vector(-1,1){2}}
\put(8,2.5){\vector(-1,1){1}}
\end{picture}
\end{center}
Using the notation of  
\cite[\S 5]{MR2795234}, $T_0$ corresponds to the
codimension $0$ stratum $X_0$ in $M_{(0,4,4)}$ of sheaves $E$ admitting a resolution
of the form 
\[ 0 \rightarrow \cO(-1)^{\oplus 4} \rightarrow \cO^{\oplus 4} \rightarrow E \rightarrow 0 \,,\]
and
$T_1$ corresponds to the
codimension $1$ stratum $X_1$ in $M_{(0,4,4)}$ of sheaves $E$ admitting a resolution
of the form 
\[ 0 \rightarrow  \cO(-2) \oplus \cO(-1) \rightarrow \cO \oplus \cO(1)
\rightarrow E \rightarrow 0 \,.\]

We can check running the scattering algorithm that 
\[ P_{T_0}(M_{(0,4,4)})=[12]_q(1+2q^2+q^3+2q^4+q^6)\,,\]
\[ P_{T_1}(M_{(0,4,4)})=[12]_q[3]_q^2 q\,,\]
and so 
\[ P(M_{(0,4,4)})=[12]_q (1+q+4q^2+4q^3+4q^4+q^5+q^6)\,.\]
\end{proof}

\vspace{+8 pt}
\noindent
Institute for Theoretical Studies \\
ETH Zurich \\
8092 Zurich, Switzerland \\
pboussea@ethz.ch

\end{document}